\newtheorem{theorem}{Theorem}[section]
\newtheorem{lemma}[theorem]{Lemma}
\numberwithin{equation}{section}
\title{Global, Non-scattering solutions to the quintic, focusing semilinear wave equation on $\mathbb{R}^{1+3}$}
\date{}
\author{Mohandas Pillai} 
\numberwithin{equation}{section}
\begin{document}
\maketitle
\begin{abstract} \noindent We consider the quintic, focusing semilinear wave equation on $\mathbb{R}^{1+3}$, in the radially symmetric setting, and construct infinite time blow-up, relaxation, and intermediate types of solutions. More precisely, we first define an admissible class of time-dependent length scales, which includes a symbol class of functions. Then, we construct solutions which can be decomposed, for all sufficiently large time, into an Aubin-Talentini (soliton) solution, re-scaled by an admissible length scale, plus radiation (which solves the free 3 dimensional wave equation), plus corrections which decay as time approaches infinity. The solutions include infinite time blow-up and relaxation with rates including, but not limited to, positive and negative powers of time, with exponents sufficiently small in absolute value. We also obtain solutions whose soliton component has oscillatory length scales, including ones which converge to zero along one sequence of times approaching infinity, but which diverge to infinity along another such sequence of times. The method of proof is similar to a recent wave maps work of the author, which is itself inspired by matched asymptotic expansions.
 \end{abstract}
\tableofcontents
\section{Introduction}
We consider the quintic, focusing semilinear wave equation on $\mathbb{R}^{1+3}$, in the radially symmetric setting, namely
\begin{equation}\label{slw}-\partial_{t}^{2}u+\partial_{r}^{2}u+\frac{2}{r}\partial_{r}u+u^{5}=0,\quad  r>0\end{equation}
The following energy is conserved for sufficiently regular solutions to \eqref{slw}.
\begin{equation}\label{enslw} E_{SLW}(u,\partial_{t}u) = \int_{0}^{\infty} r^{2} dr\left(\frac{(\partial_{t}u)^{2}}{2}+\frac{(\partial_{r}u)^{2}}{2}-\frac{u^{6}}{6}\right)\end{equation}
If $u$ is a solution to \eqref{slw}, then, so is the function $u_{\lambda}(t,r):= \frac{1}{\sqrt{\lambda}} u(\frac{t}{\lambda},\frac{r}{\lambda}), \lambda>0$. We remark that the equation \eqref{slw} is energy critical, since, if $u$ is a sufficiently regular solution to \eqref{slw}, then, $E_{SLW}(u,\partial_{t}u) = E_{SLW}(u_{\lambda},\partial_{t}u_{\lambda})$ (since $E_{SLW}(u,\partial_{t}u)$ is independent of time). We denote the Aubin-Talentini soliton by
$$Q_{1}(r) = (\frac{r^2}{3}+1)^{-1/2}$$
For $\lambda>0$, we let
\begin{equation}\label{qlambdadef}Q_{\lambda}(r) = \lambda^{-1/2} Q_{1}(\frac{r}{\lambda})\end{equation}
The Cauchy problem associated to \eqref{slw} is locally well-posed in $\dot{H}^{1}(\mathbb{R}^{3}) \times L^{2}(\mathbb{R}^{3})$ (see, for instance, \cite{kcbms}, \cite{soggebook}, and references therein). 
The free wave equation \begin{equation}\label{free3dwaveeqn}-\partial_{t}^{2}u+\partial_{r}^{2}u+\frac{2}{r}\partial_{r}u=0\end{equation} will also be of use to us, and we recall its formally conserved energy.
\begin{equation}\label{en}E(u,\partial_{t}u) =\frac{1}{2}\left(||\partial_{t}u||_{L^{2}(r^{2} dr)}^{2} + ||\partial_{r}u||_{L^{2}(r^{2} dr)}^{2}\right)  \end{equation}
Part of the work \cite{dkm}, of Duyckaerts, Kenig, and Merle, showed that if $u$ is a global solution to \eqref{slw} with $E_{SLW}(u,\partial_{t}u) < 2 E_{SLW}(Q_{1},0)$, then, $u$ either scatters, or, there exists $v$ solving \eqref{free3dwaveeqn}, $n=\pm 1$, and $\lambda(t)$ defined for $t$ sufficiently large, so that
\begin{equation}\label{dkmasymp}\lim_{t \rightarrow \infty}E(u-v- n Q_{\lambda(t)},\partial_{t}u-\partial_{t}v) =0\end{equation}
Another characterization of solutions satisfying \eqref{dkmasymp} was provided in a special case of one of the results of \cite{knscenterstablemanifold}. If $(u,\partial_{t}u) \in C([0,\infty), \dot{H}^{1}(\mathbb{R}^{3}) \times L^{2}(\mathbb{R}^{3}))$ satisfies \eqref{dkmasymp}, then, $(u(0),\partial_{t}u(0)) \in M_{D}$, where $M_{D}$ is a $C^{1}$ connected manifold constructed in \cite{knscenterstablemanifold}.\\
\\
The work \cite{dk}, of Donninger and Krieger constructed infinite time blow-up and relaxation solutions to \eqref{slw}, with soliton length scale $\lambda(t) = t^{-\mu}$, for $|\mu|$ sufficiently small.\\ 
\\
For each positive $\lambda(t) \in C^{\infty}((50,\infty))$ satisfying the following inequalities for all sufficiently large $t$, and sufficiently small $C_{u},C_{l},C_{2} \geq 0$ (the precise constraints are in \eqref{cofconstr0} through \eqref{cofconstr2})
$$\frac{-C_{l}}{t} \leq \frac{\lambda'(t)}{\lambda(t)} \leq \frac{C_{u}}{t}, \quad \frac{|\lambda^{(k)}(t)|}{\lambda(t)} \leq \frac{C_{k}}{t^{k}}, \quad k \geq 2$$
the current work constructs a finite energy solution to \eqref{slw} which can be decomposed as 
$$u(t,r) = Q_{\lambda(t)}(r) + v_{rad}(t,r) + u_{e}(t,r)$$
where
$v_{rad}$ solves \eqref{free3dwaveeqn}, $E(u_{e},\partial_{t}(Q_{\lambda(t)}+u_{e})) \rightarrow 0$ as $t \rightarrow \infty$. (We remark that, aside from the precise smallness constraints on $C_{u},C_{l},C_{2}$, the  admissible class of $\lambda$ of this work is the same as that of a wave maps work of the author, \cite{wm2}.) To the knowledge of the author, the existence of all of the solutions constructed in this work, aside from those with asymptotically constant $\lambda(t)$, or $\lambda(t)$ equal to precisely a power of $t$ (as in \cite{dk}) is new. This includes new infinite time blow-up and relaxation solutions (see Remark 2 after Theorem \ref{mainthm}), along with solutions that have oscillatory soliton length scales satisfying any combination of 
\begin{equation}\label{lambdaosc}\begin{split}&\liminf_{t \rightarrow \infty} \lambda(t) \in \{0 , \lambda_{0}\} \quad \text{   and  }
\limsup_{t \rightarrow \infty} \lambda(t) \in\{ \lambda_{1} ,\infty\}, \quad 0< \lambda_{0} \leq \lambda_{1}\end{split}\end{equation}
(in addition to oscillatory $\lambda$ such that $\lim_{t \rightarrow \infty} \lambda(t) = 0 \text{  or  } \infty$) (see Remark 3 after Theorem \ref{mainthm}). In addition, the procedure used to construct the ansatz of this work, and many aspects of the procedure used to complete the ansatz to an exact solution, are quite different than that used in \cite{dk}.

We will now briefly summarize those aspects of the work \cite{kstslw} of Krieger, Schlag, and Tataru, which are necessary in order to precisely describe the admissible class of $\lambda$ of the current work. In the process of completing our approximate solution to an exact solution, we use the distorted Fourier transform, $\mathcal{F}$ from Proposition 4.3 of \cite{kstslw}. This is a Fourier transform associated to $L$, which is the elliptic part of a conjugation of the linearization of \eqref{slw} around $Q_{1}$. We also use the transference operator, $\mathcal{K}$, defined in $(2.7)$ of \cite{kstslw}. In the interest of brevity, we will not recall the definition of $\mathcal{K}$ until it is used in the proof of this work. What is important to note here is that,  by Proposition 5.2 of \cite{kstslw}, for all $\alpha \in \mathbb{R}$, $\mathcal{K}$ and $[\xi \partial_{\xi},\mathcal{K}]$ are bounded on $L^{2,\alpha}_{\rho}$, which has the norm
$$||f||_{L^{2,\alpha}_{\rho}}^{2} = |f(\xi_{d})|^{2} +||f(\xi) \langle \xi \rangle^{\alpha}||_{L^{2}((0,\infty),\rho(\xi) d\xi)}^{2} $$
Here, $\xi_{d}$ is the negative eigenvalue of $L$, and $\rho$ is the density of the continuous part of the spectral measure. Finally, by Lemma 4.6 of \cite{kstslw}, there exists $C_{\rho}>0$ so that
$$\frac{\rho(y)}{\rho(z)} \leq C_{\rho} \left(\sqrt{\frac{y}{z}}+\sqrt{\frac{z}{y}}\right), \quad y,z >0$$
Now, we can define our admissible class of $\lambda$, which we denote $\Lambda$. Let $\Lambda$ be the set of positive functions $\lambda \in C^{\infty}((50,\infty))$ such that there exists $T_{\lambda}>50, C_{u},C_{l},C_{k}\geq 0$ so that
\begin{equation}\label{lambdasymb} \frac{-C_{l}}{t} \leq \frac{\lambda'(t)}{\lambda(t)} \leq \frac{C_{u}}{t}, \quad \frac{|\lambda^{(k)}(t)|}{\lambda(t)} \leq \frac{C_{k}}{t^{k}}, \quad k \geq 2, t \geq T_{\lambda}\end{equation}
where, if $M:= \text{max}\{C_{u},C_{l}\}$,
\begin{equation}\label{cofconstr0}   C_{l}+\frac{163}{2352} C_{u} < \frac{1}{588}\end{equation}
\begin{equation}\label{cofconstr1}\begin{split}&40\left(\frac{3 M^{2}}{16} (111-45 \log(4))+\frac{15 C_{2}}{16}(-12+12 \sqrt{3}+15 \pi^{2}-4 \log(4))\right)\\
&+2(C_{2}(1+||\mathcal{K}||_{\mathcal{L}(L^{2,0}_{\rho})})+M^{2}(||\mathcal{K}||_{\mathcal{L}(L^{2,0}_{\rho})}+2||[\mathcal{K},\xi\partial_{\xi}]||_{\mathcal{L}(L^{2,0}_{\rho})}+||\mathcal{K}||_{\mathcal{L}(L^{2,0}_{\rho})}^{2})+2M(1+||\mathcal{K}||_{\mathcal{L}(L^{2,0}_{\rho})})) < \frac{1}{12 \sqrt{C_{\rho}}}\end{split}\end{equation}
and
\begin{equation}\label{cofconstr2} M (1+||\mathcal{K}||_{\mathcal{L}(L^{2,\frac{1}{2}}_{\rho})}) < \frac{1}{48\sqrt{C_{\rho}}}\end{equation}
\emph{Remark} (Smallness constraints \eqref{cofconstr0} through \eqref{cofconstr2} can be achieved by re-scaling the exponent of $t$)  If $f \in C^{\infty}((0,\infty))$ is any positive function satisfying, for all $k \geq 1$,
$$\frac{|f^{(k)}(t)|}{f(t)} \leq \frac{D_{k}}{t^{k}}, \quad t \geq T_{f}$$
(for some $T_{f}>50$) then, the function defined for $t > 50$ by 
\begin{equation}\label{flambda}\lambda(t) = f(t^{1/c})\end{equation} is in $\Lambda$ for $c>0$ sufficiently large. The main theorem of this work is the following.
\begin{theorem}\label{mainthm} For each $\lambda \in \Lambda$, there exists $T_{0}>0$ and a finite energy solution, $u$, to \eqref{slw}, for $t > T_{0}$, satisfying
$$u(t,r) = Q_{\lambda(t)}(r) + v_{rad}(t,r)+u_{e}(t,r)$$
where
$$(-\partial_{t}^{2}+\partial_{r}^{2}+\frac{2}{r}\partial_{r})v_{rad} = 0, \quad E(v_{rad},\partial_{t}v_{rad}) < \infty$$
and
$$E(u_{e},\partial_{t}(Q_{\lambda(t)}+u_{e})) \leq \frac{C \left(\sup_{x \in [T_{\lambda},t]} \sqrt{\lambda(x)}\right)^{2}}{t}$$
\end{theorem}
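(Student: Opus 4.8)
The plan is to construct $u$ by the matched-asymptotics-plus-fixed-point method of the author's wave maps paper \cite{wm2}, adapting the linear machinery of \cite{kstslw} and \cite{dk} to the larger class $\Lambda$. \textbf{Step 1 (approximate solution).} Starting from $Q_{\lambda(t)}$, which for frozen $\lambda$ is an exact static solution, the only error is $\partial_{t}^{2}Q_{\lambda(t)}$, which by the symbol bounds \eqref{lambdasymb} is, on the natural scales, of relative size $O((\lambda'/\lambda)^{2}+|\lambda''|/\lambda)=O(t^{-2})$, hence small and time-decaying. But the soliton has a slowly decaying spatial tail, so this error is not integrable at spatial infinity for fixed $t$ and cannot be removed working only in the inner (soliton) region $r \lesssim t$. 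I would therefore build the ansatz as $u_{a}=Q_{\lambda(t)}+v_{rad}+(\text{further corrections})$, where $v_{rad}$ is a finite-energy solution of \eqref{free3dwaveeqn} chosen to carry the far-field/light-cone behavior, and the remaining corrections are produced by repeatedly inverting the rescaled linearized operator of \eqref{slw} around $Q_{\lambda(t)}$ against the now-localized residual error. One iterates, tracking at each stage both the inner expansion and its matching with the self-similar outer region across $r \sim t$, until the ansatz error $e_{a}:=(-\partial_{t}^{2}+\partial_{r}^{2}+\tfrac{2}{r}\partial_{r})u_{a}+u_{a}^{5}$ has the size and decay required by Step 3.

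\textbf{Step 2 (reduction to a transformed evolution).} Writing $u=u_{a}+\eta$, the remainder $\eta$ solves a wave equation with time-dependent potential $\approx 5 Q_{\lambda(t)}^{4}$, source $-e_{a}$, and superlinear terms in $\eta$. Conjugating by $r$ to turn the radial $3$-dimensional Laplacian into $\partial_{r}^{2}$, passing to the variable $r/\lambda(t)$, and applying the distorted Fourier transform $\mathcal{F}$ of Proposition 4.3 of \cite{kstslw}, the equation for $\widehat{\eta}(t,\xi)$ becomes a second-order ODE in $t$ whose operator coefficients are built from multiplication by $\xi$, the transference operator $\mathcal{K}$ of $(2.7)$ of \cite{kstslw}, and $[\mathcal{K},\xi\partial_{\xi}]$ — all bounded on $L^{2,\alpha}_{\rho}$ by Proposition 5.2 of \cite{kstslw} — plus forcing coming from $\widehat{e_{a}}$ and the transformed nonlinearity.

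\textbf{Step 3 (energy estimates and fixed point).} I would run a contraction argument for $\widehat{\eta}$ in the weighted spaces $L^{2,\alpha}_{\rho}$ with $\alpha=0$ and $\alpha=\tfrac{1}{2}$, based on a weighted energy functional whose time derivative is controlled using: the $\mathcal{K}$-bounds above; the spectral-density ratio estimate $\rho(y)/\rho(z)\leq C_{\rho}(\sqrt{y/z}+\sqrt{z/y})$ of Lemma 4.6 of \cite{kstslw}, which governs how $\rho$ distorts under the time-dependent rescaling and produces the factor $\sqrt{C_{\rho}}$; and the symbol bounds \eqref{lambdasymb}. The constraints \eqref{cofconstr0}--\eqref{cofconstr2} are precisely what makes the resulting differential inequality for this functional close with a favorable constant, yielding $E(u_{e},\partial_{t}(Q_{\lambda(t)}+u_{e}))\leq C t^{-1}(\sup_{x\in[T_{\lambda},t]}\sqrt{\lambda(x)})^{2}$, the factor $\sqrt{\lambda}$ being the length scale that $Q_{\lambda(t)}$ contributes to the energy-level norms. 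The negative eigenvalue $\xi_{d}$ of $L$ gives a one-dimensional unstable direction in this evolution, which I would remove in the standard way: solve the stable/center part by contraction and fix the single remaining free parameter in the data (equivalently, select the unstable component by the solution operator integrated from $+\infty$) so that the unstable mode vanishes. Setting $u_{e}:=u_{a}-Q_{\lambda(t)}-v_{rad}+\eta$ then gives the theorem.

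\textbf{Main obstacle.} The crux is closing the energy estimate of Step 3 with the precise decay rate. Because $\lambda$ need not be monotone or a power of $t$ — it may oscillate as in \eqref{lambdaosc} — the rescaling speed $\lambda'/\lambda$ genuinely interacts with $\mathcal{K}$ and with the moving spectral density in the energy identity, and one must extract coercivity with explicit numerical constants; this is why \eqref{cofconstr0}--\eqref{cofconstr2} appear with concrete rationals such as $\tfrac{163}{2352}$ and $\tfrac{1}{588}$ rather than an unspecified smallness. A secondary, and also delicate, point is engineering the ansatz in Step 1 so that $e_{a}$ is simultaneously localized enough (once the free radiation has been peeled off) and decaying fast enough for Step 3 to apply; the matching across the light cone $r\sim t$ and the necessity of the free radiation term, both forced by the soliton's slow spatial tail, are where that difficulty concentrates.
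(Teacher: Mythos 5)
Your overall architecture — an ansatz built by matched asymptotics, conversion via the distorted Fourier transform of \cite{kstslw}, a contraction in $\rho$-weighted $L^2$ spaces with the transference operator bounds and Lemma 4.6 supplying the constants in \eqref{cofconstr0}--\eqref{cofconstr2}, and removal of the $\xi_d$-instability by integrating the $e_{+}$-component from $+\infty$ — is the framework the paper actually uses. However, two specific points in your Step 1 are wrong or missing in a way that would prevent the scheme from closing as you have described it.

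First, the matching is not performed across the light cone $r\sim t$. The paper introduces an intermediate length scale $h(t)=\lambda(t)(t/\lambda(t))^{a}$ with $\lambda(t)\ll h(t)\ll t$, with $a$ pinned down by \eqref{hdef}, and matches the inner (elliptic) and outer (wave) expansions on $r\sim h(t)$. The rationale is stated explicitly in the Summary: a near-origin correction $u_{\mathrm{small}}$ incurs the error $\partial_{t}^{2}u_{\mathrm{small}}\sim t^{-2}u_{\mathrm{small}}$, while a far-field correction $u_{\mathrm{large}}$ incurs $V(t,r)u_{\mathrm{large}}\sim \lambda(t)^{2}r^{-4}u_{\mathrm{large}}$; the two are comparable only when $r\sim\sqrt{t\lambda(t)}$, which is the order of magnitude of $h(t)$, far inside the light cone. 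If you set up the matching at $r\sim t$ the error sizes on the two sides differ by large powers of $t/h(t)$ and the asymptotic series cannot be made to agree to the required order. Moreover, your description says the further corrections "are produced by repeatedly inverting the rescaled linearized operator of \eqref{slw} around $Q_{\lambda(t)}$"; that is only the inner side ($u_{ell},u_{ell,2},u_{ell,3}$). On the outer side $r\gtrsim h(t)$ the corrections $w_{1},v_{ex},u_{w,2},\dots$ are Duhamel solutions of the \emph{free} radial d'Alembertian (the potential, which decays as $\lambda(t)^{2}r^{-4}$, is treated as part of the error there), and the free waves $v_{2},v_{3},v_{4}$ serve as the degrees of freedom that get fixed by the matching conditions rather than being prescribed up front.

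Second, the theorem's bound is on $E(u_{e},\partial_{t}(Q_{\lambda(t)}+u_{e}))$, not $E(u_{e},\partial_{t}u_{e})$, and this distinction is load-bearing: when $\lambda'(t)\neq 0$, $\partial_{t}Q_{\lambda(t)}$ decays only like $1/r$ and is \emph{not} in $L^{2}(r^{2}\,dr)$. The proof relies on an exact cancellation, recorded in \eqref{dtw10qlambdacancel}, between $\partial_{t}Q_{\lambda(t)}$ and $\partial_{t}w_{1,0}$ (where $w_{1,0}(t,r)=\sqrt{3}(\sqrt{\lambda(r+t)}-\sqrt{\lambda(t)})/r$ is a carefully chosen driven-wave correction), so that the combined time derivative decays fast enough at spatial infinity to be square-integrable. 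Your sketch never identifies this correction nor the cancellation, and a generic near-field/far-field ansatz without it would produce an infinite quantity on the left-hand side of the asserted energy bound. This is also where the $\sup_{x\in[T_{\lambda},t]}\sqrt{\lambda(x)}$ factor comes from: the Cauchy data of the outer corrections at radius $r$ carry a factor $\sqrt{\lambda(r)}$ and propagate to time $t$ from radii up to $\sim t$, hence the supremum. Finally, the remark that you would close by a "weighted energy functional whose time derivative is controlled" is a legitimate variant, but the paper instead bounds the explicit Duhamel propagator via Minkowski's inequality and the $\rho$-comparison lemma in the norm $Z$ of \eqref{znormdef}; that difference is stylistic and not a gap.
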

\emph{Remark 1}. As in \cite{wm2}, the Cauchy data of $v_{rad}$ is explicit in terms of $\lambda$. In fact, 
$$v_{rad}(0,r)=0, \quad \partial_{t}v_{rad}(0,r) = \frac{\psi(r)}{r} \left(\frac{-15 \pi \sqrt{\lambda(r)} \lambda''(r)}{8} -\frac{\sqrt{3}\lambda'(r)}{2 \sqrt{\lambda(r)}}\right) + v_{3,0}(r)+v_{4,0}(r) $$
where $v_{3,0}$ is defined in \eqref{v30def}, $v_{4,0}$ is defined in \eqref{v40def} (the function $N_{0}$ appearing in \eqref{v40def} is defined in \eqref{n0def}), and $\psi$ is a relatively unimportant cutoff, defined in \eqref{psidef} ($\psi(r) =0$ near $r=0$, and $\psi(r)=1$ for large $r$).\\
\\
\emph{Remark 2}. The solutions in Theorem \ref{mainthm} include infinite time blow-up and relaxation solutions including, but not limited to $\lambda(t)$ being a power of $t$, with exponent sufficiently small in absolute value. Firstly, if $T_{f}>0$, and $f \in C^{\infty}((T_{f},\infty))$ is a positive function satisfying
$$\frac{|f^{(k)}(t)|}{f(t)} \leq C_{f,k}t^{-k}, \quad k \geq 1, \quad t > T_{f}$$
then, if $g(t) = \log(f(t))$, we have
$$|g^{(n)}(t)| \leq C_{n} t^{-n}, \quad n \geq 1, \quad t > T_{f}$$
Therefore, a simple induction argument shows that for all $k \in \mathbb{N}$, there exists $T_{k}>0$ so that, if 
$$g_{k}(t) = \left(\text{k-th fold composition of }\log\right)(t), \quad t > T_{k}$$ 
then,  $g_{k}\in C^{\infty}((T_{k},\infty))$, and, for $t >T_{k}$,
$$g_{k}(t) >1, \quad |g_{k}^{(n)}(t)| \leq C_{n,k} t^{-n}, \quad n \geq 1$$
Then, if $k \in \mathbb{N}$ and $\alpha \in \mathbb{R}$ Faa di Bruno's formula, for instance, shows that if
$$h_{k,\alpha}(t) = (g_{k}(t+T_{k}))^{\alpha}, \quad t>50$$
we have $h_{k,\alpha} \in C^{\infty}((50,\infty))$, and for $t >50$,
$$ h_{k,\alpha}(t) >0, \quad \frac{|h_{k,\alpha}^{(n)}(t)|}{h_{k,\alpha}(t)} \leq \frac{C_{n,k,\alpha}}{t^{n} g_{k}(t+T_{k})}, \quad n\geq 1$$
(recall that $g_{k}(x)>1$ if $x >T_{k}$). Then, an induction argument shows that, for each $N \in \mathbb{N}$, if $\beta_{k} \in \mathbb{N}$, and $\alpha_{k} \in \mathbb{R}$, for all $1 \leq k \leq N$, and $\epsilon\geq 0$ is sufficiently small, 
$$ \lambda(t):= t^{\pm \epsilon} \prod_{w=1}^{N} h_{\beta_{w},\alpha_{w}}(t) \in \Lambda, \quad \lambda(t):= t^{\pm \epsilon} \in \Lambda$$
This gives infinite time blow-up and relaxation solutions with a variety of rates, including powers of $t$.\\
\\
\emph{Remark 3}. The solutions in Theorem \ref{mainthm} include  solutions with $\lambda(t)$ as in \eqref{lambdaosc}. It is relatively straightforward to verify that the following functions are in $\Lambda$ (and the complete details of this verification for the following functions is given in the remarks following Theorem 1.1 of \cite{wm2}). For  $0< \lambda_{0} \leq \lambda_{1} \in \mathbb{R}$, $0 \leq \alpha_{0},\alpha_{1}$, $\alpha_{0}+\alpha_{1} <1$
$$ \lambda(t) = \frac{\lambda_{0} \log^{-\alpha_{0}}(t) + \lambda_{1} \log^{\alpha_{1}}(t)}{2}+\frac{\left(\lambda_{1}\log^{\alpha_{1}}(t) - \lambda_{0}\log^{-\alpha_{0}}(t)\right)}{2} \sin(\log(\log(t)))$$
For $c>0$ sufficiently small, and $|a|$ sufficiently small, depending on $c$, 
$$\lambda(t) = t^{a}\left(2+c \sin(\log(t))\right)$$
As per \eqref{flambda}, there are many more possible forms of $\lambda \in \Lambda$. \\
\\
\emph{Remark 4}. The function $u$ from Theorem \ref{mainthm} is of the form $$u=Q_{\lambda(t)}(r) + v_{rad}(t,r)+v_{e}(t,r)+v_{f}(t,r)$$
where $v_{rad},v_{e}$ are fairly explicit, and 
$$(t,x) \mapsto v_{f}(t,|x|) \in C^{0}_{t}([T_{0},\infty),H^{2}(\mathbb{R}^{3})), \quad (t,x) \mapsto \partial_{1}v_{f}(t,|x|) \in C^{0}([T_{0},\infty),H^{1}(\mathbb{R}^{3}))$$
This is due to the definition of the space $Z$ (see \eqref{znormdef}), the fact that dilation is continuous on $L^{2}$ and Lemma 2.7 of \cite{kstslw}, which relates $H^{k}(\mathbb{R}^{3})$ to $L^{2,\alpha}_{\rho}$ spaces.\\
\\
As discussed previously, the work \cite{dk}, of Donninger and Krieger constructed infinite time blow-up and relaxation solutions to \eqref{slw}, with soliton length scale $\lambda(t) = t^{-\mu}$, for $|\mu|$ sufficiently small. The aforementioned work \cite{kstslw}, of Krieger, Schlag, and Tataru constructed finite time blow-up solutions to \eqref{slw}, with soliton length scale $\lambda(t) = t^{1+\nu}$, for $\nu > \frac{1}{2}$. The subsequent work of Krieger and Schlag, \cite{ksfullrangeslw}, extended the range of $\nu$, to all $\nu>0$. The stability of the solutions in \cite{kstslw} and \cite{ksfullrangeslw} was studied in the work  \cite{knstability}, of Krieger and Nahas. Work of Donninger, Huang, Krieger, and Schlag, \cite{exoticslw}, constructed other finite time blow-up solutions to \eqref{slw}, with soliton length scale given by $\lambda(t) = t^{1+\nu} e^{\epsilon_{0} \sin(\log(t))}$, with $\nu>3, |\epsilon_{0}| \ll 1$.\\
\\
We also mention some constructions of solutions to the critical semilinear wave equation in other dimensions. The work \cite{hrtypeii} constructed finite time blow-up solutions to the critical semilinear wave equation in $4+1$ dimensions.  Multisoliton solutions to the $5+1$ dimensional critical wave equation were constructed in \cite{mmmultisolitons} \cite{ymultisoliton}. Infinite time blow-up solutions involving multiple dynamically rescaled solitons for the $5+1$ dimensional critical wave equation were constructed in \cite{jmmultisolitons}. Two bubble solutions to the $6+1$ dimensional critical wave equation were constructed in \cite{j}.\\
\\
The work \cite{kmthresholdslw} studied the possible dynamical behavior of solutions to the critical wave equation in dimensions 3,4, and 5, which have energy strictly less than that of the ground state soliton. The work \cite{dmthresholddynamics} classified the possible dynamical behavior of solutions to the critical wave equation in spatial dimensions $3,4,$ and $5$, which have energy exactly equal to that of the ground state soliton. The work \cite{knsglobaldynamics} studied the dynamics of solutions with energy bounded above by a quantity slightly larger than the ground state energy. Stable manifolds for the critical wave equation in 3 spatial dimensions were constructed in  \cite{ksonthefocusingslw} (studied further in \cite{knsthreshold}), \cite{bcenterstablemanifold}, and \cite{knscenterstablemanifold}. Modulated soliton solutions emerging from randomized initial data were studied in \cite{kmrandominitdat}.
\subsection{Acknowledgments}This material is based upon work partially supported by the National Science Foundation under Grant No. DMS-2103106.
\section{Notation}

The equation \eqref{slw}, linearized around $Q_{1}(R)$ is 
\begin{equation}\label{linearizedeqn} -\partial_{t}^{2}u+\partial_{R}^{2}u+\frac{2}{R}\partial_{R}u+\frac{45 u}{(3+R^{2})^{2}}=0\end{equation}
Two linearly independent solutions to \eqref{linearizedeqn} are the following
\begin{equation}\label{phi0e2def}\phi_{0}(R) = -\frac{\sqrt{3} \left(R^2-3\right)}{2 \left(R^2+3\right)^{3/2}}, \quad e_{2}(R) = \frac{2 \left(R^4-18 R^2+9\right)}{\sqrt{3} R \left(R^2+3\right)^{3/2}}\end{equation}
In particular, the function denoted by $\phi_{0}$ in \cite{kstslw}, which we denote in this paper by $\widetilde{\phi_{0}}$ satisfies
$$\widetilde{\phi_{0}}(R) = 2 R \phi_{0}(R)$$
The following function, which is (negative of) the potential obtained by linearizing \eqref{slw} around $Q_{\lambda(t)}(r)$, will appear in many of our computations.
\begin{equation}\label{Vdef}V(t,r)=\frac{-45}{\lambda(t)^2 \left(\frac{r^2}{\lambda(t)^2}+3\right)^2}\end{equation}
\section{Summary of the Proof}
The method of proof of Theorem \ref{mainthm} is similar to that used in a wave maps work of the author, \cite{wm2}. For completeness, we summarize the argument here. We choose $\lambda \in \Lambda$, restrict our attention to sufficiently large $t$, and start our ansatz with the soliton $Q_{\lambda(t)}(r)$. Inserting $Q_{\lambda(t)}(r)$ into \eqref{slw} produces the error term $\partial_{t}^{2}Q_{\lambda(t)}(r)$. We improve this error term by finding an approximate solution to
\begin{equation}\label{introuceqn}\left(-\partial_{t}^{2}+\partial_{r}^{2}+\frac{2}{r}\partial_{r}+\frac{45 \lambda(t)^{2}}{(r^{2}+3\lambda(t)^{2})^{2}}\right)v = \partial_{t}^{2}Q_{\lambda(t)}(r)\end{equation}
(The operator on the left hand side is the linearization of \eqref{slw} around $Q_{\lambda(t)}$). Our starting point is a function of the form
\begin{equation}\label{introucdef}u_{c}(t,r) = \chi_{\leq 1}(\frac{r}{h(t)})u_{e}(t,r) + \chi_{\geq 1}(\frac{r}{h(t)}) u_{w}(t,r)\end{equation}
where $\chi_{\leq 1}, \chi_{\geq 1}$ are cutoffs satisfying \eqref{chiprops}, and we define a length scale $h(t) = \lambda(t) \left(\frac{t}{\lambda(t)}\right)^{a}$,  where $a$ satisfies \eqref{hdef}. (The function $h(t)$ is defined with the constraints \eqref{hdef} at the beginning of the argument, for convenience. One may not know at the beginning of the argument why $a$ is restricted to satisfy \eqref{hdef}, and one could keep $a$ fairly general, and then impose the constraints \eqref{hdef} at various stages in the argument, as needed. We will give some intuition about the choice of $a$ in this section).\\
\\
We will construct $u_{e}$ and $u_{w}$ to be good approximate solutions to \eqref{introuceqn} for small $r$ and large $r$, respectively, satisfying the following matching property: When one takes the asymptotic expansion of $u_{e}(t,r)$ and $u_{w}(t,r)$ for large $t$, in the region $r \sim h(t)$, one obtains functions of the form $$\sum_{j=-n}^{n} \sum_{l=0}^{m} \frac{c_{j,l}(t) \log^{l}(r)}{r^{j}}$$
for various $n,m \geq 0$  (see sections \ref{firstordermatching} \ref{secondordermatching}, and \ref{thirdordermatching}). We choose $u_{e}$ and $u_{w}$ so that sufficiently many of the $c_{j,l}(t)$ involved in the expansions of $u_{e}$ and $u_{w}$ are equal to each other. As we explain in more detail shortly, this is accomplished by essentially keeping enough degrees of freedom in $u_{e}$ and $u_{w}$, which are then fixed when imposing the matching conditions. We start with $u_{ell}$, the first term in $u_{e}$, which solves the following equation.
\begin{equation}\label{introuelleqn} \left(\partial_{r}^{2}+\frac{2}{r}\partial_{r}+\frac{45 \lambda(t)^{2}}{(r^{2}+3\lambda(t)^{2})^{2}}\right)u_{ell}(t,r) = \partial_{t}^{2}Q_{\lambda(t)}(r)\end{equation}
(In other words, we remove the $\partial_{t}^{2}$ term from the left hand side of \eqref{introuceqn} when considering the small $r$ corrections). The most general solution to \eqref{introuelleqn} which is not singular at the origin depends on one function of $t$, say $c_{1}(t)$ (see \eqref{uellexp}). Recall that we must use sufficiently general solutions to the equations that we consider in order to be able to do the matching procedure, up to the order required, and we can regard $c_{1}(t)$ as a degree of freedom in $u_{ell}$. \\
\\
While the linear error term of $u_{ell}$, which is $\partial_{t}^{2}u_{ell}$, is better than that of the soliton for small $r$, it is not sufficiently small for our purposes. So, we iterate the above procedure, obtaining first $u_{ell,2}$ solving \eqref{introuelleqn}, with $u_{ell}$ on the left hand side replaced by $u_{ell,2}$, and $Q_{\lambda(t)}(r)$ on the right-hand side replaced by $u_{ell}$, see \eqref{uell2def}. Then, we obtain $u_{ell,3}$ by replacing $u_{ell,2}$ with $u_{ell,3}$, and replacing $u_{ell}$ by $u_{ell,2}$, see \eqref{uell3def}. We remark that $u_{ell,2}$ and $u_{ell,3}$ are not the most general solutions to their equations which are not singular at the origin, but, are sufficient for our purposes.\\
\\
Next, we consider $u_{w}$, the large $r$ approximate solution. To understand the first few terms in $u_{w}$, one can start with a general (finite energy) solution to
\begin{equation}\label{uwintrofirsteqn}\left(-\partial_{t}^{2}+\partial_{r}^{2}+\frac{2}{r}\partial_{r}\right)u = \partial_{t}^{2}Q_{\lambda(t)}(r)\end{equation} 
(note that the operator on the left-hand side is obtained by removing the potential term from \eqref{introuceqn}). This general solution can be expressed as  $w_{1}(t,r)+v_{ex}(t,r)+v_{2}(t,r)$ where
$w_{1}$ is a particular solution to \eqref{uwintrofirsteqn}, with $ \partial_{t}^{2}Q_{\lambda(t)}(r)$ replaced by its leading term for large $r$ (see \eqref{w1eqn}), $v_{ex}$ is a particular solution to \eqref{uwintrofirsteqn}, with the rest of $\partial_{t}^{2}Q_{\lambda(t)}(r)$ on the right-hand side (see \eqref{vexeqn}, and $v_{2}$ is a free wave (with Cauchy data $v_{2}(0,r)=0, \quad \partial_{t}v_{2}(0,r) = v_{2,0}(r)$ where $v_{2,0} \in L^{2}(r^{2} dr)$ is not yet specified, in this discussion). $v_{2}$ can be regarded as a degree of freedom in the general solution to \eqref{uwintrofirsteqn}. (In the actual proof, for convenience, we define $v_{2}$ with a fixed explicit $v_{2,0}$ which leads to an eventual matching between $u_{w}$ and $u_{e}$ to first order. In order to explain why the particular choice of $v_{2,0}$ is made in the proof, we leave it general in this discussion). Also, when $\lambda'(t) \neq 0$, $Q_{\lambda(t)}$ and $w_{1}(t)$ both have infinite kinetic energy, but, an exact cancellation of the large $r$ behavior of each of these functions shows that $\partial_{t}(Q_{\lambda(t)}(r) + \chi_{\geq 1}(\frac{r}{h(t)}) w_{1}(t,r)) \in L^{2}(r^{2} dr)$, see \eqref{dtw10qlambdacancel}.\\
\\
We can now do a first order matching of $u_{e}$ and $u_{w}$. (This is actually only one part of what is called ``first order matching'' in  section \ref{firstordermatching}).  Note that the leading behavior of $u_{ell}$, $w_{1}$ and $v_{2}$ in the matching region is given by $u_{ell,main}$, $w_{1,lm}$ and $v_{2,lm}$.
\begin{equation}\begin{split}&u_{ell,main}(t,r) \\
&= -\frac{\sqrt{3}c_{1}(t) \lambda(t)}{2 r}+\sqrt{\lambda(t)} \lambda''(t) \left(\frac{675 \pi  \lambda(t)^2}{16 r^2}-\frac{3 \sqrt{3} \lambda(t) \left(37-20 \log \left(\frac{r^2}{3 \lambda(t)^2}\right)\right)}{8 r}+\frac{\sqrt{3} r}{4 \lambda(t)}-\frac{15 \pi }{8}\right)\\
&+\frac{\left(\frac{15 \sqrt{3} \lambda(t)}{16 r}-\frac{\sqrt{3} r}{8 \lambda(t)}\right) \lambda'(t)^2}{\sqrt{\lambda(t)}}\end{split}\end{equation}
$$w_{1,lm}(t,r) = \frac{1}{2} r \left(\frac{\sqrt{3} \lambda''(t)}{2 \sqrt{\lambda(t)}}-\frac{\sqrt{3} \lambda'(t)^2}{4 \lambda(t)^{3/2}}\right)$$
$$v_{2,lm}(t,r) =t v_{2,0}(t)$$
Here, we use the fact that $\lambda(t) \ll h(t) \ll t$, and work under the assumption of sufficiently regular $v_{2,0}$ (we will end up choosing $v_{2,0}$  to be a smooth function). Note that our freedom to choose $v_{2,0}$ only affects terms of the form $f_{0}(t)$, while there are larger terms of the form $r f_{1}(t)$ involved in both $u_{ell}$ and $w_{1}$. On the other hand, the terms of the form $r f_{1}(t)$ in $u_{ell}$ and $w_{1,lm}$ automatically match (in other words, they are the same). A similar kind of automatic matching of terms that are larger than those involving the degrees of freedom is observed in \cite{wm2}. Therefore, we can choose $v_{2,0}$ so that the terms of the form $f_{0}(t)$ in the difference $u_{ell,main}-w_{1,lm}-v_{2,lm}$ vanish. Doing so requires that 
$$v_{2,0}(r) = \frac{-15 \pi \sqrt{\lambda(r)} \lambda''(r)}{8 r}, \quad r \geq T_{0}$$
(since we only consider $t \geq T_{0}$). This explains why, in the proof, $v_{2}$ is defined in \eqref{v20def} with the particular Cauchy data $v_{2,0}(r)=\frac{\psi(r)}{r}\left(\frac{-15}{8} \pi \sqrt{\lambda(r)} \lambda''(r)\right)$, where $\psi$ is defined in \eqref{psidef}. In this discussion, we now fix $v_{2,0}$ by the above formula.\\
\\
We provide some intuition regarding the constraints on $a$, \eqref{hdef}. The linear error term of any of our small $r$ corrections, say $u_{small}$ is $\partial_{t}^{2}u_{small}$, while that of a large $r$ correction is $V(t,r) u_{large}(t,r)$. In order to identify a region of the form $r \sim h(t)$ where $u_{small}$ and $u_{large}$, as well as their linear error terms, are of comparable size, we note that $u_{small}$ is a symbol in $t$, and recall the definition of $V$, given in \eqref{Vdef}. Therefore, the error terms of $u_{small}$ and $u_{large}$ are of comparable size when  $h(t) \gg \lambda(t)$, and $t^{-2} \sim \lambda(t)^{2} h(t)^{-4}$. This would suggest that $h(t) \sim \sqrt{t \lambda(t)}$. Our actual definition of $h(t)$ is larger than $\sqrt{t\lambda(t)}$ simply because increasing $h(t)$ improves the error terms associated to $u_{w}$ in the matching region (though it does make the error terms associated to $u_{e}$ larger) and it turns out that some error terms associated to $u_{w}$ are more delicate than those associated to $u_{e}$. On the other hand, $a$ can not be too large, since this would make some of the error terms of $u_{e}$ too large.\\
\\ 
Next, we consider the higher order terms in $u_{w}$. The linear error term associated to $w_{1}+v_{2}$ is  $V(t,r) (w_{1}+v_{2}):= RHS_{2}(t,r)$, where we recall the notation \eqref{Vdef}. Therefore, we consider a general, finite energy solution to \eqref{uwintrofirsteqn} (with the right-hand side replaced by $RHS_{2}$) given by $u_{w,2}+v_{3}$, where $v_{3}$ is a finite energy free wave with Cauchy data not yet specified, and $u_{w,2}$ is the particular solution with zero Cauchy data at infinity. As before, $v_{3}$ can be regarded as a degree of freedom in our second correction in $u_{w}$.\\
\\
Then, we compute the leading behavior of $v_{ex}$ and $u_{w,2}$ in the matching region (see Section \ref{firstordermatching}). As shown in Section \ref{firstordermatching}, $u_{ell,main}$ as well as the leading parts of $v_{ex}$ and $u_{w,2}$ contain terms of the form $f_{0}(t) r^{-2}$ and $f_{1}(t) \log(r) r^{-1}$, which can not be fixed by an appropriate choice of the degrees of freedom of our solutions. However, exactly as before, and analogously to \cite{wm2}, these terms happen to exactly match each other, so that it is possible to choose $c_{1}(t)$, the degree of freedom in $u_{ell}$, so that $u_{ell}$ matches $w_{1}+v_{2}+v_{ex}+u_{w,2}$ to an appropriate order (see again Section \ref{firstordermatching}). \\
\\
Higher order matching, done in Section \ref{secondordermatching}, fixes the Cauchy data of $v_{3}$. In Section \ref{thirdordermatching}, we verify an ``automatic'' matching of certain higher order terms. At this stage, we have fully constructed $u_{c}$, given in \eqref{introucdef}. Recalling that $u_{w}$ appears in \eqref{introucdef} with a factor of $\chi_{\geq 1}(\frac{r}{h(t)})$, we add another term to the ansatz, $\psi_{2}(\frac{r}{h(t)}) u_{3}$, in order to eliminate the linear error term $V(t,r) v_{3}(t,r) \chi_{\geq 1}(\frac{r}{h(t)})$. ($\psi_{2}$ is defined in \eqref{psi2def}, see also \eqref{u3formula}).\\
\\
At this stage, we have the preliminary ansatz $ Q_{\lambda(t)}(r)+ u_{c}(t,r)+\psi_{2}(\frac{r}{h(t)}) u_{3}(t,r)$ whose linear error term is small (see Lemmas \ref{eu3lemma}, \ref{ematch0estlemma}, \ref{ew2lemma}, \ref{eexlemma}, \ref{eell3lemma}). The nonlinear interactions of our preliminary ansatz are decomposed into a perturbative term $N_{1}$, and a non-perturbative part, $N_{0}$, see section \ref{nonlinear1section}. We eliminate $N_{0}$ by first considering $u_{N_{0}}$, the solution to 
$$-\partial_{t}^{2}u_{N_{0}}+\partial_{r}^{2}u_{N_{0}}+\frac{2}{r}\partial_{r}u_{N_{0}}=N_{0}$$
with zero Cauchy data at infinity. The linear error term of $u_{N_{0}}$, namely $V(t,r) u_{N_{0}}$, is worst near the origin, due to the fast decay of $V(t,r)$ for large $r$, and turns out to be too large near the origin. We therefore add a free wave, $v_{4}$, to $u_{N_{0}}$, which cancels the leading part of $u_{N_{0}}(t,r)$ in the region $r \leq \frac{t}{2}$, thereby allowing the linear error term of $u_{N_{0}}+v_{4}$, which is $e_{1} := V(t,r) (u_{N_{0}}+v_{4})(t,r)$ to be much smaller than $V(t,r) u_{N_{0}}(t,r)$, for instance, in the region $r \leq \frac{t}{2}$. There are still two parts of $e_{1}$ which are not quite perturbative. These are eliminated by $u_{4}$ \eqref{u4eqn} and $u_{N_{0},ell}$\eqref{un0elleqn}, see also \eqref{eun0ev4defs}. At this stage, we have the improved ansatz\\
$Q_{\lambda(t)}(r)+ u_{c}(t,r)+\psi_{2}(\frac{r}{h(t)}) u_{3}(t,r)+u_{N_{0}}(t,r) + v_{4}(t,r) + u_{4}(t,r) + u_{N_{0},ell}(t,r) \psi_{1}(\frac{r}{t})$. The self interactions of the new free wave, $v_{4}(t,r)$, are not quite perturbative for large $r$, though the rest of the nonlinear interactions of our ansatz are (see \eqref{n20def}). We therefore add a final term, $u_{N_{2}}$, to the ansatz in order to eliminate the $v_{4}$ self-interactions for large $r$, see \eqref{un2eqn}.\\
\\
Denoting our final ansatz by $Q_{\lambda(t)}(r) + u_{ansatz}$, we are now ready to substitute $u=Q_{\lambda(t)}(r) + u_{ansatz} +  v$ into \eqref{slw}, and solve the resulting equation for $v$ (\eqref{veqn}) using a fixed point argument. More precisely, we formally derive the equation \eqref{yeqn} solved by $y$ defined by
\begin{equation}y(t,\xi) = (y_{0}(t),y_{1}(t,\xi \lambda(t)^{-2})) = \mathcal{F}\left(\left(\cdot\right) v(t,\cdot\lambda(t))\right)(\xi)\end{equation}
where $\mathcal{F}$ is the distorted Fourier transform from \cite{kstslw}, which we regard as a two-component vector, as in Proposition 4.3 of \cite{kstslw}. Then, we prove that \eqref{yeqn} has a solution, say $(y_{6,0},y_{6,1})$, which is regular enough to allow us to justify the statement that the function $v$ defined by the following expression, with $y_{0}=y_{6,0}, y_{1}=y_{6,1}$
\begin{equation}v(t,r) = \frac{\lambda(t)}{r}\left(y_{0}(t) \phi_{d}(\frac{r}{\lambda(t)})+\int_{0}^{\infty} \phi(\frac{r}{\lambda(t)},\xi) y_{1}(t,\frac{\xi}{\lambda(t)^{2}}) \rho(\xi) d\xi\right)\end{equation}
is a solution to \eqref{veqn}. The equation \eqref{yeqn} is solved with a fixed point argument in the space $Z$, defined in Section \ref{iterationspacesection}. (The map $T$, whose fixed point is a solution to \eqref{yeqn}, is defined in \eqref{tdef}). The fixed point argument essentially only uses Minkowski's inequality, and energy-type estimates. The full details are given just after Lemma \ref{rhslemma}.
\section{Construction of the Ansatz}
Let $\lambda \in \Lambda$. By definition of $\Lambda$ and \eqref{cofconstr0}, we have
\begin{equation}\label{lambdacomparg} \left(\frac{x}{t}\right)^{-C_{l}} \leq \frac{\lambda(x)}{\lambda(t)} \leq \left(\frac{x}{t}\right)^{C_{u}}, x \geq t \geq T_{\lambda}, \quad x \mapsto \frac{\lambda(x)^{5/2}}{x} \text{ is decreasing on }[T_{\lambda},\infty)\end{equation}
In particular, by \eqref{cofconstr0}, there exists $T_{\lambda,0}>0$ so that, for all $t \geq T_{\lambda,0}$, $t-\lambda(t) \geq 2T_{\lambda}$.  Let $T_{0}>0$ satisfy the following, but be otherwise arbitrary.  
\begin{equation}\label{t0init}T_{0}\geq e^{200}(1+T_{\lambda}+T_{\lambda,0}):=T_{0,1}\end{equation}
For the rest of the paper, we restrict $t$ to satisfy $t \geq T_{0}$. Throughout the proof, the letter $C$ will denote a constant, whose value may change from line to line, but which is \emph{independent} of $T_{0}$, unless otherwise stated. Define
\begin{equation}h(t)=\lambda(t)\left(\frac{t}{\lambda(t)}\right)^{a}\end{equation}
where $a$ is any real number satisfying
\begin{equation}\label{hdef}\frac{2(2+\frac{5}{2}C_{u}+84C_{l})}{7(1-C_{u})} < a < \text{min}\{\frac{2(4-7C_{u}-12C_{l})}{13(1-C_{u})},\frac{2(1-4C_{u}-12C_{l})}{3(1-C_{u})}\}\end{equation}
In particular, $a < \frac{2}{3}$, and thus the lower bound on $a$ implies that  $x \mapsto h(x)$ is increasing, and that there exists $C>0$ so that $h(t) \geq C \sqrt{t}, \quad t \geq T_{\lambda}$. It is possible to choose $a$ as in \eqref{hdef}, by \eqref{cofconstr0}.  We start with the near origin corrections.
\subsection{First small $r$ correction}
We define $u_{ell}$ to be the following solution to 
$$\partial_{r}^{2}u_{ell}+\frac{2}{r}\partial_{r}u_{ell} +\frac{45 u_{ell}}{\lambda(t)^2 \left(\frac{r^2}{\lambda(t)^2}+3\right)^2} = \partial_{t}^{2}Q_{\lambda(t)}(r) $$
\begin{equation}\label{uellexp}\begin{split}u_{ell}(t,R \lambda(t)) &= \phi_{0}(R) \int_0^{R} s^2 \lambda(t)^2 f(s \lambda(t)) e_{2}(s) \, ds-e_{2}(R) \int_0^{R} s^2 \lambda(t)^2 f(s \lambda(t)) \phi_{0}(s) \, ds+c_{1}(t)\phi_{0}(R)\\
&=\frac{f_{1}(R) \lambda'(t)^2}{\sqrt{\lambda(t)}}+f_{2}(R) \sqrt{\lambda(t)} \lambda''(t)+c_{1}(t)\phi_{0}(R)\end{split}\end{equation}
where
$$f_{1}(R) = -\frac{\sqrt{3} R^2 \left(R^2-3\right)}{8 \left(R^2+3\right)^{3/2}}$$
\begin{equation}\begin{split}f_{2}(R) &=\frac{\sqrt{3} R \left(R^4-66 R^2+30 \left(R^2-3\right) \log \left(\frac{1}{3} \left(R^2+3\right)\right)+45\right)-15 \left(R^4-18 R^2+9\right) \tan ^{-1}\left(\frac{R}{\sqrt{3}}\right)}{4 R \left(R^2+3\right)^{3/2}}\end{split}\end{equation}
and $c_{1}$ will be chosen later. Let 
$$f_{1,0}(R) =\frac{15 \sqrt{3}}{16 R}-\frac{\sqrt{3} R}{8}, \quad f_{1,1}(R) = f_{1}(R)-f_{1,0}(R)$$ 
$$f_{2,0}(R) =\frac{675 \pi }{16 R^2}-\frac{3 \sqrt{3} \left(37-20 \log \left(\frac{R^2}{3}\right)\right)}{8 R}+\frac{\sqrt{3} R}{4}-\frac{15 \pi }{8}, \quad f_{2,1}(R) = f_{2}(R)-f_{2,0}(R)$$
Then, for $k=0,1$,
$$\lim_{s \rightarrow \infty}\left(\frac{s^{3}}{\log(s)}f_{k,1}(s)\right) < \infty$$
Then, the leading part of $u_{ell}$ in the matching region $r \sim h(t)$ will turn out to be:
\begin{equation}\label{uellmaindef}\begin{split}u_{ell,main}(t,r) &= -\frac{\sqrt{3} c_{1}(t) \lambda(t)}{2 r}+\frac{f_{1,0}(\frac{r}{\lambda(t)}) \lambda'(t)^2}{\sqrt{\lambda(t)}}+f_{2,0}(\frac{r}{\lambda(t)}) \sqrt{\lambda(t)} \lambda''(t)\end{split}\end{equation}
\subsection{Second small $r$ correction}
The second correction for small $r$ eliminates the linear error term of $u_{ell}$. In particular, we define $u_{ell,2}$ to be the following solution to
$$\partial_{r}^{2}u_{ell,2}+\frac{2}{r}\partial_{r}u_{ell,2}+\frac{45 u_{ell,2}}{\lambda(t)^2 \left(\frac{r^2}{\lambda(t)^2}+3\right)^2} = \partial_{t}^{2}u_{ell}(t,r) $$
\begin{equation}\label{uell2def}u_{ell,2}(t,r) = \phi_{0}(\frac{r}{\lambda(t)}) \int_0^{\frac{r}{\lambda(t)}} s^2 \lambda(t)^2 \partial_{1}^{2}u_{ell}(t,s\lambda(t)) e_{2}(s)  ds-e_{2}(\frac{r}{\lambda(t)}) \int_0^{\frac{r}{\lambda(t)}} s^2 \lambda(t)^2\partial_{1}^{2}u_{ell}(t,s\lambda(t)) \phi_{0}(s)  ds\end{equation}
In order to compute the relevant terms associated to $u_{ell,2}$ for the eventual matching process (see Section \ref{secondordermatching}) we first decompose $u_{ell}$ as follows.
\begin{equation}\begin{split} u_{ell}(t,r) &= \frac{f_{1}(\frac{r}{\lambda(t)}) \lambda'(t)^2}{\sqrt{\lambda(t)}}+f_{2}(\frac{r}{\lambda(t)}) \sqrt{\lambda(t)} \lambda''(t)+c_{1}(t)\phi_{0}(\frac{r}{\lambda(t)})=u_{ell,main}(t,r)+e_{ell,1}(t,r)\end{split}\end{equation}
where
\begin{equation}\label{eell1def}e_{ell,1}(t,r) = \frac{f_{1,1}(\frac{r}{\lambda(t)}) \lambda'(t)^2}{\sqrt{\lambda(t)}}+f_{2,1}(\frac{r}{\lambda(t)}) \sqrt{\lambda(t)} \lambda''(t)+c_{1}(t)\left(\phi_{0}(\frac{r}{\lambda(t)})+\frac{\sqrt{3}\lambda(t)}{2 r}\right)\end{equation}
Then, for $r \geq \lambda(t)$, we re-write the formula for $u_{ell,2}$ as follows (the first four integrals contain the terms which will be required in the matching process).
\begin{equation}\label{uell2formatching}\begin{split} &u_{ell,2}(t,r)\\
&=\phi_{0}(\frac{r}{\lambda(t)}) \int_{1}^{\frac{r}{\lambda(t)}} s^{2} \lambda(t)^{2} \partial_{1}^{2}u_{ell,main}(t,s\lambda(t)) e_{2}(s) ds- e_{2}(\frac{r}{\lambda(t)}) \int_{1}^{\frac{r}{\lambda(t)}} s^{2}\lambda(t)^{2} \partial_{1}^{2}u_{ell,main}(t,s\lambda(t)) \phi_{0}(s) ds\\
&- e_{2}(\frac{r}{\lambda(t)}) \int_{1}^{\infty} s^{2}\lambda(t)^{2} \partial_{1}^{2}e_{ell,1}(t,s\lambda(t)) \phi_{0}(s) ds- e_{2}(\frac{r}{\lambda(t)}) \int_{0}^{1} s^{2}\lambda(t)^{2} \partial_{1}^{2}u_{ell}(t,s\lambda(t)) \phi_{0}(s) ds\\
&+\phi_{0}(\frac{r}{\lambda(t)}) \int_{1}^{\frac{r}{\lambda(t)}} s^{2} \lambda(t)^{2} \partial_{1}^{2}e_{ell,1}(t,s\lambda(t)) e_{2}(s) ds +\phi_{0}(\frac{r}{\lambda(t)}) \int_{0}^{1} s^{2} \lambda(t)^{2} \partial_{1}^{2}u_{ell}(t,s\lambda(t)) e_{2}(s) ds\\
&+ e_{2}(\frac{r}{\lambda(t)}) \int_{\frac{r}{\lambda(t)}}^{\infty} s^{2}\lambda(t)^{2} \partial_{1}^{2}e_{ell,1}(t,s\lambda(t)) \phi_{0}(s) ds\end{split}\end{equation}
\subsection{Third small $r$ correction}
We eliminate the error term of $u_{ell,2}$ with the correction $u_{ell,3}$, which the following solution to
$$\partial_{r}^{2}u_{ell,3}+\frac{2}{r}\partial_{r}u_{ell,3}+\frac{45 u_{ell,3}}{\lambda(t)^2 \left(\frac{r^2}{\lambda(t)^2}+3\right)^2}=\partial_{t}^{2}u_{ell,2}(t,r)$$
\begin{equation}\label{uell3def}u_{ell,3}(t,r) = \phi_{0}(\frac{r}{\lambda(t)}) \int_0^{\frac{r}{\lambda(t)}} s^2 \lambda(t)^2 \partial_{1}^{2}u_{ell,2}(t,s\lambda(t)) e_{2}(s)  ds-e_{2}(\frac{r}{\lambda(t)}) \int_0^{\frac{r}{\lambda(t)}} s^2 \lambda(t)^2\partial_{1}^{2}u_{ell,2}(t,s\lambda(t)) \phi_{0}(s)  ds\end{equation}
\subsection{First large $r$ correction}
We first note that
\begin{equation}\label{dttqlgrcancel}\partial_{t}^{2}Q_{\lambda(t)}(r) =\frac{\sqrt{3} \left(2 \lambda(t) \lambda''(t)-\lambda'(t)^2\right)}{4 r \lambda(t)^{3/2}} + O\left(\frac{1}{r^{3}}\right), \quad r \rightarrow \infty\end{equation}
Then, we let
\begin{equation}\label{w1def}w_{1}(t,r) = w_{1,0}(t,r) + \frac{t}{2} \int_{0}^{\pi} \sin(\theta) \widetilde{v_{2,0}}(\sqrt{r^{2}+t^{2}+2 r t \cos(\theta)}) d\theta:=w_{1,0}(t,r) + \widetilde{v_{2}}(t,r)\end{equation}
where
\begin{equation}\label{v20hattildedef}\widetilde{v_{2,0}}(r) = \frac{\psi(r)}{r} \left(\frac{-\sqrt{3} \lambda'(r)}{2 \sqrt{\lambda(r)}}\right), \quad w_{1,0}(t,r) = \frac{\sqrt{3} \left(\sqrt{\lambda(r+t)}-\sqrt{\lambda(t)}\right)}{r}\end{equation}
and
$\psi \in C^{\infty}(\mathbb{R})$, 
\begin{equation}\label{psidef}\psi(x)=\begin{cases} 0, \quad x \leq T_{\lambda}\\
1, \quad x \geq 2T_{\lambda}\end{cases}\end{equation}
By direct computation, $w_{1}$ solves
\begin{equation}\label{w1eqn}-\partial_{t}^{2}w_{1}+\partial_{r}^{2}w_{1}+\frac{2}{r}\partial_{r}w_{1} = \frac{\sqrt{3} \left(2 \lambda(t) \lambda''(t)-\lambda'(t)^2\right)}{4 r \lambda(t)^{3/2}}\end{equation}
The main contribution of $w_{1}$ in the matching region is
\begin{equation}\label{w1maindef}\begin{split} w_{1,main}(t,r) &=\sum _{j=0}^5 \frac{r^{j} \partial_{t}^{j}\left(\frac{\sqrt{3} \lambda'(t)}{2 \sqrt{\lambda(t)}}\right)}{(j+1)!}+t \widetilde{v_{2,0}}(t)+\frac{1}{6} r^2 \partial_{t}^{2}\left(t \widetilde{v_{2,0}}(t)\right) +\frac{1}{120} r^4  \partial_{t}^{4}\left(t \widetilde{v_{2,0}}(t)\right) \\
&:=\frac{r^{5}}{720} \partial_{t}^{5}\left(\frac{\sqrt{3} \lambda'(t)}{2 \sqrt{\lambda(t)}}\right)+w_{1,cm}(t,r)\end{split}\end{equation}
We also define
\begin{equation}\label{w1linmaindef}w_{1,lm}(t,r):= \sum _{j=0}^1 \frac{r^{j} \partial_{t}^{j}\left(\frac{\sqrt{3} \lambda'(t)}{2 \sqrt{\lambda(t)}}\right)}{(j+1)!}+t \widetilde{v_{2,0}}(t)\end{equation}
A direct application of the fundamental theorem of calculus (or Taylor's theorem) gives
\begin{lemma}\label{w1minusmainest} For $0 \leq j,k \leq 2$ or $j=3,k=0$, $$|\partial_{t}^{j}\partial_{r}^{k}(w_{1}-w_{1,main})(t,r)| \leq \frac{C r^{7-k}}{t^{8+j}} \sqrt{\lambda(t)}, \quad r \leq \frac{t}{2}$$
For any $j \geq 0$,
$$|\partial_{t}^{j}w_{1,lm}(t,r)| \leq \frac{C_{j} r \sqrt{\lambda(t)}}{t^{j+2}}, \quad |\partial_{t}^{j}(w_{1,cm}-w_{1,lm})(t,r)| \leq \frac{C_{j} r^{3} \sqrt{\lambda(t)}}{t^{4+j}}$$
$$|\partial_{t}^{j}(w_{1}-w_{1,cm})(t,r)| \leq \begin{cases} \frac{C_{j}r^{5} \sqrt{\lambda(t)}}{t^{6+j}}, \quad r \leq \frac{t}{2}\\
\frac{C_{j}\sup_{x \in [t,t+r]} \sqrt{\lambda(x)} \log(t+r)}{r}\left(\frac{1}{t^{j}}+\frac{1}{\langle t-r \rangle^{j}}\right) + \frac{C_{j} r^{3}\sqrt{\lambda(t)}}{t^{4+j}}, \quad r \geq \frac{t}{2}\end{cases}$$
\end{lemma}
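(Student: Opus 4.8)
The statement to prove is Lemma~\ref{w1minusmainest}, a collection of pointwise bounds on $w_1$, $w_{1,main}$, $w_{1,lm}$, and $w_{1,cm}$.

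\textbf{Proof proposal.} The plan is to unwind the definitions in \eqref{w1def}, \eqref{v20hattildedef}, \eqref{w1maindef}, \eqref{w1linmaindef}, and in each case identify the stated "main" piece as a truncated Taylor expansion (in $r$, around $r=0$, with $t$ fixed) of the corresponding exact object, so that the difference is controlled by a remainder term involving higher $r$-derivatives evaluated at an intermediate radius. The key structural observation is that $w_1 = w_{1,0} + \widetilde{v_2}$, where $w_{1,0}(t,r) = \frac{\sqrt 3(\sqrt{\lambda(r+t)}-\sqrt{\lambda(t)})}{r}$ is an explicit function of $r+t$ divided by $r$, and $\widetilde{v_2}(t,r) = \frac{t}{2}\int_0^\pi \sin\theta\, \widetilde{v_{2,0}}(\sqrt{r^2+t^2+2rt\cos\theta})\, d\theta$ is the standard Kirchhoff-type representation of a radial free wave with data $(0, \widetilde{v_{2,0}})$. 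For the first bound, I would write $w_{1,0}(t,r) = \sqrt 3 \int_0^1 \frac{d}{dv}\sqrt{\lambda(t+rv)}\,dv$ and Taylor-expand $v \mapsto \sqrt{\lambda(t+rv)}$; matching against $\sum_{j=0}^5 \frac{r^j \partial_t^j(\frac{\sqrt 3 \lambda'(t)}{2\sqrt{\lambda(t)}})}{(j+1)!}$ (which is exactly the degree-5 Taylor polynomial of $w_{1,0}$ in $r$), the error is $\frac{C r^{6}}{?}$ times a sixth derivative; then one uses the symbol bounds \eqref{lambdasymb} on $\lambda$ together with \eqref{lambdacomparg} to estimate $|\partial_t^k \sqrt{\lambda(t+rv)}| \leq C_k \sqrt{\lambda(t)}\, t^{-k}$ uniformly for $r \leq t/2$ (so $t+rv \sim t$). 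For the $\widetilde{v_2}$ part, one Taylor-expands $\widetilde{v_{2,0}}(\sqrt{r^2+t^2+2rt\cos\theta})$ in $r$ around $r=0$, performs the $\theta$-integral term by term (the odd-power-in-$r$ terms integrate to the $\partial_t^{2m}$ derivatives of $t\widetilde{v_{2,0}}(t)$ via the classical identity for the spherical means solution formula), and again bounds the remainder using $|\widetilde{v_{2,0}}^{(k)}(s)| \leq C_k \frac{\sqrt{\lambda(s)}}{s^{1+k}}$, which follows from \eqref{v20hattildedef} and the symbol estimates on $\lambda$ (note $\widetilde{v_{2,0}}(r) \sim \frac{\lambda'(r)}{r\sqrt{\lambda(r)}}$, so it gains a factor $t^{-1}$ from $\lambda'/\lambda$ beyond each $r$-derivative). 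Combining, and differentiating the remainder formulas $j$ times in $t$ and $k$ times in $r$ (which is permissible since everything is smooth for $t \geq T_0$ and $r$ in the relevant range), yields $|\partial_t^j \partial_r^k (w_1 - w_{1,main})| \leq \frac{C r^{7-k}}{t^{8+j}}\sqrt{\lambda(t)}$ for $r \leq t/2$ in the stated range of $j,k$.

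\textbf{The remaining bounds.} The estimates on $w_{1,lm}$ and $w_{1,cm}-w_{1,lm}$ are purely algebraic: $w_{1,lm}$ consists of the terms of $w_{1,main}$ that are $O(r)$ (namely $j=0,1$ in the first sum plus $t\widetilde{v_{2,0}}(t)$), and $w_{1,cm}-w_{1,lm}$ collects the $O(r^3)$ and higher terms, so one just differentiates each explicit term in $t$ and invokes $|\partial_t^j(t\widetilde{v_{2,0}}(t))| \leq C_j \sqrt{\lambda(t)}\, t^{-1-j}$ and $|\partial_t^j(\frac{\sqrt 3 \lambda'(t)}{2\sqrt{\lambda(t)}})| \leq C_j \sqrt{\lambda(t)}\, t^{-1-j}$ (from the symbol bounds on $\lambda$); note the lowest-order term of $w_{1,lm}$ is $\frac{\sqrt 3 \lambda'(t)}{2\sqrt{\lambda(t)}} + t\widetilde{v_{2,0}}(t)$, and there may be a cancellation or at least both pieces are $O(\sqrt{\lambda(t)}/t)$, which is consistent with the claimed $r$ on the right side since one can also absorb into $O(r)$ using $r \leq t$. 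For $w_1 - w_{1,cm}$: in the region $r \leq t/2$ one writes $w_1 - w_{1,cm} = (w_1 - w_{1,main}) + (w_{1,main} - w_{1,cm})$ where the second piece is the single explicit term $\frac{r^5}{720}\partial_t^5(\frac{\sqrt 3\lambda'(t)}{2\sqrt{\lambda(t)}})$, bounded by $C r^5 \sqrt{\lambda(t)} t^{-6}$; combined with the first bound (with $k=0$, giving $C r^7 t^{-8}\sqrt{\lambda(t)} \leq C r^5 \sqrt{\lambda(t)} t^{-6}$ since $r \leq t$) this gives the first branch. For $r \geq t/2$ one cannot Taylor-expand, so instead one estimates $w_{1,0}$ and $\widetilde{v_2}$ directly from their integral representations: for $w_{1,0}$, $|\partial_t^j w_{1,0}(t,r)| \leq \frac{C_j}{r}(\sup_{x\in[t,t+r]}\sqrt{\lambda(x)})(t^{-j} + \langle t-r\rangle^{-j})$ — wait, $w_{1,0}$ depends only on $r+t$ and $t$, so its $t$-derivatives only produce $t^{-j}$ and the $\langle t-r\rangle$ does not arise from $w_{1,0}$; the $\langle t-r\rangle^{-j}$ comes from $\widetilde{v_2}$, where the Kirchhoff integral has its argument $\sqrt{r^2+t^2+2rt\cos\theta}$ ranging over $[|t-r|, t+r]$, and differentiating in $t$ can land on the endpoint behavior near $|t-r|$ (the standard "strong Huygens" structure of the 3D wave propagator), producing factors of $\langle t-r\rangle^{-1}$; one bounds $\widetilde{v_{2,0}}$ and its derivatives using $|\widetilde{v_{2,0}}(s)| \leq C\frac{\sqrt{\lambda(s)}}{s}$ near $s \sim |t-r|$ and the $\log(t+r)$ arises from integrating $\frac{1}{s}$-type decay of $\widetilde{v_{2,0}}$ against the spherical measure over the range $s \in [|t-r|, t+r]$. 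The $r^3\sqrt{\lambda(t)}t^{-4-j}$ tail in the $r \geq t/2$ branch is just a restatement (on this region $r \sim t$) of the lower-order explicit terms that were not captured.

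\textbf{Main obstacle.} I expect the delicate point to be the $r \geq t/2$ estimate on $w_1 - w_{1,cm}$ and in particular on $\widetilde{v_2}$, because here the finite speed of propagation / sharp Huygens structure of the 3D wave evolution matters: one must carefully track how the $\langle t-r\rangle^{-j}$ singularity emerges when $t$-derivatives hit the endpoints of the Kirchhoff integral, and one must use the decay of $\widetilde{v_{2,0}}(s) \sim \frac{\sqrt{\lambda(s)}}{s}$ together with the comparability estimate $\frac{\lambda(x)}{\lambda(t)} \leq (x/t)^{C_u}$ from \eqref{lambdacomparg} to handle the supremum of $\sqrt{\lambda}$ over the (possibly long) interval $[|t-r|, t+r]$ — in particular to make sure the $\log(t+r)$ factor is genuinely only logarithmic and not a power. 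The Taylor-remainder bounds for $r \leq t/2$, by contrast, are routine once the symbol bounds on $\lambda$ and on $\widetilde{v_{2,0}}$ are recorded, and the algebraic bounds on $w_{1,lm}$ and $w_{1,cm}-w_{1,lm}$ are immediate. Throughout, all differentiations under the integral sign are justified by smoothness of $\lambda$ on $(50,\infty)$ and the restriction $t \geq T_0 > e^{200}(1+T_\lambda+T_{\lambda,0})$ from \eqref{t0init}, which keeps all arguments well inside the domain where the symbol estimates hold.
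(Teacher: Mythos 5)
There is a genuine gap. Your plan — Taylor-expand $w_{1,0}$ and $\widetilde{v_2}$ \emph{separately} in $r$ around $r=0$, identify the listed terms of $w_{1,main}$ as their Taylor polynomials, and bound the two remainders individually — does not yield the stated bound $\frac{C r^{7-k}}{t^{8+j}}\sqrt{\lambda(t)}$. Truncating $w_{1,0}$ at degree $5$ and $\widetilde{v_2}$ at degree $4$ leaves each with a nonzero $r^6$ term of size $\frac{r^6\sqrt{\lambda(t)}}{t^7}$ (from $\frac{r^6 g^{(7)}(t)}{7!}$ and $\frac{r^6}{7!}\partial_t^6(t\widetilde{v_{2,0}}(t))$ respectively, with $g(t)=\sqrt{3\lambda(t)}$); adding them gives an $O(r^6/t^7)$ bound, which is strictly weaker than $O(r^7/t^8)$ when $r\le t/2$. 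The missing ingredient is the exact algebraic cancellation
\begin{equation}
t\,\widetilde{v_{2,0}}(t) \;=\; -\frac{\sqrt{3}\,\lambda'(t)}{2\sqrt{\lambda(t)}} \;=\; -\partial_t\!\left(\sqrt{3\lambda(t)}\right), \qquad t \ge 2T_\lambda,
\end{equation}
which forces \emph{every} even-degree Taylor coefficient (in $r$) of $w_1=w_{1,0}+\widetilde{v_2}$ to vanish: the coefficient of $r^m$ in $w_{1,0}$ is $\frac{g^{(m+1)}(t)}{(m+1)!}$, while $\widetilde{v_2}$ contributes $\frac{\partial_t^{m}(t\widetilde{v_{2,0}}(t))}{(m+1)!}=-\frac{g^{(m+1)}(t)}{(m+1)!}$ for $m$ even and $0$ for $m$ odd. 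Consequently $w_1$ has a purely odd Taylor series in $r$, $w_{1,main}$ captures it through degree $6$ (the degree-$6$ coefficient being zero), and the remainder is genuinely $O(r^7 g^{(8)}(t))\sim \frac{r^7\sqrt{\lambda(t)}}{t^8}$.

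This cancellation is equally essential for the other two estimates you dismiss as "purely algebraic." Without it, $w_{1,lm}$ has a nonvanishing constant term $\frac{\sqrt{3}\lambda'(t)}{2\sqrt{\lambda(t)}}+t\widetilde{v_{2,0}}(t)$ of size $\sqrt{\lambda(t)}/t$, which cannot be bounded by $\frac{C r\sqrt{\lambda(t)}}{t^2}$ as $r\to 0$; and $w_{1,cm}-w_{1,lm}$ has nonvanishing $r^2$ and $r^4$ coefficients of size $\sqrt{\lambda(t)}/t^3$ and $\sqrt{\lambda(t)}/t^5$ respectively, of which the former cannot be bounded by $\frac{C r^3\sqrt{\lambda(t)}}{t^4}$. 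Your fallback argument — that $\sqrt{\lambda(t)}/t$ can be "absorbed into $O(r)$ using $r\le t$" — runs in the wrong direction: $\sqrt{\lambda(t)}/t \le C\,r\sqrt{\lambda(t)}/t^2$ requires $r\gtrsim t$, not $r\le t$. You flagged that "there may be a cancellation" but then proceeded as if it were optional; it is not. (Separately, your recorded decay rate $|\widetilde{v_{2,0}}^{(k)}(s)|\le C_k\sqrt{\lambda(s)}/s^{1+k}$ is off by one power of $s$ — it should be $\sqrt{\lambda(s)}/s^{2+k}$, as your own parenthetical observation implies — but that appears to be a typo rather than a conceptual error. The $r\ge t/2$ branch of the $w_1-w_{1,cm}$ estimate is handled reasonably in spirit.)
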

Recalling \eqref{v20hattildedef}, we also have the following estimates.
\begin{lemma}\label{w1estlemma} For all $j,k \geq 0$, there exists $C_{j,k}>0$ such that for $r \leq \frac{t}{2}$,
$$|\partial_{t}^{j}\partial_{r}^{k} w_{1}(t,r)| \leq \frac{C_{j,k} \sqrt{\lambda(t)}}{t^{2+j}} \begin{cases} r, \quad k=0\\
\frac{1}{t^{k-1}}, \quad k \geq 1\end{cases}$$
and for $r \geq \frac{t}{2}$,
$$|\partial_{t}^{j}\partial_{r}^{k} w_{1,0}(t,r)| \leq \frac{C_{j,k} \sup_{x \in [t,t+r]}\sqrt{\lambda(x)}}{t^{j}r^{k+1}}, \quad |\partial_{t}^{j}\partial_{r}^{k} \widetilde{v_{2}}(t,r)| \leq \frac{C_{j,k} \left(\sup_{x \in [T_{\lambda},t+r]}\sqrt{\lambda(x)}\right) \log(t+r)}{r\langle r-t\rangle^{j+k}}$$
\begin{equation}\label{dtpdr}|\left(\partial_{t}+\partial_{r}\right)\widetilde{v_{2}}| \leq \frac{C \left(\sup_{x \in [T_{\lambda},t+r]}\sqrt{\lambda(x)}\right)\log(t+r)}{t r}, \quad r \geq \frac{t}{2}\end{equation}
\begin{equation}\label{v2tildenearorigin}t(|\partial_{t}\widetilde{v_{2}}(t,r)| + |\partial_{r}\widetilde{v_{2}}(t,r)|) + |\widetilde{v_{2}}(t,r)| \leq \frac{C \left(\sup_{x \in [T_{\lambda},t]}\sqrt{\lambda(x)}\right)}{t}, \quad r \leq \frac{t}{2}\end{equation}
\end{lemma}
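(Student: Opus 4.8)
The plan is to reduce everything to two ingredients: the elementary symbol calculus of $g := \sqrt{\lambda}$, and the d'Alembert/Kirchhoff representation of the free wave $\widetilde{v_2}$, which in the interior region $r \leq t/2$ combines with the explicit $w_{1,0}$ into a symmetric second difference of $g$.

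First I would record that, by \eqref{lambdasymb}, $L := \log\lambda$ satisfies $|L^{(n)}(x)| \leq C_n x^{-n}$ for $n \geq 1$, $x \geq T_\lambda$, whence, writing $g = e^{L/2}$ and using Fa\`a di Bruno's formula, $|g^{(n)}(x)| \leq C_n \sqrt{\lambda(x)}\, x^{-n}$ for all $n \geq 0$ and $x \geq T_\lambda$; together with \eqref{lambdacomparg} this gives $\sqrt{\lambda(y)} \leq C\sqrt{\lambda(x)}$ whenever $T_\lambda \leq x/2 \leq y \leq 3x/2$. Since $t \geq T_0 \geq e^{200}(1+T_\lambda+T_{\lambda,0})$, for $r \leq t/2$ one has $t-r \geq t/2 \geq 2T_\lambda$, so $\psi \equiv 1$ on $[t-r,t+r]$; reducing the spherical mean in \eqref{w1def} to the one-dimensional d'Alembert formula for the radial wave equation gives $\widetilde{v_2}(t,r) = -\frac{\sqrt3}{2r}\int_{t-r}^{t+r} g'(\sigma)\,d\sigma = -\frac{\sqrt3}{2r}\big(g(t+r)-g(t-r)\big)$, and combining with $w_{1,0}(t,r) = \frac{\sqrt3}{r}(g(t+r)-g(t))$ from \eqref{v20hattildedef} yields the key identity $w_1(t,r) = \frac{\sqrt3}{2r}\big(g(t+r)+g(t-r)-2g(t)\big)$. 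Writing this as $w_1(t,r) = \frac{\sqrt3}{2}\,r\,H(t,r)$ with $H(t,r) = \int_0^1(1-v)\big(g''(t+vr)+g''(t-vr)\big)dv$, differentiating under the integral, and using $|g^{(m)}(t\pm vr)| \leq C_m\sqrt{\lambda(t)}\,t^{-m}$ (valid since $t\pm vr\in[t/2,3t/2]$), one gets $|\partial_t^j\partial_r^k H| \leq C_{j,k}\sqrt{\lambda(t)}\,t^{-2-j-k}$, and the Leibniz rule applied to $\frac{\sqrt3}{2}rH$ gives exactly the claimed estimate for $\partial_t^j\partial_r^k w_1$, $r\leq t/2$. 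The near-origin bound \eqref{v2tildenearorigin} follows from the companion formula $\widetilde{v_2} = -\frac{\sqrt3}{2}G(t,r)$, $G(t,r) = \int_{-1}^1 g'(t+vr)\,dv$: here $|G|\leq C\sqrt{\lambda(t)}/t$ and $|\partial_t G|\leq C\sqrt{\lambda(t)}/t^2$ are immediate, while $\partial_r G = \int_{-1}^1 v\,g''(t+vr)\,dv = \int_{-1}^1 v\big(g''(t+vr)-g''(t)\big)dv$ (using $\int_{-1}^1 v\,dv=0$) is $O(r\sqrt{\lambda(t)}/t^3)$; one then replaces $\sqrt{\lambda(t)}$ by $\sup_{[T_\lambda,t]}\sqrt{\lambda}$ using the comparability recorded above.

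For $r \geq t/2$ I would argue directly. For $w_{1,0}(t,r) = \frac{\sqrt3}{r}(g(r+t)-g(t))$, the Leibniz rule together with $|g^{(m)}(r+t)| \leq C\sqrt{\lambda(r+t)}(r+t)^{-m}$ and $(r+t)^{j+b}\geq t^j r^b$ for the $r$-derivatives hitting $g$ gives the claimed $\big(\sup_{[t,t+r]}\sqrt{\lambda}\big)/(t^j r^{k+1})$ bound, the undifferentiated factor being estimated by $|g(r+t)-g(t)|\leq 2\sup_{[t,t+r]}\sqrt{\lambda}$. For $\widetilde{v_2}$ I would use the uniform representation $\widetilde{v_2}(t,r) = -\frac{\sqrt3}{2r}\big(\Psi(r+t)-\Psi(|r-t|)\big)$, where $\Psi(x) := \int_0^x\psi(\sigma)g'(\sigma)\,d\sigma$ is smooth and flat to infinite order at $0$ (so $\Psi(|r-t|)$ is smooth in $r-t$). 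For the size, $|\psi(\sigma)g'(\sigma)| \leq C\sqrt{\lambda(\sigma)}/\sigma \leq C\big(\sup_{[T_\lambda,t+r]}\sqrt{\lambda}\big)/\sigma$ (valid because $\psi$ vanishes on $[0,T_\lambda]$ and \eqref{lambdasymb} controls $\lambda'/\lambda$ down to $T_\lambda$), so $|\Psi(r+t)-\Psi(|r-t|)| \leq C\big(\sup_{[T_\lambda,t+r]}\sqrt{\lambda}\big)\log\frac{r+t}{\max(|r-t|,T_\lambda)}$ — this $\int d\sigma/\sigma$ is precisely the source of the $\log(t+r)$. For the derivatives: a $\partial$ landing on $\Psi(r+t)$ produces $g^{(m)}(r+t)$ (here $r+t\geq 3t/2 \gg 2T_\lambda$, so $\psi\equiv1$ near $r+t$), controlled by $\big(\sup_{[T_\lambda,t+r]}\sqrt{\lambda}\big)(r+t)^{-m}$; a $\partial$ landing on $\Psi(|r-t|)$ produces, when $|r-t|\geq 2T_\lambda$, $g^{(m)}(|r-t|)$, controlled by $\big(\sup_{[T_\lambda,t+r]}\sqrt{\lambda}\big)\langle r-t\rangle^{-m}$, and when $|r-t|\leq 2T_\lambda$ — the thin shell near $r=t$, where $\langle r-t\rangle\simeq 1$ — the relevant derivatives of $\psi g'$ are bounded on the compact interval $[0,2T_\lambda]$ by $C\sup_{[T_\lambda,t+r]}\sqrt{\lambda}$; $\partial_r$ on the prefactor $1/r$ only improves powers of $r$. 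Collecting terms and using $t/2\leq r$, $|r-t|\leq 2r$, $r\leq r+t\leq 3r$, $r+t\geq|r-t|$ in this region yields the stated bound on $\partial_t^j\partial_r^k\widetilde{v_2}$. Finally \eqref{dtpdr} follows from the null-direction cancellations $(\partial_t+\partial_r)\Psi(|r-t|)=0$ and $(\partial_t+\partial_r)\Psi(r+t)=2\psi(r+t)g'(r+t)$, which give $(\partial_t+\partial_r)\widetilde{v_2} = \frac{\sqrt3}{2r^2}\big(\Psi(r+t)-\Psi(|r-t|)\big) - \frac{\sqrt3}{r}\psi(r+t)g'(r+t)$, with the first term $\leq C\big(\sup_{[T_\lambda,t+r]}\sqrt{\lambda}\big)\log(t+r)/r^2 \leq C\big(\sup_{[T_\lambda,t+r]}\sqrt{\lambda}\big)\log(t+r)/(tr)$ since $r\geq t/2$, and the second $\leq C\big(\sup_{[T_\lambda,t+r]}\sqrt{\lambda}\big)/(r(r+t)) \leq C\big(\sup_{[T_\lambda,t+r]}\sqrt{\lambda}\big)/(tr)$.

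The main obstacle is not any individual computation but two structural points. First, one must exploit the second-difference cancellation $w_1 = \frac{\sqrt3}{2r}(g(t+r)+g(t-r)-2g(t))$ in the interior: without it $w_1$ would only be $O(\sqrt{\lambda(t)}/t)$, whereas the required bound is $O(r\sqrt{\lambda(t)}/t^2)$, which is precisely the reason for pairing $\widetilde{v_2}$ with $w_{1,0}$. Second, one must handle $\widetilde{v_2}$ near the light cone $r=t$, where the d'Alembert integral reaches down to $\sigma\sim T_\lambda$, i.e. into the range on which $\lambda$ obeys no decay estimate — this is exactly why the cutoff $\psi$ appears in $\widetilde{v_{2,0}}$, why the bound carries a factor $\log(t+r)$, and why the relevant supremum is $\sup_{[T_\lambda,t+r]}\sqrt{\lambda}$ rather than a supremum over an interval starting at $t$.
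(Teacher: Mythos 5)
Your proof is correct, and it takes a genuinely cleaner route than the paper's. The key structural observation you make is that, once the spherical mean is reduced to the radial d'Alembert form, the combination $w_{1,0}+\widetilde{v_2}$ collapses to a symmetric second difference $w_1(t,r)=\frac{\sqrt3}{2r}\bigl(g(t+r)+g(t-r)-2g(t)\bigr)$ with $g=\sqrt\lambda$, from which the interior estimates follow by one application of the Leibniz rule to $r\cdot\int_0^1(1-v)(g''(t+vr)+g''(t-vr))\,dv$. The paper arrives at essentially the same cancellation, but only implicitly: it writes $w_{1,0}$ and $\widetilde{v_2}$ each as an FTC remainder relative to $y=0$, and the two $\pm\sqrt3\,g'(t)$ boundary terms cancel between the two pieces. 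Your form makes the mechanism transparent. Likewise, in the exterior region you use the one-dimensional representation $\widetilde{v_2}(t,r)=-\frac{\sqrt3}{2r}\bigl(\Psi(r+t)-\Psi(|r-t|)\bigr)$ with $\Psi(x)=\int_0^x\psi\,g'$, and every claimed estimate — including the null-direction bound \eqref{dtpdr}, which drops out of $(\partial_t+\partial_r)\Psi(|r-t|)=0$ — is a direct Leibniz computation. The paper instead keeps $\widetilde{v_2}$ in two-dimensional integral form and cycles through the identity \eqref{derivofsqrtcomp}, the scaling vector field $X=t\partial_t+r\partial_r$, the wave equation for $\partial_t^2$, and an induction for higher $\partial_t^j$; that machinery is avoidable here exactly because $\Psi$ is flat to infinite order at $0$ (the cutoff $\psi$ kills the interval $[0,T_\lambda]$), so $\Psi(|r-t|)$ is $C^\infty$ across the light cone and can be differentiated freely. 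One small point worth stating explicitly in a writeup: in the regime $T_\lambda\le|r-t|\le 2T_\lambda$ your constants absorb powers of $T_\lambda$, so the claimed $C_{j,k}$ depend on $\lambda$ (through $T_\lambda$) but not on $T_0$, which is consistent with the paper's conventions. Both proofs rely on the same two structural cancellations — the second difference in the interior, the null form at the cone — but yours reaches them by reducing to one-dimensional profiles, which is shorter and more legible.
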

\begin{proof} For $r \leq \frac{t}{2}$, we start with the definition of $w_{1}$, \eqref{w1def}, and get
\begin{equation}\label{w1forest}\begin{split}w_{1}(t,r)&=\frac{\sqrt{3}}{2} \int_{0}^{1} \frac{\lambda'(t+r y)}{\sqrt{\lambda(t+r y)}} dy + \frac{t}{2}\int_{-1}^{1} \widetilde{v_{2,0}}(\sqrt{r^{2}+t^{2}+2 r t u}) du\\
&=\frac{\sqrt{3}}{2} \int_{0}^{1} dy \int_{t}^{t+r y} ds \partial_{s}\left(\frac{\lambda'(s)}{\sqrt{\lambda(s)}}\right)+\frac{t}{2}\int_{-1}^{1}du \int_{0}^{r} dy \frac{\widetilde{v_{2,0}}'(\sqrt{y^{2}+t^{2}+2y t u}) (y+ tu)}{\sqrt{y^{2}+t^{2}+2 y t u}}\\
&=\frac{\sqrt{3}}{2}\int_{0}^{1} r dw (1-w)\partial_{s}\left(\frac{\lambda'(s)}{\sqrt{\lambda(s)}}\right)\Bigr|_{s=t+r w}+\frac{t}{2}\int_{0}^{r} dy \int_{-1}^{1} \frac{du \widetilde{v_{2,0}}'(\sqrt{y^{2}+t^{2}+2 y t u})}{\sqrt{y^{2}+t^{2}+2 y t u}}(y+t u)\end{split}\end{equation}
The symbol-type estimates on $\lambda$, \eqref{lambdasymb}, allow for the differentiation under the integral sign in $t$ and $r$, for the first term of the last line of the equation above, and give, for $r \leq \frac{t}{2}$,
$$|\partial_{t}^{j}\partial_{r}^{k} \left(\frac{\sqrt{3}}{2} \int_{0}^{1} r dy (1-y) \partial_{s}\left(\frac{\lambda'(s)}{\sqrt{\lambda(s)}}\right)\Bigr|_{s=t+r y}\right)| \leq C_{j,k} \frac{\sqrt{\lambda(t)}}{t^{2+j}} \begin{cases}r, \quad k=0\\
\frac{1}{t^{k-1}}, \quad k \geq 1\end{cases}$$ 
where we used \eqref{lambdacomparg}. To estimate the second term on the last line of \eqref{w1forest} in the region $r \leq \frac{t}{2}$, we first note that \eqref{v20hattildedef} implies that
\begin{equation}\label{v20tildeests}|\widetilde{v_{2,0}}^{(k)}(r)| \leq \frac{C_{k}\sqrt{\lambda(r)}}{r^{2+k}} \mathbbm{1}_{\{r \geq T_{\lambda}\}}\end{equation}
Direct estimation gives, for $r \leq \frac{t}{2}$,
\begin{equation}|\partial_{t}^{j}\partial_{r}^{k}\left(\frac{t}{2}\int_{0}^{r} dy\int_{-1}^{1} \frac{du \widetilde{v_{2,0}}'(\sqrt{y^{2}+t^{2}+2 y t u})}{\sqrt{y^{2}+t^{2}+2 y t u}} (y+t u)\right)| \leq \frac{C_{j,k} \sqrt{\lambda(t)}}{t^{2+j}}\begin{cases} r, \quad k=0\\
\frac{1}{t^{k-1}}, \quad k \geq 1\end{cases}\end{equation}
Finally, \eqref{v2tildenearorigin} is obtained by directly estimating \eqref{w1def}. For $w_{1,0}$ in the region $r \geq \frac{t}{2}$, we simply directly estimate \eqref{v20hattildedef}. For $\widetilde{v_{2}}$, we use the following.  If $f \in C^{\infty}([0,\infty))$, then, for $k \geq 1$, and $u \in \mathbb{R}$,
\begin{equation}\label{derivofsqrtcomp}\partial_{r}^{k}\left(f(\sqrt{r^{2}+t^{2}+2 r u})\right) = \sum_{l=0}^{k}\sum_{n=1}^{k} (r+u)^{l} c_{l,k,n} \frac{f^{(n)}(\sqrt{r^{2}+t^{2}+2 r u})}{\left(r^{2}+t^{2}+2 r u\right)^{\frac{l+k-n}{2}}}\end{equation}
Then, directly differentiating the definition of $\widetilde{v_{2}}$ in \eqref{w1def}, we get
\begin{equation}\label{drkv2tilde}|\partial_{r}^{k}\widetilde{v_{2}}(t,r)| \leq \frac{C_{k} \left(\sup_{x \in [T_{\lambda},t+r]}\sqrt{\lambda(x)}\right) \log(r+t)}{\langle r-t \rangle^{k} r}, \quad r \geq \frac{t}{2}\end{equation}
Next, we let $X = t \partial_{t}+r \partial_{r}$, and get
$$X(\widetilde{v_{2}})(t,r) = \widetilde{v_{2}}(t,r) + \frac{t}{2} \int_{0}^{\pi} \sin(\theta) \sqrt{r^{2}+t^{2}+2 r t \cos(\theta)} \widetilde{v_{2,0}}'(\sqrt{r^{2}+t^{2}+2 r t \cos(\theta)}) d\theta$$
Using $X(\widetilde{v_{2}}) = t(\partial_{t}+\partial_{r})\widetilde{v_{2}}+(r-t)\partial_{r}\widetilde{v_{2}}$, we get \eqref{dtpdr}. Next, again using \eqref{derivofsqrtcomp} and \eqref{drkv2tilde}, we get a first estimate:
\begin{equation}\label{drkdtv2tilde}|\partial_{r}^{k}\partial_{t}\widetilde{v_{2}}| \leq \frac{C_{k} \left(\sup_{x \in [T_{\lambda},t+r]}\sqrt{\lambda(x)}\right) \log(r+t)}{r \langle r-t \rangle^{k}} \left(\frac{1}{t} + \frac{r}{t \langle t-r \rangle}\right), \quad r \geq \frac{t}{2}\end{equation}
Next, we use
$$\widetilde{v_{2}}(t,r) = \frac{1}{2}\int_{-t}^{t} du \widetilde{v_{2,0}}(\sqrt{r^{2}+t^{2}+2 r t u})$$
and directly differentiate to get
$$|\partial_{r}^{k}\partial_{t}\widetilde{v_{2}}| \leq \frac{C_{k} \left(\sup_{x \in [T_{\lambda},t+r]}\sqrt{\lambda(x)}\right)}{\langle r-t \rangle^{2+k}}, \quad r \geq \frac{t}{2}$$
Combining this with \eqref{drkdtv2tilde}, we get
$$|\partial_{r}^{k}\partial_{t}\widetilde{v_{2}}(t,r)| \leq \frac{C_{k} \left(\sup_{x \in [T_{\lambda},t+r]}\sqrt{\lambda(x)}\right) \log(t)}{r \langle t-r \rangle^{k+1}}, \quad r \geq \frac{t}{2}$$
Finally, we differentiate the equation solved by $\widetilde{v_{2}}$, namely
$$\left(-\partial_{t}^{2}+\partial_{r}^{2}+\frac{2}{r}\partial_{r}\right)\widetilde{v_{2}} =0$$
to get
$$|\partial_{r}^{k}\partial_{t}^{2}\widetilde{v_{2}}| \leq \frac{C_{k,j} \left(\sup_{x \in [T_{\lambda},t+r]}\sqrt{\lambda(x)}\right) \log(t+r)}{r \langle t-r \rangle^{2+k+j}}, \quad r \geq \frac{t}{2}$$
An induction argument then finishes the proof of the lemma \end{proof}
Next, let
\begin{equation}\label{rhsdef}RHS(t,r) = \partial_{t}^{2}Q_{\lambda(t)}(r) - \left(\frac{\sqrt{3} \left(2 \lambda(t) \lambda''(t)-\lambda'(t)^2\right)}{4 r \lambda(t)^{3/2}}\right)\end{equation}
Note that the solution to
$$\begin{cases}-\partial_{t}^{2}v_{ex,s}+\partial_{r}^{2}v_{ex,s}+\frac{2}{r}\partial_{r}v_{ex,s}=0\\
v_{ex,s}(s,r) =0\\
\partial_{1}v_{ex,s}(s,r) = RHS(s,r)\end{cases}$$
is
$$v_{ex,s}(t,r) = \frac{f_{3}(s,r+s-t)-f_{3}(s,|r-(s-t)|)}{\pi  r}$$
where
\begin{equation}\label{vexf3def}\begin{split}f_{3}(s,x)&=\frac{\pi}{2} \int_{x}^{\infty} r RHS(s,r) dr\\
&=-\frac{\sqrt{3} \pi  \left(3 x \lambda(s)^2 \left(\sqrt{3 \lambda(s)^2+x^2}+6 x\right)+x^3 \left(\sqrt{3 \lambda(s)^2+x^2}-x\right)+27 \lambda(s)^4\right) \lambda'(s)^2}{8 \left(\lambda(s) \left(3 \lambda(s)^2+x^2\right)\right)^{3/2}}\\
&-\frac{\pi  \left(x \left(x-\sqrt{3 \lambda(s)^2+x^2}\right)+9 \lambda(s)^2\right) \lambda''(s)}{4 \sqrt{\frac{1}{3} x^2 \lambda(s)+\lambda(s)^3}}\end{split}\end{equation}
Then, we let $v_{ex}$ denote the following solution to
\begin{equation}\label{vexeqn}-\partial_{t}^{2}v_{ex}+\partial_{r}^{2}v_{ex}+\frac{2}{r}\partial_{r}v_{ex} = RHS(t,r)\end{equation}
\begin{equation}\label{vexdef}v_{ex}(t,r) = \int_{t}^{\infty}v_{ex,s}(t,r)ds\end{equation}
\begin{lemma} \label{vexestlemma}
For all $j \geq 0$, there exists $C_{j}>0$ such that
\begin{equation}\label{vexests}|\partial_{t}^{j}\partial_{r}^{k}v_{ex}(t,r)| \leq \begin{cases} \frac{C_{j} \lambda(t)^{3/2-k} \log(t)}{t^{2+j}}, \quad r \leq \lambda(t)\\
\frac{C_{j} \sup_{x \in [t,t+r]}\left(\lambda(x)^{5/2}\right) \log(t+r)}{r^{1+k}t^{2+j}}, \quad r \geq \lambda(t)\end{cases},\quad  k=0,1\end{equation}
In addition,
\begin{equation}\label{vexenest}||\partial_{t}v_{ex}||_{L^{2}(r^{2} dr)} + ||\partial_{r}v_{ex}||_{L^{2}(r^{2} dr)} \leq \frac{C \lambda(t)}{t}\end{equation}\end{lemma}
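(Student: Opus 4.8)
The plan is to estimate $v_{ex}$ directly from the representation \eqref{vexdef}, $v_{ex}(t,r) = \int_{t}^{\infty} v_{ex,s}(t,r)\, ds$, where $v_{ex,s}(t,r) = \frac{f_{3}(s,r+s-t) - f_{3}(s,|r-(s-t)|)}{\pi r}$ and $f_{3}$ is given explicitly in \eqref{vexf3def}. The first thing I would do is record pointwise bounds on $f_{3}(s,x)$ and its relevant derivatives. From the closed form, one sees that $f_{3}(s,x)$ is a rational-algebraic expression in $x$ and $\lambda(s)$, homogeneous of a fixed degree under simultaneous scaling, multiplied by $\lambda'(s)^{2}$ and by $\lambda''(s)$ respectively; using the symbol bounds \eqref{lambdasymb} (so $|\lambda'(s)| \leq M \lambda(s)/s$ and $|\lambda''(s)| \leq C_{2}\lambda(s)/s^{2}$) together with \eqref{lambdacomparg}, I expect an estimate of the shape
\[
|f_{3}(s,x)| \leq \frac{C \lambda(s)^{?}}{s^{2}} \cdot g(x/\lambda(s)),
\]
where $g$ is bounded and decays appropriately in its argument; one checks the small-$x$ and large-$x$ regimes of $g$ separately from the formula. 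I would also need $\partial_{x} f_{3}$ and $\partial_{s} f_{3}$, obtained by differentiating \eqref{vexf3def} and again invoking \eqref{lambdasymb}, \eqref{lambdacomparg}; crucially, $\partial_{s}$ falling on $\lambda(s)$ costs a power of $1/s$ by the symbol property, which is what produces the $t^{-(2+j)}$ decay in the final bound.

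Next I would insert these bounds into $v_{ex,s}$ and integrate in $s$ over $[t,\infty)$. The argument $r + s - t$ ranges over $[r,\infty)$ and $|r - (s-t)|$ over $[0,\infty)$ as $s$ varies, so after the change of variable $s \mapsto s - t$ the $s$-integral becomes an integral of $f_{3}$-differences along characteristics; writing $f_{3}(s,r+s-t) - f_{3}(s,|r-(s-t)|)$ and using the fundamental theorem of calculus in the second variable to exploit the cancellation between the two terms when $r$ is small, I expect to gain the extra factor of $r$ needed in the region $r \leq \lambda(t)$. In the region $r \geq \lambda(t)$ the decay of $g$ at large argument, combined with $\int_{t}^{\infty} \lambda(s)^{5/2} s^{-2}\, ds \lesssim \sup_{x \in [t,t+r]}\lambda(x)^{5/2} \cdot t^{-1}$ (using that $x \mapsto \lambda(x)^{5/2}/x$ is decreasing, \eqref{lambdacomparg}), should give the stated $\sup_{x\in[t,t+r]}(\lambda(x)^{5/2}) r^{-1-k} t^{-2-j}\log(t+r)$. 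The logarithmic factor $\log(t)$ (resp. $\log(t+r)$) arises from the borderline $\int ds/s$ type behaviour near the light cone $s - t \sim r$, exactly as in the analogous estimates in \cite{wm2}. Spatial derivatives $\partial_{r}^{k}$ for $k = 0,1$ are handled by differentiating $v_{ex,s}$ in $r$ before integrating in $s$; each $\partial_r$ either hits the explicit $1/r$ prefactor or the argument $r + s - t$, in the latter case producing a $\partial_{x}f_{3}$ which satisfies the bounds already recorded, costing one power of $r$ (resp. $\lambda(t)$) as reflected in the $r^{-1-k}$ (resp. $\lambda(t)^{3/2-k}$) in \eqref{vexests}. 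Time derivatives $\partial_{t}^{j}$ act both on the explicit $t$-dependence through $r + s - t$ and $|r-(s-t)|$ (which, combined with the $s$-integration, can be traded for $\partial_{s}$ up to boundary terms at $s = t$ that vanish since $v_{ex,t}(t,r) = 0$) and would be treated by an induction on $j$, each step using the equation \eqref{vexeqn} to convert $\partial_{t}^{2}$ into spatial derivatives plus $RHS$ where convenient.

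Finally, the energy bound \eqref{vexenest}: I would split $\|\partial_{t}v_{ex}\|_{L^{2}(r^{2}dr)}^{2} + \|\partial_{r}v_{ex}\|_{L^{2}(r^{2}dr)}^{2}$ into the regions $r \leq \lambda(t)$, $\lambda(t) \leq r \leq t$, and $r \geq t$, and in each region integrate the pointwise bounds from \eqref{vexests} against $r^{2}\,dr$. In the inner region $\int_{0}^{\lambda(t)} (\lambda(t)^{1/2}\log(t)/t^{2})^{2} r^{2}\,dr \lesssim \lambda(t)^{4}\log^{2}(t)/t^{4} \ll \lambda(t)^{2}/t^{2}$; in the outer regions the $r^{-2}$ (from $\partial_r$, $k=1$) or $r^{-1}$ decay makes $\int r^{-2} \cdot r^{2}\,dr$ or $\int r^{-4}\cdot r^2\,dr$ converge, and using $\sup_{x\in[t,t+r]}\lambda(x)^{5/2} \leq \lambda(t+r)^{5/2}$ together with the monotonicity $\lambda(x)^{5/2}/x$ decreasing, the integral is controlled by $C\lambda(t)^{2}/t^{2}$ after taking the square root. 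The main obstacle I anticipate is not any single estimate but rather extracting the cancellation in $f_{3}(s,r+s-t) - f_{3}(s,|r-(s-t)|)$ cleanly enough near the light cone $s = t + r$, where $|r - (s-t)|$ passes through zero and the naive bound on each term separately is too lossy to close the $r \leq \lambda(t)$ case and to keep only a single logarithm; this is exactly the delicate point, and it is where I would lean most heavily on the matched-asymptotics bookkeeping and the explicit structure of \eqref{vexf3def} rather than on soft arguments.
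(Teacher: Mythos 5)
Your plan for the pointwise bounds \eqref{vexests} is essentially the paper's: record bounds of the form $|\partial_s^j\partial_x^k f_3(s,x)| \lesssim \lambda(s)^{5/2} s^{-2-j} \max\{x,\lambda(s)\}^{-k-1}$ directly from \eqref{vexf3def}, change variables in \eqref{vexdef}, and estimate the resulting integrals; the paper performs the change of variable explicitly so that the cancellation you want to extract via FTC is already manifest in the decomposition $v_{ex}(t,r) = \frac{1}{\pi r}\bigl(\int_r^\infty f_3(t+y-r,y)\,dy - \int_0^r f_3(t+r-y,y)\,dy - \int_0^\infty f_3(t+r+y,y)\,dy\bigr)$, and the $t$-derivatives are handled by direct differentiation under the integral sign rather than by iterating the equation, but these are minor differences in execution.

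The genuine gap is in your argument for the energy estimate \eqref{vexenest}. You propose to square the pointwise bounds from \eqref{vexests} and integrate against $r^2\,dr$, but this cannot work: the pointwise decay in the outer region is only $|\partial_t v_{ex}| \lesssim r^{-1}$, so $\|\partial_t v_{ex}\|_{L^2(r^2 dr)}^2$ would be controlled by $\int^\infty r^{-2}\cdot r^2\,dr = \int^\infty dr$, which diverges; indeed you write down this very integral and assert it converges, which it does not. The same problem afflicts $\|\partial_r v_{ex}\|_{L^2(r^2 dr)}$ when $\lambda$ is growing, since $\sup_{x\in[t,t+r]}\lambda(x)^{5/2}$ grows with $r$ and $\int^\infty (1+r/t)^2 r^{-2}\,dr$ again diverges. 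The bounds in \eqref{vexests} are simply not strong enough for an $L^2(r^2\,dr)$ estimate, and squaring $L^\infty$ bounds is the wrong tool at borderline decay. The paper proves \eqref{vexenest} by a different route: it returns to the Duhamel-type representation in terms of $RHS$ (which decays like $r^{-3}$ at spatial infinity, much faster than $v_{ex}$ itself), applies Minkowski's inequality in the $s$-integral to reduce to $\int_t^\infty \|RHS(s,\cdot)\|_{L^2(r^2 dr)}\,ds$, and, crucially, estimates the combination $\partial_r v_{ex} + v_{ex}/r$ rather than $\partial_r v_{ex}$ alone because that combination has the clean closed form $\int_t^\infty(-\frac{1}{2r})\bigl((r+s-t)RHS(s,r+s-t) + (s-t-r)RHS(s,|s-t-r|)\bigr)\,ds$; the cross term $\int \partial_r(r v_{ex}^2)\,dr$ is then shown to vanish by a boundary argument using the pointwise bounds and the dominated/monotone convergence theorems, yielding $\|\partial_r v_{ex} + v_{ex}/r\|_{L^2(r^2 dr)} = \|\partial_r v_{ex}\|_{L^2(r^2 dr)}$. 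You would need to replace your squaring-the-pointwise-bound step by this (or an equivalent) Minkowski-on-the-source argument to close the proof.
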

\begin{proof} 
By directly estimating \eqref{vexf3def} and its derivatives, we get
\begin{equation}\label{f3ests}|\partial_{s}^{j}\partial_{x}^{k}f_{3}(s,x)| \leq \frac{C_{j,k} \lambda(s)^{5/2}}{s^{2+j}} \frac{1}{\left(\text{max}\{x,\lambda(s)\}\right)^{k+1}}\end{equation}
Then, from the definition of $v_{ex}$ (\eqref{vexdef}), we get
$$v_{ex}(t,r) = \int_{r}^{\infty}dy \frac{ f_{3}(t+y-r,y)}{\pi r} - \int_{0}^{r} dy \frac{f_{3}(r+t-y,y)}{\pi r} - \int_{0}^{\infty} \frac{dy}{\pi r} f_{3}(t+r+y,y)$$
Differentiation under the integral sign is possible by \eqref{f3ests}, and direct estimation of the integrals gives \eqref{vexests} for $r \geq \lambda(t)$. Direct estimation of the above integral also gives \eqref{vexests} for $r \leq \lambda(t)$, and $k=0$. Finally, direct estimation of the $r$ derivative of 
\begin{equation} \partial_{t}^{j}v_{ex}(t,r) =  -\frac{1}{\pi} \int_{0}^{1} dz \partial_{1}^{j}f_{3}(t+r(1-z),rz) -\frac{1}{\pi} \int_{0}^{1} dz \partial_{1}^{j} f_{3}(t+r(1+z),rz) +\int_{r}^{\infty} \frac{dy}{\pi} \int_{-1}^{1} \partial_{1}^{j+1}f_{3}(r z+t+y,y) dz\end{equation} finishes the proof of \eqref{vexests}. For the energy estimate, \eqref{vexenest}, we note that
$$\partial_{r}v_{ex}(t,r)+\frac{v_{ex}(t,r)}{r} = \int_{t}^{\infty} ds \left(\frac{-1}{2r}\right)\begin{aligned}[t]&\left((r+s-t) RHS(s,r+s-t)+(s-t-r)RHS(s,|s-t-r|)\right)\end{aligned}$$
$$\partial_{t}v_{ex}(t,r) = \int_{t}^{\infty} ds \left(\frac{-1}{2r}\right)\left(-(r+s-t) RHS(s,r+s-t)+(s-t-r)RHS(s,|s-t-r|)\right)$$
Then, Minkowski's inequality and direct estimation of \eqref{rhsdef} give
$$||\partial_{r}v_{ex}+\frac{v_{ex}}{r}||_{L^{2}(r^{2} dr)}+ ||\partial_{t}v_{ex}(t,r)||_{L^{2}(r^{2} dr)} \leq \frac{C \lambda(t)}{t}$$
Finally, since $||\partial_{r}v_{ex}+\frac{v_{ex}}{r}||_{L^{2}(r^{2} dr)} < \infty$, the dominated convergence theorem shows that
$$||\partial_{r}v_{ex}+\frac{v_{ex}}{r}||^{2}_{L^{2}(r^{2} dr)} = \lim_{M \rightarrow \infty} \left(\int_{\frac{1}{M}}^{M} r^{2} dr (\partial_{r}v_{ex})^{2} + \int_{\frac{1}{M}}^{M} dr \partial_{r}(r v_{ex}^{2})\right)$$
Then, \eqref{vexests} and the monotone convergence theorem, show that
$$||\partial_{r}v_{ex}+\frac{v_{ex}}{r}||^{2}_{L^{2}(r^{2} dr)} =\int_{0}^{\infty} r^{2} dr (\partial_{r}v_{ex})^{2} $$
which completes the proof of the lemma.
\end{proof}
We define $v_{ex,ell}$ to be the following solution to
$$\partial_{r}^{2}v_{ex,ell}+\frac{2}{r}\partial_{r}v_{ex,ell}=RHS$$
\begin{equation}\begin{split}v_{ex,ell}(t,r) &=-\frac{1}{r}\int_{0}^{r} y^{2}RHS(t,y)dy - \int_{r}^{\infty} y RHS(t,y) dy\\
&=\frac{\sqrt{3} \lambda'(t)^2 \left(r \left(r-\sqrt{3 \lambda(t)^2+r^2}\right)+\lambda(t)^2 \left(45 \sinh ^{-1}\left(\frac{r}{\sqrt{3} \lambda(t)}\right)-\frac{24 r}{\sqrt{3 \lambda(t)^2+r^2}}\right)\right)}{8 r \lambda(t)^{3/2}}\\
&+\frac{\sqrt{3} \lambda''(t) \left(r \left(\sqrt{3 \lambda(t)^2+r^2}-r\right)+15 \lambda(t)^2 \sinh ^{-1}\left(\frac{r}{\sqrt{3} \lambda(t)}\right)\right)}{4 r \sqrt{\lambda(t)}}\end{split}\end{equation}
It will be convenient to define the following principal part of $v_{ex,ell}$ in the matching region:
\begin{equation}\label{vexell0def}\begin{split} &v_{ex,ell,0}(t,r)\\
&:=\frac{3 \sqrt{3} \sqrt{\lambda(t)} \left(\lambda'(t)^2 \left(15 \log \left(\frac{4}{3 \lambda(t)^2}\right)+30 \log (r)-17\right)+2 \lambda(t) \lambda''(t) \left(5 \log \left(\frac{4}{3 \lambda(t)^2}\right)+10 \log (r)+1\right)\right)}{16 r}\end{split}\end{equation}
A direct computation gives
\begin{lemma}\label{vexellminusell0estlemma} For $0 \leq j+k \leq 5$,
$$|\partial_{t}^{j}\partial_{r}^{k}(v_{ex,ell}(t,r)-v_{ex,ell,0}(t,r))| \leq \frac{C \lambda(t)^{3/2}}{t^{2+j}} \begin{cases} \frac{(1+|\log(\frac{r}{\lambda(t)})|) \lambda(t)}{r^{1+k}}, \quad r \leq \lambda(t)\\
\frac{\lambda(t)^{3}}{r^{3+k}}, \quad r \geq \lambda(t)\end{cases}$$\end{lemma}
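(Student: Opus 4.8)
The plan is to use that both $v_{ex,ell}$ and $v_{ex,ell,0}$ are explicit and that, up to $t$-dependent coefficients, each depends only on the self-similar variable $R := r/\lambda(t)$. Substituting $r = R\lambda(t)$ into the explicit formula for $v_{ex,ell}$ and into \eqref{vexell0def}, and using $\log r = \log R + \log\lambda(t)$, one checks that the $\log\lambda(t)$ terms in \eqref{vexell0def} cancel, so that
$$v_{ex,ell}(t,r) = \frac{\lambda'(t)^2}{\sqrt{\lambda(t)}}\,A(R) + \sqrt{\lambda(t)}\,\lambda''(t)\,B(R), \qquad v_{ex,ell,0}(t,r) = \frac{\lambda'(t)^2}{\sqrt{\lambda(t)}}\,A_{0}(R) + \sqrt{\lambda(t)}\,\lambda''(t)\,B_{0}(R),$$
where $A,B,A_{0},B_{0} \in C^{\infty}((0,\infty))$ are explicit, with $A,B$ built from $R$, $\sqrt{3+R^{2}}$ and $\sinh^{-1}(R/\sqrt{3})$, and $A_{0},B_{0}$ of the form $(a_{1}+a_{2}\log R)/R$. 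It therefore suffices to estimate (i) the coefficients $\lambda'(t)^{2}/\sqrt{\lambda(t)}$, $\sqrt{\lambda(t)}\lambda''(t)$ and their $t$-derivatives, and (ii) the profile differences $F := A - A_{0}$, $G := B - B_{0}$ and their $R$-derivatives, and then to combine these via the chain rule.

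For (i), set $p(t) := \lambda'(t)/\lambda(t)$; by \eqref{lambdasymb}, $|p(t)| \le M/t$, and since each $p^{(m)}$ is a polynomial in $\lambda'(t)/\lambda(t),\dots,\lambda^{(m+1)}(t)/\lambda(t)$ with every monomial of total order $m+1$, \eqref{lambdasymb} also gives $|p^{(m)}(t)| \le C_{m}t^{-m-1}$. Since $\lambda'(t)^{2}/\sqrt{\lambda(t)} = \lambda(t)^{3/2}p(t)^{2}$, $\sqrt{\lambda(t)}\lambda''(t) = \lambda(t)^{3/2}(p'(t)+p(t)^{2})$, and $\partial_{t}^{m}(\lambda(t)^{\alpha})$ is $\lambda(t)^{\alpha}$ times a polynomial in $p,\dots,p^{(m-1)}$, the Leibniz rule yields, for every $\alpha \in \mathbb{R}$, $|\partial_{t}^{l}(\lambda(t)^{\alpha}p(t)^{2})| + |\partial_{t}^{l}(\lambda(t)^{\alpha}(p'(t)+p(t)^{2}))| \le C_{l,\alpha}\lambda(t)^{\alpha}/t^{l+2}$; this is exactly the form needed once $r$-derivatives (which, as noted below, only multiply by powers of $\lambda(t)^{-1}$) have been taken.

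For (ii), one Taylor expands $\sqrt{3+R^{2}}$, $\sinh^{-1}(R/\sqrt{3})$ and $R/\sqrt{3+R^{2}}$ both at $R = \infty$ and at $R = 0$. At $R = \infty$ one has $\sinh^{-1}(R/\sqrt{3}) = \log R + \tfrac12\log(4/3) + O(R^{-2})$, and $A_{0},B_{0}$ are, by the precise constants in \eqref{vexell0def}, exactly the $O(\log R/R) + O(1/R)$ parts of $A$ and $B$; hence $F$ and $G$ contain no $\log R$, no constant, and no $R^{-1}$ term, so $F(R), G(R) = O(R^{-3})$, and term-by-term differentiation of the (elementary) expansions gives $|F^{(m)}(R)| + |G^{(m)}(R)| \le C_{m}R^{-3-m}$ for $R \ge 1$. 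At $R = 0$, the brackets defining $A$ and $B$ vanish to first order, so $A,B$ extend smoothly across $R=0$ and are bounded with bounded derivatives on $(0,1]$; thus $F,G$ inherit only the mild singularity of $-A_{0},-B_{0}$, giving $|F^{(m)}(R)| + |G^{(m)}(R)| \le C_{m}(1+|\log R|)R^{-1-m}$ for $0 < R \le 1$. To assemble: since $\partial_{r}R = \lambda(t)^{-1}$ is independent of $r$, $\partial_{r}^{k}[A(R)] = \lambda(t)^{-k}A^{(k)}(R)$, and since $\partial_{t}^{q}R = R\cdot(\text{polynomial in }p,\dots,p^{(q-1)})$, Faa di Bruno gives $|\partial_{t}^{q}[F^{(k)}(R)]| \le C_{q}t^{-q}\sum_{m=0}^{q}R^{m}|F^{(k+m)}(R)|$. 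Combining these with (i) through the Leibniz rule gives
$$\bigl|\partial_{t}^{j}\partial_{r}^{k}(v_{ex,ell}-v_{ex,ell,0})(t,r)\bigr| \le \frac{C\,\lambda(t)^{3/2-k}}{t^{2+j}}\sum_{m=0}^{j}R^{m}\bigl(|F^{(k+m)}(R)| + |G^{(k+m)}(R)|\bigr),$$
and plugging in the bounds from (ii) — using $R^{m}R^{-3-k-m} = R^{-3-k}$ for $R \ge 1$, and $R^{m}(1+|\log R|)R^{-1-k-m} = (1+|\log R|)R^{-1-k}$ for $R \le 1$ — together with $R = r/\lambda(t)$ yields the two claimed estimates. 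The hypothesis $j+k \le 5$ is used precisely to guarantee that only profile derivatives of order $k+m \le j+k \le 5$ occur.

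The main obstacle is step (ii): verifying the "matching", i.e.\ that \eqref{vexell0def} with its precise constants really does cancel both the $\log R/R$ and the $R^{-1}$ asymptotics of $v_{ex,ell}$ as $R \to \infty$ (which requires tracking the logarithms carefully, e.g.\ $\log(4/3) = 2\log(2/\sqrt{3})$), and that no singularity worse than $\log R/R$ is introduced as $R \to 0$. Everything else is routine bookkeeping with the Leibniz rule, the chain rule, and the symbol bounds \eqref{lambdasymb}.
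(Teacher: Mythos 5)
Your proposal is correct and is essentially a careful unpacking of what the paper calls "a direct computation" — the paper itself supplies no written proof for this lemma. The key structural observations you make are exactly the right ones: (a) after substituting $R = r/\lambda(t)$, both $v_{ex,ell}$ and $v_{ex,ell,0}$ split into $\tfrac{\lambda'(t)^2}{\sqrt{\lambda(t)}}\cdot(\text{profile in }R)$ plus $\sqrt{\lambda(t)}\lambda''(t)\cdot(\text{profile in }R)$, with the $\log\lambda(t)$ in $v_{ex,ell,0}$ cancelling against the $\log\lambda(t)$ hidden in $\log r$; (b) the precise constants in \eqref{vexell0def} are chosen to match the full $O(\log R/R)+O(1/R)$ tail of the profiles of $v_{ex,ell}$ at $R\to\infty$ (I checked the numerics — using $\sinh^{-1}(R/\sqrt3)=\log R+\tfrac12\log\tfrac43+O(R^{-2})$, $R(R-\sqrt{3+R^2})=-\tfrac32+O(R^{-2})$, $24R/\sqrt{3+R^2}=24+O(R^{-2})$, both the $\log R/R$ and the constant$/R$ coefficients agree term by term in $A-A_0$ and in $B-B_0$), giving $O(R^{-3})$ decay; (c) near $R=0$ the bracketed expressions in $v_{ex,ell}$ vanish to first order, so the profiles of $v_{ex,ell}$ are smooth there and the singular $(1+|\log R|)/R$ behavior comes solely from $-v_{ex,ell,0}$; and (d) the symbol bounds \eqref{lambdasymb} turn each $\partial_t$ into a factor $t^{-1}$ while each $\partial_r$ becomes a $\lambda(t)^{-1}$ times a profile derivative, so that $R^m F^{(k+m)}(R)$ contracts back to $R^{-3-k}$ (resp.\ $(1+|\log R|)R^{-1-k}$) in each regime. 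This is precisely the claimed estimate after rewriting in terms of $r$ and $\lambda(t)$.

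One small remark: the constraint $0\le j+k\le 5$ in the lemma statement is not really forced by your argument (the profiles $F,G$ are smooth and decay with all derivatives), it is simply the range that is needed later in the paper; your closing sentence about this restriction is harmless but should not be read as indicating a genuine obstruction at higher orders.
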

$v_{ex,sub}(t,r):=v_{ex}(t,r)-v_{ex,ell}(t,r)$ solves
\begin{equation}\label{vexsubeqn}-\partial_{t}^{2}v_{ex,sub}+\partial_{r}^{2}v_{ex,sub}+\frac{2}{r}\partial_{r}v_{ex,sub}=\partial_{t}^{2}v_{ex,ell}\end{equation} 
In fact, we have the following formula for $v_{ex,sub}$:
\begin{lemma} \label{vexsublemma}
$$v_{ex,sub}(t,r) = \int_{0}^{\infty} dx \int_{|r-x|}^{r+x} ds \left(-\frac{x}{2r}\right) \partial_{1}^{2}v_{ex,ell}(t+s,x)$$\end{lemma}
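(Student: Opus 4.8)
\emph{Proof proposal.} The asserted identity is the Duhamel (variation of parameters) representation, with vanishing Cauchy data prescribed at $t=+\infty$, of the solution $v_{ex,sub}$ of \eqref{vexsubeqn}, after an application of Fubini's theorem. First I would record the explicit formula for the free $3$-dimensional radial wave run backwards in time: if $g$ is a locally integrable function on $(0,\infty)$ with mild regularity, and $V$ solves $-\partial_{t}^{2}V+\partial_{r}^{2}V+\frac{2}{r}\partial_{r}V=0$ with $V(\tau_{0},\cdot)=0$ and $\partial_{t}V(\tau_{0},\cdot)=g$, then, for $\tau\le\tau_{0}$,
$$V(\tau,r)=-\frac{1}{2r}\int_{|r-(\tau_{0}-\tau)|}^{r+(\tau_{0}-\tau)}\rho\, g(\rho)\,d\rho .$$
This follows by observing that $rV$ solves the one dimensional wave equation on $\{r>0\}$ with a Dirichlet condition at $r=0$ and applying d'Alembert's formula to the odd extension of $\rho\mapsto\rho g(\rho)$ across the origin; it is the same computation that produced the formula for $v_{ex,s}$, since $RHS=\partial_{r}^{2}v_{ex,ell}+\frac{2}{r}\partial_{r}v_{ex,ell}$ and $f_{3}(s,x)=\frac{\pi}{2}\int_{x}^{\infty}\rho\,RHS(s,\rho)\,d\rho$.

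Next I would prove that $v_{ex,sub}(t,r)=\int_{t}^{\infty}V_{s}(t,r)\,ds$, where $V_{s}$ is the solution of the free radial wave equation with $V_{s}(s,\cdot)=0$, $\partial_{t}V_{s}(s,\cdot)=\partial_{1}^{2}v_{ex,ell}(s,\cdot)$ (given pointwise by the formula above with $\tau_{0}=s$, $g=\partial_{1}^{2}v_{ex,ell}(s,\cdot)$). I would do this by telescoping in the parameter $s$. For fixed $t$ and $s\ge t$, let $\Phi_{s}(t,r)$ be the solution of the free radial wave equation with Cauchy data $\big(v_{ex,ell}(s,\cdot),\ \partial_{1}v_{ex,ell}(s,\cdot)\big)$ at time $s$, again understood through the explicit d'Alembert-type formula (note $v_{ex,ell}(s,\cdot)$ need not lie in the energy space). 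A direct computation — in which two of the terms arising from the product rule cancel, and the remaining term not equal to $V_{s}(t,r)$ is rewritten using $\partial_{r}^{2}v_{ex,ell}+\frac{2}{r}\partial_{r}v_{ex,ell}=RHS$ — gives $\partial_{s}\Phi_{s}(t,r)=V_{s}(t,r)-v_{ex,s}(t,r)$. Integrating in $s$ over $(t,\infty)$, using $\Phi_{t}(t,\cdot)=v_{ex,ell}(t,\cdot)$, the construction \eqref{vexdef} of $v_{ex}$ (so that $\int_{t}^{\infty}v_{ex,s}(t,r)\,ds=v_{ex}(t,r)$), and $\lim_{s\to\infty}\Phi_{s}(t,r)=0$, gives $-v_{ex,ell}(t,r)=\int_{t}^{\infty}V_{s}(t,r)\,ds-v_{ex}(t,r)$, i.e.\ the claimed representation of $v_{ex,sub}=v_{ex}-v_{ex,ell}$. (One could alternatively argue by uniqueness, but the telescoping avoids having to make precise in what sense $v_{ex,sub}$ is ``the'' solution with zero data at infinity.) The limit $\lim_{s\to\infty}\Phi_{s}(t,r)=0$ I would obtain from the explicit wave formula together with the decay in $s$ and the $O(\log\rho/\rho)$ behaviour for large $\rho$ of $v_{ex,ell}(s,\rho)$ and $\partial_{1}v_{ex,ell}(s,\rho)$, which follow from the explicit formula for $v_{ex,ell}$, \eqref{vexell0def}, Lemma \ref{vexellminusell0estlemma}, and \eqref{lambdasymb}, \eqref{lambdacomparg}.

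Finally, I would substitute the explicit formula for $V_{s}$ into $v_{ex,sub}(t,r)=\int_{t}^{\infty}V_{s}(t,r)\,ds$, perform the change of variables $s=t+s'$, and exchange the $s'$- and $\rho$-integrations by Fubini, using the identity of integration regions
$$\{\,s'>0,\ |r-s'|<\rho<r+s'\,\}=\{\,\rho>0,\ |\rho-r|<s'<\rho+r\,\} ;$$
relabelling $\rho\to x$, $s'\to s$ produces precisely the formula in the statement, the factor $-\tfrac{x}{2r}$ being inherited from the wave formula. I expect the main obstacle to be the justification of these manipulations — absolute convergence of the double integral (hence applicability of Fubini), legitimacy of differentiation under the integral sign in the telescoping step, and the vanishing of $\Phi_{s}(t,r)$ as $s\to\infty$ — all of which reduce to quantitative bounds on $\rho\,\partial_{1}^{2}v_{ex,ell}(s,\rho)$, uniform in $\rho\in(0,\infty)$ and locally uniform in $s$: near $\rho=0$ it is $\log$-integrable in $\rho$ (the $\rho$ factor cancels the $\rho^{-1}$ of $v_{ex,ell}$ there, leaving only a logarithmic singularity), for $\rho$ large one uses the decay furnished by Lemma \ref{vexellminusell0estlemma} and \eqref{vexell0def}, and integrability near $s=\infty$ follows from $\partial_{1}^{2}v_{ex,ell}(s,\rho)=O\big(\lambda(s)^{5/2}s^{-4}\rho^{-1}(1+\log\rho+|\log\lambda(s)|)\big)$ for $\rho\gtrsim\lambda(s)$, combined with the monotonicity of $x\mapsto\lambda(x)^{5/2}/x$ from \eqref{lambdacomparg}.
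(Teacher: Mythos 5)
Your proof is correct, but it takes a genuinely different route from the paper's. The paper works entirely at the level of $RHS$: it applies Fubini to the Duhamel representation \eqref{vexdef} of $v_{ex}$, integrates by parts in the retarded-time variable $w$, isolates $v_{ex,ell}$ as the zeroth-order term, and then identifies the remainder with the iterated wave propagator acting on $\partial_{1}^{2}v_{ex,ell}$ by an algebraic rearrangement of the kernel $(r+y-w)_{+}-(|r-y|-w)_{+}$, a step that amounts to matching the coefficient of $\partial_{1}RHS(t+w,y)$ case by case in $w,y,r$. Your telescoping argument via the auxiliary family $\Phi_{s}$ is a commutation-with-Duhamel identity: it is conceptually cleaner, it makes explicit exactly where the elliptic relation $\partial_{r}^{2}v_{ex,ell}+\frac{2}{r}\partial_{r}v_{ex,ell}=RHS$ enters (the cancellation in $\partial_{s}\Phi_{s}$), and it entirely avoids the paper's inner-kernel rearrangement. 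What you pay for is some extra structural overhead: besides Fubini you must justify differentiation under the integral sign in the $\partial_{s}\Phi_{s}$ computation, and you need $\lim_{s\to\infty}\Phi_{s}(t,r)=0$, which requires controlling $v_{ex,ell}(s,\rho)$ and $\partial_{1}v_{ex,ell}(s,\rho)$ near the backward light cone $\rho\sim s-t$ and not merely for bounded $\rho$. As you point out, all of these reduce to the same symbol-type bounds on $\rho\,\partial_{1}^{j}v_{ex,ell}(s,\rho)$ coming from the explicit formula for $v_{ex,ell}$, \eqref{vexell0def}, Lemma \ref{vexellminusell0estlemma}, and \eqref{lambdasymb}, so the two proofs are of comparable rigor; yours trades the paper's explicit kernel manipulation for an additional limiting argument.
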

\begin{proof}
\begin{equation}\begin{split} v_{ex}(t,r) &= \int_{t}^{\infty} ds \left(\frac{-1}{2r} \int_{|r-(s-t)|}^{r+s-t} y RHS(s,y) dy\right)= \int_{0}^{\infty} dy \int_{|r-y|}^{r+y} dw \left(\frac{-y}{2r} RHS(t+w,y)\right) \end{split}\end{equation}
Integrating by parts in $w$, we get
\begin{equation}\begin{split} v_{ex}(t,r) &= \int_{0}^{\infty} dy \left(\frac{-y}{2r}\right)\left(RHS(t+r+y,y)(r+y)-RHS(t+|r-y|,y) |r-y|\right)\\
&+\frac{1}{2r} \int_{0}^{\infty} dy y \int_{|r-y|}^{r+y} w \partial_{1}RHS(t+w,y) dw\\
&=\int_{0}^{\infty} dy \left(\frac{-y}{2r}\right)\left(RHS(t,y)(r+y)-RHS(t,y) |r-y|\right)\\
&+\int_{0}^{\infty} dy \left(\frac{-y}{2r}\right)\begin{aligned}[t]&\left(\left(RHS(t+r+y,y)-RHS(t,y)\right)(r+y)\right.\\
&\left.-\left(RHS(t+|r-y|,y)-RHS(t,y)\right) |r-y|\right)\end{aligned}\\
&+\frac{1}{2r}\int_{0}^{\infty} dy y \int_{|r-y|}^{r+y} w \partial_{1}RHS(t+w,y) dw\end{split}\end{equation}
The first term on the right-hand side of the last equality above is $v_{ex,ell}$. So,
\begin{equation}\begin{split}&v_{ex,sub}(t,r)\\
&= \int_{0}^{\infty} dy \left(\frac{-y}{2r}\right)\left(\int_{0}^{r+y} \partial_{1}RHS(t+w,y)(r+y-w) dw -\int_{0}^{|r-y|} \partial_{1}RHS(w+t,y)(|r-y|-w)dw\right)\\
&= \int_{0}^{\infty} dq \int_{0}^{\infty} dy \left(\frac{y}{4r}\right) \left(\partial_{1}RHS(t+r+q,y)-\partial_{1}RHS(t+|r-q|,y)\right)(q+y-|q-y|)\\
&=\int_{0}^{\infty} dx \int_{|r-x|}^{r+x} ds \left(-\frac{x}{2r}\int_{0}^{\infty} dy \left(\frac{-y}{2x}\right)\partial_{1}^{2}RHS(t+s,y)(x+y-|x-y|)\right)\\
&=\int_{0}^{\infty} dx \int_{|r-x|}^{r+x} ds \left(-\frac{x}{2r}\right) \partial_{1}^{2}v_{ex,ell}(t+s,y)\end{split}\end{equation}\end{proof}
The leading part of $v_{ex,sub}$ in the matching region is $v_{ex,sub,ell}$ (as the next lemma shows), given by
\begin{equation}\label{vexsubelldef}v_{ex,sub,ell}(t,r) = -\frac{1}{r} \int_{\lambda(t)}^{r} x^{2}\partial_{1}^{2}v_{ex,ell,0}(t,x) dx+ \int_{\lambda(t)}^{r} x \partial_{1}^{2}v_{ex,ell,0}(t,x) dx\end{equation}
In particular, $v_{ex,sub,ell}$ solves
$$\partial_{r}^{2}v_{ex,sub,ell}+\frac{2}{r}\partial_{r}v_{ex,sub,ell}=\partial_{t}^{2}v_{ex,ell,0}$$

\begin{lemma}\label{vexsubminussub0estlemma} For $j=0$, $0 \leq k \leq 2$ and $j=1$, $0 \leq k \leq 1$, and in the region $\lambda(t) \leq r \leq t$,
\begin{equation}\begin{split} &|\partial_{t}^{j}\partial_{r}^{k}\begin{aligned}[t]&\left(v_{ex,sub}(t,r)-v_{ex,sub,ell}(t,r)+\int_{t}^{\infty} ds g_{3}(s) + \int_{t}^{\infty} ds g_{2}''(s) \log\left(\frac{4 (s-t)^{2}}{3 \lambda(s)^{2}}\right)\right.\\
&\left.+\int_{t}^{\infty} ds (s-t)\partial_{s}^{2}\left(v_{ex,ell}(s,y)-v_{ex,ell,0}(s,y)\right)\Bigr|_{y=s-t}\right)|\end{aligned}\\
&\leq \begin{cases} \frac{C r^{2}\lambda(t)^{5/2}\log(t)}{t^{5}}+\frac{C\lambda(t)^{7/2} \log^{2}(t)}{t^{4}}, \quad j=k=0\\
\frac{C \lambda(t)^{5/2}\log^{2}(t)}{t^{4}}, \quad j=1,k=0\\
\frac{C r \lambda(t)^{5/2} \log(t)}{t^{5}} + \frac{C \lambda(t)^{7/2} \log^{2}(t)}{r t^{4}}, \quad j=0,k=1\\
\frac{C \lambda(t)^{5/2} \log^{2}(t)}{r t^{4}}, \quad j+k=2\end{cases}\end{split}\end{equation}
where
$$g_{1}(s) = \frac{3 \sqrt{3} \sqrt{\lambda(s)}((-17)\lambda'(s)^{2}+2 \lambda(s)\lambda''(s))}{16}, \quad g_{2}(s) = \frac{3 \sqrt{3}\sqrt{\lambda(s)} (15 \lambda'(s)^{2}+10 \lambda(s)\lambda''(s))}{16}$$
$$g_{3}(s) = \frac{2 g_{2}(s) ((\lambda'(s))^{2}-\lambda(s)\lambda''(s))+\lambda(s)(-4 \lambda'(s)g_{2}'(s)+\lambda(s)g_{1}''(s))}{\lambda(s)^{2}}$$\end{lemma}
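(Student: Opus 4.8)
The plan is to feed the closed-form expression for $v_{ex,sub}$ from Lemma \ref{vexsublemma} through an essentially \emph{exact} rearrangement that peels off the principal part $v_{ex,sub,ell}$ and the three explicit correction integrals, leaving only genuinely small remainders. Writing $\tau$ for the source-time increment and applying Fubini to the $(x,\tau)$ integral (everywhere justified by the decay bounds below), Lemma \ref{vexsublemma} reads
\[ v_{ex,sub}(t,r) = -\frac{1}{2r}\int_{0}^{\infty}d\tau\int_{|r-\tau|}^{r+\tau}dx\;x\,\partial_{1}^{2}v_{ex,ell}(t+\tau,x). \]
Split $\partial_{1}^{2}v_{ex,ell} = \partial_{1}^{2}v_{ex,ell,0} + \partial_{1}^{2}(v_{ex,ell}-v_{ex,ell,0})$, so $v_{ex,sub} = A + B$. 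Differentiating \eqref{vexell0def} twice in the first slot and using the definition of $g_{3}$ gives $v_{ex,ell,0}(t,r) = \tfrac{1}{r}(g_{1}(t) + g_{2}(t)\log\tfrac{4r^{2}}{3\lambda(t)^{2}})$ and hence $\partial_{1}^{2}v_{ex,ell,0}(s,x) = \tfrac{1}{x}(g_{3}(s) + g_{2}''(s)\log\tfrac{4x^{2}}{3\lambda(s)^{2}})$; in particular every $x$-integral in $A$ is elementary, and the same formula identifies the integrand defining $v_{ex,sub,ell}$ in \eqref{vexsubelldef} as $-\tfrac{x^{2}}{r}$ or $x$ times $\tfrac{1}{x}(g_{3}(t)+g_{2}''(t)\log\tfrac{4x^{2}}{3\lambda(t)^{2}})$.

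To treat $A$, split the $\tau$-integral at $\tau\sim r$. In the near region $\tau\lesssim r$ the light-cone shell $[|r-\tau|,r+\tau]$ stays in $x\lesssim r\le t$, so one freezes the time arguments, replacing $g_{3}(t+\tau),g_{2}''(t+\tau),\lambda(t+\tau)$ by their values at $t$; carrying out the elementary $x$- and $\tau$-integrals reproduces, identically, the $r$-linear and $r\log r$ terms and the $\lambda(t)$-normalized integral defining $v_{ex,sub,ell}$ — the oversized terms match automatically, exactly as $u_{ell,main}$ and $w_{1,lm}$ do in the earlier matching steps. In the far region $\tau\gtrsim r$, Taylor expand $(\tau\pm r)\log(\tau\pm r)$ in powers of $r/\tau$; its leading term, once one adds back $\int_{t}^{\infty}g_{3}(s)\,ds$ and $\int_{t}^{\infty}g_{2}''(s)\log\tfrac{4(s-t)^{2}}{3\lambda(s)^{2}}\,ds$, reassembles into the missing $[t,t+r]$ pieces of those integrals, while the symbol bounds \eqref{lambdasymb}, \eqref{lambdacomparg} control the freezing errors (each gaining a power of $\tau/t\le r/t$), the Taylor remainders (each gaining a power of $r/\tau$), and the $O(\lambda(t))$ discrepancy between the lower endpoint $|r-\tau|$ of the wave representation and the $\lambda(t)$ appearing in \eqref{vexsubelldef}.

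For $B$: since $v_{ex,ell}-v_{ex,ell,0}$ and its second time-derivative are $O(\lambda(s)^{5/2}x^{-1}(1+|\log(x/\lambda(s))|)s^{-4})$ for $x\le\lambda(s)$ and $O(\lambda(s)^{9/2}x^{-3}s^{-4})$ for $x\ge\lambda(s)$ by Lemma \ref{vexellminusell0estlemma}, the $x$-integral in $B$ converges absolutely and $B$ is already small; expanding the inner $\tau$-integral over the shell of width $2\min(r,\tau)$ extracts the part where the backward light cone has radius $x\approx\tau$ as $-\int_{t}^{\infty}(s-t)\,\partial_{s}^{2}(v_{ex,ell}-v_{ex,ell,0})(s,y)\big|_{y=s-t}\,ds$, i.e. the third correction integral, with the difference (and the remaining $x\lesssim r$ part of $B$) bounded directly by Lemma \ref{vexellminusell0estlemma}. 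The $\partial_{t}$- and $\partial_{r}$-derivatives are obtained by differentiating under the integral sign — legitimate by \eqref{lambdasymb} and the derivative estimates of Lemma \ref{vexellminusell0estlemma}, and for $\partial_{t}$ after first changing to the $\tau$-variable so that no boundary contribution at $s=t$ arises — and rerunning the above; a single derivative costs a factor $t^{-1}$ on the freezing and Taylor remainders but only a factor $r^{-1}$ on the $\log r$-type remainders, which produces the asymmetry between the $j=k=0$ bound and the $k\ge 1$ and $j=1$ bounds (using $r\le t$ in the latter).

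The hard part is the rearrangement for $A$: $v_{ex,sub,ell}$, $\int_{t}^{\infty}g_{3}$, and $\int_{t}^{\infty}g_{2}''\log\tfrac{4(s-t)^{2}}{3\lambda^{2}}$ are each of size $\sim\lambda(t)^{5/2}(\log t)/t^{3}$ or $\sim r\lambda(t)^{5/2}/t^{4}$ — much larger than the claimed remainder — so the recombination has to be carried out on the nose; crude termwise estimates lose too much. Concretely one must verify (i) that the $r^{1}$ and $r^{1}\log r$ coefficients produced by the near region of $A$ cancel $v_{ex,sub,ell}$ exactly, and (ii) that the far region of $A$ reproduces $-\int_{t}^{\infty}g_{3}-\int_{t}^{\infty}g_{2}''\log\tfrac{4(s-t)^{2}}{3\lambda^{2}}$ up to errors no worse than $\lambda^{7/2}(\log^{2}t)/t^{4}$ and $r^{2}\lambda^{5/2}(\log t)/t^{5}$, uniformly for $\lambda(t)\le r\le t$ and after up to two derivatives. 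Handling the $s=t$ endpoint of the logarithmic correction and the $|r-\tau|$-versus-$\lambda(t)$ endpoint mismatch are the fiddly points, and the overall bookkeeping of which remainder feeds which term in the stated four-case estimate is where the care is concentrated.
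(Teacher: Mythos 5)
Your decomposition $v_{ex,sub}=A+B$ with $A$ driven by $\partial_1^2 v_{ex,ell,0}$ and $B$ by $\partial_1^2(v_{ex,ell}-v_{ex,ell,0})$ is exactly the split the paper uses (with $A=v_{ex,sub,0}$), and your identification of $\partial_1^2 v_{ex,ell,0}(s,x)=x^{-1}(g_3(s)+g_2''(s)\log\frac{4x^2}{3\lambda(s)^2})$, the near/far split of the $\tau$-integral, the extraction of $-\int_t^\infty(s-t)F(s,s-t)\,ds$ from the far part of $B$, and the endpoint mismatch at $\lambda(t)$ all track the paper's computation. The one place where your plan diverges from the paper's and where you have a weak spot is the $(j,k)=(0,2)$ case: you propose to get $\partial_r^2$ by differentiating the wave representation twice under the integral sign, whereas the paper deliberately avoids this. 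It instead first establishes $\partial_t$ and $\partial_t^2$ bounds (the latter outside the lemma's stated range, used only as an auxiliary step), verifies that $v_{ex,sub}-v_{ex,sub,0}$ solves the free wave equation with source $\partial_t^2(v_{ex,ell}-v_{ex,ell,0})$, and then reads off $\partial_r^2$ from the PDE. Direct double $r$-differentiation of the light-cone representation produces boundary contributions at $y=|s-t-r|$ and $y=s-t+r$ whose bookkeeping is delicate near $s=t+r$; the PDE route sidesteps that entirely. If you insist on direct differentiation you would need to argue those boundary terms carefully, and the claimed $j+k=2$ bound $\lambda(t)^{5/2}\log^2(t)/(rt^4)$ is genuinely larger than what naive division of the $j=0$ estimates by $r$ or $t$ would predict, so the loss has to be traced. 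Otherwise, your proof is an honest sketch of the paper's: the ``rearrangement on the nose'' you flag as the hard part corresponds to the paper's long explicit expansion of $v_{ex,sub,0}-v_{ex,sub,ell}+\int_t^\infty g_3+\int_t^\infty g_2''\log(\cdot)$ into manifestly small pieces, which you assert rather than carry out, but correctly identify as the crux.
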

\begin{proof}We start with the definition (recall \eqref{vexell0def})
$$v_{ex,sub,0}(t,r):= \int_{t}^{\infty}ds \int_{|s-t-r|}^{s-t+r} dy \left(\frac{-y}{2r}\right) \partial_{s}^{2}v_{ex,ell,0}(s,y)= \int_{t}^{\infty} ds \left(\frac{-1}{2r}\right) \int_{|s-t-r|}^{s-t+r}dy(g_{3}(s)+\log(\frac{4 y^{2}}{3\lambda(s)^{2}})g_{2}''(s))$$
where $g_{1},g_{2},g_{3}$ are as in the lemma statement. We compute the $y$ integrals, and decompose the $s$ integrals in the following form, which makes it clear which terms will cancel when $v_{ex,sub,ell}$ is subtracted. (Whenever $y$ appears in integrals below, it arises from the change of variable $s=t+r y$).
\begin{equation}\begin{split}v_{ex,sub,0}(t,r)&= \frac{r}{4}\left(2+\log(\frac{16 r^{4}}{9 \lambda(t)^{4}})\right)g_{2}''(t)+ \frac{r}{2}g_{3}(t) - \int_{t}^{\infty} ds g_{3}(s)+\int_{t}^{t+r}ds (g_{3}(s)-g_{3}(t))\left(1-\frac{(s-t)}{r}\right) \\
&-\frac{1}{2}\int_{0}^{1}dy (g_{2}''(t+r y)-g_{2}''(t)) r \begin{aligned}[t]&\left((1+y)(-2+\log(\frac{4}{3}\frac{r^{2}}{\lambda(t)^{2}}(1+y)^{2}))\right.\\
&+\left.(1-y)(2+\log(\frac{3 \lambda(t)^{2}}{4 r^{2}(1-y)^{2}}))\right)\end{aligned}\\
&-\frac{1}{2}\int_{0}^{1} dy g_{2}''(t+r y) r (2y)\log(\frac{\lambda(t)^{2}}{\lambda(t+r y)^{2}}) - \int_{t}^{\infty} ds g_{2}''(s) (2+\log(\frac{4 (s-t)^{2}}{3\lambda(s)^{2}}))\\
&+\int_{t}^{t+r} ds (g_{2}''(s)-g_{2}''(t)) \log(\frac{4 (s-t)^{2}}{3\lambda(s)^{2}}) -2 \int_{t}^{t+r} ds g_{2}''(t) \log(\frac{\lambda(s)}{\lambda(t)})\\
&+r(g_{2}''(t+r)-g_{2}''(t))(-1+\log(4))\\
&+r^{2}\int_{1}^{\infty} \left(y \log(1-\frac{1}{y^{2}}) + coth^{-1}(y)-y+\frac{y^{2}}{2} \log(1+\frac{2}{y-1})\right)g_{2}'''(t+r y) dy -2 (g_{2}'(t)+r g_{2}''(t))\end{split}\end{equation}
We remark that the last line of the above expression was obtained using integration by parts. We note that
\begin{equation}\begin{split}&\int_0^r \partial_{t}^{2}v_{ex,ell,0}(t,x) x \, dx-\frac{1}{r}\int_0^r \partial_{t}^{2}v_{ex,ell,0}(t,x) x^2 \, dx = \frac{1}{4} r g_{2}''(t) \left(\log \left(\frac{16 r^4}{9 \lambda(t)^4}\right)+2\right)-2 r g_{2}''(t)+\frac{1}{2} r g_{3}(t)\end{split}\end{equation}
Therefore, recalling \eqref{vexsubelldef}, we have
\begin{equation}\begin{split}&v_{ex,sub,0}-v_{ex,sub,ell}+\int_{t}^{\infty} ds g_{3}(s) + \int_{t}^{\infty} ds g_{2}''(s)\log(\frac{4 (s-t)^{2}}{3\lambda(s)^{2}})\\
&=\int_{t}^{t+r} ds (g_{3}(s)-g_{3}(t))(1-\frac{(s-t)}{r})\\
&-\frac{r}{2}\int_{0}^{1} dy (g_{2}''(t+r y)-g_{2}''(t))\left((1+y)(-2+\log(\frac{4 r^{2}}{3 \lambda(t)^{2}}(1+y)^{2}))+(1-y)(2+\log(\frac{3 \lambda(t)^{2}}{4 r^{2}(1-y)^{2}}))\right)\\
&-r \int_{0}^{1} dy g_{2}''(t+r y) y \log(\frac{\lambda(t)^{2}}{\lambda(t+r y)^{2}})+\int_{t}^{t+r} ds (g_{2}''(s)-g_{2}''(t)) \log(\frac{4 (s-t)^{2}}{3 \lambda(s)^{2}})\\
&-2 \int_{t}^{t+r} ds g_{2}''(t) \log(\frac{\lambda(s)}{\lambda(t)}) + r(g_{2}''(t+r) - g_{2}''(t)) (-1+\log(4))\\
&+r^{2}\int_{1}^{\infty} \left(y\log(1-\frac{1}{y^{2}}) + coth^{-1}(y) - y+\frac{y^{2}}{2} \log(1+\frac{2}{y-1})\right) g_{2}'''(t+r y) dy\\
&+\int_0^{\lambda(t)} \partial_{t}^{2}v_{ex,ell,0}(t,x) x \, dx-\frac{1}{r}\int_0^{\lambda(t)} \partial_{t}^{2}v_{ex,ell,0}(t,x) x^2 \, dx\end{split}\end{equation}
From Lemma \ref{vexellminusell0estlemma}, we thus get for $j=0, 0 \leq k \leq 2$ and $j=1, 0 \leq k \leq 1$,  
\begin{equation}\begin{split}&|\partial_{t}^{j}\partial_{r}^{k} \left(v_{ex,sub,0}-v_{ex,sub,ell}+\int_{t}^{\infty} ds g_{3}(s) + \int_{t}^{\infty} ds g_{2}''(s)\log(\frac{4 (s-t)^{2}}{3\lambda(s)^{2}})\right)|\\
&\leq \frac{C r^{2-k} \lambda(t)^{5/2} \log(t)}{t^{5+j}} + \frac{C \lambda(t)^{7/2} \log(t)}{r^{k}t^{4+j}}, \quad \lambda(t) \leq r \leq t \end{split}\end{equation}
Next, for $F(s,y) = \partial_{s}^{2}(v_{ex,ell}(s,y)-v_{ex,ell,0}(s,y))$, we have
\begin{equation}\label{vexsubminussub0}\begin{split} v_{ex,sub}(t,r)-v_{ex,sub,0}(t,r)&=\int_{t}^{\infty} ds \int_{|s-t-r|}^{s-t+r} dy \left(\frac{-y}{2r}\right) \partial_{s}^{2}(v_{ex,ell}(s,y)-v_{ex,ell,0}(s,y))\\
&=\int_{0}^{r} dy \frac{y}{r}\left(-\frac{y}{2}\right) F(t,y) + \int_{r}^{2r} dy \left(2-\frac{y}{r}\right)\left(\frac{-y}{2}\right) F(t,y)\\
&-\frac{1}{2}\int_{0}^{1}dz \int_{r(1-z)}^{r(1+z)} dy y (F(t+r z,y)-F(t,y))\\
&-\frac{1}{2r}\int_{t+r}^{\infty} ds \left(\int_{s-t-r}^{s-t+r} dy y F(s,y) -2r(s-t) F(s,s-t)\right)\\
&-\int_{t}^{\infty} ds(s-t) F(s,s-t) + \int_{t}^{t+r} ds (s-t) F(s,s-t)\end{split}\end{equation}
Direct estimation gives, in the region $ \lambda(t) \leq r \leq t$,
$$|v_{ex,sub}(t,r)-v_{ex,sub,0}(t,r)+\int_{t}^{\infty} ds(s-t) F(s,s-t)| \leq \frac{C r^{2} \lambda(t)^{5/2}\log(t)}{t^{5}} + \frac{C \lambda(t)^{7/2} \log^{2}(t)}{t^{4}}$$
We now consider the derivatives of $v_{ex,sub}(t,r)-v_{ex,sub,0}(t,r)+\int_{t}^{\infty} ds(s-t) F(s,s-t)$. For the first $r$ derivative, we directly differentiate \eqref{vexsubminussub0}. To estimate the first time derivative, we start with
$$v_{ex,sub}(t,r) - v_{ex,sub,0}(t,r) = \int_{0}^{\infty}dw \int_{|w-r|}^{w+r} dy \left(\frac{-y}{2r}\right) F(t+w,y)$$
and note that Lemma \ref{vexellminusell0estlemma} justifies the differentiation under the integral sign in $t$, leading to
$$\partial_{t}(v_{ex,sub}-v_{ex,sub,0}) = \int_{t}^{\infty} ds \int_{|s-t-r|}^{s-t+r} dy \left(\frac{-y}{2r}\right) \partial_{1}F(s,y)$$
This expression, as well as its $r$ and $t$ derivatives are then directly estimated. Finally, we can infer an estimate on  $\partial_{r}^{2}\left(v_{ex,sub}-v_{ex,sub,0}\right)$ from our work up to this point, as soon as we justify the statement that
\begin{equation}\label{vexsubminussub0eqn}\left(-\partial_{t}^{2}+\partial_{r}^{2}+\frac{2}{r}\partial_{r}\right)\left(v_{ex,sub}-v_{ex,sub,0}\right) = \partial_{t}^{2}(v_{ex,ell}-v_{ex,ell,0})\end{equation}
From \eqref{vexsubeqn}, it suffices to study $v_{ex,sub,0}$. We note that 
\begin{equation}\begin{split} v_{ex,sub,0}(t,r) &= \frac{-1}{2} \int_{0}^{1} dy (g_{3}(t+r y) \cdot 2 r y + g_{2}''(t+r y)\begin{aligned}[t]&\left(r(1+y)(-2+\log(\frac{4 r^{2}(1+y)^{2}}{3 \lambda(t+r y)^{2}}))\right.\\
&+\left.r(1-y)(2+\log(\frac{3 \lambda(t+r y)^{2}}{4 r^{2}(1-y)^{2}}))\right)\end{aligned})\\
&-\frac{1}{2}\int_{1}^{\infty} dy (g_{3}(t+r y)\cdot 2 r + g_{3}''(t+r y)\begin{aligned}[t]&\left(r(1+y)(-2+\log(\frac{4 r^{2}(1+y)^{2}}{3 \lambda(t+r y)^{2}}))\right.\\
&+\left. r(y-1)(2+\log(\frac{3 \lambda(t+r y)^{2}}{4 r (1-y)^{2}}))\right)\end{aligned})\end{split}\end{equation} 
Now, we can directly differentiate under the integral sign in $(t,r)$, given the symbol type estimates and smoothness of $\lambda$, and then integrate by parts, to show that $v_{ex,sub,0}$ solves
$$\left(-\partial_{t}^{2}+\partial_{r}^{2}+\frac{2}{r}\partial_{r}\right)\left(v_{ex,sub,0}\right) = \partial_{t}^{2}(v_{ex,ell,0})$$
which establishes \eqref{vexsubminussub0eqn}. Finally, a direct estimation of $\partial_{t}\left(\int_{t}^{\infty} ds (s-t) \partial_{s}^{2}(v_{ex,ell}(s,y)-v_{ex,ell,0}(s,y))\Bigr|_{y=s-t}\right)$ gives: for $0 \leq k \leq 2$ and $j=0$ or $0 \leq k \leq 1$ and $j=1$,
\begin{equation}\begin{split}&|\partial_{t}^{j}\partial_{r}^{k} \left(v_{ex,sub}(t,r) - v_{ex,sub,0}(t,r) + \int_{t}^{\infty} ds (s-t) \partial_{s}^{2}(v_{ex,ell}(s,y)-v_{ex,ell,0}(s,y))\Bigr|_{y=s-t}\right)|\\
&\leq \begin{cases} \frac{C r^{2}\lambda(t)^{5/2} \log(t)}{t^{5}} + \frac{C \lambda(t)^{7/2} \log^{2}(t)}{t^{4}}, \quad j=0, k=0\\
\frac{C \lambda(t)^{5/2} \log^{2}(t)}{t^{4}}, \quad j=1,k=0\\
\frac{C r \lambda(t)^{5/2} \log(t)}{t^{5}} + \frac{C \lambda(t)^{7/2} \log^{2}(t)}{r t^{4}}, \quad j=0,k=1\\
\frac{C \lambda(t)^{5/2} \log^{2}(t)}{r t^{4}}, \quad j+k=2\end{cases}\end{split}\end{equation}
which finishes the proof of the lemma.
\end{proof}
Finally, we consider the free wave $v_{2}$ solving
\begin{equation}\label{v20def}\begin{cases}-\partial_{t}^{2}v_{2}+\partial_{r}^{2}v_{2}+\frac{2}{r}\partial_{r}v_{2}=0\\
v_{2}(0,r)=0, \quad \partial_{t}v_{2}(0,r):=v_{2,0}(r)=\frac{\psi(r)}{r}\left(\frac{-15}{8} \pi \sqrt{\lambda(r)} \lambda''(r)\right)\end{cases}\end{equation}
(This specific choice of $v_{2,0}$ is made so that $w_{1}+v_{2}$ matches $u_{ell}$ to an appropriate order, as will be shown in Section \ref{firstordermatching}, see also the discussion following \eqref{uwintrofirsteqn}, and the comment after \eqref{matchingpart1comp}). We have
$$v_{2}(t,r) =  \frac{t}{2}\int_{0}^{\pi} \sin(\theta) v_{2,0}(\sqrt{r^{2}+t^{2}+2 r t \cos(\theta)})d\theta$$
We define $v_{2,main}$ to be the first three terms in a small $r$ expansion of $v_{2}$: (recall that $t \geq T_{0} > 2 T_{\lambda}$, so $\psi(t) =1$)
\begin{equation}\label{v2maindef}\begin{split} v_{2,main}(t,r) &=t v_{2,0}(t)+ \frac{1}{6} r^2 \left(2 v_{2,0}'(t)+t v_{2,0}''(t)\right)+\frac{1}{120} r^4 \left(4 v_{2,0}'''(t)+tv_{2,0}''''(t)\right)\end{split}\end{equation}
It will also be convenient to consider various pieces of $v_{2,main}$:
\begin{equation}\label{v2qmdef}v_{2,qm}(t,r) = -\frac{15}{8} \left(\pi  \sqrt{\lambda(t)} \lambda''(t)\right)+\frac{1}{6} r^2 \left(2v_{2,0}'(t)+t v_{2,0}''(t)\right):=v_{2,lm}(t,r) + \frac{1}{6} r^2 \left(2v_{2,0}'(t)+t v_{2,0}''(t)\right)\end{equation}
We also note that
\begin{equation}\label{v20ests}|v_{2,0}^{(k)}(r)| \leq \frac{C_{k} \mathbbm{1}_{\{r \geq T_{\lambda}\}} \lambda(r)^{3/2}}{r^{3+k}}\end{equation}
Using the identical procedures as in Lemmas \ref{w1minusmainest}, \ref{w1estlemma}, we get:
\begin{lemma}\label{v2minusmainest} For $0 \leq j, k \leq 2$, or $j=3,k=0$
$$|\partial_{r}^{k}\partial_{t}^{j}\left(v_{2}-v_{2,main}\right)(t,r)| \leq \frac{C r^{6-k} \lambda(t)^{3/2}}{t^{8+j}}, \quad r \leq \frac{t}{2}$$
For $j \geq 0$,
$$|\partial_{t}^{j}v_{2,lm}(t,r)| \leq \frac{C_{j}\lambda(t)^{3/2}}{t^{2+j}}, \quad |\partial_{t}^{j}\left(v_{2,qm}-v_{2,lm}\right)| \leq \frac{C_{j} r^{2} \lambda(t)^{3/2}}{t^{4+j}}$$
$$|\partial_{t}^{j}(v_{2}-v_{2,qm})| \leq \begin{cases}\frac{C_{j}r^{4}\lambda(t)^{3/2}}{t^{6+j}}, \quad r \leq \frac{t}{2}\\
\frac{C_{j}\sup_{x \in [T_{\lambda},t+r]} \lambda(x)^{3/2} \log(t+r)}{r \langle t-r \rangle^{j+1}} + \frac{C_{j} r^{2} \lambda(t)^{3/2}}{t^{4+j}}, \quad r \geq \frac{t}{2}\end{cases}$$
\end{lemma}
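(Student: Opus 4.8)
The plan is to obtain all of the stated bounds by repeating, essentially word for word, the arguments in the proofs of Lemmas \ref{w1minusmainest} and \ref{w1estlemma}, with $\widetilde{v_{2,0}}$ replaced everywhere by $v_{2,0}$ and the decay estimate \eqref{v20tildeests} replaced by \eqref{v20ests}. The preliminary step is to record \eqref{v20ests} itself, which follows from the definition of $v_{2,0}$ in \eqref{v20def} by Leibniz' rule together with the symbol class \eqref{lambdasymb} (using $|\lambda^{(k)}(r)| \leq C_{k}\lambda(r)r^{-k}$ and $|\partial_{r}^{j}\sqrt{\lambda(r)}| \leq C_{j}\sqrt{\lambda(r)}\,r^{-j}$). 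The same observation handles the bounds on $v_{2,lm}$ and $v_{2,qm}-v_{2,lm}$ at once: by \eqref{v2qmdef} each of these, and each of its $t$-derivatives, is an explicit elementary expression in $\lambda,\lambda',\lambda''$ and their derivatives (indeed $v_{2,lm}(t,r) = -\tfrac{15}{8}\pi\sqrt{\lambda(t)}\lambda''(t)$ and $v_{2,qm}(t,r)-v_{2,lm}(t,r) = \tfrac{1}{6}r^{2}(2v_{2,0}'(t)+t v_{2,0}''(t))$), so \eqref{lambdasymb} (equivalently \eqref{v20ests}) yields $|\partial_{t}^{j}v_{2,lm}(t,r)| \leq C_{j}\lambda(t)^{3/2}t^{-2-j}$ and $|\partial_{t}^{j}(v_{2,qm}-v_{2,lm})(t,r)| \leq C_{j}r^{2}\lambda(t)^{3/2}t^{-4-j}$.

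For the region $r \leq t/2$ I would substitute $\rho = \sqrt{r^{2}+t^{2}+2rt\cos\theta}$ in the spherical means representation of $v_{2}$ recorded just after \eqref{v20def}, which turns it into $v_{2}(t,r) = \tfrac{1}{2r}\bigl(H(t+r)-H(t-r)\bigr)$ with $H'(\rho)=\rho\, v_{2,0}(\rho)$. Taylor expanding $H$ about $\rho=t$ and using the cancellation of the even powers of $r$ in $H(t+r)-H(t-r)$, the terms through order $r^{5}$ reproduce $v_{2,main}$ as given by \eqref{v2maindef} (here one uses $(\rho v_{2,0})^{(2m)}|_{\rho=t}=2m\,v_{2,0}^{(2m-1)}(t)+t\,v_{2,0}^{(2m)}(t)$ and the coefficients $1/(2m+1)!$), while the remainder is bounded by $C r^{6}\sup_{\rho\in[t-r,t+r]}|(\rho v_{2,0})^{(6)}(\rho)|$; applying \eqref{v20ests} and comparing $\lambda(\rho)$ with $\lambda(t)$ over $[t/2,3t/2]$ via \eqref{lambdacomparg} gives $\leq C r^{6}\lambda(t)^{3/2}t^{-8}$. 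The bound \eqref{v20ests} also licenses differentiating under the integral sign; each $\partial_{r}$ lowers the power of $r$ by one and each $\partial_{t}$ produces a factor $t^{-1}$, which gives the first displayed estimate over the stated range of $(j,k)$. Truncating the same expansion lower identifies $v_{2,lm}$ with the $r$-independent term (note $t v_{2,0}(t)=-\tfrac{15}{8}\pi\sqrt{\lambda(t)}\lambda''(t)$, since $\psi(t)=1$), $v_{2,qm}-v_{2,lm}$ with the $r^{2}$ term, and hence $v_{2}-v_{2,qm}$ with the tail of order $\geq r^{4}$; combining the estimate on $v_{2}-v_{2,main}$ with $|v_{2,main}-v_{2,qm}| \leq C r^{4}\lambda(t)^{3/2}t^{-6}$ and using $r \leq t/2$ produces $|\partial_{t}^{j}(v_{2}-v_{2,qm})(t,r)| \leq C_{j}r^{4}\lambda(t)^{3/2}t^{-6-j}$ in this region.

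For $r \geq t/2$ the Taylor expansion is unavailable, and I would follow the proof of Lemma \ref{w1estlemma} step by step: first differentiate the spherical means representation of $v_{2}$ using the chain-rule identity \eqref{derivofsqrtcomp} together with \eqref{v20ests} to get the pure $\partial_{r}^{k}$ bound; then bring in the scaling field $X = t\partial_{t}+r\partial_{r}$, which acts on $v_{2}$ by producing $v_{2}$ plus a term of the same type but with $\rho\, v_{2,0}'(\rho)$ inside the integral, and use $X = t(\partial_{t}+\partial_{r})+(r-t)\partial_{r}$ to trade a spatial derivative for a time derivative; sharpen the resulting bound on $\partial_{t}v_{2}$ near the light cone by means of the alternative representation $v_{2}(t,r)=\tfrac{1}{2}\int_{-t}^{t}v_{2,0}(\sqrt{r^{2}+t^{2}+2rtu})\,du$; pass to $\partial_{t}^{2}$ by differentiating the equation $(-\partial_{t}^{2}+\partial_{r}^{2}+\tfrac{2}{r}\partial_{r})v_{2}=0$; and then close by induction on the total order of differentiation. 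The term $C_{j}r^{2}\lambda(t)^{3/2}t^{-4-j}$ appearing in the $r \geq t/2$ estimate for $v_{2}-v_{2,qm}$ is then just $|v_{2,qm}-v_{2,lm}|+|v_{2,lm}|$, already bounded above.

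I expect the only genuinely delicate point to be the bookkeeping near the light cone $r \approx t$ in the region $r \geq t/2$: there one must combine the elementary $\rho^{-2}$-type decay estimate (sharp far from the cone) with the estimate gained from differentiating the wave equation (which trades spatial regularity for $\langle t-r\rangle$-decay) so as to land exactly on the claimed powers $\langle t-r\rangle^{-j-1}$ with only a single logarithmic loss. This is precisely the maneuver carried out in the proof of Lemma \ref{w1estlemma}; since $v_{2,0}$ decays one power of $r$ faster than $\widetilde{v_{2,0}}$ and carries $\lambda^{3/2}$ in place of $\sqrt{\lambda}$, every estimate there transfers under the obvious substitution, and no new obstacle appears.
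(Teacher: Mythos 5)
Your proposal follows exactly the route the paper indicates: the text before Lemma \ref{v2minusmainest} reads ``Using the identical procedures as in Lemmas \ref{w1minusmainest}, \ref{w1estlemma}, we get,'' so you are reproducing the intended argument. Your identification $v_{2}(t,r)=\tfrac{1}{2r}\bigl(H(t+r)-H(t-r)\bigr)$ with $H'(\rho)=\rho\,v_{2,0}(\rho)$ for $r<t$, the verification that the Taylor coefficients $H^{(2m+1)}(t)/(2m+1)!$ reproduce $v_{2,main}$ via $H^{(2m+1)}(t)=2m\,v_{2,0}^{(2m-1)}(t)+t\,v_{2,0}^{(2m)}(t)$, and the use of \eqref{v20ests} together with \eqref{lambdacomparg} to bound the $H^{(7)}$ remainder are correct; likewise the deduction of the $v_{2,lm}$, $v_{2,qm}-v_{2,lm}$, and $v_{2}-v_{2,qm}$ bounds by reading the small-$r$ expansion at lower truncation order, and the large-$r$ estimates by repeating the $X=t\partial_{t}+r\partial_{r}$ and equation-differentiation steps of Lemma \ref{w1estlemma} with $\widetilde{v_{2,0}}\to v_{2,0}$ and \eqref{v20tildeests}\ $\to$ \eqref{v20ests}.
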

\begin{lemma}\label{v2estlemma}For all $j, k \geq 0$, there exists $C_{j,k} >0$, such that
$$|\partial_{r}^{k}\partial_{t}^{j}v_{2}(t,r)| \leq \frac{C_{j,k} \lambda(t)^{3/2}}{t^{2+j+k}}, \quad r \leq \frac{t}{2}$$
$$|\partial_{r}^{k}\partial_{t}^{j}v_{2}(t,r)| \leq \frac{C_{j,k} \left(\sup_{x \in [T_{\lambda},t+r]}\lambda(x)^{3/2}\right) \log(t+r)}{r \langle t -r \rangle^{k+j+1}}, \quad r \geq \frac{t}{2}$$
$$|(\partial_{t}+\partial_{r})v_{2}| \leq \frac{C\left(\sup_{x \in [T_{\lambda},t+r]}\lambda(x)^{3/2}\right)\log(r+t)}{\langle t-r \rangle t r}, \quad r \geq \frac{t}{2}$$
\end{lemma}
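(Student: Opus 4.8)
\emph{Proof plan.} The plan is to repeat the proof of Lemma~\ref{w1estlemma} essentially verbatim, replacing $\widetilde{v_{2,0}}$ by $v_{2,0}$ throughout and using the decay bounds \eqref{v20ests} in place of \eqref{v20tildeests}. Since $v_{2,0}$ decays one power of $r$ faster than $\widetilde{v_{2,0}}$ and carries a factor $\lambda(\cdot)^{3/2}$ rather than $\sqrt{\lambda(\cdot)}$, the structure of all the estimates is identical, and the correspondingly improved weights appear in the conclusion.

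First, for $r \leq \frac{t}{2}$, I would start from
$$v_{2}(t,r) = \frac{t}{2}\int_{-1}^{1} v_{2,0}\bigl(\sqrt{r^{2}+t^{2}+2 r t u}\,\bigr)\, du,$$
carry out the same change of variables and integration by parts as in \eqref{w1forest} so as to write $v_{2}$ as an integral whose integrand is a symbol in $(t,r)$ on the range $\frac{t}{2} \leq \sqrt{r^{2}+t^{2}+2rtu} \leq \frac{3t}{2}$, then differentiate under the integral sign (justified by \eqref{v20ests} and the smoothness of $\lambda$), and estimate the resulting derivatives of $v_{2,0}$ using \eqref{v20ests} together with \eqref{lambdacomparg} to replace $\lambda(\cdot)^{3/2}$ by $\lambda(t)^{3/2}$. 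This produces the first bound.

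For $r \geq \frac{t}{2}$, I would proceed in the four steps used for $\widetilde{v_{2}}$ in Lemma~\ref{w1estlemma}: (i)~apply \eqref{derivofsqrtcomp} and \eqref{v20ests} to obtain a preliminary bound on $\partial_{r}^{k}v_{2}$; (ii)~introduce the scaling field $X = t\partial_{t}+r\partial_{r}$, observe that $Xv_{2}$ is again of the same form, now with $\rho\, v_{2,0}'(\rho)$ in the integrand, and combine this with the identity $Xv_{2} = t(\partial_{t}+\partial_{r})v_{2}+(r-t)\partial_{r}v_{2}$ to extract the $(\partial_{t}+\partial_{r})v_{2}$ estimate; (iii)~use the representation $v_{2}(t,r) = \frac{1}{2}\int_{-t}^{t} v_{2,0}\bigl(\sqrt{r^{2}+t^{2}+2rw}\,\bigr)\, dw$ and direct differentiation to get a second bound on $\partial_{r}^{k}\partial_{t}v_{2}$ carrying an extra factor $\langle r-t\rangle^{-1}$, and take the better of the two; and (iv)~differentiate the equation $(-\partial_{t}^{2}+\partial_{r}^{2}+\frac{2}{r}\partial_{r})v_{2}=0$ from \eqref{v20def} to trade each additional $\partial_{t}^{2}$ for $\partial_{r}^{2}+\frac{2}{r}\partial_{r}$, closing an induction on $j$. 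Here one uses at each stage that $r \geq \frac{t}{2}$ forces $\langle t-r\rangle \leq C r$, which is exactly what reconciles the various weights.

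The main obstacle is the region near the light cone $r \approx t$: the sharp weight $\langle t-r\rangle^{-(k+j+1)}$, together with the single power $r^{-1}$ and the $\log(t+r)$ factor, cannot be read off from any single one of the representations above, and one must carefully interleave the estimates of steps (ii) and (iii), exactly as in the proof of Lemma~\ref{w1estlemma}. Away from the cone, and throughout the region $r \leq \frac{t}{2}$, the argument is routine symbol-type differentiation under the integral sign; the cutoff $\psi$ enters only to confine the argument of $v_{2,0}$ to $\{\rho \geq T_{\lambda}\}$, so that \eqref{v20ests} is applicable.
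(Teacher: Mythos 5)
Your plan is exactly what the paper does: it dispatches this lemma (together with Lemma~\ref{v2minusmainest}) with the single sentence ``Using the identical procedures as in Lemmas \ref{w1minusmainest}, \ref{w1estlemma}, we get:'', and the steps you spell out — the symbol estimate on $r\le t/2$ via differentiation under the integral sign, the $X=t\partial_t+r\partial_r$ identity together with the two representations of $v_2$ near the light cone, and the wave equation to close the induction — are precisely those used for $\widetilde{v_2}$ in Lemma \ref{w1estlemma}, with \eqref{v20ests} in place of \eqref{v20tildeests} accounting for the extra power of $\langle t-r\rangle^{-1}$ and the $\lambda^{3/2}$ weight. The only cosmetic inaccuracy is that the integration by parts in \eqref{w1forest} was introduced specifically to capture the cancellation $w_{1,0}(t,0)+\widetilde{v_2}(t,0)=0$, which has no analogue for $v_2$ alone (indeed $v_2(t,0)=tv_{2,0}(t)\neq 0$); for the stated bound on $r\le t/2$, plain differentiation under the integral sign already suffices.
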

Letting $v_{1}=w_{1}+v_{ex}+v_{2}$, we thus have a particular solution to 
$$-\partial_{t}^{2}v_{1}+\partial_{r}^{2}v_{1}+\frac{2}{r}\partial_{r}v_{1}=\partial_{t}^{2}Q_{\lambda(t)}(r)$$
\subsection{Second large $r$ correction}
Now, we need to correct the linear error term from $v_{1}$, which is (recall \eqref{Vdef})
$$V(t,r) (w_{1}+v_{2}):=RHS_{2}(t,r)$$ 
From Lemmas \ref{w1estlemma} and \ref{v2estlemma}, we get
\begin{equation}\label{dtjrhs2est}\begin{split}&|\partial_{t}^{j} RHS_{2}(t,r)| \\
&\leq \begin{cases} \frac{C_{j}}{t^{j+2} \sqrt{\lambda(t)}(\frac{r^{2}}{\lambda(t)^{2}}+1)^{3/2}}, \quad r \leq \frac{t}{2}\\
\frac{C_{j} \lambda(t)^{2}\log(r)}{r^{5}}\left(\frac{1}{\langle t-r \rangle^{j}}+\frac{1}{t^{j}}\right) \left(\sup_{x \in [T_{\lambda},t+r]}\sqrt{\lambda(x)} + \left(\sup_{x \in [T_{\lambda},t+r]}\lambda(x)^{3/2}\right)\left(\frac{1}{\langle t-r \rangle} + \frac{1}{t}\right)\right), \quad r \geq \frac{t}{2}\end{cases}\end{split}\end{equation}
We consider the particular solution to 
\begin{equation}\label{uw2eqn}-\partial_{t}^{2}u_{w,2}+\partial_{r}^{2}u_{w,2}+\frac{2}{r}\partial_{r}u_{w,2} = RHS_{2}(t,r)\end{equation}
given by
$$u_{w,2}(t,r) = \int_{t}^{\infty} ds u_{w,2,s}(t,r)$$
where $u_{w,2,s}$ solves
$$\begin{cases}-\partial_{t}^{2}u_{w,2,s}+\partial_{r}^{2}u_{w,2,s}+\frac{2}{r}\partial_{r}u_{w,2,s}=0\\
u_{w,2,s}(s,r)=0\\
\partial_{t}u_{w,2,s}(s,r) = RHS_{2}(s,r)\end{cases}$$
Therefore,
\begin{equation}\label{uw2withell}\begin{split}u_{w,2}(t,r)& = \int_{t}^{\infty} ds\left(\frac{-1}{2r}\int_{|r-(s-t)|}^{r+s-t}  y RHS_{2}(s,y) dy\right)\end{split}\end{equation}

\begin{lemma}\label{uw2estlemma}For $r \geq \lambda(t)$,
\begin{equation}\label{uw2ptwse}|u_{w,2}(t,r)| \leq \frac{C \sup_{x \in [T_{\lambda},t+r]}\left(\lambda(x)^{5/2}\right) \log^{2}(t+r)}{r t^{2}}\end{equation}
\begin{equation}\label{druw2ptwse}|\partial_{r}u_{w,2}(t,r)| \leq \frac{C \sup_{x \in [T_{\lambda},t+r]}\left(\lambda(x)^{5/2}\right)\log^{2}(t+r)}{r t^{2}}\left(\frac{1}{r}+\frac{1}{t}\right) + \frac{C \sup_{x \in [T_{\lambda},t+r]}\left(\lambda(x)^{3/2}\right) \log^{2}(t+r)}{r t^{2}}\end{equation}
\begin{equation}\label{uw2enest}||\partial_{t}u_{w,2}(t,r)||_{L^{2}(r^{2} dr)} + ||\partial_{r}u_{w,2}(t,r)||_{L^{2}(r^{2} dr)} \leq \frac{C \lambda(t)}{t}\end{equation}

\end{lemma}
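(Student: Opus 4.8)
The plan is to derive all three estimates from the representation formula \eqref{uw2withell} together with the pointwise bounds on $RHS_{2}$ in \eqref{dtjrhs2est}. Since $u_{w,2}$ is built from $RHS_{2}$ in exactly the way $v_{ex}$ is built from $RHS$, the argument runs parallel to the proof of Lemma \ref{vexestlemma}, the only genuinely new feature being that the large-$r$ bound in \eqref{dtjrhs2est} carries an extra factor $\log(r)$, which is what produces the $\log^{2}(t+r)$ appearing in \eqref{uw2ptwse} and \eqref{druw2ptwse}. For the pointwise bound on $u_{w,2}$ itself, I would start from \eqref{uw2withell}, substitute $w=s-t$, and interchange the order of integration to get
\[
u_{w,2}(t,r) = -\frac{1}{2r}\int_{0}^{\infty} y\left(\int_{|r-y|}^{r+y} RHS_{2}(t+w,y)\,dw\right) dy .
\]
Then I would split the $y$-integral into the ranges $0<y<\lambda(t)$, $\lambda(t)\le y\le t$, and $y>t$; in the first two ranges one uses the bound of \eqref{dtjrhs2est} valid for $y\le (t+w)/2$ (legitimate there since $t+w\ge t\ge 2y$), and in the last range one uses the large-$r$ bound of \eqref{dtjrhs2est}. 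In each range one first carries out the $w$-integral. The symbol estimates \eqref{lambdasymb}, \eqref{lambdacomparg} — in particular that $x\mapsto\lambda(x)^{5/2}/x$ is decreasing together with $\lambda(x)\le \lambda(t)(x/t)^{C_{u}}$ and the smallness of $C_{u}$ from \eqref{cofconstr0} — make integrals such as $\int_{s_{0}}^{\infty}\lambda(s)^{5/2}s^{-2}\,ds \le C\lambda(s_{0})^{5/2}/s_{0}$ converge with the stated dependence. Performing the remaining $y$-integral, and keeping track of where the $\log(r)$ factor and one additional logarithm from a $w$- or $y$-integral combine, yields \eqref{uw2ptwse}.

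For the pointwise bound on $\partial_{r}u_{w,2}$, I would differentiate the displayed formula in $r$. This produces the term $-u_{w,2}(t,r)/r$, controlled by \eqref{uw2ptwse} (this is the source of the $\tfrac1r+\tfrac1t$ factor in \eqref{druw2ptwse}), together with two boundary terms coming from differentiating the limits $r+y$ and $|r-y|$ of the $w$-integral; these are integrals in $y$ of $RHS_{2}$ evaluated at the retarded times $t+r+y$ and $t+|r-y|$. One splits $y<r$ and $y>r$ and estimates with \eqref{dtjrhs2est} as before; near $y\approx r$ the retarded time $t+|r-y|$ is comparable to $t$, so one uses the bound for $RHS_{2}$ at time $\approx t$, and splitting once more at $y\approx\lambda(t)$ and $y\approx t$, while using that $RHS_{2}=V(w_{1}+v_{2})$ inherits a factor $\sqrt{\lambda}$ from $w_{1}$ and a factor $\lambda^{3/2}$ from $v_{2}$ (via Lemmas \ref{w1estlemma}, \ref{v2estlemma}), gives precisely the $\lambda(\cdot)^{5/2}$ and $\lambda(\cdot)^{3/2}$ contributions in \eqref{druw2ptwse}.

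For the energy bound \eqref{uw2enest}, I would follow the proof of \eqref{vexenest} essentially verbatim. Differentiating \eqref{uw2withell} one obtains, as for $v_{ex}$,
\[
\partial_{t}u_{w,2}(t,r) = \int_{t}^{\infty} ds\left(\frac{-1}{2r}\right)\left(-(r+s-t)RHS_{2}(s,r+s-t)+(s-t-r)RHS_{2}(s,|s-t-r|)\right),
\]
together with an analogous formula for $\partial_{r}u_{w,2}+\tfrac{u_{w,2}}{r}$. Minkowski's inequality reduces matters to $\int_{t}^{\infty}\|RHS_{2}(s,\cdot)\|_{L^{2}(r^{2}\,dr)}\,ds$; splitting at $r=s/2$, the rescaling $r=\lambda(s)\rho$ on $r\le s/2$ gives $\|RHS_{2}(s,\cdot)\|_{L^{2}(r^{2}\,dr)}\le C\lambda(s)/s^{2}$ (the $r\ge s/2$ piece, by the $r^{-5}$ decay in \eqref{dtjrhs2est}, is much smaller), and then $\int_{t}^{\infty}\lambda(s)s^{-2}\,ds\le C\lambda(t)/t$ again by $\lambda(s)\le\lambda(t)(s/t)^{C_{u}}$ with $C_{u}<1$. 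This bounds $\|\partial_{t}u_{w,2}\|_{L^{2}(r^{2}\,dr)}+\|\partial_{r}u_{w,2}+\tfrac{u_{w,2}}{r}\|_{L^{2}(r^{2}\,dr)}$; finally, since $u_{w,2}$ is bounded at the origin (indeed $u_{w,2}(t,0^{+})=-\int_{t}^{\infty}(s-t)RHS_{2}(s,s-t)\,ds$) and decays as $r\to\infty$ by \eqref{uw2ptwse}, the boundary terms in $\int_{0}^{\infty}\partial_{r}(r\,u_{w,2}^{2})\,dr$ vanish, so $\|\partial_{r}u_{w,2}\|_{L^{2}(r^{2}\,dr)}=\|\partial_{r}u_{w,2}+\tfrac{u_{w,2}}{r}\|_{L^{2}(r^{2}\,dr)}\le C\lambda(t)/t$, exactly as in Lemma \ref{vexestlemma}. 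I expect the main obstacle to be the pointwise estimate for $\partial_{r}u_{w,2}$: one must control the light-cone boundary terms uniformly in the region $y\approx r$ (where the retarded time degenerates to $t$) and across the thresholds $y\approx\lambda(t)$ and $y\approx t$, while correctly tracking the two different powers of $\lambda$ entering $RHS_{2}$; convergence of the nested $s$-integrals there, and in the energy estimate, is what makes the smallness of $C_{u},C_{l}$ in \eqref{cofconstr0} essential.
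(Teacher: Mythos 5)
Your argument is essentially the paper's proof unpacked: the paper itself dispatches \eqref{uw2ptwse} and \eqref{druw2ptwse} with the phrase ``direct estimation using \eqref{dtjrhs2est}'' and handles \eqref{uw2enest} by pointing to the procedure used for \eqref{vexenest}, explicitly singling out the need for the rough bound $|u_{w,2}(t,r)|\leq C\sqrt{\lambda(t)}/t$ to verify that $r\,u_{w,2}(t,r)^{2}\to0$ as $r\to0$ so the boundary term in $\int_{0}^{\infty}\partial_{r}(r\,u_{w,2}^{2})\,dr$ vanishes. You supply that same observation (via the finite value of $u_{w,2}(t,0^{+})$), the same Minkowski/Hardy-type rearrangement, the same split at $r=s/2$ with a $\rho=r/\lambda(s)$ rescaling, and the same reliance on \eqref{lambdacomparg}. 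So the approach is the same, not a different route.

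One small inaccuracy to fix in the write-up: in the range $\lambda(t)\le y\le t$ of the interchanged $y$-integral, the justification ``legitimate there since $t+w\ge t\ge 2y$'' only holds for $y\le t/2$; for $t/2<y\le t$ one has $y>(t+w)/2$ when $0\le w<2y-t$, so the large-argument branch of \eqref{dtjrhs2est} must be used on that initial $w$-window before switching to the small-argument branch. This requires one more split in $w$ (or, equivalently, a split of the middle $y$-range at $y=t/2$), but it is a bookkeeping adjustment, not a gap in the method.
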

\begin{proof}
A direct estimation using \eqref{dtjrhs2est} gives \eqref{uw2ptwse} and \eqref{druw2ptwse}. The energy estimate, \eqref{uw2enest} is proven with the same procedure used to establish \eqref{vexenest}. The only detail to note is that the following rough estimate
\begin{equation}\label{uw2roughptwse}|u_{w,2}(t,r)| \leq \frac{C \sqrt{\lambda(t)}}{t}, \quad r >0\end{equation}
 which results from directly inserting \eqref{dtjrhs2est} into \eqref{uw2withell}, verifies that 
 $$r u_{w,2}(t,r)^{2} \rightarrow 0, \quad r \rightarrow 0$$ 
 \end{proof}
The leading part of $u_{w,2}$ in the matching region is:
\begin{equation}\label{uw2elldef}u_{w,2,ell}(t,r) = \frac{-1}{r}\int_{0}^{r} dy y^{2} RHS_{2}(t,y) - \int_{r}^{\infty} dy y RHS_{2}(t,y)\end{equation}
which solves
\begin{equation}\label{uw2elleqn}\partial_{r}^{2}u_{w,2,ell}+\frac{2}{r}\partial_{r}u_{w,2,ell}=RHS_{2}(t,r)\end{equation}
\begin{lemma} We have the following estimates on $u_{w,2,ell}$. For $k \geq 0$,
\begin{equation}\begin{split}&|\partial_{t}^{k} u_{w,2,ell}(t,r)| \leq \begin{cases} \frac{C_{k}  \lambda(t)^{3/2}}{t^{2+k}}, \quad r \leq \lambda(t)\\
\frac{C_{k} \log^{2}(t) (1+\log(\frac{r}{\lambda(t)})) \lambda(t)^{2} \sup_{x \in [T_{\lambda},t]}\sqrt{\lambda(x)}}{r t^{2+k}}, \quad \lambda(t) \leq r \leq \frac{t}{2}\end{cases}\end{split}\end{equation}
If $r \geq \frac{t}{2}$, 
\begin{equation}\begin{split}&|\partial_{t}^{k} u_{w,2,ell}(t,r)| \\
&\leq\frac{C_{k}\lambda(t)^{2}\log^{2}(r)}{r t^{2}}\\
&\cdot \begin{cases} \left(\frac{\sup_{x \in [T_{\lambda},t+r]}\sqrt{\lambda(x)}}{t^{k}} + \frac{\sup_{x \in [T_{\lambda},t+r]}\lambda(x)^{3/2}}{t^{1+k}}\right), k=0,1\\
\left(\frac{1}{t \langle t-r \rangle^{k-1}}+\frac{1}{t^{k}}\right)\left(\sup_{x \in [T_{\lambda},t+r]}\sqrt{\lambda(x)} + \sup_{x \in [T_{\lambda},t+r]}\left(\lambda(x)^{3/2}\right) \left(\frac{1}{\langle t-r \rangle}+\frac{1}{t}\right)\right), \quad k \geq 2\end{cases}\end{split}\end{equation}\end{lemma}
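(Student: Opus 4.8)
The plan is to work from the explicit representation \eqref{uw2elldef},
\[
u_{w,2,ell}(t,r)=-\frac{1}{r}\int_{0}^{r}y^{2}RHS_{2}(t,y)\,dy-\int_{r}^{\infty}y\,RHS_{2}(t,y)\,dy,
\]
together with the bounds \eqref{dtjrhs2est} on $\partial_{t}^{j}RHS_{2}$. Since the endpoints $0,r,\infty$ do not depend on $t$, the first step is to differentiate under the integral sign — justified because \eqref{dtjrhs2est} furnishes, for each $j\le k$, a $y$-dominating function integrable near $0$ and (after bounding the $\sup_{[T_{\lambda},t+y]}\lambda(x)^{\cdot}$ factors by powers of $y$ via \eqref{lambdacomparg}) at $\infty$ — giving $\partial_{t}^{k}u_{w,2,ell}(t,r)=-\frac{1}{r}\int_{0}^{r}y^{2}\partial_{t}^{k}RHS_{2}(t,y)\,dy-\int_{r}^{\infty}y\,\partial_{t}^{k}RHS_{2}(t,y)\,dy$. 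The estimate then splits over the $r$-regions $r\le\lambda(t)$, $\lambda(t)\le r\le t/2$, $r\ge t/2$, and within each over the $y$-regions $(0,\lambda(t))$, $(\lambda(t),t/2)$, $(t/2,\infty)$ where the two branches of \eqref{dtjrhs2est} apply.

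For $r\le\lambda(t)$ and for $\lambda(t)\le r\le t/2$ one only needs elementary one-variable estimates. On $(0,t/2)$ the first branch $|\partial_{t}^{k}RHS_{2}(t,y)|\le C_{k}t^{-k-2}\lambda(t)^{-1/2}(y^{2}\lambda(t)^{-2}+1)^{-3/2}$ applies; after the substitution $y=\lambda(t)u$ the model integrals $\int u^{2}(u^{2}+1)^{-3/2}\,du$ (logarithmic) and $\int u(u^{2}+1)^{-3/2}\,du$ (convergent) appear, and, after writing $\lambda(t)^{5/2}\le\lambda(t)^{2}\sup_{[T_{\lambda},t]}\sqrt{\lambda(x)}$ and wasting a power of $\log t$, they reproduce the claimed $\lambda(t)^{3/2}t^{-2-k}$ (for $r\le\lambda(t)$) and $\lambda(t)^{2}\sup\sqrt{\lambda(x)}(1+\log(r/\lambda(t)))\log^{2}(t)(rt^{2+k})^{-1}$ (for $\lambda(t)\le r\le t/2$); the tail $\int_{t/2}^{\infty}$, handled by the second branch, is always lower order by a power of $\lambda(t)/t$, using $\log(t/\lambda(t))\lesssim\log t$ from \eqref{lambdacomparg}. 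The region $r\ge t/2$ with $k\in\{0,1\}$ is likewise handled by inserting \eqref{dtjrhs2est} directly: on $(0,t/2)$ the scaling argument gives $\lesssim r^{-1}\lambda(t)^{5/2}t^{-2-k}\log(t/\lambda(t))$, which fits since $\log(t/\lambda(t))\lesssim\log r$; on $(t/2,\infty)$ the factor $\langle t-y\rangle^{-k}$ integrates against $y^{-3}\log y$ to $O(1)$ when $k=0$ and to an extra $\log t$ when $k=1$, both consistent with the stated $\log^{2}r$, while the $\langle t-y\rangle^{-k-1}\sup\lambda(x)^{3/2}$ pieces produce the $t^{-1-k}\sup\lambda(x)^{3/2}$ term. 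In all of these, \eqref{lambdacomparg} (and the monotonicity of $x\mapsto\lambda(x)^{5/2}/x$) converts the $\sup_{[T_{\lambda},t+y]}$ weights into the stated $\sup_{[T_{\lambda},t+r]}$ weights.

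The delicate case is $r\ge t/2$ with $k\ge2$. Here the naive bound fails: on $(t/2,\infty)\ni y$ the worst term of $|\partial_{t}^{k}RHS_{2}|$, coming from $V(t,y)\partial_{t}^{k}\widetilde{v_{2}}(t,y)$ (and $V\partial_{t}^{k}v_{2}$), is of size $\lambda(t)^{2}\sup\sqrt{\lambda(x)}\,\log y\,y^{-5}\langle t-y\rangle^{-k}$, and $\int_{r}^{\infty}y^{-4}\langle t-y\rangle^{-k}\log y\,dy$ concentrates near $y=t$ (when $r<t$), yielding $\sim\lambda(t)^{2}\sup\sqrt{\lambda(x)}\log t\,t^{-4}$, which overshoots the target by $\sim t^{k-1}/\log t$. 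To recover this one uses the evolution equations rather than only the pointwise bounds: $w_{1}$ solves \eqref{w1eqn} and $v_{2}$ solves the free wave equation, so $(-\partial_{t}^{2}+\partial_{r}^{2}+\tfrac{2}{r}\partial_{r})(w_{1}+v_{2})$ equals the explicit source $\tfrac{\sqrt3(2\lambda(t)\lambda''(t)-\lambda'(t)^{2})}{4r\lambda(t)^{3/2}}$; iterating $\partial_{t}^{2}(w_{1}+v_{2})=(\partial_{r}^{2}+\tfrac{2}{r}\partial_{r})(w_{1}+v_{2})-(\text{source})$ replaces every pair of time derivatives that fall on $w_{1}+v_{2}$ inside $\partial_{t}^{k}RHS_{2}$ by a radial Laplacian plus a lower-order explicit term. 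One then integrates by parts in $y$ in $-\int_{r}^{\infty}y\,\partial_{t}^{k}RHS_{2}\,dy$ (and in $-\tfrac1r\int_{0}^{r}y^{2}\partial_{t}^{k}RHS_{2}\,dy$): each derivative transferred onto the smooth, rapidly decaying factor built from $V$ gains a power of $y^{-1}\sim t^{-1}$, so the derivative pairs together supply the missing $\sim t^{-(k-1)}$, while the boundary terms at $y=r$ — assembled from $V$, $\widetilde{v_{2}}$, $v_{2}$ and their spatial derivatives at $r$ — carry the factors $\langle t-r\rangle^{-1},\dots,\langle t-r\rangle^{-(k-1)}$ of the statement; the leftover interior integrals, and the contributions of the explicit source, are then within reach of the crude bound \eqref{dtjrhs2est}. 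Carrying out this integration-by-parts bookkeeping — keeping exactly two powers of $\log$, exactly $\langle t-r\rangle^{-(k-1)}$, and the correct $\sup_{[T_{\lambda},t+r]}\lambda(x)^{\cdot}$ weights simultaneously, and doing so uniformly in the parity of $k$ — is the main obstacle; everything else reduces to routine estimation of rational-times-logarithmic integrands.
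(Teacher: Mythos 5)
Your overall strategy is the same one the paper uses: differentiate under the integral, estimate by $r$-region, and — because the second branch of \eqref{dtjrhs2est} carries a $\langle t-y\rangle^{-k}$ factor that concentrates near $y=t$ — fall back on the free-wave structure and integration by parts in $y$ when $k\geq 2$. The gap is that you have misidentified \emph{where} the fallback is needed. You assert that for $r\leq t/2$ the tail $\int_{t/2}^{\infty}y\,\partial_t^kRHS_2(t,y)\,dy$ is ``always lower order by a power of $\lambda(t)/t$'', and accordingly deploy the integration-by-parts recovery only in the regime $r\geq t/2$, $k\geq 2$. That assertion is false for $k\geq 2$. The tail appears for \emph{every} $r>0$, since $\int_r^\infty = \int_r^{t/2}+\int_{t/2}^\infty$ when $r<t/2$, and a direct estimate of the near-$y=t$ portion using the second branch of \eqref{dtjrhs2est} gives a contribution of size roughly $\lambda(t)^2\sup_{[T_\lambda,t]}\sqrt{\lambda(x)}\,\log t\,/t^4$ with no further gain in $k$, because $\int\langle t-y\rangle^{-k}dy=O(1)$ once $k\geq 2$. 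This overshoots the target bound (which is $\sim r^{-1}t^{-2-k}\lambda(t)^2\sup\sqrt{\lambda(x)}\,\log^2 t$ for $\lambda(t)\leq r\leq t/2$, or $\lambda(t)^{3/2}t^{-2-k}$ for $r\leq\lambda(t)$) by a factor that grows like a positive power of $t$ once $k\geq 2$. So your recovery mechanism must be applied uniformly in $r$ for $k\geq 2$, and the paper indeed does exactly this: it splits the tail into the $V\,w_{1,0}$ part (handled directly, since $w_{1,0}$ has symbol-type $t$-decay from $\lambda$) and the $V(v_2+\widetilde{v_2})$ part, and treats the latter by $\partial_t^{2m}=H^m$, $\partial_t^{2m+1}=H^m\partial_t$, $\partial_t=(\partial_t+\partial_y)-\partial_y$, and integration by parts.

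A secondary, non-fatal difference: you iterate the full inhomogeneous equation \eqref{w1eqn} for $w_1$ together with the free equation for $v_2$, so every $\partial_t^2$ on $w_1+v_2$ is traded for a radial Laplacian plus the explicit source. The paper instead treats $w_{1,0}$ directly (its time derivatives already gain powers of $t$) and uses $\partial_t^2 = H$ only for the two genuinely free waves $v_2$ and $\widetilde{v_2}$, which are the only pieces whose $t$-derivatives fail to improve near $y=t$; this avoids introducing and tracking the source term. Your variant would work but is less economical. You also do not mention the boundary-term cancellation at $y=r$ between the two integrals in the $r\geq t/2$ split, which the paper explicitly uses to sharpen the estimate; that is part of the bookkeeping you acknowledge leaving open.
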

\begin{proof}
If $r \leq \frac{t}{2}$, 
\begin{equation}\begin{split} \partial_{t}^{k}u_{w,2,ell}(t,r)&= \frac{-1}{r} \int_{0}^{r} dy y^{2}\partial_{t}^{k}RHS_{2}(t,y) - \int_{r}^{\frac{t}{2}} dy y \partial_{t}^{k}RHS_{2}(t,y) - \int_{\frac{t}{2}}^{\infty} dy y \partial_{t}^{k}\left(V(t,y) w_{1,0}(t,y)\right)\\
& - \int_{\frac{t}{2}}^{\infty} dy y \partial_{t}^{k}(V(t,y) (v_{2}(t,y)+\widetilde{v_{2}}(t,y)))\end{split}\end{equation}
We directly estimate all integrals except for the last one, using Lemmas \ref{w1estlemma}, \ref{v2estlemma}. Since the estimates on $t$ derivatives of $v_{2}$ and $\widetilde{v_{2}}$ in Lemmas \ref{v2estlemma} and \ref{w1estlemma}, respectively, are not a power of $t$ better than the corresponding estimates on $v_{2}(t,r)$ and $\widetilde{v_{2}}(t,r)$, in the region $r \geq \frac{t}{2}$, we treat the last integral term as follows.
We have
$$\partial_{t}^{k}\left(V(t,y) (v_{2}+\widetilde{v_{2}})\right) = \sum_{j=0}^{k} {k \choose j} \partial_{t}^{k-j}(V(t,y)) \partial_{t}^{j}(v_{2}+\widetilde{v_{2}})$$
Let $H(f) = \partial_{y}^{2}f+\frac{2}{y}\partial_{y}f$. If $j \geq 1$, we write (for both $v_{2}$ and $\widetilde{v_{2}}$)
\begin{equation}\label{dtjfreewave}\begin{cases} \partial_{t}^{j} v_{2}(t,y) = H^{\frac{j}{2}}v_{2}(t,y), \quad j \text{ is even }\\
\partial_{t}^{j}v_{2}(t,y) = H^{\frac{j-1}{2}}(\partial_{t}v_{2})(t,y), \quad j \text{ is odd }\end{cases}\end{equation}
Then, we integrate by parts in $y$, integrating the terms in \eqref{dtjfreewave}, using the following observation. If $f$ and $g$ are smooth functions of $y \in (0,\infty)$, $0<a<b$, and $n \geq 1$, then,
\begin{equation} \begin{split}\int_{a}^{b} dy y^{2}g(y) H^{n}(f)(y) &= \sum_{j=1}^{n}\begin{aligned}[t]&\left(-a^{2} H^{j-1}(g)(a) \partial_{y}H^{n-j} f(a) + b^{2}H^{j-1}(g)(b) \partial_{y}H^{n-j}f(b)\right.\\
&\left. + a^{2}\partial_{y}(H^{j-1}g)(a)H^{n-j}f(a) - b^{2}\partial_{y}(H^{j-1}g)(b) H^{n-j}f(b)\right)\end{aligned} \\
&+ \int_{a}^{b} f H^{n} (g) y^{2} dy\end{split}\end{equation}
In the case when $j$ is odd in \eqref{dtjfreewave}, this process results in an integral involving $\partial_{t}v_{2}$, which we further re-write as
$\partial_{t}v_{2}(t,y) = \left(\partial_{t}+\partial_{y}\right)v_{2}(t,y) - \partial_{y}v_{2}(t,y)$, and integrate by parts in $y$ for the $\partial_{y}v_{2}$ term. (The point is that $(\partial_{t}+\partial_{y})v_{2}(t,y)$ has better estimates near $y =t$ than just $\partial_{t}v_{2}(t,y)$, as per Lemmas \ref{v2estlemma} and \ref{w1estlemma}). In the region $r \geq \frac{t}{2}$, we start with
\begin{equation}\begin{split}\partial_{t}^{k} u_{w,2,ell}(t,r) &= \frac{-1}{r}\int_{0}^{\frac{t}{2}} dy y^{2} \partial_{t}^{k}RHS_{2}(t,y) - \frac{1}{r}\int_{\frac{t}{2}}^{r} dy y^{2}\partial_{t}^{k}(V(t,y)w_{1,0}) - \frac{1}{r}\int_{\frac{t}{2}}^{r} dy y^{2} \partial_{t}^{k}(V(t,y) (v_{2}+\widetilde{v_{2}}))\\
&-\int_{r}^{\infty} dy y \partial_{t}^{k}(V(t,y)w_{1,0}) - \int_{r}^{\infty} dy y \partial_{t}^{k}(V(t,y) (v_{2}+\widetilde{v_{2}}))\end{split}\end{equation}
Then, we use a similar procedure as above, integrating by parts for the sum of the third and fifth integrals above. Here, the only extra detail to note is that the boundary term at $y=r$ from one integration by parts for the third integral cancels the boundary term at $y=r$ from one integration by parts for the fifth integral. This allows for slightly better estimates than otherwise, and finishes the proof of the lemma.
\end{proof}
We define $u_{w,2,sub}(t,r) = u_{w,2}(t,r)-u_{w,2,ell}(t,r)$. Given the equations that $u_{w,2}$ and $u_{w,2,ell}$ solve, namely \eqref{uw2eqn} and \eqref{uw2elleqn}, $u_{w,2,sub}$ solves
$$-\partial_{t}^{2}u+\partial_{r}^{2}u+\frac{2}{r}\partial_{r}u =\partial_{t}^{2}u_{w,2,ell}$$ As in Lemma \ref{vexsublemma}, we have
\begin{lemma} \label{uw2sublemma}
$$u_{w,2,sub}(t,r) = \int_{0}^{\infty} dx \int_{|r-x|}^{r+x} ds \left(-\frac{x}{2r}\right) \partial_{1}^{2}u_{w,2,ell}(t+s,x)$$\end{lemma}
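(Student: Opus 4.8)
The plan is to reproduce, essentially line for line, the computation used to prove Lemma~\ref{vexsublemma}, since $u_{w,2}$, $u_{w,2,ell}$, $u_{w,2,sub}$ stand in exactly the same relation to $RHS_{2}$ as $v_{ex}$, $v_{ex,ell}$, $v_{ex,sub}$ do to $RHS$: in particular, the defining formula \eqref{uw2elldef} for $u_{w,2,ell}$ has the same shape, $-\tfrac{1}{r}\int_{0}^{r}y^{2}RHS_{2}(t,y)\,dy-\int_{r}^{\infty}yRHS_{2}(t,y)\,dy$, as the one used for $v_{ex,ell}$. I would begin from the Duhamel representation \eqref{uw2withell}, change the order of integration (Fubini, justified by \eqref{dtjrhs2est}), and write
\[
u_{w,2}(t,r)=\int_{0}^{\infty}dy\int_{|r-y|}^{r+y}dw\left(\frac{-y}{2r}\right)RHS_{2}(t+w,y).
\]

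Next I would integrate by parts in $w$ (using $1=\partial_{w}w$) to split $u_{w,2}$ into the boundary term
\[
\int_{0}^{\infty}dy\left(\frac{-y}{2r}\right)\left((r+y)RHS_{2}(t+r+y,y)-|r-y|\,RHS_{2}(t+|r-y|,y)\right)
\]
plus the remainder $\tfrac{1}{2r}\int_{0}^{\infty}dy\,y\int_{|r-y|}^{r+y}w\,\partial_{1}RHS_{2}(t+w,y)\,dw$. Adding and subtracting $RHS_{2}(t,y)$ in the boundary term, the piece carrying $RHS_{2}(t,y)$ collapses, via $(r+y)-|r-y|=2\min\{r,y\}$, to precisely $u_{w,2,ell}(t,r)$ as in \eqref{uw2elldef}, while the pieces carrying $RHS_{2}(t+r+y,y)-RHS_{2}(t,y)$ and $RHS_{2}(t+|r-y|,y)-RHS_{2}(t,y)$ are rewritten, by the fundamental theorem of calculus, as integrals of $\partial_{1}RHS_{2}$. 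Hence $u_{w,2,sub}=u_{w,2}-u_{w,2,ell}$ is a sum of integrals of $\partial_{1}RHS_{2}$; following the displayed chain of identities in the proof of Lemma~\ref{vexsublemma} — a second application of the fundamental theorem of calculus, converting the difference of $\partial_{1}RHS_{2}$ at two times into an integral of $\partial_{1}^{2}RHS_{2}$, together with a change of variables in the time-delay variables — these reorganize into
\[
\int_{0}^{\infty}dx\int_{|r-x|}^{r+x}ds\left(-\frac{x}{2r}\right)\left(\int_{0}^{\infty}dy\left(\frac{-y}{2x}\right)\partial_{1}^{2}RHS_{2}(t+s,y)\,(x+y-|x-y|)\right).
\]
Since differentiating \eqref{uw2elldef} twice in time commutes with the integrals (by \eqref{dtjrhs2est}) and reproduces the weight $-\tfrac{1}{x}\int_{0}^{x}y^{2}(\cdot)\,dy-\int_{x}^{\infty}y(\cdot)\,dy$, the inner $y$-integral above equals $\partial_{1}^{2}u_{w,2,ell}(t+s,x)$, which yields the claimed identity.

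All interchanges of integration order and differentiations under the integral sign are justified by \eqref{dtjrhs2est}, which also ensures absolute convergence of every integral above — using the $r^{-5}$-type decay for large $r$ and the boundedness $|RHS_{2}(t,r)|\le C\,t^{-2}\lambda(t)^{-1/2}$ for $r\le t/2$. The only place meriting attention is the light cone $y\sim t$, where \eqref{dtjrhs2est} carries a $\log(t+r)$ factor and $\langle t-r\rangle^{-j}$ weights; these remain integrable against the $dy$, $dw$, $ds$ measures occurring here, so they cause no difficulty. I do not anticipate a genuine obstacle: this lemma is the exact analogue of Lemma~\ref{vexsublemma} with $RHS$ replaced by $RHS_{2}$, and \eqref{dtjrhs2est} plays precisely the role that direct estimation of \eqref{rhsdef} played in that proof; the task is purely the bookkeeping of transcribing the same integration-by-parts identities.
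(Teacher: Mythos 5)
Your proposal is correct and is precisely the argument the paper has in mind: the paper simply says ``As in Lemma~\ref{vexsublemma}, we have'' and leaves the transcription to the reader, which is what you have carried out, replacing $RHS$, $v_{ex}$, $v_{ex,ell}$, $v_{ex,sub}$ by $RHS_{2}$, $u_{w,2}$, $u_{w,2,ell}$, $u_{w,2,sub}$ and invoking \eqref{dtjrhs2est} in place of the direct estimation of \eqref{rhsdef}. One minor note: the last display in the paper's proof of Lemma~\ref{vexsublemma} contains a typographical slip ($\partial_{1}^{2}v_{ex,ell}(t+s,y)$ should read $\partial_{1}^{2}v_{ex,ell}(t+s,x)$), and your version has this argument correctly as $x$.
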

Let 
$$RHS_{2,0,0}(t,r) = V(t,r)\left(w_{1,lm}+v_{2,lm}\right), \quad RHS_{2,0}=V(t,r)\left(w_{1,cm}+v_{2,qm}\right)$$
Then, the part of $u_{w,2,ell}$ which will be important for the matching procedure is $u_{w,2,ell,0}$, which is defined by
\begin{equation}\label{uw2ell0def}u_{w,2,ell,0}(t,r) = \frac{-1}{r}\int_{0}^{r} dy y^{2} RHS_{2,0}(t,y) - \int_{r}^{\infty} dy y RHS_{2,0,0}(t,y)+\int_{0}^{r} dy y \left(RHS_{2,0}(t,y)-RHS_{2,0,0}(t,y)\right) \end{equation}
The following lemma gives the precise sense in which $u_{w,2,ell,0}$ is the leading part of $u_{w,2,ell}$.
\begin{lemma}\label{uw2ellminusell0estlemma}For $0 \leq j \leq 3$, $0 \leq k \leq 2$,  and in the region $\lambda(t) \leq r \leq \frac{t}{2}$,
$$|\partial_{t}^{j}\partial_{r}^{k}\left(u_{w,2,ell}-u_{w,2,ell,0}+\int_{0}^{\infty}  x V(t,x) (w_{1}+v_{2}-w_{1,lm}-v_{2,lm})(t,x)dx\right)| \leq \frac{C r^{3-k}\lambda(t)^{5/2}}{t^{6+j}}$$
Also, for $0 \leq j \leq 3$ and $r \leq t$, 
$$|\partial_{t}^{j}\left(u_{w,2,ell}-u_{w,2,ell,0}\right)(t,r)| \leq \frac{C \lambda(t)^{2}\sup_{x\in [T_{\lambda},t]}\left(\sqrt{\lambda(x)}\right) \log^{2}(t)}{t^{3}}\left( \frac{1}{t^{j}}+ \frac{1}{\langle t-r\rangle^{j}}\right)$$
For $k=1,2$,
$$|\partial_{t}^{2}\partial_{r}^{k}\left(u_{w,2,ell}-u_{w,2,ell,0}\right)(t,r)| \leq \begin{cases}\frac{C r^{6-k}}{\sqrt{\lambda(t)}t^{8}}, \quad r \leq \lambda(t)\\
\frac{C \lambda(t)^{2} \sup_{x \in [T_{\lambda},t]}\sqrt{\lambda(x)} \log(t)}{t^{5} \langle t-r \rangle^{k}} + \frac{C \lambda(t)^{2} \sup_{x \in [T_{\lambda},t]}\lambda(x)^{3/2} \log^{2}(t)}{t^{5}\langle t-r \rangle^{1+k}}, \quad \frac{t}{2} \leq r \leq t\end{cases}$$
Finally, for $0 \leq j \leq 3$,
$$|\partial_{t}^{j}\left(-\int_{0}^{\infty} dx x V(t,x) (w_{1}+v_{2}-w_{1,lm}-v_{2,lm})\right)| \leq \frac{C \lambda(t)^{2}\sup_{x \in [T_{\lambda},t]} \sqrt{\lambda(x)} \log(t)}{t^{3+j}}$$
and for $0 \leq j \leq 5$, and $s \geq t$,
\begin{equation}\label{uw2ellminusell0minusint}\begin{split}&|\partial_{s}^{j}\left(u_{w,2,ell}(s,y)-u_{w,2,ell,0}(s,y)+\int_{0}^{\infty} dx x V(s,x)(w_{1}+v_{2}-w_{1,lm}-v_{2,lm})(s,x)\right)\Bigr|_{y=s-t}|\\
&\leq \begin{cases} \frac{C_{j}(s-t)^{6}}{\sqrt{\lambda(s)} s^{6+j}}, \quad s-t \leq \lambda(s)\\
\frac{C_{j} (s-t)^{3}\lambda(s)^{5/2}}{s^{6+j}}, \quad \lambda(s) \leq s-t \leq \frac{s}{2}\\
\frac{C_{j}\log^{2}(s)\lambda(s)^{2} \sup_{x \in [T_{\lambda},s]}\sqrt{\lambda(x)}}{s^{4} t^{j-1}} + \frac{C_{j}\log^{2}(s)\lambda(s)^{2}\sup_{x \in [T_{\lambda},s]} \lambda(x)^{3/2}}{s^{4}t ^{j}}, \quad \frac{s}{2} \leq s-t \leq s\end{cases}\end{split}\end{equation}
\end{lemma}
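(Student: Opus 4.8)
\emph{Plan.} I would first collapse $u_{w,2,ell}-u_{w,2,ell,0}$ into one explicit single integral, and then bound that integral and its derivatives by splitting the $y$-range at $\lambda(t)$ and at $t/2$ and inserting the pointwise estimates of Lemmas \ref{w1minusmainest}, \ref{w1estlemma}, \ref{v2minusmainest}, \ref{v2estlemma} together with $|V(t,y)|\le C\min\{\lambda(t)^{-2},\lambda(t)^{2}y^{-4}\}$ and its $t$-derivatives (from \eqref{Vdef}, \eqref{lambdasymb}, \eqref{dtjrhs2est}). Starting from \eqref{uw2elldef} and \eqref{uw2ell0def}, using $RHS_{2}-RHS_{2,0,0}=(RHS_{2}-RHS_{2,0})+(RHS_{2,0}-RHS_{2,0,0})$ and rewriting $-\int_r^\infty = -\int_0^\infty + \int_0^r$, one obtains
\[ u_{w,2,ell}(t,r)-u_{w,2,ell,0}(t,r)=\int_0^r y\Bigl(1-\tfrac{y}{r}\Bigr)V(t,y)\bigl(w_{1}+v_{2}-w_{1,cm}-v_{2,qm}\bigr)(t,y)\,dy-\int_0^\infty y\,V(t,y)\bigl(w_{1}+v_{2}-w_{1,lm}-v_{2,lm}\bigr)(t,y)\,dy \]
(equivalently, $(\partial_r^2+\tfrac2r\partial_r)u_{w,2,ell,0}=RHS_{2,0}$, so the difference solves $(\partial_r^2+\tfrac2r\partial_r)u=RHS_{2}-RHS_{2,0}$ and is regular at $r=0$). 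Both integrals converge absolutely: near $y=0$ the integrands vanish like $y^{3}$ by Lemmas \ref{w1minusmainest}, \ref{v2minusmainest}, and for large $y$ the decay of $V$ beats the linear-in-$r$ growth of $w_{1,lm}$. Differentiation in $t$ and $r$ under the integral signs is justified by the smoothness of $V$ and the symbol bounds \eqref{lambdasymb}. The second integral above is exactly the object estimated in the fourth displayed bound of the lemma, and once it is added back the remaining bounds concern only the first integral.

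\emph{The $r$-independent integral (fourth bound).} Split $\int_0^\infty=\int_0^{\lambda(t)}+\int_{\lambda(t)}^{t/2}+\int_{t/2}^\infty$: on $[0,\lambda(t)]$ use $|V|\le C\lambda(t)^{-2}$ and $|w_{1}+v_{2}-w_{1,lm}-v_{2,lm}|\le Cy^{2}\lambda(t)^{3/2}t^{-4}$; on $[\lambda(t),t/2]$ use $|V(t,y)|\le C\lambda(t)^{2}y^{-4}$ with the small-$r$ bounds of Lemmas \ref{w1minusmainest}, \ref{v2minusmainest}; on $[t/2,\infty)$ use $|V(t,y)|\le C\lambda(t)^{2}y^{-4}$ together with the far-field bounds of Lemmas \ref{w1estlemma}, \ref{v2estlemma}, $|w_{1,lm}|\le Cy\sqrt{\lambda(t)}\,t^{-2}$, $|v_{2,lm}|\le C\lambda(t)^{3/2}t^{-2}$, and \eqref{lambdacomparg} to replace suprema over $[T_\lambda,t+y]$ by suprema over $[T_\lambda,t]$ in the rapidly decaying tail. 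Each regime contributes $\le C\lambda(t)^{2}\bigl(\sup_{[T_\lambda,t]}\sqrt{\lambda}\bigr)\log(t)\,t^{-3}$, and each $\partial_t$ hitting $V$ or a $w_{1}/v_{2}$-difference costs a further $t^{-1}$, giving the fourth bound.

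\emph{The $r$-dependent integral (first, second, third bounds).} Here one differentiates $F(t,r):=\int_0^r y(1-\tfrac yr)V(t,y)(w_{1}+v_{2}-w_{1,cm}-v_{2,qm})(t,y)\,dy$; the integrand vanishes at $y=r$, so $\partial_r F=r^{-2}\int_0^r y^{2}V(\cdots)\,dy$ and $\partial_r^2F$ additionally produces $V(t,r)(w_{1}+v_{2}-w_{1,cm}-v_{2,qm})(t,r)$. In the matching region $\lambda(t)\le r\le t/2$ every $y\le t/2$, so Lemmas \ref{w1minusmainest}, \ref{v2minusmainest} give $|w_{1}-w_{1,cm}|\le Cy^{5}\sqrt{\lambda(t)}\,t^{-6}$, $|v_{2}-v_{2,qm}|\le Cy^{4}\lambda(t)^{3/2}t^{-6}$, the $y$-integral is $O(r^{3}\lambda(t)^{5/2}t^{-6})$, and $\partial_t^j\partial_r^k$ yields exactly $r^{3-k}\lambda(t)^{5/2}t^{-6-j}$: the first bound. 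For the second bound one extends to $r\le t$ by splitting $\int_0^r=\int_0^{t/2}+\int_{t/2}^r$ and using the far-field (hence $\langle t-r\rangle$-singular) estimates on the outer piece, and combines this with the fourth bound, using $\lambda(t)^{5/2}\le\lambda(t)^{2}\sup_{[T_\lambda,t]}\sqrt{\lambda}$ and $\sup_{[T_\lambda,t]}\lambda(x)\ll t$. The third bound has $k\ge1$, so the $r$-independent piece drops out; for $r\le\lambda(t)$ one has $|V|\sim\lambda(t)^{-2}$ and the $v_{2}-v_{2,qm}$ term dominates, giving $\int_0^r y^{5}\lambda(t)^{-1/2}t^{-6}\,dy\sim r^{6}\lambda(t)^{-1/2}t^{-6}$ and hence $r^{6-k}\lambda(t)^{-1/2}t^{-8}$ after $\partial_t^2\partial_r^k$, while for $t/2\le r\le t$ one differentiates $F$ directly, the worst terms being the $y=r$ evaluations with their $\langle t-r\rangle^{-k}$ and $\langle t-r\rangle^{-1-k}$ singularities.

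\emph{The light-cone bound \eqref{uw2ellminusell0minusint}, and the main difficulty.} By Step 1 the quantity there equals $I(s):=\int_0^{s-t}z\bigl(1-\tfrac{z}{s-t}\bigr)V(s,z)(w_{1}+v_{2}-w_{1,cm}-v_{2,qm})(s,z)\,dz$, in which $s$ occurs simultaneously as the time variable, in the upper limit $s-t$, and in the prefactor $(s-t)^{-1}$. Upon differentiating, the $z=s-t$ boundary terms cancel because of the $\bigl(1-\tfrac{z}{s-t}\bigr)$ factor, leaving $\partial_s I(s)=\int_0^{s-t}z\bigl(1-\tfrac{z}{s-t}\bigr)\partial_s\!\bigl[V(w_{1}+v_{2}-w_{1,cm}-v_{2,qm})\bigr](s,z)\,dz+(s-t)^{-2}\int_0^{s-t}z^{2}V(s,z)(w_{1}+v_{2}-w_{1,cm}-v_{2,qm})(s,z)\,dz$; iterating this $j\le5$ times and estimating in the three regimes $s-t\le\lambda(s)$, $\lambda(s)\le s-t\le s/2$, $s/2\le s-t\le s$ (the last needing the far-field bounds of Lemmas \ref{w1estlemma}, \ref{v2estlemma} because then part of the $z$-range is comparable to $s$) yields \eqref{uw2ellminusell0minusint}. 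I expect the main obstacle to be precisely this last bookkeeping, and more generally the behaviour near the light cone $r\sim t$ (and $z=s-t$ near $s$): one must track the cancellation between the polynomially growing parts $w_{1,cm},v_{2,qm}$ and the decaying-but-$\langle t-r\rangle$-singular parts of $w_{1},v_{2}$, while absorbing the negative powers of $\langle t-r\rangle$ generated by the $\partial_r$'s and (in the last bound) by the $\partial_s$'s acting on $(s-t)^{-1}$ and on the moving endpoint. Away from the light cone everything reduces to splitting integrals at $\lambda(t)$ and $t/2$ and tracking powers of $r$, $t$, and $\lambda$.
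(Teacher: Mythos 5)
Your decomposition of $u_{w,2,ell}-u_{w,2,ell,0}$ into the $r$-dependent piece $\int_0^r y\bigl(1-\tfrac{y}{r}\bigr)V\cdot(w_1+v_2-w_{1,cm}-v_{2,qm})\,dy$ plus the $r$-independent piece $-\int_0^\infty yV\cdot(w_1+v_2-w_{1,lm}-v_{2,lm})\,dy$ is exactly the one the paper uses, and your treatment of the $r$-dependent piece in $\lambda(t)\le r\le t/2$ is fine: there the $y$-range stays away from the light cone, so the near-field bounds of Lemmas \ref{w1minusmainest} and \ref{v2minusmainest} (in which each $\partial_t$ really does cost $t^{-1}$) give the claimed $r^{3-k}\lambda^{5/2}t^{-6-j}$.

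The gap is your assertion, for the $r$-independent integral, that ``each $\partial_t$ hitting $V$ or a $w_1/v_2$-difference costs a further $t^{-1}$.'' This is false for the free-wave contributions in the far field $y\ge t/2$: by Lemmas \ref{w1estlemma} and \ref{v2estlemma}, a $\partial_t$ on $v_2$ or $\widetilde{v_2}$ there costs $\langle t-y\rangle^{-1}$, not $t^{-1}$, and since $\int_{t/2}^\infty y\,|V|\,\langle t-y\rangle^{-(j+1)}\cdots\,dy$ is concentrated at $y\sim t$ and already convergent once $j\ge1$, higher $\partial_t$'s gain no further $t$-decay: the naive bound on $\partial_t^j$ of the $v_2$/$\widetilde{v_2}$ piece saturates at $\sim\lambda^2\bigl(\sup_{[T_\lambda,t]}\lambda^{3/2}\bigr)\log t/t^4$ for every $j\ge1$, whereas the lemma claims $\lambda^2\bigl(\sup\sqrt{\lambda}\bigr)\log t/t^{3+j}$, which for $j=3$ is better by a factor $t^2\sup\lambda$. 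The paper therefore does not estimate this integral pointwise. It (i) integrates by parts in $y$ to write $-\int yVw_{1,0}\,dy$ in closed form plus a symbol-type remainder, and evaluates $\int yVw_{1,lm}\,dy$, $\int yVv_{2,lm}\,dy$ explicitly; (ii) uses the Kirchhoff representation of $v_2+\widetilde{v_2}$ to rewrite $-\int yV(v_2+\widetilde{v_2})\,dy$ as two closed-form terms plus $-\tfrac{45\sqrt3}{4}\lambda^2\int_0^\infty w\int_{|w-t|}^{w+t}\tfrac{(h(w)-h(t))\,dy}{(y^2+3\lambda^2)^2}\,dw$; (iii) checks that every non-integral (order $\lambda^{3/2}/t^2$) term cancels in the combination \eqref{termcuw2ellminusell0}; and (iv) splits the double integral via $w=t+z$ into regions where the integrand, thanks to the MVT factor $h(w)-h(t)$, is a genuine $t$-symbol, so that only now does $\partial_t^j$ gain $t^{-j}$. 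The same obstruction affects your sketch of the light-cone bound \eqref{uw2ellminusell0minusint} in the regime $s-t\ge s/2$ (where part of the $z$-range reaches $z\sim s$) and of the second displayed bound for $t/2\le r\le t$; you flag the light-cone region as the hard part but your stated bookkeeping rule is the very thing that fails there.
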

\begin{proof} From \eqref{uw2elldef}, \eqref{uw2ell0def}, we get
\begin{equation}\begin{split}u_{w,2,ell}(t,r)-u_{w,2,ell,0}(t,r) &= \frac{-1}{r} \int_{0}^{r} dy y^{2} V(t,y)\left(w_{1}+v_{2}-w_{1,cm}-v_{2,qm}\right)\\
&+\int_{0}^{r} dy y V(t,y) (w_{1}-w_{1,cm}+v_{2}-v_{2,qm})-\int_{0}^{\infty} dy y V(t,y) (w_{1}+v_{2}-w_{1,lm}-v_{2,lm})\end{split}\end{equation}
We start with the last term in the above expression. From integration by parts,
\begin{equation}\begin{split}-\int_{0}^{\infty} dy y V(t,y) w_{1,0}(t,y) &= \frac{15 \sqrt{3} \lambda'(t)}{4 \sqrt{\lambda(t)}} - \frac{45 \pi}{16} \left(-\lambda''(t)\sqrt{\lambda(t)}+\frac{\lambda'(t)^{2}}{2 \sqrt{\lambda(t)}}\right)\\
&+\frac{5 \sqrt{3}}{8 \lambda(t)} \int_{0}^{\infty} \left(\sqrt{3}\arctan(\frac{\sqrt{3}\lambda(t)}{x})(x^{2}+3\lambda(t)^{2}) - 3 x \lambda(t)\right) \partial_{x}^{2}\left(\frac{\lambda'(t+x)}{\sqrt{\lambda(t+x)}}\right) dx\end{split}\end{equation}
Directly evaluating the integrals gives the following equations:
\begin{equation}\int_{0}^{\infty} dy y V(t,y) w_{1,lm}(t,y) = \frac{45 \pi ((\lambda'(t))^{2}-2 \lambda(t)\lambda''(t))}{32 \sqrt{\lambda(t)}}, \quad \int_{0}^{\infty} dy y V(t,y)v_{2,lm}(t,y) = \frac{225 \pi \sqrt{\lambda(t)} \lambda''(t)}{16}\end{equation}
\begin{equation}\begin{split} -\int_{0}^{\infty} dy y V(t,y) (\widetilde{v_{2}}+v_{2})(t,y) &= \frac{-15 \sqrt{3}}{4}\frac{\lambda'(t)}{\sqrt{\lambda(t)}} -\frac{225 \pi \sqrt{\lambda(t)}\lambda''(t)}{16}\\
& - \frac{45 \sqrt{3}}{4}\lambda(t)^{2} \int_{0}^{\infty} w dw \int_{|w-t|}^{w+t} \frac{dy}{(y^{2}+3\lambda(t)^{2})^{2}} \left(h(w)-h(t)\right)\end{split}\end{equation}
where
$$h(x) = \frac{\psi(x)}{x}\left(\frac{\lambda'(x)}{\sqrt{\lambda(x)}}+\frac{5 \sqrt{3}\pi}{4} \lambda''(x) \sqrt{\lambda(x)}\right)$$
Therefore,
\begin{equation}\label{termcuw2ellminusell0}\begin{split} &-\int_{0}^{\infty} dy y V(t,y) (w_{1}+v_{2}-w_{1,lm}-v_{2,lm})\\
&=\frac{5 \sqrt{3}}{8 \lambda(t)} \int_{0}^{\infty} \left(\sqrt{3}\arctan(\frac{\sqrt{3}\lambda(t)}{x})(x^{2}+3\lambda(t)^{2}) - 3 x \lambda(t)\right) \partial_{x}^{2}\left(\frac{\lambda'(t+x)}{\sqrt{\lambda(t+x)}}\right) dx\\
&- \frac{45 \sqrt{3}}{4}\lambda(t)^{2} \int_{0}^{\infty} w dw \int_{|w-t|}^{w+t} \frac{\left(h(w)-h(t)\right) dy}{(y^{2}+3\lambda(t)^{2})^{2}} \end{split}\end{equation}
We directly estimate the first term on the right-hand side of \eqref{termcuw2ellminusell0} (and its derivatives). On the other hand, since $v_{2}, \widetilde{v_{2}}$ are not symbols globally, we decompose the second term as 
\begin{equation}\begin{split}&\int_{0}^{\infty} w dw \int_{|t-w|}^{t+w} \frac{(h(w)-h(t))dy}{(y^{2}+3 \lambda(t)^{2})^{2}}\\
&=\int_{-\frac{t}{2}}^{t}(t+z) dz \int_{|z|}^{\infty} \frac{dy(h(t+z)-h(t))}{(y^{2}+3\lambda(t)^{2})^{2}} + \int_{2t}^{\infty} w dw \int_{w-t}^{\infty} \frac{dy(h(w)-h(t))}{(y^{2}+3\lambda(t)^{2})^{2}}\\
&+\int_{0}^{\frac{t}{2}} w dw \int_{t-w}^{t+w} \frac{dy(h(w)-h(t))}{(y^{2}+3\lambda(t)^{2})^{2}} - \int_{\frac{t}{2}}^{\infty} w dw \int_{w+t}^{\infty} \frac{dy(h(w)-h(t))}{(y^{2}+3\lambda(t)^{2})^{2}}\end{split}\end{equation}
Then, we differentiate in $t$, and directly estimate. This procedure does lead to a large number of terms to estimate, but, each one can be estimated directly with an elementary argument. All other estimates in the lemma statement follow from a direct computation.
 \end{proof}
Finally, it will be convenient to consider the following leading part of $u_{w,2,ell,0}$:
\begin{equation}\label{uw2ell00def}\begin{split}u_{w,2,ell,0,0}(t,r) &=\frac{675 \pi  \lambda(t)^{5/2} \lambda''(t)}{16 r^2}\\
&+\frac{45 \sqrt{3} \sqrt{\lambda(t)} \left(\lambda'(t)^2 \left(4 \log \left(\frac{\sqrt{3} \lambda(t)}{r}\right)-2\right)-\lambda(t) \lambda''(t) \left(8 \log \left(\frac{\sqrt{3} \lambda(t)}{r}\right)+5 \pi ^2-4\right)\right)}{32 r}\end{split}\end{equation}

We will also need the leading part of $u_{w,2,ell,0}-u_{w,2,ell,0,0}$ in the matching region.
$$u_{w,2,ell,0,1}(t,r):= \frac{-45}{2} a_{1}(t) \lambda(t)^{2} r +\frac{45}{4} \lambda(t)^{2}(6 b_{1}(t)+3 \sqrt{3} \pi a_{1}(t)\lambda(t)+b_{1}(t) \log(\frac{9 \lambda(t)^{4}}{r^{4}}))$$ 
where
\begin{equation}\label{a1b1def}a_{1}(t) = \frac{1}{4!} \partial_{t}^{3}\left(\frac{\sqrt{3}\lambda'(t)}{2\sqrt{\lambda(t)}}\right), \quad b_{1}(t) = \frac{1}{6}\left(2 v_{2,0}'(t)+t v_{2,0}''(t)\right)\end{equation}
Now, we prove the sense in which $u_{w,2,ell,0,0}, u_{w,2,ell,0,1}$ are the leading and subleading parts of $u_{w,2,ell,0}$ in the matching region.
\begin{lemma}\label{uw2ell00estlemma1} For $0 \leq j,k \leq 2$, $j=3, k=0$, and $r \geq \lambda(t)$
$$|\partial_{t}^{j}\partial_{r}^{k}\left(u_{w,2,ell,0}-u_{w,2,ell,0,0}-u_{w,2,ell,0,1}\right)(t,r)| \leq \frac{C \lambda(t)^{9/2} (|\log(r)|+\log(t))}{r^{k+3} t^{2+j}} \left(1+\frac{r^{2}}{t^{2}}\right)$$
For $j=2,3$,
$$|\partial_{t}^{j}(u_{w,2,ell,0}(t,r)-u_{w,2,ell,00}(t,r))| \leq \begin{cases} \frac{C \lambda(t)^{7/2}}{r^{2}t^{2+j}}, \quad r \leq \lambda(t)\\
\frac{C \lambda(t)^{9/2}\log(t)}{r^{3}t^{2+j}} +\frac{C r \lambda(t)^{5/2}\log(t)}{t^{4+j}}, \quad  t \geq r \geq \lambda(t)\end{cases}$$
For $0 \leq j \leq 5$ and $k=0$, or $j=2, k=1,2$,
$$|\partial_{t}^{j}\partial_{r}^{k}\left(u_{w,2,ell,0}-\left(u_{w,2,ell,00}-\frac{675 \pi \lambda(t)^{5/2} \lambda''(t)}{16 r^{2}}\right)\right)| \leq \begin{cases} \frac{C \lambda(t)^{5/2}}{r^{1+k} t^{2+j}} (1+|\log(\frac{r}{\lambda(t)})|), \quad r \leq \lambda(t)\\
\frac{C \lambda(t)^{7/2}}{t^{2+j} r^{2+k}} + \frac{C \lambda(t)^{5/2} r^{1-k}\log(t)}{t^{4+j}}, \quad \lambda(t) \leq r \leq t\end{cases}$$
\end{lemma}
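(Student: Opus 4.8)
The proof is a direct, if lengthy, computation: every function entering the statement is given by an explicit formula, so the plan is to compute $u_{w,2,ell,0}$ in closed form, expand it in the regime $r\geq\lambda(t)$, and read off which terms are $u_{w,2,ell,0,0}$, which are $u_{w,2,ell,0,1}$, and what is left.

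First I would make the source terms explicit. Using $t\widetilde{v_{2,0}}(t)=-\tfrac{\sqrt3\lambda'(t)}{2\sqrt{\lambda(t)}}$ for $t\geq T_0$ (which forces the $r^0$, $r^2$, and $r^4$ coefficients of $w_{1,cm}$ to vanish), equations \eqref{w1maindef}, \eqref{w1linmaindef}, \eqref{v2qmdef} give $w_{1,cm}(t,r)=\tfrac r2\partial_t\!\big(\tfrac{\sqrt3\lambda'(t)}{2\sqrt{\lambda(t)}}\big)+a_1(t)r^3$, $w_{1,lm}(t,r)=\tfrac r2\partial_t\!\big(\tfrac{\sqrt3\lambda'(t)}{2\sqrt{\lambda(t)}}\big)$, and $v_{2,qm}(t,r)=v_{2,lm}(t,r)+b_1(t)r^2$, with $a_1,b_1$ as in \eqref{a1b1def}. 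Hence, writing $V(t,y)=\tfrac{-45\lambda(t)^2}{(y^2+3\lambda(t)^2)^2}$ as in \eqref{Vdef}, the source $RHS_{2,0}=V(t,y)\big(a_1(t)y^3+b_1(t)y^2+\tfrac12\partial_t(\tfrac{\sqrt3\lambda'}{2\sqrt\lambda})\,y+v_{2,lm}\big)$ is $V(t,y)$ times an explicit cubic polynomial in $y$, and $RHS_{2,0,0}$ is $V(t,y)$ times its linear part; by \eqref{lambdasymb} the four coefficients are symbols in $t$ of sizes $O(\sqrt{\lambda}/t^4)$, $O(\lambda^{3/2}/t^4)$, $O(\sqrt\lambda/t^2)$, $O(\lambda^{3/2}/t^2)$ respectively.

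Next I would insert these into \eqref{uw2ell0def}. Each of the three integrands is a rational function of $y$ with denominator $(y^2+3\lambda(t)^2)^2$, so partial fractions reduces the antiderivatives to polynomials in $y$ together with multiples of $\log(y^2+3\lambda(t)^2)$, $\arctan\!\big(y/(\sqrt3\lambda(t))\big)$, $y/(y^2+3\lambda(t)^2)$, and $1/r$; this yields $u_{w,2,ell,0}(t,r)$ as an explicit elementary function of $r$ and of $\lambda(t),\dots,\lambda^{(4)}(t)$. For $r\geq\lambda(t)$ I would then expand using $\log(r^2+3\lambda^2)=2\log r+O(\lambda^2/r^2)$, $\arctan(r/(\sqrt3\lambda))=\tfrac\pi2-\tfrac{\sqrt3\lambda}{r}+O(\lambda^3/r^3)$, $(r^2+3\lambda^2)^{-1}=r^{-2}+O(\lambda^2/r^4)$. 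Collecting by powers of $r$: the $r^{-2}$, $r^{-1}$, and $r^{-1}\log r$ terms should assemble exactly into $u_{w,2,ell,0,0}$ (for instance the coefficient $\tfrac{675\pi\lambda^{5/2}\lambda''}{16}$ of $r^{-2}$ arises as the sum of the $v_{2,lm}$-contributions to $\tfrac{-1}{r}\int_0^r y^2 RHS_{2,0}\,dy$ and to $-\int_r^\infty y\,RHS_{2,0,0}\,dy$); the $r^1$, $r^0$, and $r^0\log r$ terms, coming entirely from the $a_1(t)y^3$ and $b_1(t)y^2$ parts of the source, should assemble into $u_{w,2,ell,0,1}$; and every remaining term carries the decay in $r$ and the power of $\lambda(t)$ claimed in the first estimate, the borderline factor $1+r^2/t^2$ recording precisely the slowest-decaying leftover, namely an $r^{-1}$-type term with coefficient $O(\lambda^{9/2}/t^4)$ (and a $\log$) produced by the $a_1(t)y^3$ part. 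The $r$-derivatives are taken termwise on these explicit expressions, and the $t$-derivatives via the Leibniz rule together with \eqref{lambdasymb} and \eqref{lambdacomparg}, each $\partial_t$ costing a factor $t^{-1}$ without disturbing the power of $\lambda(t)$; this also handles the second estimate, which is the coarser statement obtained by subtracting only $u_{w,2,ell,0,0}$. For the third estimate the same expansion applies in $\lambda(t)\leq r\leq t$ after subtracting only the $r^{-1}$ and $r^{-1}\log r$ parts of $u_{w,2,ell,0,0}$, while in the sub-region $r\leq\lambda(t)$ — where $\lambda/r$ is not small and the expansion breaks down — one instead estimates \eqref{uw2ell0def} directly, using $|V(t,y)|\leq C\lambda(t)^{-2}$ together with the polynomial bounds on the source to get $|u_{w,2,ell,0}(t,r)|\leq C\lambda(t)^{3/2}/t^{2}$, and then bounds $u_{w,2,ell,0,0}-\tfrac{675\pi\lambda^{5/2}\lambda''}{16r^2}$ directly from its formula; this is exactly why that $r^{-2}$ term, which is genuinely singular as $r\downarrow\lambda(t)$, is kept out of the subtraction there.

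The work is not conceptually hard; the main obstacle is bookkeeping. One must carry out a moderate list of elementary integrals without sign or constant errors, sort the resulting terms by their powers of $r$ and $\lambda(t)$, confirm that the stated $u_{w,2,ell,0,0}$ and $u_{w,2,ell,0,1}$ are precisely the indicated partial sums (matching the numerical constants, not merely the orders), and verify that every leftover — together with all of its $t$- and $r$-derivatives up to the stated orders — obeys the claimed bound, keeping careful track of the at-most-logarithmic losses that produce the $|\log r|+\log t$ factors.
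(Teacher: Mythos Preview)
Your proposal is correct and follows essentially the same approach as the paper's own proof: the paper simply records the two explicit closed-form identities (one for the $RHS_{2,0,0}$-piece minus $u_{w,2,ell,0,0}$, expressed via $g_{1}(R)=\arctan(\sqrt3/R)-\sqrt3/R$ and $g_{2}(R)=\log(3/R^{2}+1)$, and one for the $(RHS_{2,0}-RHS_{2,0,0})$-piece minus $u_{w,2,ell,0,1}$, expressed via $\arctan$ and $\log$), from which all three estimates are read off directly. Your observation that $t\widetilde{v_{2,0}}(t)=-\tfrac{\sqrt3\,\lambda'(t)}{2\sqrt{\lambda(t)}}$ kills the even-$r$ coefficients of $w_{1,cm}$ is exactly what makes $RHS_{2,0}$ a cubic-in-$y$ source and the computation tractable, and your treatment of the small-$r$ regime in the third estimate is consistent with the paper's formulas being valid for all $r>0$.
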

\begin{proof}
The lemma follows from the following direct computations
\begin{equation}\begin{split}&-\frac{1}{r}\int_{0}^{r} dy y^{2} RHS_{2,0,0}(t,y)-\int_{r}^{\infty} dy y RHS_{2,0,0}(t,y) - u_{w,2,ell,0,0}(t,r) \\
&= \frac{45}{32} \sqrt{\lambda(t)}\lambda''(t) \left(\frac{\lambda(t)\sqrt{3} (10 \pi  g_{1}(\frac{r}{\lambda(t)})+4 g_{2}(\frac{r}{\lambda(t)}))}{r}+4 g_{1}(\frac{r}{\lambda(t)})\right) -\frac{45}{32} \frac{\lambda'(t)^2}{\sqrt{\lambda(t)}} \left(2 g_{1}(\frac{r}{\lambda(t)})+\frac{2 \sqrt{3}\lambda(t) g_{2}(\frac{r}{\lambda(t)})}{r}\right) \end{split}\end{equation}
where
$$g_{1}(R)=\tan ^{-1}\left(\frac{\sqrt{3}}{R}\right)-\frac{\sqrt{3}}{R}, \quad g_{2}(R) = \log \left(\frac{3}{R^2}+1\right)$$
and
\begin{equation}\begin{split}&-\frac{1}{r}\int_{0}^{r} dy y^{2} \left(RHS_{2,0}(t,y)-RHS_{2,0,0}(t,y)\right)+\int_{0}^{r} dy y \left(RHS_{2,0}(t,y)-RHS_{2,0,0}(t,y)\right) - u_{w,2,ell,0,1}(t,r) \\
&=\frac{135 \sqrt{3} \lambda(t)^3 \left(-2 r a_{1}(t) \tan ^{-1}\left(\frac{\sqrt{3} \lambda(t)}{r}\right)-2 b_{1}(t) \tan ^{-1}\left(\frac{r}{\sqrt{3} \lambda(t)}\right)\right)}{4 r}\\
&+\frac{45 \lambda(t)^2 \left(12 a_{1}(t) \lambda(t)^2 \log \left(\frac{3 \lambda(t)^2}{3 \lambda(t)^2+r^2}\right)-2 r b_{1}(t) \log \left(\frac{3 \lambda(t)^2}{r^2}+1\right)\right)}{4 r}\end{split}\end{equation}
\end{proof}
Direct estimation also gives the following lemma.
\begin{lemma}\label{uw2ell00estlemma}For $r \geq \lambda(t)$ , $j \geq 0$, and $k=0,1$,
$$|\partial_{r}^{k}\partial_{t}^{j}u_{w,2,ell,0,0}(t,r)| \leq \frac{C_{j}\lambda(t)^{5/2}}{r^{1+k} t^{2+j}} (1+|\log(\frac{\lambda(t)}{r})|), \quad |\partial_{t}^{j}u_{w,2,ell,0,1}(t,r)| \leq \frac{C_{j}\lambda(t)^{5/2} r}{t^{4+j}}$$
$$|\partial_{t}^{n}\left(u_{w,2,ell,0}-\left(u_{w,2,ell,0,0}-\frac{675 \pi \lambda(t)^{5/2}\lambda''(t)}{16 r^{2}}\right)\right)| \leq \begin{cases} \frac{C \lambda(t)^{5/2}(1+|\log(\frac{r}{\lambda(t)})|)}{r t^{2+n}}, \quad r \leq \lambda(t)\\
\frac{C\lambda(t)^{7/2}}{r^{2}t^{2+n}} + \frac{C \lambda(t)^{5/2}}{t^{3+n}}, \quad t \geq r \geq \lambda(t)\end{cases}, \quad 0 \leq n \leq 3$$\end{lemma}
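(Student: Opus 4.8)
The plan is to prove all three estimates by differentiating explicit expressions and invoking the symbol bounds \eqref{lambdasymb}, the comparison \eqref{lambdacomparg}, and the decay bounds \eqref{v20ests} for $v_{2,0}$; no new integral identities are needed beyond the algebraic ones already produced in the proof of Lemma \ref{uw2ell00estlemma1}. For the first estimate I would differentiate the closed form \eqref{uw2ell00def} of $u_{w,2,ell,0,0}$ directly: by \eqref{lambdasymb} each of $\lambda'(t)^2$ and $\lambda(t)\lambda''(t)$ is $O(\lambda(t)^2 t^{-2})$, a $t$-derivative falling on a symbol coefficient or on $\log(\sqrt3\lambda(t)/r)$ (whose $t$-derivative is $\lambda'(t)/\lambda(t)$, again a symbol) costs $t^{-1}$, and an $r$-derivative of $r^{-1}$, $r^{-2}$ or $r^{-1}\log(\sqrt3\lambda(t)/r)$ costs $r^{-1}$; in the region $r\geq\lambda(t)$ one has $|\lambda''(t)|/r\leq C\lambda(t) t^{-2}/r\leq Ct^{-2}$, so the $r^{-2}$ contribution is absorbed into the $r^{-1}$-type bound, and $|\log(\sqrt3\lambda(t)/r)|\leq C(1+|\log(\lambda(t)/r)|)$ yields the logarithmic factor. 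For the second estimate, \eqref{a1b1def}, \eqref{lambdasymb} and \eqref{v20ests} give $|\partial_t^j a_1(t)|\leq C_j\sqrt{\lambda(t)}\,t^{-4-j}$ and $|\partial_t^j b_1(t)|\leq C_j\lambda(t)^{3/2}t^{-4-j}$; differentiating the displayed formula for $u_{w,2,ell,0,1}$ term by term and using $r\geq\lambda(t)$ (so $\lambda(t)^{7/2}\leq\lambda(t)^{5/2}r$ for the $a_1\lambda^3$ and $b_1\lambda^2$ terms, and $\lambda(t)^{3/2}|\log(9\lambda(t)^4/r^4)|\leq C\lambda(t)^{1/2}r$ for the logarithmic term, via $\log x\leq x$ with $x=r/\lambda(t)$) then gives $|\partial_t^j u_{w,2,ell,0,1}(t,r)|\leq C_j\lambda(t)^{5/2}r\,t^{-4-j}$.

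For the third estimate I would use that $RHS_{2,0,0}(t,y)=V(t,y)(w_{1,lm}+v_{2,lm})(t,y)$ and $RHS_{2,0}(t,y)=V(t,y)(w_{1,cm}+v_{2,qm})(t,y)$ are explicit, with $V$ as in \eqref{Vdef} and, by \eqref{w1linmaindef}, \eqref{w1maindef} and \eqref{v2qmdef}, $w_{1,lm},w_{1,cm},v_{2,lm},v_{2,qm}$ polynomials in the radial variable with coefficients that are symbols in $t$ on $[T_0,\infty)$; hence the three integrals in \eqref{uw2ell0def} (integrands of the form rational in $y$ times polynomial in $y$) evaluate in closed form — precisely the computation in the proof of Lemma \ref{uw2ell00estlemma1}, which gives $u_{w,2,ell,0}=u_{w,2,ell,0,0}+u_{w,2,ell,0,1}+(\text{explicit remainder})$, the remainder being a finite sum of terms built from $\arctan$, $\log$ and rational functions of $r/\lambda(t)$ times symbol coefficients of size $O(\lambda(t)^{p} t^{-2})$. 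The quantity to be bounded is thus $\frac{675\pi}{16}\lambda(t)^{5/2}\lambda''(t)r^{-2}+u_{w,2,ell,0,1}+(\text{remainder})$, and I would split into $\lambda(t)\leq r\leq t$ and $r\leq\lambda(t)$. In the first region, using the large-argument expansions of $\arctan(\sqrt3\lambda(t)/r)$ and $\log(1+3\lambda(t)^2/r^2)$ one checks that the remainder is $O(\lambda(t)^{7/2}r^{-2}t^{-2})+O(\lambda(t)^{5/2}t^{-3})$ — absorbing powers of $r$ by $r\leq t$ and logarithms by $\lambda(t)\log(r/\lambda(t))\leq r$ — and this, together with the bound just proved for $u_{w,2,ell,0,1}$ (and $r\leq t$) and the elementary bound on $\frac{675\pi}{16}\lambda(t)^{5/2}\lambda''(t)r^{-2}$, gives the claim; $\partial_t^n$ costs a further $t^{-n}$ since the only logarithms present are $\log(r/\lambda(t))$, whose $t$-derivatives are symbols. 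In the region $r\leq\lambda(t)$ I would instead use the small-argument expansion $\arctan(\sqrt3\lambda(t)/r)-\sqrt3\lambda(t)/r=\frac{\pi}{2}-\frac{r}{\sqrt3\lambda(t)}-\frac{\sqrt3\lambda(t)}{r}+O\!\big((r/\lambda(t))^3\big)$: the $r^{-2}$ singularity of the remainder coming from the $-\sqrt3\lambda(t)/r$ term has coefficient exactly $-\frac{675\pi}{16}\lambda(t)^{5/2}\lambda''(t)$, which cancels the added $\frac{675\pi}{16}\lambda(t)^{5/2}\lambda''(t)r^{-2}$, and what remains is $O(\lambda(t)^{5/2}(1+|\log(\lambda(t)/r)|)r^{-1}t^{-2})$ after using $r\leq\lambda(t)$ and $\lambda(t)\leq t$, with $\partial_t^n$ again costing $t^{-n}$. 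Differentiation under the integral sign throughout is justified by \eqref{lambdasymb} and smoothness of $\lambda$, exactly as in Lemma \ref{uw2ell00estlemma1}.

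The differentiations and the per-term estimates are routine; the only point demanding care is the third estimate in the region $r\leq\lambda(t)$, where one must verify the exact cancellation of the $\lambda(t)^{5/2}\lambda''(t)r^{-2}$ contributions — equivalently, that the $-\sqrt3\lambda(t)/r$ term of $\arctan(\sqrt3\lambda(t)/r)-\sqrt3\lambda(t)/r$ produces precisely $-\frac{675\pi}{16}\lambda(t)^{5/2}\lambda''(t)r^{-2}$ — since otherwise the asserted $r^{-1}$-type bound would fail. Keeping $n\leq 3$ suffices because only a bounded number of $t$-derivatives of this quantity is used in the subsequent argument.
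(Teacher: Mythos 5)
Your proposal is correct and follows essentially the same route as the paper, which simply asserts the lemma follows by "direct estimation" (implicitly referring to the closed-form expressions for $u_{w,2,ell,0}-u_{w,2,ell,0,0}-u_{w,2,ell,0,1}$ in terms of $g_1$, $g_2$ and the remaining $\arctan$/$\log$ quantities derived in the proof of Lemma \ref{uw2ell00estlemma1}). You correctly identify and verify the one non-routine step the paper leaves implicit: that the $-\sqrt{3}/R$ contribution of $g_1(R)=\arctan(\sqrt3/R)-\sqrt3/R$ in the term $\frac{45}{32}\sqrt{\lambda(t)}\lambda''(t)\cdot\frac{10\pi\sqrt3\,\lambda(t)\,g_1(r/\lambda(t))}{r}$ produces exactly $-\frac{675\pi}{16}\lambda(t)^{5/2}\lambda''(t)r^{-2}$, cancelling the added $\frac{675\pi}{16}\lambda(t)^{5/2}\lambda''(t)r^{-2}$ and making the $r\leq\lambda(t)$ bound possible; the rest is routine symbol estimation.
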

Recall that $u_{w,2,sub}$ is a solution to
$$-\partial_{t}^{2}u+\partial_{r}^{2}u+\frac{2}{r}\partial_{r}u=\partial_{t}^{2}u_{w,2,ell}(t,r)$$
The following part of $u_{w,2,sub}$ will also be important for the matching procedure:
\begin{equation}\label{uw3ell0def}u_{w,3,ell,0}(t,r):= -\frac{1}{r} \int_{\lambda(t)}^{r} dy y^{2}\partial_{t}^{2}u_{w,2,ell,0,0}(t,y) + \int_{\lambda(t)}^{r} dy y \partial_{t}^{2}u_{w,2,ell,0,0}(t,y) \end{equation}
Directly from \eqref{uw2ell00estlemma}, we get
\begin{lemma} $$|\partial_{t}^{k}u_{w,3,ell,0}(t,r)| \leq \frac{C r \lambda(t)^{5/2} (1+\log(\frac{r}{\lambda(t)}))}{t^{4+k}}, \quad r \geq \lambda(t), \quad 0 \leq k \leq 2$$\end{lemma}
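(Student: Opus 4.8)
The plan is to push $\partial_t^k$ directly through the defining formula \eqref{uw3ell0def} and estimate term by term, using only Lemma \ref{uw2ell00estlemma} and the symbol bounds \eqref{lambdasymb}. Write $G(t,y):=\partial_t^2 u_{w,2,ell,0,0}(t,y)$, so that
\[u_{w,3,ell,0}(t,r) = -\frac{1}{r}\int_{\lambda(t)}^r y^2\,G(t,y)\,dy + \int_{\lambda(t)}^r y\,G(t,y)\,dy.\]
Since, by \eqref{uw2ell00def}, $u_{w,2,ell,0,0}$ is an explicit smooth function of $r$ and of $\lambda(t),\lambda'(t),\lambda''(t)$, and $\lambda\in C^\infty$ with all derivatives controlled by \eqref{lambdasymb}, the integrand is smooth in $(t,y)$ on the relevant range and one may differentiate under the integral sign. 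Each $\partial_t$ produces a boundary term at the moving endpoint $y=\lambda(t)$ (carrying a factor $\lambda'(t)$) together with the integral with one further $\partial_t$ on $G$; iterating $k$ times ($k\le 2$), $\partial_t^k u_{w,3,ell,0}$ is a finite sum of the two ``bulk'' integrals $-\tfrac{1}{r}\int_{\lambda(t)}^r y^2\,\partial_t^{k+2}u_{w,2,ell,0,0}(t,y)\,dy$ and $\int_{\lambda(t)}^r y\,\partial_t^{k+2}u_{w,2,ell,0,0}(t,y)\,dy$, plus boundary terms, each a product of a weight $\lambda(t)$ or $\lambda(t)^2$, possibly a factor $1/r$, a product of derivatives $\lambda^{(m)}(t)$ with $m\ge1$, and one evaluation $\partial_t^{j}\partial_y^{l}u_{w,2,ell,0,0}(t,\lambda(t))$ with $l\le1$ and $j+l\le k+2$ (a short check shows that for $k\le2$ no $y$-derivative beyond the first ever appears, so Lemma \ref{uw2ell00estlemma}, which covers $k=0,1$, suffices).

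For the bulk integrals, $r\ge\lambda(t)$ gives $1+|\log(\lambda(t)/y)|=1+\log(y/\lambda(t))$ for $y\in[\lambda(t),r]$, so Lemma \ref{uw2ell00estlemma} with $j=k+2$ yields $|\partial_t^{k+2}u_{w,2,ell,0,0}(t,y)|\le C\lambda(t)^{5/2}\,y^{-1}\,t^{-(4+k)}(1+\log(y/\lambda(t)))$. Using $\int_{\lambda(t)}^r y(1+\log(y/\lambda(t)))\,dy\le C r^2(1+\log(r/\lambda(t)))$ and $\int_{\lambda(t)}^r (1+\log(y/\lambda(t)))\,dy\le C r(1+\log(r/\lambda(t)))$, both bulk terms are bounded by $C r\lambda(t)^{5/2}(1+\log(r/\lambda(t)))\,t^{-(4+k)}$, which is exactly the asserted bound.

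For the boundary terms I would use \eqref{lambdasymb} in the form $|\lambda^{(m)}(t)|\le C\lambda(t)t^{-m}$ together with Lemma \ref{uw2ell00estlemma} evaluated at $y=\lambda(t)$, where the logarithm vanishes, so that $|\partial_t^{j}\partial_y^{l}u_{w,2,ell,0,0}(t,\lambda(t))|\le C\lambda(t)^{3/2-l}t^{-(2+j)}$ for $l\le1$. Each boundary term then carries the total decay $t^{-(4+k)}$ (the $k$ extra derivatives being distributed among the $\lambda^{(m)}$ factors and the evaluation of $u_{w,2,ell,0,0}$) and a total power of $\lambda(t)$ of at least $\lambda(t)^{7/2}$, with an additional power of $\lambda(t)$ whenever the prefactor $1/r$ is present (since $1/r$ always accompanies a $\lambda(t)^2$ weight, from the $y^2$ in the first integral of \eqref{uw3ell0def}, rather than a $\lambda(t)^1$). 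Invoking $r\ge\lambda(t)$ — which gives $\lambda(t)^a t^{-(4+k)}\le r\lambda(t)^{5/2}t^{-(4+k)}$ for $a\ge7/2$, and $\lambda(t)^a r^{-1}t^{-(4+k)}\le r\lambda(t)^{5/2}t^{-(4+k)}$ for $a\ge9/2$ — every boundary term is dominated by $C r\lambda(t)^{5/2}t^{-(4+k)}$, and summing the finitely many contributions finishes the proof.

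The only genuine work here is bookkeeping: for $k=1$ and $k=2$, writing out the complete list of boundary terms generated by differentiating the $\lambda(t)$-dependent lower limit (and the chain-rule terms from the evaluation $u_{w,2,ell,0,0}(t,\lambda(t))$), and checking in each case that $r\ge\lambda(t)$ is enough to absorb the surplus powers of $\lambda(t)$. There is no analytic subtlety, consistent with the text's remark that the bound follows \emph{directly} from Lemma \ref{uw2ell00estlemma}.
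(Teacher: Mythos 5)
Your proposal is correct and takes exactly the approach the paper intends: the paper offers no explicit proof beyond the remark ``Directly from Lemma \ref{uw2ell00estlemma}, we get,'' and your argument is precisely that direct estimation, carefully spelled out. The bulk-integral estimate and the boundary-term bookkeeping (in particular that for $k\le 2$ only $\partial_y$-derivatives of order $\le 1$ appear, and that each surplus power of $\lambda(t)$ at $y=\lambda(t)$ is absorbed via $r\ge\lambda(t)$) are exactly right.
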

$u_{w,3,ell,0}$ is the leading part of $u_{w,2,sub,0}$ in the matching region, in the following sense.
\begin{lemma}\label{uw2subminusell30estlemma} For $\lambda(t) \leq r \leq \frac{t}{4}$,
\begin{equation}\begin{split}&|\partial_{t}^{j}\partial_{r}^{k}\left(u_{w,2,sub}-u_{w,3,ell,0}\right.-\begin{aligned}[t]&\left(\int_{t}^{\infty} w_{2}(s) ds + \int_{t}^{\infty} ds q_{1}''(s) \log(\frac{s-t}{\sqrt{3}\lambda(s)})\right.\\
&\left.\left.+\int_{t}^{\infty}ds(s-t)\partial_{s}^{2}\left(u_{w,2,ell}(s,y)-(u_{w,2,ell,00}(s,y) - \frac{675 \pi \lambda(s)^{5/2}\lambda''(s)}{16 y^{2}})\right)\Bigr|_{y=s-t}\right)\right)|\end{aligned} \\
&\leq \begin{cases} \frac{C r^{2}\lambda(t)^{2} \sup_{x \in [T_{\lambda},t]}\sqrt{\lambda(x)} \log^{2}(t)}{t^{5}}+\frac{C \lambda(t)^{7/2} \log^{2}(t)}{t^{4}}, \quad j=k=0\\
\frac{C r \lambda(t)^{2}\sup_{x \in [T_{\lambda},t]}\sqrt{\lambda(x)} \log^{2}(t)}{t^{5}} + \frac{C \lambda(t)^{7/2} \log^{2}(t)}{t^{4} r}, \quad k=1,j=0\\
\frac{C\lambda(t)^{2}\sup_{x \in [T_{\lambda},t]}\sqrt{\lambda(x)} \log^{2}(t)}{t^{4}}, \quad j=1, k=0\\
\frac{C \lambda(t)^{2} \sup_{x \in [T_{\lambda},t]}\sqrt{\lambda(x)} \log^{2}(t)}{t^{4} r}, \quad j=1, k=1 \text{  or  } j=0,k=2\end{cases}\end{split}\end{equation}
where $$q_{1}(t) = \frac{-45 \sqrt{3} \lambda(t)^{2}}{2} \partial_{t}^{2}\sqrt{\lambda(t)}, \quad q_{2}(t) = \frac{-45 \sqrt{3}}{32}\left(-8 \partial_{t}^{2}\left(\sqrt{\lambda(t)}\right) \lambda(t)^{2}+5 \pi^{2}\lambda(t)^{3/2}\lambda''(t)\right)$$
and
$$w_{2}(t) = \frac{q_{1}(t) \lambda'(t)^{2}}{\lambda(t)^{2}} - q_{2}''(t) -\frac{(2q_{1}'(t)\lambda'(t)+q_{1}(t)\lambda''(t))}{\lambda(t)}$$\end{lemma}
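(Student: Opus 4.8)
\emph{Proof proposal.} The plan is to mirror, step for step, the proof of Lemma~\ref{vexsubminussub0estlemma}, with the quadruple $(v_{ex,sub},v_{ex,ell},v_{ex,ell,0},v_{ex,sub,ell})$ replaced by $(u_{w,2,sub},u_{w,2,ell},u_{w,2,ell,0,0},u_{w,3,ell,0})$. First I would introduce the auxiliary function
$$u_{w,2,sub,0}(t,r) = \int_{t}^{\infty} ds \int_{|s-t-r|}^{s-t+r} dy \left(-\frac{y}{2r}\right) \partial_{1}^{2}u_{w,2,ell,0,0}(s,y),$$
the Duhamel solution with zero data at infinity of $\left(-\partial_{t}^{2}+\partial_{r}^{2}+\frac{2}{r}\partial_{r}\right)u = \partial_{t}^{2}u_{w,2,ell,0,0}$, i.e.\ the propagation of the explicit leading part of the source $\partial_{t}^{2}u_{w,2,ell}$ that drives $u_{w,2,sub}$ by Lemma~\ref{uw2sublemma}. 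The target quantity then splits as $\left(u_{w,2,sub}-u_{w,2,sub,0}\right) + \left(u_{w,2,sub,0}-u_{w,3,ell,0}-(\text{the three }\int_{t}^{\infty}\text{ corrections})\right)$, and I would bound the two pieces separately, recovering the $\partial_{r}^{2}$ estimates at the end from $\left(-\partial_{t}^{2}+\partial_{r}^{2}+\frac{2}{r}\partial_{r}\right)\left(u_{w,2,sub}-u_{w,2,sub,0}\right) = \partial_{t}^{2}\left(u_{w,2,ell}-u_{w,2,ell,0,0}\right)$, exactly as $\partial_{r}^{2}(v_{ex,sub}-v_{ex,sub,0})$ was obtained in Lemma~\ref{vexsubminussub0estlemma}.

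For the second, explicit piece: since $u_{w,2,ell,0,0}$ is by \eqref{uw2ell00def} a finite sum of $t$-dependent coefficients times $r^{-2}$, $r^{-1}$, and $r^{-1}\log r$, the inner $dy$-integral can be carried out in closed form (the $r^{-2}$ coefficient producing $\int dy/y=\log(\cdot)$ contributions), and after the substitution $s=t+ry$ and Taylor expansion of each coefficient about $s=t$ the result decomposes into (i) terms polynomial in $r$ with coefficients built from $\partial_{t}^{m}$ of the coefficients of $u_{w,2,ell,0,0}$, (ii) the three convergent expressions $\int_{t}^{\infty} w_{2}(s)\,ds$, $\int_{t}^{\infty} q_{1}''(s)\log\!\left(\frac{s-t}{\sqrt{3}\lambda(s)}\right) ds$ and the $\int_{t}^{\infty}(s-t)\partial_{s}^{2}(\cdots)|_{y=s-t}$ term of the statement, whose integrands are precisely what the $r^{-1}$, $r^{-1}\log r$, $r^{-2}$ structure of $u_{w,2,ell,0,0}$ dictates (convergence following from the monotonicity in \eqref{lambdacomparg}), and (iii) remainders carrying an extra factor of $r$ or of $(s-t)$ per derivative, controlled by \eqref{lambdasymb}, \eqref{lambdacomparg}, and Lemmas~\ref{uw2ell00estlemma1}, \ref{uw2ell00estlemma}. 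Then I would observe that $u_{w,3,ell,0}$, being by \eqref{uw3ell0def} the solution of $\partial_{r}^{2}u+\frac{2}{r}\partial_{r}u=\partial_{t}^{2}u_{w,2,ell,0,0}$ obtained by integrating from the lower limit $r=\lambda(t)$, reproduces the polynomial part (i) exactly up to lower-endpoint discrepancies, which are of size at most $\lambda(t)^{7/2}\log^{2}(t)/t^{4}$ and are what produce the $\lambda(t)^{7/2}$-contributions in the four cases; this yields the claimed bound with $u_{w,2,sub}$ replaced by $u_{w,2,sub,0}$.

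The remaining, more laborious piece is
$$u_{w,2,sub}(t,r)-u_{w,2,sub,0}(t,r) = \int_{t}^{\infty} ds \int_{|s-t-r|}^{s-t+r} dy \left(-\frac{y}{2r}\right) \partial_{1}^{2}\left(u_{w,2,ell}-u_{w,2,ell,0,0}\right)(s,y)$$
together with its derivatives of the orders appearing in the statement (the $\partial_{r}^{2}$ bound recovered from the equation above, the rest estimated directly). Here I would split the $y$-integral at $y\sim\lambda(s)$ and at $y\sim s/2$ and, in each subregion, invoke Lemma~\ref{uw2ellminusell0estlemma} together with Lemmas~\ref{uw2ell00estlemma1} and~\ref{uw2ell00estlemma} to control $u_{w,2,ell}-u_{w,2,ell,0,0}$ and its $s$- and $y$-derivatives; note that for $\lambda(t)\leq r\leq t/4$ one has $s-y\geq t-r\geq 3t/4$ throughout, so the integrand is evaluated strictly inside its light cone and no genuine characteristic singularity occurs. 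To differentiate under the integral sign in $t$ I would rewrite the representation as $\int_{0}^{\infty}dw\int_{|w-r|}^{w+r}dy\left(-\frac{y}{2r}\right)\partial_{1}^{2}\left(u_{w,2,ell}-u_{w,2,ell,0,0}\right)(t+w,y)$, and for the terms that decay only weakly in $t$ near $y=s-t$ I would use $\partial_{t}=(\partial_{t}+\partial_{y})-\partial_{y}$, integrating by parts in $y$ for the $\partial_{y}$ part so as to trade the weak $\partial_{t}$-decay for the better $(\partial_{t}+\partial_{y})$-decay, exactly as in Lemma~\ref{vexsubminussub0estlemma}. No essentially new idea beyond that lemma is needed; the real obstacle is the bookkeeping — the large number of terms, the careful handling of the $r^{-2}$ coefficient of $u_{w,2,ell,0,0}$ (whose $\partial_{1}^{2}$ is strongly singular near $y=0$), and the need to keep the powers of $\log t$ exact and to separate the $\lambda(t)^{7/2}$ from the $\lambda(t)^{5/2}$ contributions — so that the precise right-hand sides in all four cases come out as stated.
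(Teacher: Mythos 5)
Your overall strategy — mirror Lemma~\ref{vexsubminussub0estlemma} with a Duhamel auxiliary function $u_{w,2,sub,0}$, an explicit computation for $u_{w,2,sub,0}-u_{w,3,ell,0}$, and a remainder analysis for $u_{w,2,sub}-u_{w,2,sub,0}$ — is the right one and matches the paper's plan. But there are two interrelated gaps in how you set it up, both tied to the $r^{-2}$ coefficient $\frac{675\pi\lambda(s)^{5/2}\lambda''(s)}{16y^{2}}$ in $u_{w,2,ell,0,0}$.

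First, you define $u_{w,2,sub,0}$ by propagating $\partial_{s}^{2}u_{w,2,ell,0,0}$, keeping the $r^{-2}$ term inside. The paper instead propagates $\partial_{s}^{2}\bigl(u_{w,2,ell,00}-\frac{675\pi\lambda^{5/2}\lambda''}{16y^{2}}\bigr)$, i.e.\ with the $r^{-2}$ piece removed. This is not cosmetic. The natural non-perturbative part of your remainder $u_{w,2,sub}-u_{w,2,sub,0}$ is $\int_{t}^{\infty}(s-t)\,\partial_{s}^{2}\bigl(u_{w,2,ell}-u_{w,2,ell,0,0}\bigr)(s,s-t)\,ds$; but since $u_{w,2,ell,0,0}(s,y)\sim c(s)/y^{2}$ near $y=0$ while $u_{w,2,ell}$ is bounded there, the integrand behaves like $c''(s)/(s-t)$ as $s\to t^{+}$, and that integral is log-divergent. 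So you cannot subtract the natural non-perturbative piece; you are forced to split off the $y^{-2}$ contribution, which is exactly the step that reproduces the paper's choice of $u_{w,2,sub,0}$ and of the third $\int_{t}^{\infty}$ correction. Your setup therefore collapses back onto the paper's as soon as you try to make the estimates rigorous.

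Second, you allocate all three $\int_{t}^{\infty}$ corrections to the explicit piece $u_{w,2,sub,0}-u_{w,3,ell,0}$ and claim the two pieces can be bounded separately. This cannot work: the third correction $\int_{t}^{\infty}(s-t)\,\partial_{s}^{2}\bigl(u_{w,2,ell}-(u_{w,2,ell,00}-\frac{675\pi\lambda^{5/2}\lambda''}{16y^{2}})\bigr)\big|_{y=s-t}\,ds$ contains $\partial_{s}^{2}u_{w,2,ell}$, which does not appear anywhere in $u_{w,2,sub,0}$ or $u_{w,3,ell,0}$ (both are built purely from $u_{w,2,ell,0,0}$), so there is nothing in the explicit piece to cancel it. A rough size count shows this uncancelled term is of order $\lambda(t)^{2}\sup\sqrt{\lambda}\,\log^{2}(t)/t^{3}$, which is much larger than the claimed bound. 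The third correction must be subtracted from the remainder piece $u_{w,2,sub}-u_{w,2,sub,0}$ — that is where the $\partial_{s}^{2}u_{w,2,ell}$ lives — while only the first two corrections come from the explicit computation, and the paper does exactly this. As written, neither of your two pieces satisfies the claimed bound individually; you would need to observe a large cancellation between them, which your ``bound the two pieces separately'' plan explicitly forgoes.

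A smaller remark: the $\partial_{t}=(\partial_{t}+\partial_{y})-\partial_{y}$ integration-by-parts device you invoke is used in the paper for the $u_{w,2,ell}$ lemma, not for Lemma~\ref{vexsubminussub0estlemma} (where derivatives of the remainder are estimated by direct differentiation under the integral sign after justifying it via Lemma~\ref{vexellminusell0estlemma}). Importing it here is harmless, but it is not the mechanism actually required once the decomposition is set up correctly.
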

\begin{proof} We recall the definition of $u_{w,2,ell,00}$ in \eqref{uw2ell00def}, and start by defining
$$u_{w,2,sub,0}(t,r) = \int_{t}^{\infty} ds \int_{|s-t-r|}^{s-t+r} dy \left(\frac{-y}{2r}\right) \partial_{s}^{2}\left(u_{w,2,ell,00}(s,y) -\frac{675 \pi \lambda(s)^{5/2}\lambda''(s)}{16 y^{2}}\right)$$ 
Let $q_{1},q_{2},w_{2}$ be as in the lemma statement. Then,
\begin{equation}\begin{split}u_{w,2,sub,0}(t,r) &= \int_{t}^{\infty} ds \int_{|s-t-r|}^{s-t+r} \frac{dy}{2r} \left(w_{2}(s) - \log(\frac{\sqrt{3} \lambda(s)}{y}) q_{1}''(s)\right)\\
&= \int_{t}^{\infty} \frac{ds}{2r}\begin{aligned}[t]&\left(w_{2}(s) (s-t+r-|s-t-r|)\right.\\
&\left.-q_{1}''(s)\right.\begin{aligned}[t]&\left(|s-t-r|\log(|s-t-r|)-(s-t+r)\log(s-t+r)\right.\\
&\left.\left.-(|s-t-r|-(s-t+r))(1+\log(\sqrt{3}\lambda(s)))\right)\right)\end{aligned}\end{aligned}\end{split}\end{equation}
We now write this expression in such a way as to make manifest those terms (which are roughly on the order of $\frac{r \lambda(t)^{5/2}}{t^{4}}$) which will cancel in the combination $u_{w,2,sub,0}-u_{w,3,ell,0}$. This leads to
\begin{equation}\label{uw2sub0fordiff}\begin{split}u_{w,2,sub,0}(t,r) &=\frac{r}{2}\left(-w_{2}(t) + \frac{3}{2}q_{1}''(t) + q_{1}''(t) \log(\frac{\sqrt{3}\lambda(t)}{r})\right)+\int_{t}^{\infty} w_{2}(s) ds + \int_{t}^{\infty} ds q_{1}''(s) \log(\frac{s-t}{\sqrt{3}\lambda(s)})\\
&+r \int_{0}^{1} dy \left(q_{1}''(t+r y) \log(\frac{\lambda(t+r y)}{r}) - q_{1}''(t) \log(\frac{\lambda(t)}{r})\right) +r \int_{0}^{1} dy (q_{1}''(t+r y)-q_{1}''(t))\log(\frac{\sqrt{3}}{y})\\
&-\int_{t}^{t+r} ds (w_{2}(s)-w_{2}(t)) - \frac{r}{2}(q_{1}''(t+r)-q_{1}''(t))(-1+\log(4)) \\
&-\frac{r^{2}}{2} \int_{1}^{\infty} q_{1}'''(t+r y) ((1+y^{2}) \coth^{-1}(y) + y(-1+\log(1-\frac{1}{y^{2}})))dy\\
&+\frac{r}{2}\int_{0}^{1} dy \begin{aligned}[t]&\left((w_{2}(t+r y)-w_{2}(t))2 y\right.\\
&\left.-(q_{1}''(t+r y)-q_{1}''(t))\right.\begin{aligned}[t]&((1-y)\log(r(1-y))-(1+y)\log(r(1+y))\\
&\left.+2 y(1+\log(\sqrt{3}\lambda(t+r y))))\right)\end{aligned}\end{aligned}\\
&+\frac{r}{2}\int_{0}^{1} dy (-q_{1}''(t) 2 y) (\log(\lambda(t+r y)-\log(\lambda(t))))\end{split}\end{equation}
We remark that the sixth integral term in the expression above comes from integration by parts:
\begin{equation}\begin{split}&\frac{1}{2} \int_{t+r}^{\infty} ds q_{1}''(s) \log(\frac{(s-t-r)(s-t+r)}{(s-t)^{2}}) - \frac{1}{2r} \int_{t+r}^{\infty} ds q_{1}''(s)(s-t)\left(\log(\frac{s-t-r}{s-t+r})+\frac{2r}{s-t}\right)\\
&=\frac{-r}{2} q_{1}''(t) (-1+\log(4))\\
&-\frac{r}{2}(q_{1}''(t+r)-q_{1}''(t))(-1+\log(4)) - \frac{r^{2}}{2} \int_{1}^{\infty} q_{1}'''(t+r y) ((1+y^{2})\coth^{-1}(y) + y(-1+\log(1-\frac{1}{y^{2}}))) dy\end{split}\end{equation}
We recall the definition of $u_{w,3,ell,0}$ (\eqref{uw3ell0def}), and note that
\begin{equation}\begin{split}&\frac{-1}{r} \int_{0}^{r} y^{2}\partial_{t}^{2}\left(u_{w,2,ell,00}(t,y)-\frac{675 \pi \lambda(t)^{5/2}\lambda''(t)}{16 y^{2}}\right) dy + \int_{0}^{r} y \partial_{t}^{2}\left(u_{w,2,ell,00}(t,y)-\frac{675 \pi \lambda(t)^{5/2}\lambda''(t)}{16 y^{2}}\right) dy\\
&=\frac{r}{2}(-w_{2}(t) + \frac{3}{2}q_{1}''(t) + q_{1}''(t) \log(\sqrt{3}\frac{\lambda(t)}{r}))\end{split}\end{equation}
which are precisely the first non-integral terms in \eqref{uw2sub0fordiff}. Therefore,
\begin{equation}\begin{split}&u_{w,2,sub,0}-u_{w,3,ell,0}\\
&= \frac{1}{r} \int_{\lambda(t)}^{r} dy y^{2} \frac{675 \pi}{16 y^{2}} \partial_{t}^{2}(\lambda(t)^{5/2}\lambda''(t)) - \int_{\lambda(t)}^{r} dy y \frac{675 \pi}{16 y^{2}} \partial_{t}^{2}(\lambda(t)^{5/2}\lambda''(t))\\
&-\frac{1}{r}\int_{0}^{\lambda(t)} y^{2} \partial_{t}^{2}\left(u_{w,2,ell,00}-\frac{675 \pi \lambda(t)^{5/2}\lambda''(t)}{16 y^{2}}\right) dy + \int_{0}^{\lambda(t)} y  \partial_{t}^{2}\left(u_{w,2,ell,00}-\frac{675 \pi \lambda(t)^{5/2}\lambda''(t)}{16 y^{2}}\right)dy\\
&+\int_{t}^{\infty}w_{2}(s) ds + \int_{t}^{\infty} ds q_{1}''(s) \log(\frac{s-t}{\sqrt{3}\lambda(s)}) + r \int_{0}^{1} dy \left(q_{1}''(t+r y) \log(\frac{\lambda(t+r y)}{r}) - q_{1}''(t) \log(\frac{\lambda(t)}{r})\right)\\
&+r \int_{0}^{1} dy (q_{1}''(t+r y)-q_{1}''(t)) \log(\frac{\sqrt{3}}{y}) - \int_{t}^{t+r} ds (w_{2}(s)-w_{2}(t)) - \frac{r}{2}(q_{1}''(t+r)-q_{1}''(t))(-1+\log(4))\\
&-\frac{r^{2}}{2} \int_{1}^{\infty} q_{1}'''(t+r y) ((1+y^{2})\coth^{-1}(y) + y(-1+\log(1-\frac{1}{y^{2}}))) dy\\
&+\frac{r}{2}\int_{0}^{1} dy \begin{aligned}[t]&\left((w_{2}(t+r y)-w_{2}(t))2 y\right.\\
&\left.-(q_{1}''(t+r y)-q_{1}''(t))\right.\begin{aligned}[t]&((1-y)\log(r(1-y))-(1+y)\log(r(1+y))\\
&\left.+2 y(1+\log(\sqrt{3}\lambda(t+r y))))\right)\end{aligned}\end{aligned}\\
&+\frac{r}{2}\int_{0}^{1}dy(-q_{1}''(t)) 2 y(\log(\lambda(t+r y))-\log(\lambda(t)))\end{split}\end{equation}
Each term on the right-hand side of this expression (and its derivatives) can be straightforwardly estimated now. We thus get: for $0 \leq k \leq 2$ and $j=0$ or $0 \leq k \leq 1$ and $j=1$,
\begin{equation}\begin{split}&|\partial_{t}^{j}\partial_{r}^{k} \left(u_{w,2,sub,0}-u_{w,3,ell,0}-\left(\int_{t}^{\infty} w_{2}(s) ds + \int_{t}^{\infty} ds q_{1}''(s) \log(\frac{s-t}{\sqrt{3}\lambda(s)})\right)\right)|\\
&\leq \frac{C}{t^{4+j} r^{k}} \log(t) \lambda(t)^{5/2} \left(\frac{r^{2}}{t}+\lambda(t)\right), \quad \lambda(t) \leq r \leq t\end{split}\end{equation}
The next step is to estimate $u_{w,2,sub}-u_{w,2,sub,0}$. Let
$$F(s,y) = \partial_{s}^{2}\left(u_{w,2,ell}(s,y) -(u_{w,2,ell,00}(s,y) - \frac{675 \pi \lambda(s)^{5/2}\lambda''(s)}{16 y^{2}})\right)$$ 
Using the definitions of $u_{w,2,sub}$ and $u_{w,2,sub,0}$, we get
\begin{equation}\begin{split}u_{w,2,sub}(t,r)-u_{w,2,sub,0}(t,r) &= \int_{t}^{\infty} ds \int_{|s-t-r|}^{s-t+r} dy \left(\frac{-y}{2r}\right) F(s,y)\\
&= \int_{t}^{\infty} ds \int_{|s-t-r|}^{s-t+r} dy \left(\frac{-y}{2r}\right) \partial_{s}^{2}\left(\frac{675 \pi \lambda(s)^{5/2}\lambda''(s)}{16 y^{2}}\right)\\
&+\int_{t}^{\infty} ds \int_{|s-t-r|}^{s-t+r} dy \left(\frac{-y}{2r}\right) \partial_{s}^{2}\left(u_{w,2,ell}(s,y) - u_{w,2,ell,00}(s,y)\right)\end{split}\end{equation}
Direct estimation gives, for $0 \leq k \leq 2$ and $j=0$ or $0 \leq k \leq 1$ and $j=1$, and in the region $\lambda(t) \leq r \leq \frac{t}{4}$,
$$|\partial_{t}^{j}\partial_{r}^{k}\left(\int_{t}^{\infty} ds \int_{|s-t-r|}^{s-t+r} dy \left(\frac{-y}{2r}\right) \partial_{s}^{2}\left(\frac{675 \pi \lambda(s)^{5/2}\lambda''(s)}{16 y^{2}}\right)\right)| \leq \frac{C \lambda(t)^{7/2} \log(t)}{r^{k}t^{4+j}}$$
Therefore, it suffices to estimate the following integral (with $G(s,y) =\partial_{s}^{2}\left(u_{w,2,ell}(s,y) - u_{w,2,ell,00}(s,y)\right)$) 
\begin{equation}\begin{split} &\int_{t}^{\infty} ds \int_{|s-t-r|}^{s-t+r} dy \left(\frac{-y}{2r}\right) G(s,y)\\
&=\int_{0}^{r} \frac{dy}{r} \left(r-(r-y)\right)\left(-\frac{y}{2}\right) G(t,y) + \int_{r}^{2r} \frac{dy}{r} \left(r-(y-r)\right) \left(\frac{-y}{2}\right) G(t,y)\\
&+\int_{0}^{1} r dz \int_{r-rz}^{r+rz} dy \left(\frac{-y}{2r}\right) (G(rz+t,y)-G(t,y))-\int_{t+r}^{\infty}(s-t) G(s,s-t) ds\\
&-\frac{1}{2r}\int_{t+r}^{\infty} ds \left(\int_{s-t-r}^{s-t+r} dy y G(s,y) - 2 r (s-t) G(s,s-t)\right)\end{split}\end{equation}
Each integral is directly estimated, using Lemmas \ref{uw2ellminusell0estlemma} and \ref{uw2ell00estlemma1}, except for the following. We write
\begin{equation}\label{desplit} \begin{split} \int_{t+r}^{\infty} (s-t) G(s,s-t) ds&= \int_{t+r}^{\infty} (s-t) \partial_{s}^{2}\left(\frac{-675 \pi \lambda(s)^{5/2} \lambda''(s)}{16 y^{2}}\right)\Bigr|_{y=s-t} ds\\
&+\int_{t}^{\infty} (s-t) \partial_{s}^{2}\left(u_{w,2,ell}(s,y)- (u_{w,2,ell,00}(s,y)-\frac{675 \pi \lambda(s)^{5/2} \lambda''(s)}{16 y^{2}})\right)\Bigr|_{y=s-t} ds\\
&-\int_{t}^{t+r}(s-t)\partial_{s}^{2}\left(u_{w,2,ell}(s,y)- (u_{w,2,ell,00}(s,y)-\frac{675 \pi \lambda(s)^{5/2} \lambda''(s)}{16 y^{2}})\right)\Bigr|_{y=s-t} ds\end{split}\end{equation}
The point is that the second term in \eqref{desplit} is not quite perturbative, while the others can be directly estimated. This gives rise to
\begin{equation}\begin{split}&|u_{w,2,sub}-u_{w,2,sub,0}-\int_{t}^{\infty}ds(s-t)\partial_{s}^{2}\left(u_{w,2,ell}(s,y)-(u_{w,2,ell,00}(s,y) - \frac{675 \pi \lambda(s)^{5/2}\lambda''(s)}{16 y^{2}})\right)\Bigr|_{y=s-t}|\\
&\leq \frac{C r^{2}\lambda(t)^{2}\sup_{x \in [T_{\lambda},t]}\sqrt{\lambda(x)} \log^{2}(t)}{t^{5}} + \frac{C \lambda(t)^{7/2} \log^{2}(t)}{t^{4}}, \quad \lambda(t) \leq r \leq \frac{t}{4}\end{split}\end{equation}
We estimate the derivatives of $$u_{w,2,sub}-u_{w,2,sub,0}-\int_{t}^{\infty}ds(s-t)\partial_{s}^{2}\left(u_{w,2,ell}(s,y)-(u_{w,2,ell,00}(s,y) - \frac{675 \pi \lambda(s)^{5/2}\lambda''(s)}{16 y^{2}})\right)\Bigr|_{y=s-t}$$ exactly as in the proof of Lemma \ref{vexsubminussub0estlemma}. This gives rise to the following. For $j=0$, $0 \leq k \leq 2$ or $j=1, 0 \leq k \leq 1$, and in the region $\lambda(t) \leq r \leq \frac{t}{4}$,
\begin{equation}\begin{split}&|\partial_{r}^{k}\partial_{t}^{j}\left(u_{w,2,sub}-u_{w,2,sub,0}-\int_{t}^{\infty}ds(s-t)\partial_{s}^{2}\left(u_{w,2,ell}(s,y)-(u_{w,2,ell,00}(s,y) - \frac{675 \pi \lambda(s)^{5/2}\lambda''(s)}{16 y^{2}})\right)\Bigr|_{y=s-t}\right)|\\
&\leq \begin{cases} \frac{C r \lambda(t)^{2} \sup_{x \in [T_{\lambda},t]}\sqrt{\lambda(x)} \log^{2}(t)}{t^{5}} + \frac{C \lambda(t)^{7/2} \log^{2}(t)}{t^{4} r}, \quad k=1, j=0\\
\frac{C \lambda(t)^{2} \sup_{x \in [T_{\lambda},t]}\sqrt{\lambda(x)} \log^{2}(t)}{t^{4}}, \quad j=1, k=0\\
\frac{C \lambda(t)^{2} \sup_{x \in [T_{\lambda},t]}\sqrt{\lambda(x)} \log^{2}(t)}{r t^{4}}, \quad j+k=2\end{cases}
\end{split}\end{equation}
\end{proof}
Finally, let $v_{3}$ be the solution to
\begin{equation}\label{v3def}\begin{cases}-\partial_{t}^{2}v_{3}+\partial_{r}^{2}v_{3}+\frac{2}{r}\partial_{r}v_{3}=0\\
v_{3}(0,r)=0, \quad \partial_{t}v_{3}(0,r)=v_{3,0}(r)\end{cases}\end{equation}
where $v_{3,0}$ is to be specified later. In particular, we have
$$v_{3}(t,r) =  \frac{t}{2}\int_{0}^{\pi} \sin(\theta) v_{3,0}(\sqrt{r^{2}+t^{2}+2 r t \cos(\theta)})d\theta$$ Then, $u=u_{w,2}+v_{3}$ is a particular solution to the equation
$$-\partial_{t}^{2}u+\partial_{r}^{2}u+\frac{2}{r}\partial_{r}u=RHS_{2}(t,r)$$
and $v_{3,0}$ will be chosen soon so that $u_{w,2}+v_{3}$ has a desired behavior in the matching region. The leading part of $v_{3}$ in the matching region will turn out to be
\begin{equation}\label{v3maindef}v_{3,main}(t,r) = t v_{3,0}(t)\end{equation}
\subsection{Matching, Part 1}\label{firstordermatching}
We recall the leading parts of $u_{ell}$, $w_{1}$, $v_{2}$, and $v_{ex}$ (from \eqref{uellmaindef}, \eqref{w1linmaindef}, \eqref{v2qmdef}, \eqref{vexell0def}, respectively) in the matching region.
\begin{equation}\begin{split}&u_{ell,main}(t,r) \\
&= -\frac{\sqrt{3}c_{1}(t) \lambda(t)}{2 r}+\sqrt{\lambda(t)} \lambda''(t) \left(\frac{675 \pi  \lambda(t)^2}{16 r^2}-\frac{3 \sqrt{3} \lambda(t) \left(37-20 \log \left(\frac{r^2}{3 \lambda(t)^2}\right)\right)}{8 r}+\frac{\sqrt{3} r}{4 \lambda(t)}-\frac{15 \pi }{8}\right)\\
&+\frac{\left(\frac{15 \sqrt{3} \lambda(t)}{16 r}-\frac{\sqrt{3} r}{8 \lambda(t)}\right) \lambda'(t)^2}{\sqrt{\lambda(t)}}\end{split}\end{equation}
$$w_{1,lm}(t,r) = \frac{1}{2} r \left(\frac{\sqrt{3} \lambda''(t)}{2 \sqrt{\lambda(t)}}-\frac{\sqrt{3} \lambda'(t)^2}{4 \lambda(t)^{3/2}}\right)$$
$$v_{2,lm}(t,r) =-\frac{15}{8} \pi  \sqrt{\lambda(t)} \lambda''(t)$$
\begin{equation}\begin{split} &v_{ex,ell,0}(t,r) = \frac{3 \sqrt{3} \sqrt{\lambda(t)} \left(\lambda'(t)^2 \left(15 \log \left(\frac{4 r^{2}}{3 \lambda(t)^2}\right)-17\right)+2 \lambda(t) \lambda''(t) \left(5 \log \left(\frac{4r^{2}}{3 \lambda(t)^2}\right)+1\right)\right)}{16 r}\end{split}\end{equation}
Recalling the expression for $u_{w,2,ell,0,0}$ from \eqref{uw2ell00def}, direct computation gives
\begin{equation}\label{matchingpart1comp}\begin{split} &u_{ell,main}-(u_{w,2,ell,00}+v_{2,lm}+v_{ex,ell,0}+w_{1,lm})\\
&= -\frac{\sqrt{3} \sqrt{\lambda(t)} \left(16 c_{1}(t) \sqrt{\lambda(t)}+6 (15 \log (4)-37) \lambda'(t)^2+3 \left(-75 \pi ^2+212+20 \log (4)\right) \lambda(t) \lambda''(t)\right)}{32 r}\end{split}\end{equation}
Note that $v_{2,lm}$ and $w_{1,lm}$ each contain terms of the form $f_{k}(t) r^{k}$ for $k=0,1$ which exactly cancel with terms of the same form from $u_{ell,main}$ when these functions are combined as in \eqref{matchingpart1comp}. This is (partly) due to the specific choice of $v_{2,0}$ from \eqref{v20def}. Also, $u_{ell,main},u_{w,2,ell,00}$, and $v_{ex,ell,0}$ each separately contain terms of the form $f(t) \frac{\log(r)}{r}$, but, all such terms exactly cancel in the combination in \eqref{matchingpart1comp}. This is reminiscent of a similar ``automatic'' matching of logarithmically higher order terms observed in \cite{wm2}. We choose 
$$c_{1}(t) = \frac{3 \left((74-30 \log (4)) \lambda'(t)^2+\left(75 \pi ^2-4 (53+5 \log (4))\right) \lambda(t) \lambda''(t)\right)}{16 \sqrt{\lambda(t)}}$$
so that 
\begin{equation}\label{matching1}u_{ell,main}-(u_{w,2,ell,00}+v_{2,lm}+v_{ex,ell,0}+w_{1,lm})=0\end{equation}
Now that we have fixed the function $c_{1}$, we can directly estimate $u_{ell}-u_{ell,main}$, and $u_{ell}$. Note that the last estimate in the lemma statement is not sharp in the sense that $\partial_{r}^{k}u_{ell}(t,r)$ has no singularity as $r \rightarrow 0$, but, the estimate suffices for its use later on.
\begin{lemma}\label{uellminusuellmainestlemma} For $0 \leq j, k \leq 2$, 
$$|\partial_{t}^{j}\partial_{R}^{k}\left(u_{ell}(t,R\lambda(t))-u_{ell,main}(t,R\lambda(t))\right)| \leq C\frac{\lambda(t)^{3/2}(1+\log(R))}{t^{2+j} R^{3+k}}, \quad R \geq 1$$
For $0 \leq j,k \leq 4$,
$$|\partial_{t}^{j}\partial_{R}^{k}\left(u_{ell}(t,R\lambda(t))\right)| \leq \frac{C \lambda(t)^{3/2}}{t^{2+j}} \begin{cases} 1, \quad k=0\\
R^{2-k}, \quad 1 \leq k \leq 2\\
R^{4-k}, \quad 3 \leq k \leq 4\end{cases}, \quad R \leq 1$$
\begin{equation}\label{uellestforuell3} |\partial_{t}^{j}\partial_{r}^{k}u_{ell}(t,r)| \leq \frac{C \lambda(t)^{3/2}}{t^{2+j} r^{k}} \begin{cases} 1, \quad r \leq \lambda(t)\\
\frac{r}{\lambda(t)}, \quad r \geq \lambda(t)\end{cases}, \quad 0 \leq j+k \leq 6\end{equation}

\end{lemma}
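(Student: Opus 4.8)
The plan is to reduce everything to elementary estimates on the explicit profiles $f_{1},f_{2},\phi_{0}$ from \eqref{uellexp} and on the three coefficient functions $\lambda'(t)^{2}/\sqrt{\lambda(t)}$, $\sqrt{\lambda(t)}\lambda''(t)$, and $c_{1}(t)$, now that $c_{1}$ has been fixed. Since $c_{1}$ is a fixed linear combination of $\lambda'(t)^{2}/\sqrt{\lambda(t)}$ and $\sqrt{\lambda(t)}\lambda''(t)$, the symbol bounds \eqref{lambdasymb} give $|\partial_{t}^{j}(\lambda'(t)^{2}/\sqrt{\lambda(t)})|+|\partial_{t}^{j}(\sqrt{\lambda(t)}\lambda''(t))|+|c_{1}^{(j)}(t)|\leq C\lambda(t)^{3/2}t^{-2-j}$. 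For the first estimate, the key point is that with this choice of $c_{1}$ one has $u_{ell}-u_{ell,main}=e_{ell,1}$, namely \eqref{eell1def}, so in the variable $R$ one must differentiate $\frac{f_{1,1}(R)\lambda'(t)^{2}}{\sqrt{\lambda(t)}}+f_{2,1}(R)\sqrt{\lambda(t)}\lambda''(t)+c_{1}(t)(\phi_{0}(R)+\frac{\sqrt{3}}{2R})$, where $\partial_{t}$ falls only on the coefficients. Expanding the explicit formulas at $R=\infty$ gives $|\partial_{R}^{k}f_{1,1}(R)|\leq CR^{-3-k}$, $|\partial_{R}^{k}f_{2,1}(R)|\leq C(1+\log R)R^{-3-k}$, and $|\partial_{R}^{k}(\phi_{0}(R)+\frac{\sqrt{3}}{2R})|\leq CR^{-3-k}$ for $R\geq1$; multiplying by the coefficient bounds yields the first estimate.

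For the second estimate I would use instead that $f_{1},f_{2},\phi_{0}$ are smooth and even near $R=0$, so that on $[0,1]$ their $R$-derivatives of orders $0,1,2,3,4$ are bounded respectively by $C$, $CR$, $C$, $CR$, $C$; combined with the coefficient bounds above (and the fact that in the $R$-variable $\partial_{t}$ again only hits the coefficients) this gives the claimed profile $\{1,R^{2-k},R^{4-k}\}$.

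The third estimate is stated in the $r$-variable and allows $j+k\leq6$, so I would estimate $u_{ell}(t,r)=\frac{\lambda'(t)^{2}}{\sqrt{\lambda(t)}}f_{1}(\frac{r}{\lambda(t)})+\sqrt{\lambda(t)}\lambda''(t)\,f_{2}(\frac{r}{\lambda(t)})+c_{1}(t)\phi_{0}(\frac{r}{\lambda(t)})$ directly: each $\partial_{r}$ produces a factor $\lambda(t)^{-1}$ and shifts a profile derivative, and each $\partial_{t}$ either differentiates a symbol-type coefficient (costing $t^{-1}$) or shifts a profile derivative while producing a factor $-\frac{r}{\lambda(t)}\cdot\frac{\lambda'(t)}{\lambda(t)}$ of size at most $C\frac{r}{\lambda(t)}t^{-1}$ (higher $\partial_{t}$-derivatives of this factor being of the same size by \eqref{lambdasymb}). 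A Faa di Bruno / Leibniz bookkeeping then reduces matters to bounding, for $F\in\{f_{1},f_{2},\phi_{0}\}$, $b\leq j$, $k+b\leq6$, the quantity $\frac{\lambda(t)^{3/2-k-b}r^{b}}{t^{2+j}}|F^{(k+b)}(\frac{r}{\lambda(t)})|$. In the region $r\leq\lambda(t)$ one has $\frac{r}{\lambda(t)}\leq1$, $|F^{(m)}|\leq C$ there, and $(\frac{r}{\lambda(t)})^{b}\leq1$, so this is $\leq C\lambda(t)^{3/2}\lambda(t)^{-k}t^{-2-j}\leq C\lambda(t)^{3/2}r^{-k}t^{-2-j}$, which is in fact stronger than claimed (accounting for the remark that the estimate is not sharp as $r\to0$). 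In the region $r\geq\lambda(t)$ one uses the behavior at infinity: $|F(R)|\leq CR$, $|F'(R)|\leq C$, and $|F^{(m)}(R)|\leq C(1+\log R)R^{-m-1}$ for $2\leq m\leq6$, the only logarithm coming from the $R^{-1}\log R$ term of $f_{2,0}$ while $f_{1}$ and $\phi_{0}$ contribute none. The cases $k+b\in\{0,1\}$ then match the target $C\lambda(t)^{1/2}r^{1-k}t^{-2-j}$ directly, and for $k+b\geq2$ the decay estimate contributes, relative to that target, an extra factor $(\lambda(t)/r)^{2}(1+\log(r/\lambda(t)))$, which is bounded because $x\mapsto x^{-2}\log x$ is bounded on $[1,\infty)$; this produces exactly the clean (logarithm-free) bound, which equals $C\lambda(t)^{3/2}r^{-k}t^{-2-j}\cdot\frac{r}{\lambda(t)}$ as required.

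The main obstacle is purely computational: one must verify the asymptotic expansions of $f_{1},f_{2},\phi_{0}$ and of their derivatives up to order six, both at $R=0$ and at $R=\infty$ (in particular isolating the single logarithmic term and checking that it appears only in the decaying, $m\geq2$ derivatives of $f_{2}$, so that the $(\lambda/r)^{2}$ gain absorbs it), and then carry out the Leibniz bookkeeping in the third estimate without losing a power of $t$, $r$, or $\lambda$. There is no genuine analytic difficulty beyond \eqref{lambdasymb}, the explicit formulas for $f_{1},f_{2},\phi_{0},c_{1}$, and the boundedness of $x^{-2}\log x$ on $[1,\infty)$.
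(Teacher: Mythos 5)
Your proposal is correct and carries out exactly the ``direct estimation'' that the paper alludes to but does not spell out (the text only says ``Now that we have fixed the function $c_{1}$, we can directly estimate $u_{ell}-u_{ell,main}$, and $u_{ell}$'' before stating the lemma). The reduction to $e_{ell,1}$ via \eqref{eell1def}, the observation that in the $R$-variable $\partial_{t}$ falls only on the symbol-type coefficients $\lambda'(t)^{2}/\sqrt{\lambda(t)}$, $\sqrt{\lambda(t)}\lambda''(t)$, $c_{1}(t)$ (all $O(\lambda(t)^{3/2}t^{-2-j})$ by \eqref{lambdasymb}), the large-$R$ decay $|\partial_{R}^{k}f_{1,1}|,|\partial_{R}^{k}(\phi_{0}+\tfrac{\sqrt{3}}{2R})|\lesssim R^{-3-k}$, $|\partial_{R}^{k}f_{2,1}|\lesssim(1+\log R)R^{-3-k}$, the even-smoothness pattern at $R=0$ giving the $\{1,R^{2-k},R^{4-k}\}$ profile, and the Leibniz/Faa di Bruno reduction in the $r$-variable to $\frac{\lambda(t)^{3/2-k-b}r^{b}}{t^{2+j}}|F^{(k+b)}(r/\lambda(t))|$ with $b\le j$, $k+b\le 6$, followed by the absorption of the single $\log$ via the boundedness of $x^{-2}\log x$ on $[1,\infty)$, are all sound; I checked the case counts and exponent bookkeeping in both regions $r\lessgtr\lambda(t)$ and they close to the stated bounds. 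This is the same argument the paper intends; your write-up simply supplies the omitted details.
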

\subsection{Matching, Part 2}\label{secondordermatching}
Next, we consider matching of higher order terms. We recall the decomposition of $u_{ell,2}$ given in \eqref{uell2formatching}, and define $u_{ell,2,main}$ to be the sum of the terms arising in a large $r$ expansion of $u_{ell,2}(t,r)$ which do not decay as $r \rightarrow \infty$. In particular, we have the following. Let 
$$a(t) = \frac{1}{2}\partial_{t}\left(\frac{\sqrt{3}\lambda'(t)}{2\sqrt{\lambda(t)}}\right), \quad b(t) = \frac{-15}{8} \pi \sqrt{\lambda(t)}\lambda''(t)$$
We use \eqref{matching1}, recall the definitions of $u_{w,3,ell,0}$ and $v_{ex,ell,0}$ in \eqref{uw3ell0def} and \eqref{vexell0def}, respectively, inspect \eqref{uell2formatching}, and claim that  the sum of the terms arising in a large $r$ expansion of $u_{ell,2}(t,r)$ which do not decay as $r \rightarrow \infty$ is given by.
\begin{equation}\label{uell2maindef}\begin{split} &u_{ell,2,main}(t,r)\\
&=v_{ex,sub,ell}(t,r)+u_{w,3,ell,0}(t,r) - \frac{2}{\lambda(t) \sqrt{3}} \int_{\lambda(t)}^{\infty} x^{2}\partial_{1}^{2}(u_{w,2,ell,0,0}+v_{ex,ell,0})(t,x)\left(\phi_{0}(\frac{x}{\lambda(t)})+\frac{\sqrt{3}\lambda(t)}{2 x}\right) dx \\
&- \frac{15}{2}\lambda(t)^{2}\log(\frac{r}{\lambda(t)})b''(t)+ \frac{1}{12} a''(t) r^{3}+\frac{1}{6}b''(t) r^{2}-\frac{15}{8} \lambda(t)^{2}a''(t) r + \frac{43}{6} \lambda(t)^{3} a''(t) + \frac{53}{4}\lambda(t)^{2}b''(t)\\
&-\frac{\lambda(t)^{2}}{12} \left(-206 \lambda(t) a''(t) -3(13 + 30 \log(\frac{3}{2}))b''(t)\right)\\
&-\frac{2}{\sqrt{3}}\int_{1}^{\infty} s^{2}\lambda(t)^{2} \partial_{1}^{2}e_{ell,1}(t,s\lambda(t)) \phi_{0}(s) ds - \frac{2}{\sqrt{3}}\int_{0}^{1} s^{2}\lambda(t)^{2} \partial_{1}^{2}u_{ell}(t,s\lambda(t)) \phi_{0}(s) ds\end{split}\end{equation}
Then, $u_{ell,2,main}$ is the leading part of $u_{ell,2}$ in the matching region in the following sense.
\begin{lemma}\label{uell2minusuell2mainestlemma}We have the following estimates.\\
For $j=0$ and $0 \leq k \leq 2$ or $j=1$ and $0 \leq k \leq 1$
$$|\partial_{t}^{j}\partial_{r}^{k}\left(u_{ell,2}-u_{ell,2,main}\right)(t,r)| \leq\frac{C \lambda(t)^{9/2} (1+\log^{2}(\frac{r}{\lambda(t)}))}{r^{1+k}t^{4+j}}, \quad r \geq \lambda(t)$$
$$|\partial_{t}^{j}\partial_{r}^{k}u_{ell,2}(t,r)| \leq \frac{C r^{2-k}(r + \lambda(t))\sqrt{\lambda(t)}}{t^{4+j}}, \quad j,k=0,1$$
$$|\partial_{t}^{2+j}\partial_{r}^{k}u_{ell,2}(t,r)| \leq \frac{C r^{2-k} \lambda(t)^{3/2}}{t^{6+j}}, \quad r \leq \lambda(t), \quad 0 \leq j,k \leq 2$$
$$|\partial_{t}^{2+j}u_{ell,2}(t,r)| \leq \frac{C r^{3} \sqrt{\lambda(t)}}{t^{6+j}}, \quad r \geq \lambda(t), \quad 0 \leq j \leq 2$$\end{lemma}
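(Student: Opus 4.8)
The plan is to base the proof on the explicit formula \eqref{uell2formatching}, read as a variation-of-parameters representation of the solution of $\partial_r^2 u_{ell,2} + \tfrac{2}{r}\partial_r u_{ell,2} + \tfrac{45 u_{ell,2}}{\lambda(t)^2((r/\lambda(t))^2+3)^2} = \partial_t^2 u_{ell}$ against the fundamental system $\phi_0,e_2$ of \eqref{phi0e2def}, with the source split via $u_{ell} = u_{ell,main}+e_{ell,1}$. Using the large-$R$ asymptotics $\phi_0(R) = -\tfrac{\sqrt{3}}{2R}+O(R^{-3})$ and $e_2(R) = \tfrac{2}{\sqrt{3}}+O(R^{-2})$, one sees that the first two integrals of \eqref{uell2formatching} (driven by $\partial_1^2 u_{ell,main}$) carry all the non-decaying behavior in the matching region, while the $e_{ell,1}$-driven integrals (three, five, seven) and the near-origin integrals (four, six) are either lower order or produce exactly the $\int_1^\infty$ and $\int_0^1$ correction terms already written into \eqref{uell2maindef}. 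I would first dispose of integrals five, six, seven, together with their $t$- and $r$-derivatives: using the decay $s^3 f_{k,1}(s)/\log s = O(1)$ of the remainders $f_{k,1}$ (hence pointwise bounds on $\partial_1^2 e_{ell,1}$ via \eqref{lambdasymb} and the formula for $c_1$), the near-origin bounds on $\partial_1^2 u_{ell}$ from Lemma \ref{uellminusuellmainestlemma}, and the above asymptotics, each such term is directly bounded by the claimed error $\lambda(t)^{9/2}(1+\log^2(r/\lambda(t)))\,r^{-1-k}t^{-4-j}$ for $r\ge\lambda(t)$.

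The core of the argument is the large-$r$ expansion of integrals one and two. Since $\partial_1^2 u_{ell,main}(t,r)$, expressed in $R=r/\lambda(t)$, is an explicit finite sum of $R^{-2}, R^{-1}\log R, R^{-1}, R^0, R^1$ with symbol-type coefficients in $t$ — inherited from \eqref{uellmaindef} after two $t$-derivatives, with the $R$-linear and $R^0$ coefficients being essentially $a(t)$ and $b(t)$ — the integrals $\int_1^{R} s^2\lambda(t)^2\partial_1^2 u_{ell,main}(t,s\lambda(t))\,e_2(s)\,ds$ and $\int_1^R s^2\lambda(t)^2\partial_1^2 u_{ell,main}(t,s\lambda(t))\,\phi_0(s)\,ds$ become elementary once $\phi_0,e_2$ are replaced by their asymptotic series. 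Carrying out the integration and multiplying back by $\phi_0(R)$ and $e_2(R)$, I would collect the terms that do not decay as $r\to\infty$, use the matching identity \eqref{matching1} to recognize the contributions coming through $v_{ex,ell,0}$ and $u_{w,2,ell,0,0}$ as $v_{ex,sub,ell}$ and $u_{w,3,ell,0}$ (via the equations recorded after \eqref{vexsubelldef} and \eqref{uw3ell0def}), and the $v_{2,lm},w_{1,lm}$ pieces as the polynomial particular solutions $\tfrac{1}{12}a''(t)r^3$, $\tfrac{1}{6}b''(t)r^2$, and so on. This produces exactly $u_{ell,2,main}$ as in \eqref{uell2maindef}, with the remainder — built from the $O(R^{-2})$, $O(R^{-3})$ tails of $e_2,\phi_0$, the potential term acting on $u_{ell,2}$, and the decaying part of the $e_{ell,1}$ contributions — bounded by the asserted estimate. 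Time derivatives are handled by differentiating under the integral sign, justified by \eqref{lambdasymb}, each derivative costing a factor $t^{-1}$; $r$-derivatives fall on the explicit prefactors $\phi_0(R),e_2(R)$ and on the endpoint $R$, producing only elementary modifications.

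The three remaining (direct) estimates on $u_{ell,2}$ are easier: I would insert \eqref{uellestforuell3} (with $j=2$, giving $|\partial_1^2 u_{ell}(t,s\lambda(t))|\le C\lambda(t)^{3/2}t^{-4}$ for $s\le1$ and $\le C\lambda(t)^{3/2}s\,t^{-4}$ for $s\ge1$) directly into \eqref{uell2def}, using $|\phi_0(R)|\le C$, $|e_2(R)|\le C/R$, $|s^2 e_2(s)|\le Cs$, $|s^2\phi_0(s)|\le Cs^2$ for $0<s,R\le1$ and the bounded-plus-$O(1/R)$ asymptotics for $R\ge1$; a short integration gives $|u_{ell,2}(t,r)|\le Cr^2(r+\lambda(t))\sqrt{\lambda(t)}\,t^{-4}$, and each extra $t$-derivative costs $t^{-1}$ as usual. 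The main obstacle is the second paragraph — specifically, verifying that the exact rational and logarithmic coefficients in \eqref{uell2maindef} (the constant-in-$r$ terms such as $\tfrac{43}{6}\lambda(t)^3 a''(t)+\tfrac{53}{4}\lambda(t)^2 b''(t)$, the $-\tfrac{15}{2}\lambda(t)^2\log(r/\lambda(t))b''(t)$ term, and the coefficient of the $r$-linear term) are correct. This forces one to track every lower-endpoint ($s=1$) contribution of the elementary integrals and to check the exact cancellation of the logarithmic terms against the near-origin and $e_{ell,1}$ integrals — a long but entirely mechanical computation of the kind already carried out for $v_{ex,sub}$ and $u_{w,2,sub}$ in Lemmas \ref{vexsubminussub0estlemma} and \ref{uw2subminusell30estlemma}.
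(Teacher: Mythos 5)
Your proposal is correct and matches what the paper intends: the paper itself only records the parenthetical remark that the direct bounds on $u_{ell,2}$ follow from Lemma \ref{uellminusuellmainestlemma} (specifically \eqref{uellestforuell3}) inserted into \eqref{uell2def}, and leaves the remainder estimate to follow from the decomposition \eqref{uell2formatching} (which is introduced precisely for this purpose) and the large-$R$ asymptotics of $\phi_0,e_2$, with the non-decaying contributions from the first four integrals assembled into $u_{ell,2,main}$ \eqref{uell2maindef}. Your reading — integrals five, six, seven are directly subordinate to the target error, terms three and four supply the $\int_1^\infty$ and $\int_0^1$ constants via $-e_2(R)\to -\tfrac{2}{\sqrt3}$, and the first two integrals produce $v_{ex,sub,ell}$, $u_{w,3,ell,0}$, and the polynomial pieces through the matching identity \eqref{matching1} and the asymptotics $\phi_0(R)\sim -\tfrac{\sqrt3}{2R}$, $e_2(R)\sim\tfrac{2}{\sqrt3}$ — is exactly the computation the paper is relying on, and your accounting of the derivative counts and of the Wronskian cancellation of boundary terms is consistent with the stated ranges of $j,k$.
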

(The estimates on $u_{ell,2}$ in the above lemma follow directly from Lemma \ref{uellminusuellmainestlemma}, and the definition of $u_{ell,2}$). Next, recalling the definitions of $a_{1}$ and $b_{1}$ (\eqref{a1b1def}), a direct computation gives
\begin{equation}\label{em2def}\begin{split} &e_{m,2}:=u_{ell,2,main} -\left(v_{ex,sub,ell}+u_{w,3,ell,0}+w_{1,cm}-w_{1,lm}+v_{2,qm}-v_{2,lm}+u_{w,2,ell,0,1}\right)\\
&=- \frac{2}{\lambda(t) \sqrt{3}} \int_{\lambda(t)}^{\infty} x^{2}\partial_{1}^{2}(u_{w,2,ell,0,0}+v_{ex,ell,0})(t,x)\left(\phi_{0}(\frac{x}{\lambda(t)})+\frac{\sqrt{3}\lambda(t)}{2 x}\right) dx +\frac{43}{6} \lambda(t)^{3}a''(t)+\frac{53}{4}\lambda(t)^{2}b''(t) \\
&+ \frac{-2}{\sqrt{3}}\int_{1}^{\infty}s^{2}\lambda(t)^{2}\partial_{1}^{2}e_{ell,1}(t,s\lambda(t))\phi_{0}(s) ds- \frac{2}{\sqrt{3}}\int_{0}^{1} s^{2}\lambda(t)^{2} \partial_{1}^{2}u_{ell}(t,s\lambda(t)) \phi_{0}(s) ds\\
& - \frac{\lambda(t)^{2}}{12}\left(-206 \lambda(t) a''(t)-3(13+30 \log(\frac{3}{2})) b''(t)\right)-\frac{45}{4} \lambda(t)^{2}((6+\log(9))b_{1}(t)+3 \sqrt{3}\pi a_{1}(t)\lambda(t))\end{split}\end{equation}
In particular, the $r^{3}$, $r^{2}$, $r$, and $\log(r)$ terms from the functions on the left-hand side all exactly cancel in the combination $e_{m,2}$, which is independent of $r$, and turns out to be perturbative, as per the following lemma.
\begin{lemma}\label{em2estlemma} For $0 \leq j \leq 2$,
$$|\partial_{t}^{j}e_{m,2}(t)| \leq \frac{C \lambda(t)^{7/2}}{t^{4+j}}$$
\end{lemma}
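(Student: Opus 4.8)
The proof of Lemma \ref{em2estlemma} is, as the surrounding discussion makes clear, a direct estimation of the explicit expression \eqref{em2def}, term by term. The key point is that all the genuinely dangerous growth in $r$ (the $r^3$, $r^2$, $r$ and $\log r$ terms) has already been arranged to cancel, so $e_{m,2}(t)$ is a function of $t$ alone, and one only needs to bound it and its first two time derivatives by $C\lambda(t)^{7/2} t^{-4-j}$. I would organize the estimate by grouping the six (or so) summands of \eqref{em2def} into three types.

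\textbf{Step 1: the integral terms against $\phi_0(\tfrac{x}{\lambda(t)}) + \tfrac{\sqrt3 \lambda(t)}{2x}$.} The first term in \eqref{em2def} is $-\tfrac{2}{\lambda(t)\sqrt3}\int_{\lambda(t)}^\infty x^2 \partial_1^2(u_{w,2,ell,0,0}+v_{ex,ell,0})(t,x)\,\big(\phi_0(\tfrac{x}{\lambda(t)})+\tfrac{\sqrt3\lambda(t)}{2x}\big)\,dx$. Here one uses that $\phi_0(R) + \tfrac{\sqrt3}{2R} = O(R^{-3})$ as $R\to\infty$ (from \eqref{phi0e2def}), which gives the extra decay needed for convergence of the $x$-integral; combined with the pointwise bounds on $\partial_t^j\partial_x^k u_{w,2,ell,0,0}$ from Lemma \ref{uw2ell00estlemma} and on $\partial_t^j\partial_x^k v_{ex,ell,0}$ (read off from the explicit formula \eqref{vexell0def}, which behaves like $\lambda(t)^{3/2}\log(r)/(r t^{2+j})$ up to logs), one sees $\partial_x^2$ of the integrand is $O\big(\lambda(t)^{3/2}\,\log(x/\lambda(t))\,x^{-3}\,t^{-2-j}\big)$ for $x\ge\lambda(t)$. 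Multiplying by $x^2$, integrating from $\lambda(t)$ to $\infty$, and dividing by $\lambda(t)$ contributes $O(\lambda(t)^{1/2}\,t^{-2-j})$ — but the $u_{w,2,ell,0,0}$ part actually has an extra $\lambda(t)^{5/2}$ prefactor and the $v_{ex,ell,0}$ part an extra $\lambda(t)^{3/2}$, so one genuinely gets $O(\lambda(t)^{7/2} t^{-2-j})$ and $O(\lambda(t)^{3}\cdot\text{(something)})$; I would check carefully that after the $1/\lambda(t)$ the worst contribution is indeed $\lesssim \lambda(t)^{7/2} t^{-4-j}$, using the symbol bounds \eqref{lambdasymb} (which convert each $\partial_t$ into a gain of $1/t$) and the comparability \eqref{lambdacomparg}. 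Differentiation under the integral sign in $t$ is justified by these same symbol bounds, exactly as in the earlier lemmas.

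\textbf{Step 2: the explicit polynomial-in-$\lambda$ terms.} The terms $\tfrac{43}{6}\lambda^3 a'' + \tfrac{53}{4}\lambda^2 b''$, the $-\tfrac{\lambda^2}{12}(-206\lambda a'' - 3(13+30\log\tfrac32)b'')$ term, and $-\tfrac{45}{4}\lambda^2((6+\log 9)b_1 + 3\sqrt3\pi a_1\lambda)$ are all finite products of $\lambda(t)$ with the coefficient functions $a, b, a_1, b_1$ defined in \eqref{a1b1def} and just before \eqref{uell2maindef}. Each of $a'', b'', a_1, b_1$ is, by its definition and the symbol estimates \eqref{lambdasymb} plus \eqref{v20ests}, bounded by $C\,\lambda(t)^{?}\,t^{-4}$ with the power of $\lambda$ such that the full product lands at $\lambda(t)^{7/2}t^{-4}$: e.g. $b(t)\sim \sqrt{\lambda}\lambda''\sim \sqrt\lambda\cdot \lambda t^{-2}=\lambda^{3/2}t^{-2}$, so $\lambda^2 b''\sim \lambda^2\cdot\lambda^{3/2}t^{-4}=\lambda^{7/2}t^{-4}$; similarly $\lambda^3 a''\sim\lambda^3\cdot\lambda^{1/2}t^{-4}=\lambda^{7/2}t^{-4}$, and $\lambda^2\cdot\lambda b_1$, $\lambda^3 a_1$ are of the same size. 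The $\partial_t^j$ bounds follow by the same symbol-counting, each derivative costing $t^{-1}$.

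\textbf{Step 3: the two remaining integrals involving $u_{ell}$ and $e_{ell,1}$.} These are $-\tfrac{2}{\sqrt3}\int_1^\infty s^2\lambda^2\,\partial_1^2 e_{ell,1}(t,s\lambda(t))\,\phi_0(s)\,ds$ and $-\tfrac{2}{\sqrt3}\int_0^1 s^2\lambda^2\,\partial_1^2 u_{ell}(t,s\lambda(t))\,\phi_0(s)\,ds$. For the first, one uses that $e_{ell,1}$ was built (in \eqref{eell1def}) precisely from $f_{1,1}, f_{2,1}$ and $\phi_0(R)+\tfrac{\sqrt3\lambda}{2r}$, all of which decay like $\log(R)/R^{3}$ as $R\to\infty$, while $\phi_0(s)=O(s^{-1})$; so $s^2 \partial_1^2 e_{ell,1}\cdot\phi_0(s)$ is integrable near $\infty$, and the prefactors $\lambda'(t)^2/\sqrt{\lambda(t)}$, $\sqrt{\lambda(t)}\lambda''(t)$, $c_1(t)$ carried by $e_{ell,1}$ are each $O(\lambda(t)^{3/2}t^{-2})$ by \eqref{lambdasymb} and the formula for $c_1$; one also uses the $k=3,4$ bounds on $\partial_R^k u_{ell}$ (hence on derivatives of $e_{ell,1}$) from Lemma \ref{uellminusuellmainestlemma}. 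The $\lambda(t)^2$ factor together with the $\lambda(t)^{3/2}$ from the coefficients gives, after the scaling $r=s\lambda(t)$ is undone, a bound $\lesssim \lambda(t)^{7/2}t^{-4}$ — one has to be a little careful that the $\int_1^\infty$ is dimensionless in $\lambda$ once the argument $s\lambda(t)$ is tracked; the explicit formulas make this transparent. For the second integral, over $s\in(0,1)$, one simply uses the near-origin bound $|\partial_t^j\partial_R^k u_{ell}(t,R\lambda(t))|\le C\lambda(t)^{3/2}t^{-2-j}$ for $R\le 1$ from Lemma \ref{uellminusuellmainestlemma} together with integrability of $\phi_0(s)$ (which is smooth on $[0,1]$), giving $\lesssim\lambda(t)^{2}\cdot\lambda(t)^{3/2}t^{-2-j}=\lambda(t)^{7/2}t^{-2-j}$; but since $u_{ell}$ is linear in $\lambda', \lambda''$, each such term already carries a $t^{-2}$, upgrading $t^{-2-j}$ to $t^{-4-j}$. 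Again the time derivatives are handled by the same symbol estimates. Summing the three groups of contributions and taking the worst term gives $|\partial_t^j e_{m,2}(t)|\le C\lambda(t)^{7/2}t^{-4-j}$ for $0\le j\le 2$.

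\textbf{Main obstacle.} Nothing here is deep; the real work is bookkeeping. The one place to be careful is \textbf{Step 1}, the integral $\int_{\lambda(t)}^\infty$ against $\phi_0(\tfrac{x}{\lambda(t)})+\tfrac{\sqrt3\lambda(t)}{2x}$: one must confirm both the convergence at $x\to\infty$ (which rests on the $O(R^{-3})$ decay of $\phi_0(R)+\tfrac{\sqrt3}{2R}$ cancelling the slow decay of $\partial_x^2 v_{ex,ell,0}$, which only decays like $1/x$ up to logs) and that the power of $\lambda(t)$ comes out to exactly $7/2$ rather than something larger — i.e. that dividing by the explicit $\lambda(t)$ out front does not cost us. This is exactly the kind of estimate already carried out in Lemma \ref{uw2ellminusell0estlemma} and Lemma \ref{uw2subminusell30estlemma}, so the techniques are in place; it just has to be executed once more for this particular integrand, keeping the logarithms (which only ever produce harmless $\log t$ or $\log(r/\lambda(t))$ factors, all absorbed into the region $r\ge\lambda(t)$ with the stated power loss being zero once integrated) under control.
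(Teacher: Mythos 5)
The approach — a direct term-by-term estimation of the explicit formula \eqref{em2def}, using the symbol bounds \eqref{lambdasymb} and the comparability \eqref{lambdacomparg} — is the right one, and Steps 2 and 3 of your bookkeeping are essentially sound (each of $a'', b'', a_1, b_1$ carries the right number of $\lambda$'s and $t^{-1}$'s for the polynomial terms to land at $\lambda^{7/2}t^{-4}$, and the $\int_1^\infty$ and $\int_0^1$ integrals involving $e_{ell,1}$ and $u_{ell}$ converge by the $O(R^{-3}\log R)$ decay of $f_{1,1}, f_{2,1}$ and $\phi_0+\tfrac{\sqrt3}{2R}$ and the pointwise bound \eqref{uellestforuell3}).

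However, Step 1 contains a genuine error of interpretation. In this paper the notation $\partial_1$ consistently means the derivative with respect to the \emph{first} argument, i.e.\ the time variable $t$, not the spatial variable. This is forced by the context: the formula \eqref{uell2def} constructs $u_{ell,2}$ so that $(\partial_r^2+\tfrac{2}{r}\partial_r + V)u_{ell,2} = \partial_t^2 u_{ell}$, so the integrand $\partial_1^2 u_{ell}(t,s\lambda(t))$ must be $(\partial_t^2 u_{ell})(t,s\lambda(t))$, and the same reading is used in \eqref{vexsubelldef} (whose accompanying sentence identifies the right side with $\partial_t^2 v_{ex,ell,0}$), in \eqref{uw3ell0def}, and in the first integral of \eqref{em2def}. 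You instead take $\partial_1^2$ to be a second spatial derivative, and then also misstate the amplitude ($\lambda^{3/2}$ where it should be $\lambda^{5/2}$). This matters: the $\partial_t^2$ is precisely what supplies the extra $t^{-2}$ in the bound. With the correct reading one has, from Lemma \ref{uw2ell00estlemma} and the explicit formula \eqref{vexell0def}, $|\partial_t^{2+j}(u_{w,2,ell,0,0}+v_{ex,ell,0})(t,x)| \lesssim \dfrac{\lambda(t)^{7/2}}{x^2\,t^{4+j}} + \dfrac{\lambda(t)^{5/2}(1+|\log(x/\lambda(t))|)}{x\,t^{4+j}}$ for $x\ge\lambda(t)$. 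After multiplying by $x^2\bigl(\phi_0(\tfrac{x}{\lambda})+\tfrac{\sqrt3\lambda}{2x}\bigr)\lesssim x^2\min\{1,\lambda^3/x^3\}$, integrating over $[\lambda(t),\infty)$, and dividing by $\lambda(t)$, one lands at $\lambda(t)^{7/2}t^{-4-j}$ (with at most a $\log$ that one should track but which is harmless where this lemma is used, in Lemma \ref{ueminusuw0estlemma}). With your reading ($\partial_x^2$), the same bookkeeping gives only $t^{-2-j}$ — which you yourself notice is suspicious — and the integral $\int_{\lambda(t)}^\infty x^2\cdot x^{-3}\,dx$ you implicitly invoke is not even convergent without the $(\phi_0+\tfrac{\sqrt3\lambda}{2x})$ weight. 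So you must fix the interpretation of $\partial_1^2$ in Step 1 (and likewise, where it appears, in Step 3) before the power counting closes.
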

We have not yet chosen the initial velocity, $v_{3,0}$, of the free wave $v_{3}$ from \eqref{v3def}, and don't need to choose it to allow for cancellations between $v_{3,main}$ and $e_{m,2}$, in light of the above lemma. On the other hand, as per Lemmas \ref{vexsubminussub0estlemma}, \ref{uw2ellminusell0estlemma}, and \ref{uw2subminusell30estlemma}, $u_{w,2,sub}-u_{w,3,ell,0}$, $u_{w,2,ell}-u_{w,2,ell,0}$, and $v_{ex,sub}-v_{ex,sub,ell}$ contain terms that are not quite perturbative. Therefore, we choose $v_{3,0}$, the initial velocity of the free wave $v_{3}$ from \eqref{v3def}, so that
\begin{equation}\label{thirdordercancellation}\begin{aligned}[t]&\int_{t}^{\infty} w_{2}(s) ds + \int_{t}^{\infty} ds q_{1}''(s) \log(\frac{s-t}{\sqrt{3}\lambda(s)})-\int_{t}^{\infty} ds g_{3}(s)- \int_{t}^{\infty} ds g_{2}''(s) \log\left(\frac{4 (s-t)^{2}}{3 \lambda(s)^{2}}\right)\\
&+\int_{t}^{\infty}ds(s-t)\partial_{s}^{2}\left(u_{w,2,ell}(s,y)-(u_{w,2,ell,00}(s,y) - \frac{675 \pi \lambda(s)^{5/2}\lambda''(s)}{16 y^{2}})\right)\Bigr|_{y=s-t}\\
&-\int_{t}^{\infty} ds (s-t)\partial_{s}^{2}\left(v_{ex,ell}(s,y)-v_{ex,ell,0}(s,y)\right)\Bigr|_{y=s-t}\\
&-\int_{0}^{\infty}  x V(t,x) (w_{1}+v_{2}-w_{1,lm}-v_{2,lm})(t,x) dx+v_{3,main}(t,r)=0, \quad t \geq T_{0}\end{aligned}\end{equation}
All of the estimates in the entire proof thus far are valid for all $t \geq T_{0}$ for any $T_{0} \geq T_{0,1}$ (recall \eqref{t0init}, $T_{0,1}$ is some fixed, absolute constant). In particular, they are valid for all $t \geq T_{0,1}$. Then, we choose
\begin{equation}\label{v30def}\begin{split}v_{3,0}(r) = \frac{\psi(r\frac{T_{\lambda}}{T_{0,1}})}{r} &\left(-\int_{t}^{\infty} w_{2}(s) ds - \int_{t}^{\infty} ds q_{1}''(s) \log(\frac{s-t}{\sqrt{3}\lambda(s)})+\int_{t}^{\infty} ds g_{3}(s)+ \int_{t}^{\infty} ds g_{2}''(s) \log\left(\frac{4 (s-t)^{2}}{3 \lambda(s)^{2}}\right)\right.\\
&-\int_{t}^{\infty}ds(s-t)\partial_{s}^{2}\left(u_{w,2,ell}(s,y)-(u_{w,2,ell,00}(s,y) - \frac{675 \pi \lambda(s)^{5/2}\lambda''(s)}{16 y^{2}})\right)\Bigr|_{y=s-t}\\
&\left.+\int_{t}^{\infty} ds (s-t)\partial_{s}^{2}\left(v_{ex,ell}(s,y)-v_{ex,ell,0}(s,y)\right)\Bigr|_{y=s-t}\right.\\
&\left.+\int_{0}^{\infty}  x V(t,x) (w_{1}+v_{2}-w_{1,lm}-v_{2,lm})(t,x) dx\right)\Bigr|_{t=r}\end{split}\end{equation}
where $\psi$ is defined in \eqref{psidef}, and further restrict $T_{0}$ to satisfy $T_{0} \geq 2 T_{0,1}$, but is otherwise arbitrary.  Lemmas \ref{vexellminusell0estlemma}, \ref{uw2ellminusell0estlemma}, \ref{uw2ell00estlemma1} thus imply
\begin{lemma}\label{v30estlemma} For $0 \leq j \leq 3$,
$$|v_{3,0}^{(j)}(r)| \leq \frac{C\mathbbm{1}_{\{r \geq T_{\lambda}\}}}{r^{4+j}} \lambda(r)^{2}\sup_{x \in [T_{\lambda},r]}\sqrt{\lambda(x)} \log^{2}(r)$$\end{lemma}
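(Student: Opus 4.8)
The plan is to estimate $v_{3,0}$ directly from its definition \eqref{v30def}, which writes $v_{3,0}(r)=\frac{\psi(rT_{\lambda}/T_{0,1})}{r}\,\Phi(r)$, where $\Phi(r)$ is the value at $t=r$ of a sum of seven $t$-dependent quantities: four ``scalar'' integrals $\int_{t}^{\infty}$ with integrands built from $w_{2}$, $q_{1}''$, $g_{3}$, $g_{2}''$ (the last two of the $\log$-weighted type), two ``light-cone'' integrals $\int_{t}^{\infty}ds\,(s-t)\,\partial_{s}^{2}(\cdots)|_{y=s-t}$ involving $u_{w,2,ell}-(u_{w,2,ell,00}-\tfrac{675\pi\lambda(s)^{5/2}\lambda''(s)}{16y^{2}})$ and $v_{ex,ell}-v_{ex,ell,0}$, and the single term $W(t):=\int_{0}^{\infty}x\,V(t,x)(w_{1}+v_{2}-w_{1,lm}-v_{2,lm})(t,x)\,dx$. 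Since $\Phi$ is a function of $t$ alone, $\frac{d^{j}}{dr^{j}}\Phi(r)=\Phi^{(j)}(r)$, and an application of the Leibniz rule together with the elementary bound $|\partial_{r}^{m}(\psi(rT_{\lambda}/T_{0,1})/r)|\le C_{m}r^{-1-m}$ on the support $r\ge T_{0,1}>T_{\lambda}$ (where all the cited estimates apply) reduces the claim to showing $|\Phi^{(j)}(r)|\le C\,\lambda(r)^{2}\sup_{x\in[T_{\lambda},r]}\sqrt{\lambda(x)}\,\log^{2}(r)\,r^{-3-j}$ for $0\le j\le 3$, i.e.\ bounding $\partial_{t}^{j}$ of each of the seven terms by this size.

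For the scalar terms I would first record the pointwise estimates obtained by differentiating the explicit formulas for $g_{1},g_{2},g_{3},q_{1},q_{2},w_{2}$ (each a polynomial in $\lambda,\lambda',\lambda''$ divided by powers of $\lambda$) using the symbol bounds \eqref{lambdasymb}: namely $|w_{2}^{(k)}(s)|,|g_{3}^{(k)}(s)|,|q_{1}''^{(k)}(s)|,|g_{2}''^{(k)}(s)|\le C_{k}\lambda(s)^{5/2}s^{-4-k}$. To differentiate the integrals in $t$, substitute $u=s-t$ so that the $t$-dependence sits inside the smooth integrands; for the $\log$-weighted terms this also removes the apparent obstruction to differentiating under the integral sign, since $\partial_{t}\log\bigl(\tfrac{s-t}{\sqrt{3}\lambda(s)}\bigr)$ is not locally integrable while $\partial_{u}$ acting on $\lambda(t+u)$ is harmless. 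Then one differentiates under the integral sign and estimates using the above bounds together with \eqref{lambdacomparg} — in particular $x\mapsto\lambda(x)^{5/2}/x$ decreasing gives both convergence and $\lambda(t+u)^{5/2}\lesssim\lambda(t)^{5/2}(t+u)/t$ — so each such integral is $\lesssim\lambda(t)^{5/2}\log^{2}(t)\,t^{-3-j}$, well within the claimed bound (the $\log$ factors being integrable near $u=0$ and contributing at most the $\log^{2}(t)$).

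The light-cone terms are handled after the same substitution $u=s-t$, which turns $\partial_{t}^{j}$ of $\int_{t}^{\infty}ds\,(s-t)\,\partial_{s}^{2}(\cdots)|_{y=s-t}$ into $\int_{0}^{\infty}du\,u\,\partial_{1}^{2+j}(\cdots)(t+u,u)$, a pure first-slot derivative of index at most $5$. For the $v_{ex}$ term I bound $\partial_{1}^{2+j}(v_{ex,ell}-v_{ex,ell,0})(t+u,u)$ by Lemma \ref{vexellminusell0estlemma} (valid for $2+j\le5$), split the $u$-integral according to the size of $u$ relative to $\lambda(t+u)$ and to $t$, and integrate using \eqref{lambdacomparg}, obtaining order $\lambda(t)^{7/2}\log(t)\,t^{-4-j}$, more than enough. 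For the $u_{w,2,ell}$ term I write, with $W(s):=\int_{0}^{\infty}x\,V(s,x)(w_{1}+v_{2}-w_{1,lm}-v_{2,lm})(s,x)\,dx$,
\[
u_{w,2,ell}-\Bigl(u_{w,2,ell,00}-\tfrac{675\pi\lambda(s)^{5/2}\lambda''(s)}{16y^{2}}\Bigr)=\bigl(u_{w,2,ell}-u_{w,2,ell,0}+W(s)\bigr)+\bigl(u_{w,2,ell,0}-u_{w,2,ell,00}+\tfrac{675\pi\lambda(s)^{5/2}\lambda''(s)}{16y^{2}}\bigr)-W(s),
\]
bound $\partial_{1}^{2+j}$ of the first bracket, evaluated at $(t+u,u)$ and in every regime of $u$, by the last display \eqref{uw2ellminusell0minusint} of Lemma \ref{uw2ellminusell0estlemma} (valid to index $5$), and $\partial_{1}^{2+j}$ of the second bracket by the $k=0$ part of Lemma \ref{uw2ell00estlemma1} (whose range $\lambda(\cdot)\le r\le(\cdot)$ covers $r=u$, $(\cdot)=t+u$ for all $u$); the $-W(s)$ piece is transferred out of the $(s-t)$-weighted integral by integration by parts in $s$, using $\int_{t}^{\infty}(s-t)W''(s)\,ds=W(t)$ with vanishing boundary terms, and the resulting multiple of $W(t)$, together with the seventh term $W(r)$ of \eqref{v30def}, is controlled by $|\partial_{t}^{j}W(t)|\le C\lambda(t)^{2}\sup_{x\in[T_{\lambda},t]}\sqrt{\lambda(x)}\log(t)\,t^{-3-j}$ from Lemma \ref{uw2ellminusell0estlemma} — precisely the borderline size appearing in the conclusion. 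Splitting the $u$-integrals by regime and integrating with \eqref{lambdacomparg} then yields the claimed bound ($\lambda(t)^{5/2}\le\lambda(t)^{2}\sup_{x\in[T_{\lambda},t]}\sqrt{\lambda(x)}$ absorbing the powers of $\lambda$).

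The genuinely delicate point — everything else being bookkeeping — is the treatment of the $u_{w,2,ell}$ light-cone term: one must recognize that the $\int_{0}^{\infty}x\,V(\cdots)$ summand in \eqref{v30def} is exactly what is needed so that the high-order estimate \eqref{uw2ellminusell0minusint} (rather than the direct estimate on $u_{w,2,ell}-u_{w,2,ell,0}$, which is only stated up to three derivatives) becomes applicable inside the $(s-t)$-weighted integral, and one must check that the integration by parts transferring $-W(s)$ out of the integral has vanishing boundary terms, i.e.\ $(s-t)W'(s)\to0$ and $W(s)\to0$ as $s\to\infty$, which follows from the decay of $\partial_{s}^{j}W$ together with \eqref{lambdacomparg}. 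A secondary technical point is the justification of differentiation under the integral sign throughout, which rests on the smoothness of $\lambda$ and the symbol-type estimates \eqref{lambdasymb}; and there is the routine but lengthy case analysis splitting each $u$-integral according to the size of $u$ relative to $\lambda(t+u)$ and to $t$, where near $u=0$ the weight $s-t=u$ absorbs the mild singularities of the integrands at the light cone.
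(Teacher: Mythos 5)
Your proof is correct and follows exactly the route the paper intends when it cites Lemmas \ref{vexellminusell0estlemma}, \ref{uw2ellminusell0estlemma}, \ref{uw2ell00estlemma1} — in particular, the decomposition of the $u_{w,2,ell}$ light-cone term into $\bigl(u_{w,2,ell}-u_{w,2,ell,0}+W\bigr)+\bigl(u_{w,2,ell,0}-u_{w,2,ell,00}+\tfrac{675\pi\lambda^{5/2}\lambda''}{16y^{2}}\bigr)-W$ so that \eqref{uw2ellminusell0minusint} and the $k=0$ case of Lemma \ref{uw2ell00estlemma1} (both stated through five derivatives) apply inside the $(s-t)$-weighted integral, together with the integration by parts producing $W(t)$ and its comparison with the explicit $W$-term of \eqref{v30def}, is precisely what those three cited lemmas are for. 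The paper states no proof, and your write-up supplies the omitted details correctly, including the differentiation under the integral sign after the substitution $u=s-t$ and the conversion $\lambda(t)^{5/2}\le\lambda(t)^{2}\sup_{x\in[T_\lambda,t]}\sqrt{\lambda(x)}$.
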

Now that we have defined and estimated $v_{3,0}$, we can estimate $v_{3}$, defined in \eqref{v3def}.
\begin{lemma}\label{v3minusv3mainestlemma} For $j=0$, $0 \leq k \leq 2$ or  $j=1$, and $0 \leq k \leq 1$,
$$|\partial_{r}^{k}\partial_{t}^{j}(v_{3}(t,r)-v_{3,main}(t,r))| \leq \frac{C r^{2-k}\lambda(t)^{2}\sup_{x \in [T_{\lambda},t]}\sqrt{\lambda(x)} \log^{2}(t)}{t^{5+j}}, \quad r \leq \frac{t}{2}$$
$$|\partial_{t}^{2}(v_{3}(t,r)-v_{3,main}(t,r))| \leq \frac{C r \lambda(t)^{2}\sup_{x \in [T_{\lambda},t]} \sqrt{\lambda(x)} \log^{2}(t)}{t^{6}}, \quad r \leq \frac{t}{2}$$
For $j+k \leq 2$,
$$|\partial_{r}^{k}\partial_{t}^{j}v_{3}(t,r)| \leq \frac{C (\sup_{x \in [T_{\lambda},r]}\sqrt{\lambda(x)})^{5} \log^{2}(r)}{r \langle t-r \rangle^{2}}\left(\frac{1}{\langle t-r\rangle^{j+k}}+\frac{1}{t^{j+k}}\right), \quad r \geq \frac{t}{2}$$
$$|\partial_{ttr} v_{3}(t,r)| + |\partial_{t}^{3}v_{3}(t,r)| \leq \begin{cases} \frac{C \lambda(t)^{2}\left(\sup_{x \in [T_{\lambda},t]} \sqrt{\lambda(x)}\right) \log^{2}(t)}{t^{6}}, \quad r \leq \frac{t}{2}\\
\frac{C \left(\sup_{x \in [T_{\lambda},t+r]}\sqrt{\lambda(x)}\right)^{5} \log^{2}(t+r)}{r} \left(\frac{1}{\langle t-r \rangle^{5}} + \frac{1}{t \langle t-r \rangle^{4}}\right), \quad r \geq \frac{t}{2}\end{cases}$$

\end{lemma}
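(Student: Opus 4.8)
\emph{Proof proposal.} The plan is to run the argument of Lemmas \ref{w1minusmainest} and \ref{w1estlemma} (equivalently Lemmas \ref{v2minusmainest}, \ref{v2estlemma}) almost verbatim, with $v_{3,0}$ in the role of $\widetilde{v_{2,0}}$ and the pointwise bounds supplied by Lemma \ref{v30estlemma}. First I would recast the spherical-means formula for $v_{3}$: the substitution $u=\cos\theta$ followed by $\rho=\sqrt{r^{2}+t^{2}+2rtu}$ turns it into $v_{3}(t,r)=\frac{1}{2r}\int_{|t-r|}^{t+r}\rho\,v_{3,0}(\rho)\,d\rho$. Writing $G(\rho)=\rho\,v_{3,0}(\rho)$ and letting $H$ be an antiderivative of $G$, we get the convenient form $r\,v_{3}(t,r)=\tfrac12\big(H(t+r)-H(|t-r|)\big)$, in which every $t$- or $r$-derivative, as well as the reduction $\partial_{t}^{2}v_{3}=\tfrac1r\partial_{r}^{2}(r\,v_{3})$ coming from the wave equation, lands on $G,G',G''$ evaluated at the endpoints $t\pm r$. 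Note that $G$ is supported in $[T_{0,1},\infty)$ because of the cutoff $\psi(r\,T_{\lambda}/T_{0,1})$ in \eqref{v30def}, and that Lemma \ref{v30estlemma}, together with $\lambda(\rho)^{2}\le(\sup_{x\in[T_{\lambda},\rho]}\sqrt{\lambda(x)})^{4}$, gives for $0\le m\le 3$
\[ |G^{(m)}(\rho)|\le \frac{C\,\lambda(\rho)^{2}\,(\sup_{x\in[T_{\lambda},\rho]}\sqrt{\lambda(x)})\,\log^{2}\rho}{\rho^{3+m}}\le \frac{C\,(\sup_{x\in[T_{\lambda},\rho]}\sqrt{\lambda(x)})^{5}\,\log^{2}\rho}{\rho^{3+m}}. \]
This $m\le 3$ budget is exactly what the lemma requires, provided time derivatives are always traded for $\tfrac1r\partial_{r}^{2}(r\,\cdot)$ rather than applied naively.

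In the region $r\le t/2$, Taylor expansion of $H(t\pm r)$ about $t$ cancels the odd-order terms in $r\,v_{3}$, leaving $v_{3}(t,r)=G(t)+\tfrac{r^{2}}{6}G''(t)+\dots$, so the leading part is exactly $v_{3,main}(t,r)=G(t)=t\,v_{3,0}(t)$. For the difference I would use iterated fundamental-theorem-of-calculus remainders: $v_{3}-v_{3,main}=\tfrac{1}{2r}\int_{t-r}^{t+r}\!\int_{t}^{s}G'(\sigma)\,d\sigma\,ds$, and, crucially, $\partial_{t}^{2}(v_{3}-v_{3,main})=\tfrac{1}{2r}\int_{t-r}^{t+r}(G''(s)-G''(t))\,ds=\tfrac{1}{2r}\int_{t-r}^{t+r}\!\int_{t}^{s}G'''(\sigma)\,d\sigma\,ds$, while $r$-derivatives are computed by differentiating the endpoints (with the obvious sign changes according to whether $r<t$ or $r>t$). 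The double integral contributes a factor $\tfrac{1}{2r}\cdot r^{2}=\tfrac r2$, which is the source of the single power of $r$ (rather than $r^{2}$) in the $\partial_{t}^{2}$ bound and of the $t^{-6}$ (rather than $t^{-5}$) decay; and on $[t-r,t+r]\subset[t/2,3t/2]$ one replaces $\lambda(\sigma)$, $\sup_{[T_{\lambda},\sigma]}\sqrt\lambda$, $\log\sigma$ by $\lambda(t)$, $\sup_{[T_{\lambda},t]}\sqrt\lambda$, $\log t$ up to constants by \eqref{lambdacomparg}. This yields all the stated $r\le t/2$ bounds; for $\partial_{ttr}v_{3}$ one writes $\partial_{r}\partial_{t}^{2}v_{3}=-\tfrac{1}{2r^{2}}(G'(t+r)-G'(t-r))+\tfrac{1}{2r}(G''(t+r)+G''(t-r))$ and expands $G'$ and $G''$ about $t$ once more to extract the $t^{-6}$ decay, and $\partial_{t}^{3}v_{3}=\tfrac{1}{2r}(G''(t+r)-G''(t-r))$ is handled identically.

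In the region $r\ge t/2$ I would instead estimate $r\,v_{3}=\tfrac12\int_{|t-r|}^{t+r}G(\rho)\,d\rho$ directly. Since $|G(\rho)|\lesssim\rho^{-3}(\sup_{[T_{\lambda},\rho]}\sqrt\lambda)^{5}\log^{2}\rho$ is integrable at infinity and $G$ vanishes for $\rho<T_{0,1}$, the integral is dominated by its lower endpoint, giving $|r\,v_{3}|\lesssim\max(|t-r|,1)^{-2}(\sup_{[T_{\lambda},t+r]}\sqrt\lambda)^{5}\log^{2}(t+r)$, i.e. the asserted $\tfrac{1}{r\langle t-r\rangle^{2}}$ behavior. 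Each $\partial_{t}^{j}\partial_{r}^{k}$ with $j+k\le 2$, as well as $\partial_{ttr}$ and $\partial_{t}^{3}$, either differentiates the $t+r$ endpoint, producing $G^{(m)}(t+r)$, which is controlled using $(t+r)^{-3-m}\lesssim r^{-1}t^{-m}\langle t-r\rangle^{-2}$ (valid since $t+r$ dominates each of $r$, $t$, and $\langle t-r\rangle$), this being the origin of the $t^{-(j+k)}$ alternative; or it differentiates the $|t-r|$ endpoint, producing $G^{(m)}(|t-r|)$, bounded by $\langle t-r\rangle^{-3-m}$ and identically zero once $|t-r|<T_{0,1}$, hence harmless near the light cone; or it leaves a term under the integral sign, to which the same endpoint analysis applies. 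Collecting everything gives the stated $\tfrac{(\sup\sqrt\lambda)^{5}\log^{2}}{r\langle t-r\rangle^{2}}\big(\langle t-r\rangle^{-(j+k)}+t^{-(j+k)}\big)$ form, while $\partial_{t}^{3}v_{3}$ and $\partial_{ttr}v_{3}$ are read off from the $\tfrac1r\big(|G''(t+r)|+|G''(|t-r|)|\big)$-type bounds, producing the $\langle t-r\rangle^{-5}$ and $t^{-1}\langle t-r\rangle^{-4}$ contributions respectively.

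Any derivative orders not covered directly by the above are obtained by differentiating the identity $(-\partial_{t}^{2}+\partial_{r}^{2}+\tfrac2r\partial_{r})(v_{3}-v_{3,main})=\partial_{t}^{2}(t\,v_{3,0}(t))$ and inducting, exactly as at the end of the proof of Lemma \ref{w1estlemma}. I expect the only real difficulty to be bookkeeping: correctly propagating the powers of $\lambda$ and $\sqrt{\lambda}$, the $\log^{2}$ factors, and the two-regime weight structure ($\langle t-r\rangle$ near the light cone versus $t$ away from it, according to which endpoint a derivative hits), together with the discipline of routing time derivatives through $\tfrac1r\partial_{r}^{2}(r\,\cdot)$ so as never to need $v_{3,0}^{(4)}$, and of expanding to the exact order at which $G''$ or $G'''$ appears so as to capture the sharp $r$-weights near the origin. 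None of the individual integral estimates is hard; there are simply many of them.
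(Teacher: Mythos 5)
Your proposal is correct and takes essentially the same route as the paper: use the explicit free-wave representation, take Taylor/FTC remainders about $r=0$ (or about the endpoint $t$ of the 1D integral, which is the same thing after the change of variables $\rho=\sqrt{r^{2}+t^{2}+2rtu}$), feed in the bounds of Lemma~\ref{v30estlemma}, and consciously manage the three-derivative budget on $v_{3,0}$ by trading $\partial_{t}^{2}$ for $\tfrac{1}{r}\partial_{r}^{2}(r\,\cdot)$ -- exactly what the paper does when it switches to a one-derivative representation "since we only estimated three derivatives of $v_{3,0}$". The only cosmetic difference is that you write the D'Alembert kernel as $r\,v_{3}=\tfrac12\big(H(t+r)-H(|t-r|)\big)$ rather than keeping the spherical average; this makes the endpoint bookkeeping for $r\geq t/2$ a little more transparent, but is an equivalent formulation of the same argument.
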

\begin{proof}
We note that 
$$(v_{3}-v_{3,main})(t,r) = \frac{t}{2}\int_{0}^{\pi} \sin(\theta) \int_{0}^{r} dy (r-y) \partial_{y}^{2}(v_{3,0}(\sqrt{y^{2}+t^{2}+2 y t \cos(\theta)}))$$
This immediately leads to the estimates in the lemma statement for  $0 \leq j, k \leq 1$. Next, we use
$$(v_{3}-v_{3,main})(t,r) = \frac{t}{2}\int_{0}^{\pi} d\theta \sin(\theta) \int_{0}^{r} dy \frac{(y+t \cos(\theta))}{\sqrt{y^{2}+t^{2}+2 y t \cos(\theta)}} v_{3,0}'(\sqrt{t^{2}+y^{2}+2 y t \cos(\theta)}) $$
for the remaining estimates in the lemma statement,  since we only estimated three derivatives of $v_{3,0}$ in \eqref{v30estlemma}. We then finish the proof by using the same procedure as in Lemma \ref{v2estlemma}. \end{proof}
\subsection{Matching, Part 3}\label{thirdordermatching}
Here, we need to match the leading part of $u_{ell,3}$ (defined in \eqref{uell3def}) with appropriate parts of $w_{1,main}$ and $v_{2,main}$. The leading part of $u_{ell,3}$ in the matching region is given by the following function, as per the following lemma.
$$u_{ell,3,main}(t,r):= \int_{\lambda(t)}^{r} \left(\frac{-x^{2}}{r}+x\right)\partial_{1}^{2}(w_{1,cm}-w_{1,lm}+v_{2,qm}-v_{2,lm})(t,x) dx$$
\begin{lemma}\label{uell3minusuell3mainestlemma} For $j=0, 0 \leq k \leq 2$ or $j=1, 0 \leq k \leq 1$,
$$|\partial_{t}^{j}\partial_{r}^{k}\left(u_{ell,3}-u_{ell,3,main}\right)(t,r)| \leq \frac{C r^{3-k}\lambda(t)^{5/2}}{t^{6+j}}(1+\log(\frac{r}{\lambda(t)})), \quad r \geq \lambda(t)$$\end{lemma}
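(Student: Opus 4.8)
The plan is to estimate $u_{ell,3}-u_{ell,3,main}$ directly from the variation of parameters formula \eqref{uell3def}, in the same spirit as the proofs of Lemmas \ref{uellminusuellmainestlemma}, \ref{uell2minusuell2mainestlemma} and \ref{vexsubminussub0estlemma}. First I would record the asymptotics of the two homogeneous solutions: $\phi_{0}(R)=-\tfrac{\sqrt{3}}{2R}+O(R^{-3})$ and $e_{2}(R)=\tfrac{2}{\sqrt{3}}+O(R^{-2})$ as $R\to\infty$, and $\phi_{0}(0)=\tfrac12$, $e_{2}(R)=\tfrac{2}{R}+O(R)$ as $R\to0$. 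Substituting these into \eqref{uell3def}, splitting $\int_{0}^{R}=\int_{0}^{1}+\int_{1}^{R}$, and changing variables $x=s\lambda(t)$, one writes, for $r\ge\lambda(t)$,
\[ u_{ell,3}(t,r)=\mathcal{G}_{r}\big[\partial_{1}^{2}u_{ell,2}\big](t,r)+\mathcal{E}_{1}(t,r),\qquad \mathcal{G}_{r}[g](t,r):=-\tfrac1r\int_{\lambda(t)}^{r}x^{2}g(t,x)\,dx+\int_{\lambda(t)}^{r}x\,g(t,x)\,dx, \]
where $\mathcal{E}_{1}$ collects the contribution of $\int_{0}^{1}$ together with that of the $O(R^{-3})$, $O(R^{-2})$ corrections to $\phi_{0}$, $e_{2}$ and of $\phi_{0}(s)+\tfrac{\sqrt3}{2s}$, $e_{2}(s)-\tfrac{2}{\sqrt3}$ inside the integrals. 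Feeding the pointwise bounds on $\partial_{t}^{2}u_{ell,2}$ from Lemma \ref{uell2minusuell2mainestlemma} ($\lesssim r^{2}\lambda^{3/2}t^{-6}$ for $r\le\lambda$, $\lesssim r^{3}\sqrt{\lambda}\,t^{-6}$ for $r\ge\lambda$) into each of these and using the decay in $R=r/\lambda$, one gets $|\partial_{t}^{j}\partial_{r}^{k}\mathcal{E}_{1}|\lesssim r^{3-k}\lambda^{5/2}t^{-6-j}$ for the $(j,k)$ in the statement; since $\mathcal{G}_{r}$ raises the power of $r$ by exactly $2$ and $u_{ell,3}$ itself is of size $r^{5}\sqrt{\lambda}\,t^{-6}$ in this region, this is genuinely of correction size.

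Next I would use the decomposition of the leading part of $u_{ell,2}$ from \eqref{em2def}, namely $u_{ell,2,main}=(w_{1,cm}-w_{1,lm}+v_{2,qm}-v_{2,lm})+v_{ex,sub,ell}+u_{w,3,ell,0}+u_{w,2,ell,0,1}+e_{m,2}$. The point is that the cancellations between the Taylor tail of $w_{1}$ and the $\widetilde{v_{2,0}}$ terms, and between $v_{2,qm}$ and $v_{2,lm}$, leave $w_{1,cm}-w_{1,lm}+v_{2,qm}-v_{2,lm}=\tfrac{1}{12}a''(t)r^{3}+\tfrac16 b''(t)r^{2}$, whose $\partial_{t}^{2}$, run through $\mathcal{G}_{r}$, is by the very definition equal to $u_{ell,3,main}$. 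Writing $\partial_{1}^{2}u_{ell,2}=\partial_{t}^{2}(w_{1,cm}-w_{1,lm}+v_{2,qm}-v_{2,lm})+\partial_{t}^{2}(v_{ex,sub,ell}+u_{w,3,ell,0}+u_{w,2,ell,0,1}+e_{m,2})+\partial_{t}^{2}(u_{ell,2}-u_{ell,2,main})$ and applying $\mathcal{G}_{r}$, the first term produces exactly $u_{ell,3,main}$, and each remaining term must be shown to be $\lesssim r^{3-k}\lambda^{5/2}t^{-6-j}(1+\log(r/\lambda))$. For $u_{w,3,ell,0}$ and $u_{w,2,ell,0,1}$ I would use their dedicated estimates (which already control $\partial_{t}^{k}$ up to $k=2$), for $e_{m,2}$ Lemma \ref{em2estlemma}, and for $v_{ex,sub,ell}$ a direct estimate of $\partial_{t}^{2}$ of \eqref{vexsubelldef} by the routine of Lemma \ref{vexsubminussub0estlemma}; in each case $\mathcal{G}_{r}$ of the resulting bound is of the claimed size (the linear-in-$r$ term of $u_{w,2,ell,0,1}$, with coefficient $\sim a''(t)\lambda^{2}\sim\lambda^{5/2}t^{-4}$, is the one landing exactly on the borderline $r^{3}\lambda^{5/2}t^{-6}$). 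Finally $\partial_{t}^{2}(u_{ell,2}-u_{ell,2,main})$ is controlled by pushing the argument of Lemma \ref{uell2minusuell2mainestlemma} (differentiating in $t$ the integral representation of $u_{ell,2}-u_{ell,2,main}$ coming from \eqref{uell2formatching}, using the $u_{ell}$ bounds \eqref{uellestforuell3}) to one more $t$-derivative; applying $\mathcal{G}_{r}$ to the result gives a term with a gain of $\lambda^{2}r^{-2}$ relative to $r^{3}\lambda^{5/2}t^{-6}$, which is what turns the $\log^{2}$ appearing there into the single $\log$ of the statement.

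For the derivatives: since $\partial_{r}\mathcal{G}_{r}[g](t,r)=\tfrac{1}{r^{2}}\int_{\lambda(t)}^{r}x^{2}g(t,x)\,dx$ (the two boundary terms at $x=r$ cancel), the $k=1$ bounds follow from the $k=0$ analysis with the power of $r$ lowered by one, and the $j=1,\ k\le1$ bounds follow by differentiating the integral representations in $t$, the extra boundary and chain-rule terms generated by the $t$-dependence of the lower limit $\lambda(t)$ and of the scale being of lower order. For $k=2$ I would instead use the second-order ODEs: $u_{ell,3}$ solves $(\partial_{r}^{2}+\tfrac2r\partial_{r}+V)u_{ell,3}=\partial_{1}^{2}u_{ell,2}$ and $u_{ell,3,main}$ solves $(\partial_{r}^{2}+\tfrac2r\partial_{r})u_{ell,3,main}=\partial_{t}^{2}(w_{1,cm}-w_{1,lm}+v_{2,qm}-v_{2,lm})$, so $\partial_{r}^{2}(u_{ell,3}-u_{ell,3,main})$ is expressed through $\partial_{r}(u_{ell,3}-u_{ell,3,main})$ (already bounded), through $\partial_{1}^{2}u_{ell,2}-\partial_{t}^{2}(w_{1,cm}-w_{1,lm}+v_{2,qm}-v_{2,lm})=\partial_{t}^{2}\big(v_{ex,sub,ell}+u_{w,3,ell,0}+u_{w,2,ell,0,1}+e_{m,2}+(u_{ell,2}-u_{ell,2,main})\big)$, and through $Vu_{ell,3}$, which in this range is $\lesssim\lambda^{2}r^{-4}\cdot r^{5}\sqrt{\lambda}\,t^{-6}=r\lambda^{5/2}t^{-6}$, precisely the desired $k=2$ bound.

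I expect the main obstacle to be the bookkeeping of the $r$-independent and linear-in-$r$ pieces that $\mathcal{G}_{r}$ generates: $\mathcal{G}_{r}$ of a pure function of $t$ produces an $O(r^{2})$ term, $\mathcal{G}_{r}$ of the linear-in-$r$ part of $u_{w,2,ell,0,1}$ an $O(r^{3}\lambda^{5/2}t^{-6})$ term, and the $\int_{0}^{1}$ and $O(R^{-2}),O(R^{-3})$ tails of $\phi_{0},e_{2}$ contribute further borderline-size terms; one must verify that none of these exceeds the stated right-hand side and, crucially, re-derive the cancellations behind \eqref{em2def} at the level of $\partial_{t}^{2}$ and $\mathcal{G}_{r}$, so that the only non-negligible ``matched'' contribution really is $\tfrac{1}{12}a''(t)r^{3}+\tfrac16 b''(t)r^{2}$. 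The remaining, purely technical, point is extending Lemma \ref{uell2minusuell2mainestlemma} to the second $t$-derivative.
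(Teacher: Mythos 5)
Your proposal is correct and follows essentially the same route as the paper: both split off the leading Green's kernel (your $\mathcal{G}_{r}$, the paper's $\tfrac{1}{s}-\tfrac{\lambda}{r}$), both decompose the source via the $u_{ell,2,main}$-splitting of \eqref{em2def}, and both rely on the observation that the leading kernel applied to $\partial_{t}^{2}(w_{1,cm}-w_{1,lm}+v_{2,qm}-v_{2,lm})$ reproduces $u_{ell,3,main}$ by definition (this is exactly the paper's identity $I$, which in turn rests on the cancellation $t\widetilde{v_{2,0}}(t)=-\tfrac{\sqrt3\lambda'(t)}{2\sqrt{\lambda(t)}}$ that you correctly use to reduce $w_{1,cm}-w_{1,lm}+v_{2,qm}-v_{2,lm}$ to $\tfrac{1}{12}a''(t)r^{3}+\tfrac16 b''(t)r^{2}$). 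The one organizational difference is that the paper sidesteps the need for $\partial_{t}^{2}(u_{ell,2}-u_{ell,2,main})$ by re-inserting the explicit decomposition \eqref{uell2foruell3minusmainest} of $u_{ell,2}$ into the variation-of-parameters formula for $u_{ell,3}$ (so all needed higher $t$-derivatives fall on the original sources $u_{ell}$, $u_{w,2,ell,0,0}$, $v_{ex,ell,0}$, $e_{ell,1}$, for which Lemmas \ref{uellminusuellmainestlemma} and \ref{uw2ell00estlemma} already suffice), whereas your route requires extending Lemma \ref{uell2minusuell2mainestlemma} by one $t$-derivative, which you correctly flag and which is indeed routine.
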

\begin{proof}
We recall that $u_{ell,2}$ is defined in \eqref{uell2def}, use the first order matching, \eqref{matching1}, and get
\begin{equation}\label{uell2foruell3minusmainest}\begin{split}u_{ell,2}(t,r)&= \int_{0}^{1}s^{2}\lambda(t)^{2}\left(\phi_{0}(\frac{r}{\lambda(t)})e_{2}(s)+\frac{\lambda(t)}{r}-(e_{2}(\frac{r}{\lambda(t)})\phi_{0}(s)+\frac{1}{s})\right)\partial_{1}^{2}(w_{1,lm}+v_{2,lm})(t,s\lambda(t)) ds\\
&+w_{1,cm}(t,r)-w_{1,lm}(t,r)+v_{2,qm}(t,r)-v_{2,lm}(t,r)\\
&+\int_{1}^{\frac{r}{\lambda(t)}} s^{2}\lambda(t)^{2}\left(\phi_{0}(\frac{r}{\lambda(t)})e_{2}(s)+\frac{\lambda(t)}{r} -(e_{2}(\frac{r}{\lambda(t)})\phi_{0}(s)+\frac{1}{s})\right)\partial_{1}^{2}(w_{1,lm}+v_{2,lm})(t,s\lambda(t)) ds\\
&+\int_{1}^{\frac{r}{\lambda(t)}}  (\phi_{0}(\frac{r}{\lambda(t)})e_{2}(s)-e_{2}(\frac{r}{\lambda(t)})\phi_{0}(s))s^{2}\lambda(t)^{2}\partial_{1}^{2}(u_{w,2,ell,0,0}+v_{ex,ell,0})(t,s\lambda(t)) ds\\
&+\int_{1}^{\frac{r}{\lambda(t)}}  (\phi_{0}(\frac{r}{\lambda(t)})e_{2}(s)-e_{2}(\frac{r}{\lambda(t)})\phi_{0}(s))s^{2}\lambda(t)^{2}\partial_{1}^{2}e_{ell,1}(t,s\lambda(t)) ds\\
&+\int_{0}^{1}  (\phi_{0}(\frac{r}{\lambda(t)})e_{2}(s)-e_{2}(\frac{r}{\lambda(t)})\phi_{0}(s))s^{2}\lambda(t)^{2}\partial_{1}^{2}(u_{ell}-v_{2,lm}-w_{1,lm})(t,s\lambda(t)) ds\end{split}\end{equation}
where we used
$$I:=\int_{0}^{\frac{r}{\lambda(t)}} s^{2}\lambda(t)^{2}\left(\frac{1}{s}-\frac{\lambda(t)}{r}\right) \partial_{1}^{2}(w_{1,lm}+v_{2,lm})(t,s\lambda(t)) ds=w_{1,cm}(t,r)-w_{1,lm}(t,r)+v_{2,qm}(t,r)-v_{2,lm}(t,r)$$
Therefore,
\begin{equation}\begin{split}&u_{ell,3}(t,r)-u_{ell,3,main}(t,r)\\
&=\int_{0}^{1}  (\phi_{0}(\frac{r}{\lambda(t)})e_{2}(s)-e_{2}(\frac{r}{\lambda(t)})\phi_{0}(s))s^{2}\lambda(t)^{2}\partial_{1}^{2}u_{ell,2}(t,s\lambda(t)) ds\\
&+\int_{\lambda(t)}^{r} \left(\frac{\phi_{0}(\frac{r}{\lambda(t)})x^{2}e_{2}(\frac{x}{\lambda(t)})}{\lambda(t)}-\frac{e_{2}(\frac{r}{\lambda(t)})x^{2}\phi_{0}(\frac{x}{\lambda(t)})}{\lambda(t)}+\frac{x^{2}}{r}-x\right)\partial_{1}^{2}(w_{1,cm}-w_{1,lm}+v_{2,qm}-v_{2,lm})(t,x)dx\\
&+\int_{\lambda(t)}^{r} \left(\frac{\phi_{0}(\frac{r}{\lambda(t)})x^{2}e_{2}(\frac{x}{\lambda(t)})}{\lambda(t)} - \frac{e_{2}(\frac{r}{\lambda(t)})x^{2}\phi_{0}(\frac{x}{\lambda(t)})}{\lambda(t)}\right)\partial_{1}^{2}(u_{ell,2}-I)(t,x)dx \end{split}\end{equation}
We then insert each term of \eqref{uell2foruell3minusmainest} into the last term in the above expression, and estimate the resulting integrals, and their derivatives directly.
 \end{proof}
 We also use Lemma \ref{uell2minusuell2mainestlemma} to directly estimate $u_{ell,3}$:
 \begin{lemma}\label{uell3estlemma} $$|\partial_{t}^{j}\partial_{r}^{k}u_{ell,3}(t,r)| \leq \frac{C \sqrt{\lambda(t)} r^{4-k}(r+\lambda(t))}{t^{6+j}}, \quad j,k=0,1$$
 For $0 \leq k \leq 1$,
 $$|\partial_{r}^{k}\partial_{t}^{2}u_{ell,3}(t,r)| \leq \begin{cases} \frac{C r^{4-k}\lambda(t)^{3/2}}{t^{8}}, \quad r \leq \lambda(t)\\
 \frac{C r^{5-k} \sqrt{\lambda(t)}}{t^{8}}, \quad r \geq \lambda(t)\end{cases}$$\end{lemma}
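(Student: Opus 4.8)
\emph{Proof proposal.} The plan is to estimate $u_{ell,3}$ directly from its defining formula \eqref{uell3def}, exactly as the bounds on $u_{ell,2}$ in Lemma \ref{uell2minusuell2mainestlemma} follow from \eqref{uell2def}. First I would change variables $y=s\lambda(t)$ in \eqref{uell3def}, writing, with $R=r/\lambda(t)$ and $f:=\partial_{t}^{2}u_{ell,2}$,
\begin{equation}u_{ell,3}(t,r) = \phi_{0}(R)\int_{0}^{r}\frac{y^{2}}{\lambda(t)}f(t,y)\, e_{2}\!\left(\tfrac{y}{\lambda(t)}\right)dy-e_{2}(R)\int_{0}^{r}\frac{y^{2}}{\lambda(t)}f(t,y)\,\phi_{0}\!\left(\tfrac{y}{\lambda(t)}\right)dy .\end{equation}
The inputs are: (i) the elementary asymptotics of $\phi_{0},e_{2}$ from \eqref{phi0e2def} — $\phi_{0}$ is bounded near $0$ and $O(R^{-1})$ as $R\to\infty$, while $e_{2}=O(R^{-1})$ near $0$ and bounded as $R\to\infty$, together with the natural bounds on their derivatives obtained by differentiating the explicit formulas; and (ii) the bounds on $\partial_{t}^{2+j}u_{ell,2}$, $0\le j\le 2$, from Lemma \ref{uell2minusuell2mainestlemma}: $|\partial_{t}^{2+j}u_{ell,2}(t,y)|\le C y^{2}\lambda(t)^{3/2}t^{-6-j}$ for $y\le\lambda(t)$ and $|\partial_{t}^{2+j}u_{ell,2}(t,y)|\le C y^{3}\lambda(t)^{1/2}t^{-6-j}$ for $y\ge\lambda(t)$.

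For $j=k=0$ I would split the $y$-integrals at $y=\lambda(t)$ when $r\ge\lambda(t)$, insert these bounds (the $e_{2}$-singularity at $y=0$ being integrable against $y^{2}/\lambda(t)$, and $f$ moreover vanishing there), carry out the elementary power integrals, and multiply by $|\phi_{0}(R)|$, resp.\ $|e_{2}(R)|$; this produces $|u_{ell,3}(t,r)|\le C\lambda(t)^{3/2}r^{4}t^{-6}$ for $r\le\lambda(t)$ and $\le C\lambda(t)^{1/2}r^{5}t^{-6}$ for $r\ge\lambda(t)$, which is the stated bound $C\sqrt{\lambda(t)}\,r^{4}(r+\lambda(t))t^{-6}$. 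For the first $r$-derivative I would differentiate the reformulated expression and observe that the two boundary terms from the upper limits, $\phi_{0}(R)\frac{r^{2}}{\lambda(t)}f(t,r)e_{2}(R)-e_{2}(R)\frac{r^{2}}{\lambda(t)}f(t,r)\phi_{0}(R)$, cancel, leaving $\partial_{r}u_{ell,3}=\lambda(t)^{-1}\bigl(\phi_{0}'(R)\int_{0}^{r}\cdots - e_{2}'(R)\int_{0}^{r}\cdots\bigr)$, to which the same estimation applies (now using the derivative asymptotics), giving the $k=1$ bound.

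For the time derivatives the key point of the change of variables is that neither the integration limits nor the arguments of $u_{ell,2}$ now depend on $t$: each $\partial_{t}$ either falls on $\lambda(t)^{-1}$, on $\phi_{0}(r/\lambda(t))$, or on $e_{2}(y/\lambda(t))$ — producing a factor $\lambda'(t)/\lambda(t)=O(t^{-1})$ (and, at second order, $\lambda''(t)/\lambda(t)=O(t^{-2})$ and $(\lambda'(t)/\lambda(t))^{2}=O(t^{-2})$, by the symbol bounds \eqref{lambdasymb}) times an integral of the same shape — or it falls on $f(t,y)=\partial_{t}^{2}u_{ell,2}(t,y)$, replacing it by $\partial_{t}^{3}u_{ell,2}(t,y)$, resp.\ $\partial_{t}^{4}u_{ell,2}(t,y)$, which Lemma \ref{uell2minusuell2mainestlemma} makes smaller by $t^{-1}$, resp.\ $t^{-2}$. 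Hence $\partial_{t}^{j}u_{ell,3}$ and $\partial_{r}\partial_{t}^{j}u_{ell,3}$ satisfy the $j=0$ bounds with an extra factor $t^{-j}$, which is exactly the assertion of the lemma.

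The computation itself is routine; the only things to watch are the $r\le\lambda(t)$ versus $r\ge\lambda(t)$ split inside the $y$-integrals (so that the single bound $\sqrt{\lambda(t)}\,r^{4}(r+\lambda(t))t^{-6}$ glues correctly across $r=\lambda(t)$) and the cancellation of the boundary terms under $\partial_{r}$. I would also emphasize that it is precisely the change of variables $y=s\lambda(t)$ that avoids ever needing $\partial_{r}\partial_{t}^{2}u_{ell,2}$ in the regime $r\ge\lambda(t)$, where Lemma \ref{uell2minusuell2mainestlemma} does not provide such a bound; that reorganization, rather than any individual estimate, is the real (if minor) obstacle.
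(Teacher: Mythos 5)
Your proposal is correct and is essentially the proof the paper intends: the paper offers no explicit argument, saying only that the estimates follow by ``directly estimat[ing] $u_{ell,3}$'' from Lemma~\ref{uell2minusuell2mainestlemma}, and your elaboration (change of variables so that neither $\partial_r$ nor $\partial_t$ ever lands on a spatial derivative of $\partial_t^2 u_{ell,2}$; Wronskian cancellation of the boundary terms under $\partial_r$; splitting the $y$-integral at $\lambda(t)$ and inserting the $r\le\lambda(t)$ versus $r\ge\lambda(t)$ bounds on $\partial_t^{2+j}u_{ell,2}$) is exactly the computation being gestured at. Your remark that the reorganization avoids needing $\partial_r\partial_t^2 u_{ell,2}$ for $r\ge\lambda(t)$ correctly identifies why the lemma's input estimates suffice even though they are $r$-derivative--free in that region.
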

By direct computation, we get
$$u_{ell,3,main}-(w_{1,main}-w_{1,cm}+v_{2,main}-v_{2,qm}) = -\int_{0}^{\lambda(t)} \left(\frac{-x^{2}}{r}+x\right)\partial_{t}^{2}\left(w_{1,cm}-w_{1,lm}+v_{2,qm}-v_{2,lm}\right)(t,x) dx$$
Thus, the terms in $u_{ell,3,main}$ which grow fastest as $r$ approaches infinity match $w_{1,main}-w_{1,cm}+v_{2,main}-v_{2,qm}$. A direct estimation gives
\begin{lemma}\label{uell3mainminusleadingest} For $j=0$, $0 \leq k \leq 2$ or $j=1$, $0 \leq k \leq 1$, and for $r \geq \lambda(t)$,
$$|\partial_{t}^{j}\partial_{r}^{k}\left(u_{ell,3,main}-(w_{1,main}-w_{1,cm}+v_{2,main}-v_{2,qm})\right)| \leq \frac{C \lambda(t)^{11/2}}{t^{6+j}} \begin{cases} 1, \quad k=0\\
\frac{\lambda(t)}{r^{1+k}}, \quad k \geq 1\end{cases}$$\end{lemma}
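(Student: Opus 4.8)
The plan is to estimate directly the explicit representation stated just above the lemma. Abbreviating $P(t,x):=(w_{1,cm}-w_{1,lm}+v_{2,qm}-v_{2,lm})(t,x)$, that identity reads
\[
u_{ell,3,main}-(w_{1,main}-w_{1,cm}+v_{2,main}-v_{2,qm})(t,r)=-\int_{0}^{\lambda(t)}\Bigl(\frac{-x^{2}}{r}+x\Bigr)\,\partial_{t}^{2}P(t,x)\,dx .
\]
The key input is that Lemmas \ref{w1minusmainest} and \ref{v2minusmainest} (applied with the index $2+j$ in place of $j$) give, for all $j\geq 0$ and uniformly in $x$ (in particular for $0\leq x\leq\lambda(t)$),
\[
|\partial_{t}^{2+j}P(t,x)|\leq \frac{C_{j}\bigl(x^{3}\sqrt{\lambda(t)}+x^{2}\lambda(t)^{3/2}\bigr)}{t^{6+j}},
\]
and that, by the symbol bounds \eqref{lambdasymb}, every $t$-derivative falling on the factors $\lambda(t)$ produced along the way costs only a power of $t^{-1}$. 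Combined with the hypothesis $r\geq\lambda(t)$, this reduces the whole estimate to elementary integrals over $[0,\lambda(t)]$.

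First I would dispose of the case $k=0$. For $r\geq\lambda(t)$ and $0\leq x\leq\lambda(t)$ one has $0\leq x(1-x/r)\leq x$, so for $j=0$,
\[
\Bigl|\int_{0}^{\lambda(t)}x\bigl(1-\tfrac{x}{r}\bigr)\partial_{t}^{2}P(t,x)\,dx\Bigr|\leq \frac{C}{t^{6}}\int_{0}^{\lambda(t)}x\bigl(x^{3}\sqrt{\lambda(t)}+x^{2}\lambda(t)^{3/2}\bigr)\,dx\leq \frac{C\lambda(t)^{11/2}}{t^{6}} .
\]
For $j=1$ I would differentiate under the integral sign; since the upper limit $\lambda(t)$ depends on $t$, this produces an extra boundary term $\lambda'(t)\bigl(\tfrac{-\lambda(t)^{2}}{r}+\lambda(t)\bigr)\partial_{t}^{2}P(t,\lambda(t))$, which by the displayed bound and $|\lambda'(t)|\leq C\lambda(t)/t$ is of size $O\bigl(\lambda(t)^{11/2}t^{-7}\bigr)$ — exactly the target — while the remaining integral is handled as for $j=0$ but with $\partial_{t}^{3}P$, which supplies the extra factor $t^{-1}$.

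For $k\geq 1$ I would note that $\partial_{r}^{k}$ annihilates the $x$-summand of $\tfrac{-x^{2}}{r}+x$ and acts on $\tfrac{x^{2}}{r}$ through $\partial_{r}^{k}(r^{-1})=(-1)^{k}k!\,r^{-1-k}$, so that
\[
\partial_{t}^{j}\partial_{r}^{k}\Bigl[-\!\int_{0}^{\lambda(t)}\!\bigl(\tfrac{-x^{2}}{r}+x\bigr)\partial_{t}^{2}P(t,x)\,dx\Bigr]=\frac{(-1)^{k}k!}{r^{1+k}}\,\partial_{t}^{j}\!\int_{0}^{\lambda(t)}\!x^{2}\,\partial_{t}^{2}P(t,x)\,dx .
\]
Estimating the last integral exactly as in the $k=0$ case, but now with one more power of $x$ in the integrand, replaces $\lambda(t)^{11/2}$ by $\lambda(t)^{13/2}$, and the boundary term coming from $\partial_{t}$ is treated as above; this yields the bound $C_{k}\lambda(t)^{13/2}r^{-1-k}t^{-6-j}=C_{k}\lambda(t)^{11/2}t^{-6-j}\cdot\lambda(t)r^{-1-k}$, which is precisely the claimed estimate for $k\geq 1$. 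I do not expect a genuine obstacle here; the only point requiring some care is bookkeeping the boundary terms generated by differentiating $\int_{0}^{\lambda(t)}$ in $t$, but since at most one $t$-derivative is taken and $r\geq\lambda(t)$, these are always of the same order as, or smaller than, the main term.
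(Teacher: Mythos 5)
Your proposal is correct and follows the same route the paper indicates, namely a direct estimation of the identity
\[
u_{ell,3,main}-(w_{1,main}-w_{1,cm}+v_{2,main}-v_{2,qm}) = -\int_{0}^{\lambda(t)}\bigl(\tfrac{-x^{2}}{r}+x\bigr)\partial_{t}^{2}\bigl(w_{1,cm}-w_{1,lm}+v_{2,qm}-v_{2,lm}\bigr)(t,x)\,dx ,
\]
using the $t$-derivative bounds from Lemmas \ref{w1minusmainest} and \ref{v2minusmainest} together with $r\geq\lambda(t)$ and the Leibniz boundary term from the $t$-dependent upper limit.
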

\subsection{Preliminary Ansatz}
We need to add another correction into our ansatz, in order to eliminate the linear error term associated to $v_{3}$, namely
$$e_{3}(t,r):= \frac{-\chi_{\geq 1}(\frac{r}{h(t)}) \cdot 45 \lambda(t)^{2} v_{3}(t,r)}{(3\lambda(t)^{2}+r^{2})^{2}}$$
We let $u_{3}$ be the solution to
$$-\partial_{t}^{2}u_{3}+\partial_{r}^{2}u_{3}+\frac{2}{r}\partial_{r}u_{3}=e_{3}(t,r)$$
with zero Cauchy data at infinity. In other words,
\begin{equation}\label{u3formula}u_{3}(t,r) = \int_{t}^{\infty} ds \left(\frac{-1}{2r} \int_{|r-(s-t)|}^{r+s-t} y e_{3}(s,y) dy\right)\end{equation}
Then, we have
\begin{lemma}\label{u3estlemma} For $r \geq \frac{h(t)}{4}$, the following estimates are true.
$$|u_{3}(t,r)| \leq \frac{C \lambda(t)^{2} \left(\sup_{x \in [T_{\lambda},t]}\sqrt{\lambda(x)}\right)^{5} \left(\log^{2}(t) + \log^{2}(r)\right)}{t^{4}}$$
\begin{equation}\label{u3lgrest}|u_{3}(t,r)| \leq \frac{C \lambda(t)^{2}\left(\sup_{x \in [T_{\lambda},t]}\sqrt{\lambda(x)}\right)^{5} \log^{2}(t)}{r t^{3}}\end{equation}
$$|\partial_{r}u_{3}(t,r)| \leq \frac{C \lambda(t)^{2}\left(\sup_{x \in [T_{\lambda},t+r]}\sqrt{\lambda(x)}\right)^{5} (\log^{2}(t)+\log^{2}(r))}{r t^{3}} \left(\frac{1}{t}+\frac{(1+\frac{r}{t})^{\frac{5 C_{u}}{2}}}{\langle t-r \rangle^{2}}+\frac{r}{t^{3}}\right)$$
$$|\partial_{t}u_{3}(t,r)| \leq  C \frac{\lambda(t)^{2} \left(\sup_{x \in [T_{\lambda},t]}\sqrt{\lambda(x)}\right)^{5} \log^{2}(t)}{t^{5}}, \quad \frac{h(t)}{4} \leq r \leq \frac{t}{2}$$
\begin{equation}\begin{split}|\partial_{tr}u_{3}(t,r)| + |\partial_{t}^{2}u_{3}(t,r)| &\leq \frac{C \lambda(t)^{2} \left(\sup_{x \in [T_{\lambda},t]}\sqrt{\lambda(x)}\right)^{5} \log^{2}(t)}{r t^{4}h(t)^{2}} \left(1+ \frac{r h(t)^{2}}{t}\right), \quad \frac{h(t)}{4} \leq r \leq \frac{t}{2}\end{split}\end{equation}
\begin{equation}\begin{split}|\partial_{r}^{2}u_{3}| &\leq \frac{C \lambda(t)^{2} \left(\sup_{x \in [T_{\lambda},t]}\sqrt{\lambda(x)}\right)^{5} \log^{2}(t)}{r t^{3}h(t)^{2}} \cdot \left(\frac{h(t)^{2}}{r^{3}}+\frac{h(t)^{2}}{t r} + \frac{1}{t}\right), \quad \frac{h(t)}{4} \leq r \leq \frac{t}{2}\end{split}\end{equation}
Finally, 
\begin{equation}\label{u3enest}||\partial_{t}u_{3}||_{L^{2}(r^{2} dr)}+||\partial_{r}u_{3}||_{L^{2}(r^{2} dr)} \leq \frac{C \lambda(t)^{2}  \left(\sup_{x \in [T_{\lambda},t]}\sqrt{\lambda(x)}\right)^{5} \log^{2}(t)}{t^{3}}\end{equation}
\end{lemma}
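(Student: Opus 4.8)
The plan is to work directly from the explicit representation \eqref{u3formula}, insert the definition of $e_{3}$, and bound every resulting integral using the pointwise estimates on $v_{3}$ from Lemma \ref{v3minusv3mainestlemma} (together with Lemma \ref{v30estlemma} for $v_{3,main}(s,y)=s v_{3,0}(s)$). Two features of $e_{3}$ supply all the smallness. First, the cutoff $\chi_{\geq 1}(\frac{r}{h(t)})$ forces the spatial variable in the source to satisfy $y\geq h(s)\geq C\sqrt{s}\gg\lambda(s)$, so that $\frac{45\lambda(s)^{2}}{(3\lambda(s)^{2}+y^{2})^{2}}\leq \frac{C\lambda(s)^{2}}{y^{4}}$ on the support of the integrand. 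Second, on that support $v_{3}$ is genuinely small: for $y\leq \frac{s}{2}$ one uses $v_{3}=v_{3,main}+(v_{3}-v_{3,main})$ with $v_{3,main}$ controlled by Lemma \ref{v30estlemma} and $v_{3}-v_{3,main}$ by Lemma \ref{v3minusv3mainestlemma}, while for $y\geq \frac{s}{2}$ one uses the light-cone estimate of Lemma \ref{v3minusv3mainestlemma}, with its $\langle s-y\rangle^{-2}$ concentration near $y\sim s$. Accordingly I would split the $y$-integration in \eqref{u3formula} at $y=\frac{s}{2}$ (and, where needed, according to whether $y\geq h(s)$), and estimate each piece separately after the substitution $s=t+w$. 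Throughout, \eqref{lambdacomparg} is used to replace $\sup_{x\in[T_{\lambda},y+s]}\sqrt{\lambda(x)}$ by $\sqrt{\lambda(t)}$ times a power of $(1+\frac{r}{t})$.

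For the pointwise bounds on $u_{3}$ itself, this substitute-and-estimate procedure yields the first two displayed inequalities, the sharper $r^{-1}t^{-3}$ bound \eqref{u3lgrest} coming from exploiting the extra decay of the interior integral in the regime $r\geq \frac{h(t)}{4}$. For the first derivatives $\partial_{r}u_{3}$ and $\partial_{t}u_{3}$ I would differentiate \eqref{u3formula} directly, exactly as in the proof of Lemma \ref{vexestlemma}: the $r$-derivative produces a $\frac{1}{r^{2}}$ term, the boundary terms $e_{3}(s,r+s-t)$ and $e_{3}(s,|r-(s-t)|)$ from the endpoints, and an interior term, and the $t$-derivative is analogous; each is then bounded pointwise using the $v_{3}$ estimates, and the factor $(1+\frac{r}{t})^{\frac{5C_{u}}{2}}$ in the stated bound for $\partial_{r}u_{3}$ is precisely what \eqref{lambdacomparg} produces from the $(\sup_{x}\sqrt{\lambda(x)})^{5}$ in Lemma \ref{v3minusv3mainestlemma}. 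For the second-order quantities $\partial_{tr}u_{3}$, $\partial_{t}^{2}u_{3}$, and $\partial_{r}^{2}u_{3}$ in the region $\frac{h(t)}{4}\leq r\leq \frac{t}{2}$ I would proceed as in Lemma \ref{vexsubminussub0estlemma}: estimate $\partial_{tr}u_{3}$ and $\partial_{t}^{2}u_{3}$ by differentiating the form of \eqref{u3formula} in which $s$ has been shifted to $w=s-t$, so that a time derivative falls on $e_{3}$ and is controlled by the first $t$-derivative bounds of Lemma \ref{v3minusv3mainestlemma}; then recover $\partial_{r}^{2}u_{3}$ from the equation $\partial_{r}^{2}u_{3}=\partial_{t}^{2}u_{3}-\frac{2}{r}\partial_{r}u_{3}+e_{3}$, using the bounds just obtained together with the pointwise bound on $e_{3}$.

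For the energy estimate \eqref{u3enest} I would follow the scheme used to prove \eqref{vexenest} in Lemma \ref{vexestlemma}. One has the identities
$$\partial_{r}u_{3}+\frac{u_{3}}{r}=\int_{t}^{\infty}ds\left(\frac{-1}{2r}\right)\left((r+s-t)e_{3}(s,r+s-t)+(s-t-r)e_{3}(s,|s-t-r|)\right),$$
$$\partial_{t}u_{3}=\int_{t}^{\infty}ds\left(\frac{-1}{2r}\right)\left(-(r+s-t)e_{3}(s,r+s-t)+(s-t-r)e_{3}(s,|s-t-r|)\right).$$
Applying Minkowski's inequality in $L^{2}(r^{2}\,dr)$ and inserting the pointwise bound $|e_{3}(s,y)|\leq C\,\mathbbm{1}_{\{y\geq h(s)\}}\frac{\lambda(s)^{2}}{y^{4}}|v_{3}(s,y)|$, with $|v_{3}|$ estimated as above and with the $\langle s-y\rangle^{-2}$ weight integrable along the light cone, gives $\|\partial_{r}u_{3}+\frac{u_{3}}{r}\|_{L^{2}(r^{2}dr)}+\|\partial_{t}u_{3}\|_{L^{2}(r^{2}dr)}\leq \frac{C\lambda(t)^{2}(\sup_{x\in[T_{\lambda},t]}\sqrt{\lambda(x)})^{5}\log^{2}(t)}{t^{3}}$. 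Finally, since a rough global pointwise bound on $u_{3}$ (obtained by inserting the $v_{3}$ estimates crudely into \eqref{u3formula}, as in \eqref{uw2roughptwse}) shows $ru_{3}(t,r)^{2}\to 0$ as $r\to 0$, the dominated and monotone convergence argument of Lemma \ref{vexestlemma} gives $\|\partial_{r}u_{3}+\frac{u_{3}}{r}\|_{L^{2}(r^{2}dr)}^{2}=\|\partial_{r}u_{3}\|_{L^{2}(r^{2}dr)}^{2}$, which combined with the above yields \eqref{u3enest}.

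The main obstacle I anticipate is purely one of bookkeeping near the light cone $r\sim t$ (and, in the source integral, near $y\sim s$), where the estimates for $v_{3}$ from Lemma \ref{v3minusv3mainestlemma} degenerate like $\langle t-r\rangle^{-2}$: one must carefully track how this degeneracy interacts with the algebraic gain $\lambda(s)^{2}y^{-4}$, the constraint $y\geq h(s)\geq C\sqrt{s}$, the logarithmic factors, and the growth of $\sup_{x}\sqrt{\lambda(x)}$, and one must separately treat the regions $r\leq\frac{t}{2}$, $\frac{h(t)}{4}\leq r$, and $r\geq\frac{t}{2}$ in order to see that all the integrals converge and produce the precise powers of $t$, $r$, $\lambda(t)$, and $\log t$ claimed. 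Each individual estimate is elementary; it is the number of cases that makes the argument long, and the one point requiring slight care is the cancellation of a boundary term at $y=r$ (analogous to the one noted at the end of the proof of the lemma estimating $u_{w,2,ell}$) when integrating by parts is needed to handle the non-symbol behavior of $v_{3}$ near the cone.
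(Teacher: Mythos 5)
Your proposal matches the paper's proof in all essentials: extract pointwise estimates on $e_{3}$ from Lemmas \ref{v3minusv3mainestlemma}--\ref{v30estlemma} (including the $\lambda(s)^{2}y^{-4}$ gain from the cutoff), insert into \eqref{u3formula} with region splitting, handle $\partial_{t}u_{3}$ and $\partial_{t}^{2}u_{3}$ via the shifted form $w=s-t$, recover $\partial_{r}^{2}u_{3}$ from the equation, and obtain \eqref{u3enest} by Minkowski together with the integration-by-parts/limiting argument as in \eqref{vexenest}. The integration-by-parts subtlety near $y=r$ that you flag at the end is not actually needed here (the pointwise bound \eqref{e3ests} suffices directly), but since you presented it only as an anticipated possibility this does not affect the correctness of your argument.
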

\begin{proof}
We start with the following estimates on $e_{3}$, which are a direct consequence of Lemma \ref{v3minusv3mainestlemma}. 
\begin{equation}\label{e3ests} |e_{3}(t,r)| \leq \frac{C \mathbbm{1}_{\{r \geq \frac{h(t)}{2}\}} \lambda(t)^{2}}{(h(t)^{2}+r^{2})^{2}} \begin{cases} \frac{\lambda(t)^{2} \sup_{x \in [T_{\lambda},t]}\sqrt{\lambda(x)} \cdot \log^{2}(t)}{t^{3}}, \quad r \leq \frac{t}{2}\\
\frac{\left(\sup_{x \in [T_{\lambda},r]}\sqrt{\lambda(x)}\right)^{5} \log^{2}(r)}{r \langle t-r \rangle^{2}}, \quad r \geq \frac{t}{2}\end{cases}\end{equation}
Then, we insert these into \eqref{u3formula}, to get 
\begin{equation}\begin{split} u_{3}(t,r)&\leq C \int_{t}^{t+2r} \frac{ds}{r} \int_{|r-(s-t)|}^{r+s-t} \frac{y \lambda(s)^{4} \left(\sup_{x \in [T_{\lambda},s]}\sqrt{\lambda(x)}\right) \log^{2}(s)}{s^{3}(h(s)^{2}+y^{2})^{2}}\\
&+C \int_{t}^{t+2r} \frac{ds}{r} \int_{|r-(s-t)|}^{r+s-t} \frac{\lambda(s)^{2} \left(\sup_{x \in [T_{\lambda},s]}\sqrt{\lambda(x)}\right)^{5} \log^{2}(y) dy}{\langle s-y \rangle^{2} s^{4}}\\
&+C \int_{t+2r}^{\infty}\frac{ds}{r} \int_{\frac{s-t}{2}}^{\frac{3(s-t)}{2}} \frac{y \lambda(s)^{2}}{(h(s)^{2}+y^{2})^{2}} \frac{\lambda(s)^{2} \left(\sup_{x \in [T_{\lambda},s]}\sqrt{\lambda(x)}\right) \log^{2}(s)}{s^{3}} dy\\
&+C \int_{t+2r}^{\infty} \frac{ds}{r} \int_{|r-(s-t)|}^{r+s-t} \frac{\lambda(s)^{2}}{s^{4}} \left(\frac{\left(\sup_{x \in [T_{\lambda},s]}\sqrt{\lambda(x)}\right)^{5} \log^{2}(y)}{\langle s-y \rangle^{2}}\right)dy\end{split}\end{equation}
which gives the first estimate in the lemma statement. For \eqref{u3lgrest} (which is useful when $r \geq t$) we again insert \eqref{e3ests} into \eqref{u3formula}, and estimate the integrals by:
\begin{equation}\begin{split}|u_{3}(t,r)| &\leq C \int_{t}^{\infty} \frac{ds}{r} \int_{h(s)}^{\infty} dy \frac{\lambda(s)^{4} \left(\sup_{x \in [T_{\lambda},s]}\sqrt{\lambda(x)}\right) \log^{2}(s)}{y^{3}s^{5}} \\
&+ C \int_{t}^{\infty} \frac{ds}{r}\int_{\frac{s}{2}}^{\infty} dy \frac{\lambda(s)^{2} \left(\sup_{x \in [T_{\lambda},t]}\sqrt{\lambda(x)}\right)^{5} \log^{2}(s)}{s^{2}y^{2}\langle s-y \rangle^{2}}\end{split}\end{equation}
We estimate $\partial_{r}u_{3}$ similarly. For $\partial_{t}u_{3}$, we note that 
$$u_{3}(t,r) = \int_{0}^{\infty} dw \left(\frac{-1}{2r}\int_{|r-w|}^{r+w} y e_{3}(t+w,y) dy\right)$$
so
$$\partial_{t}u_{3}(t,r) =\int_{0}^{\infty} dw \left(\frac{-1}{2r} \int_{|r-w|}^{r+w} y \partial_{1}e_{3}(t+w,y) dy\right)$$
Then, we estimate $\partial_{t}u_{3}$ similarly to $u_{3}$. $\partial_{tr}u_{3}$ and $\partial_{t}^{2}u_{3}$ are estimated similarly to $\partial_{r}u_{3}$. Next, we use the equation
$$\partial_{r}^{2}u_{3} = e_{3}(t,r) -\frac{2}{r}\partial_{r}u_{3}+\partial_{t}^{2}u_{3}$$
to estimate $\partial_{r}^{2}u_{3}$ from our previous estimates. Finally, the energy estimate, \eqref{u3enest}, is proven with the same procedure used to establish \eqref{uw2enest}, with the analog of \eqref{uw2roughptwse} being 
$$|u_{3}(t,r)| \leq \frac{C \lambda(t)^{4} \left(\sup_{x \in [T_{\lambda},t]}\sqrt{\lambda(x)}\right) \log^{2}(t)}{h(t)^{3}t^{4}} + \frac{C \lambda(t)^{2} \left(\sup_{x \in [T_{\lambda},t]}\sqrt{\lambda(x)}\right)^{5} \log^{2}(t)}{t^{3}}$$
 (which results from directly inserting \eqref{e3ests} into \eqref{u3formula}). \end{proof}
Let 
\begin{equation}\label{psi2def}\psi_{2}(x) = \begin{cases} 1, \quad x \geq \frac{1}{2}\\
0, \quad x \leq \frac{1}{4}\end{cases}\end{equation}
We will insert the function $\psi_{2}(\frac{r}{h(t)}) u_{3}(t,r)$ into our ansatz, and define its error term as
$$e_{u_{3}}(t,r):= -\left(\left(-\partial_{t}^{2}+\partial_{r}^{2}+\frac{2}{r}\partial_{r}-V(t,r)\right)(\psi_{2}(\frac{r}{h(t)})u_{3})-e_{3}(t,r)\right)$$
where we recall the definition of $V$ in \eqref{Vdef}. Using the support properties of $e_{3}$ and $\psi_{2}$,
$$(1-\psi_{2}(\frac{r}{h(t)})) e_{3}(t,r) =0$$
and this gives
\begin{equation}\begin{split}e_{u_{3}}(t,r)&:= -2 \partial_{r} u_{3}(t,r) \partial_{r}(\psi_{2}(\frac{r}{h(t)}))+2 \partial_{t} u_{3}(t,r) \partial_{t}\left(\psi_{2}(\frac{r}{h(t)})\right)+ V(t,r) u_{3}(t,r)\psi_{2}(\frac{r}{h(t)})\\
&+u_{3}(t,r) \left(-\partial_{r}^{2} \left(\psi_{2}(\frac{r}{h(t)})\right)+\partial_{t}^{2} \left(\psi_{2}(\frac{r}{h(t)})\right)-\frac{2}{r} \partial_{r} \left(\psi_{2}(\frac{r}{h(t)})\right)\right)\end{split}\end{equation}
A direct application of Lemma \ref{u3estlemma} gives
\begin{lemma}\label{eu3lemma} $$||e_{u_{3}}(t,|\cdot|)||_{H^{1}(\mathbb{R}^{3})} \leq \frac{C \lambda(t)^{2} \left(\sup_{x \in [T_{\lambda},t]}\sqrt{\lambda(x)}\right)^{5} \log^{2}(t)}{\sqrt{h(t)} t^{4}}$$\end{lemma}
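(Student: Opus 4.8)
The plan is to prove Lemma~\ref{eu3lemma} by a direct substitution of the estimates from Lemma~\ref{u3estlemma} into the explicit formula for $e_{u_3}$ displayed just above the statement. The key observation is that $\psi_2(\tfrac{r}{h(t)})$, and hence all its $r$- and $t$-derivatives, is supported in the annular region $\tfrac{h(t)}{4} \leq r \leq \tfrac{h(t)}{2}$, while the remaining term $V(t,r)u_3(t,r)\psi_2(\tfrac{r}{h(t)})$ is supported in $r \geq \tfrac{h(t)}{4}$. In both cases $r \geq \tfrac{h(t)}{4}$, so \emph{all} of the pointwise bounds in Lemma~\ref{u3estlemma} are available. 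Since we only need the $H^1(\mathbb{R}^3)$ norm (equivalently, the $L^2(r^2\,dr)$ norms of the function and of its first derivatives, after passing to the $3$-dimensional radial picture), we do not need the full strength of the second-derivative estimates; the first-derivative and zeroth-order bounds suffice.

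First I would record that, by the chain rule, $\partial_r(\psi_2(\tfrac{r}{h(t)})) = \tfrac{1}{h(t)}\psi_2'(\tfrac{r}{h(t)})$, $\partial_t(\psi_2(\tfrac{r}{h(t)})) = -\tfrac{r h'(t)}{h(t)^2}\psi_2'(\tfrac{r}{h(t)})$, and similarly for the second derivatives, each picking up factors of $\tfrac{1}{h(t)}$ (or $\tfrac{h'(t)}{h(t)}$, which is $\lesssim \tfrac{1}{t}$ times a harmless power, by \eqref{lambdasymb} and the definition of $h$) and each localized to $r \sim h(t)$. Then I would estimate term by term: on the support of the cutoff derivatives, $r \sim h(t)$, so using \eqref{u3lgrest}-type bounds together with $h(t) \geq C\sqrt{t}$ and $h(t) \leq \lambda(t)(\tfrac{t}{\lambda(t)})^a$ with $a < \tfrac23$, one controls $|\partial_r u_3|$, $|\partial_t u_3|$, and $|u_3|$ on $r \sim h(t)$ by the appropriate powers of $\lambda(t)$, $t$, and $\log(t)$. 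Multiplying by the $r^2$ weight and integrating over the annulus $r \sim h(t)$ (whose measure in $r^2\,dr$ is $\sim h(t)^3$) produces an overall factor $h(t)^{3/2}$ against the pointwise bounds; combined with the $\tfrac{1}{h(t)}$ (or $\tfrac{1}{h(t)^2}$) factors from differentiating $\psi_2$, the net effect is the claimed $\tfrac{1}{\sqrt{h(t)}}$. For the term $V(t,r)u_3(t,r)\psi_2(\tfrac{r}{h(t)})$, supported in $r \geq \tfrac{h(t)}{4}$, I would split into $\tfrac{h(t)}{4} \leq r \leq t$ and $r \geq t$, use $|V(t,r)| \lesssim \tfrac{\lambda(t)^2}{r^4}$ for $r \gtrsim \lambda(t)$, insert the two bounds on $u_3$ (the $\log^2 t + \log^2 r$ one near $r \sim h(t)$ and the $\tfrac{1}{rt^3}$ one for large $r$), and check that the $L^2(r^2\,dr)$ integral converges and is dominated by the same quantity; the fast decay of $V$ makes this region strictly better than the cutoff-derivative region. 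Finally, I would assemble the pieces and absorb all constants, noting that $\sup_{x\in[T_\lambda,t]}\sqrt{\lambda(x)} \geq \sqrt{\lambda(t)}$ by \eqref{lambdacomparg}, so the various intermediate bounds all fit under the stated right-hand side.

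The main obstacle, such as it is, is bookkeeping: making sure that every factor of $h(t)$, $\lambda(t)$, $t$, and $\log(t)$ is tracked correctly across the several terms of $e_{u_3}$ and across the spatial regions, and confirming that the worst term is indeed the one involving a single derivative of $\psi_2$ hitting $u_3$ (this is what produces the half-power loss $h(t)^{-1/2}$ rather than a full power). One should also verify that passing from the radial $L^2(r^2\,dr)$ norms of $e_{u_3}$ and $\partial_r e_{u_3}$ to the $H^1(\mathbb{R}^3)$ norm of $x \mapsto e_{u_3}(t,|x|)$ introduces no new difficulty — this is standard, using that for radial functions $\|f(|\cdot|)\|_{H^1(\mathbb{R}^3)}^2 \sim \|f\|_{L^2(r^2dr)}^2 + \|\partial_r f\|_{L^2(r^2 dr)}^2$ — and that the second-derivative and mixed $\partial_t\partial_r$ estimates in Lemma~\ref{u3estlemma}, though listed there, are not actually needed here (they will be used elsewhere for higher-regularity error estimates). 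No genuinely hard estimate is involved; everything reduces to the quantitative inputs already established in Lemma~\ref{u3estlemma} and the geometric constraint $\sqrt{t} \lesssim h(t)$.
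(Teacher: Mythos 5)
Your overall approach — substituting the pointwise and $L^2$ bounds from Lemma~\ref{u3estlemma} into the explicit formula for $e_{u_3}$, observing that the cutoff derivatives are supported in $r \sim h(t)$ while the potential term enjoys fast decay in $r$, and tracking how the annular measure $h(t)^3$ against the $h(t)^{-2}$ cutoff factors produces the net $h(t)^{-1/2}$ — is exactly what the paper intends by ``a direct application of Lemma~\ref{u3estlemma}.'' The bookkeeping you sketch for $\|e_{u_3}\|_{L^2(r^2dr)}$ is correct, including the identification of the single-derivative-on-$\psi_2$ term as the one responsible for the half-power loss in $h(t)$.

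However, there is one genuine error: you twice assert that the $\partial_r^2 u_3$ and $\partial_{tr} u_3$ estimates of Lemma~\ref{u3estlemma} are not actually needed here. They are. The lemma claims an $H^1(\mathbb{R}^3)$ bound, which for the radial function $e_{u_3}(t,|\cdot|)$ is $\|e_{u_3}\|_{L^2(r^2dr)} + \|\partial_r e_{u_3}\|_{L^2(r^2dr)}$. Since $e_{u_3}$ already contains $\partial_r u_3 \cdot \partial_r(\psi_2(\tfrac{r}{h(t)}))$ and $\partial_t u_3 \cdot \partial_t(\psi_2(\tfrac{r}{h(t)}))$, differentiating once more in $r$ produces $\partial_r^2 u_3 \cdot \partial_r\psi_2$ and $\partial_{tr} u_3 \cdot \partial_t\psi_2$ (plus lower-order terms where the extra derivative lands on the cutoff). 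To control $\|\partial_r e_{u_3}\|_{L^2(r^2dr)}$ one therefore must invoke the $\partial_r^2 u_3$ and $\partial_{tr} u_3$ bounds from Lemma~\ref{u3estlemma} on the annulus $r \sim h(t)$; these contribute terms of the same order, $\frac{\lambda(t)^2(\sup\sqrt{\lambda})^5 \log^2 t}{\sqrt{h(t)} t^4}$, once one uses $h(t)\gtrsim \sqrt{t}$ and $h(t)\leq t$. Without them, the estimate of the $H^1$ norm (as opposed to only the $L^2$ norm) is simply not closed. The rest of the argument goes through as you describe.
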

We now assemble our previous corrections into a function, $u_{a}$, as follows. Let
$$u_{e}(t,r) = u_{ell}(t,r)+u_{ell,2}(t,r)+u_{ell,3}(t,r), \quad u_{w}(t,r)=w_{1}(t,r)+v_{ex}(t,r)+v_{2}(t,r)+u_{w,2}(t,r)+v_{3}(t,r)$$
\begin{equation}\label{chiprops}\chi_{\leq 1} \in C^{\infty}([0,\infty)), \quad \chi_{\leq 1}(x) = \begin{cases} 1, \quad x \leq 1\\
0, \quad x \geq 2\end{cases}, 0 \leq \chi_{\leq 1}(x) \leq 1, \quad \chi_{\geq 1}(x) = 1-\chi_{\leq 1}(x)\end{equation}
$$u_{c}(t,r) = \chi_{\leq 1}(\frac{r}{h(t)})u_{e}(t,r) +(1-\chi_{\leq 1}(\frac{r}{h(t)}))u_{w}(t,r)$$
Then, we define 
$$u_{a}(t,r)=u_{c}(t,r)+u_{3}(t,r) \psi_{2}(\frac{r}{h(t)})$$
If we substitute $u(t,r) = Q_{\lambda(t)}(r) + u_{a}(t,r)+v(t,r)$ into \eqref{slw}, we obtain
\begin{equation}\label{eqnafterua}\begin{split}&-\partial_{t}^{2}v+\partial_{r}^{2}v+\frac{2}{r}\partial_{r}v+\frac{45 \lambda(t)^2}{\left(3 \lambda(t)^2+r^2\right)^2}v(t,r)\\
&=e_{match}(t,r)+e_{ell,3}(t,r)+e_{ex}(t,r)+e_{w,2}(t,r)+e_{u_{3}}(t,r) -((Q_{\lambda(t)}+u_{a}+v)^{5}-Q_{\lambda(t)}^{5}-5Q_{\lambda(t)}^{4}(u_{a}+v))\end{split}\end{equation}
where
\begin{equation}\label{ematchdef}\begin{split}e_{match}(t,r) &= \left(\partial_{t}^{2}(\chi_{\leq 1}(\frac{r}{h(t)}))-\partial_{r}^{2}(\chi_{\leq 1}(\frac{r}{h(t)}))-\frac{2}{r}\partial_{r}(\chi_{\leq 1}(\frac{r}{h(t)}))\right)\left(u_{e}-u_{w}\right)\\
&+2\partial_{t}(\chi_{\leq 1}(\frac{r}{h(t)}))\partial_{t}(u_{e}-u_{w})-2\partial_{r}(\chi_{\leq 1}(\frac{r}{h(t)}))\partial_{r}(u_{e}-u_{w})\end{split}\end{equation}
$$e_{ell,3}(t,r) = \chi_{\leq 1}(\frac{r}{h(t)}) \partial_{t}^{2}u_{ell,3}$$
$$e_{ex}(t,r):= - \chi_{\geq 1}(\frac{r}{h(t)}) \frac{45 \lambda(t)^{2} v_{ex}(t,r)}{(3\lambda(t)^{2}+r^{2})^{2}}$$
$$e_{w,2}(t,r) = V(t,r) u_{w,2}(t,r) \chi_{\geq 1}(\frac{r}{h(t)})$$
We now estimate each of the linear error terms, starting with $e_{match}$. Using \eqref{matching1} and \eqref{em2def}, we get
\begin{equation}\begin{split}u_{e}(t,r)-u_{w}(t,r)&= u_{ell}-u_{ell,main}+u_{ell,2}-u_{ell,2,main}+u_{ell,3}-u_{ell,3,main}-(v_{3}-v_{3,main})+e_{m,2}\\
&-(w_{1}-w_{1,main})-(v_{2}-v_{2,main})-(u_{w,2}-u_{w,2,ell,00}-u_{w,2,ell,0,1}-u_{w,3,ell,0})\\
&-(v_{ex}-v_{ex,ell,0}-v_{ex,sub,ell})+u_{ell,3,main}-(w_{1,main}-w_{1,cm}+v_{2,main}-v_{2,qm})-v_{3,main}\end{split}\end{equation}
By directly combining Lemmas \ref{uellminusuellmainestlemma}, \ref{uell2minusuell2mainestlemma}, \ref{uell3minusuell3mainestlemma}, \ref{v3minusv3mainestlemma}, \ref{em2estlemma}, \ref{uw2subminusell30estlemma}, \ref{v2minusmainest}, \ref{uw2ell00estlemma1}, \ref{uw2ellminusell0estlemma}, \ref{vexsubminussub0estlemma}, \ref{vexellminusell0estlemma}, \ref{uell3mainminusleadingest}, we get
\begin{lemma} \label{ueminusuw0estlemma}For $j=0, 0 \leq k \leq 2$ or $j=1, 0 \leq k \leq 1$, and in the region $\lambda(t) \leq r \leq \frac{t}{4}$, we have
\begin{equation}\begin{split}|\partial_{t}^{j}\partial_{r}^{k}((u_{e}-u_{w}+w_{1}-w_{1,main})(t,r))| &\leq \frac{C \sqrt{\lambda(t)}}{r^{3+k}t^{2+j}} \left(\lambda(t)^{4} \log^{2}(t) + \frac{r^{9}\lambda(t)}{t^{6}}\right) \\
&+ C \begin{cases} \frac{r^{2}\lambda(t)^{2}\sup_{x \in [T_{\lambda},t]}\sqrt{\lambda(x)} \log^{2}(t)}{t^{5}} + \frac{\lambda(t)^{7/2} \log^{2}(t)}{t^{4}}, \quad j=k=0\\
 \frac{ r \lambda(t)^{2}\sup_{x \in [T_{\lambda},t]}\sqrt{\lambda(x)} \log^{2}(t)}{t^{5}} + \frac{\lambda(t)^{7/2} \log^{2}(t)}{r t^{4}}, \quad k=1,j=0\\
\frac{\lambda(t)^{2}\sup_{x \in [T_{\lambda},t]}\sqrt{\lambda(x)} \log^{2}(t)}{t^{4}}, \quad j=1,k=0\\
\frac{\lambda(t)^{2} \sup_{x \in [T_{\lambda},t]}\sqrt{\lambda(x)} \log^{2}(t)}{t^{4}r}, \quad j+k =2\end{cases}\end{split}\end{equation}\end{lemma}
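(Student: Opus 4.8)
The plan is to start from the identity for $u_e-u_w$ displayed immediately before the statement and add $w_1-w_{1,main}$ to both sides, which cancels the term $-(w_1-w_{1,main})$. This exhibits $u_e-u_w+w_1-w_{1,main}$ as a finite sum: the three near-origin remainders $u_{ell}-u_{ell,main}$, $u_{ell,2}-u_{ell,2,main}$, $u_{ell,3}-u_{ell,3,main}$; the quantity $e_{m,2}$; the ``automatic'' discrepancy $u_{ell,3,main}-(w_{1,main}-w_{1,cm}+v_{2,main}-v_{2,qm})$; the far-field remainders $-(v_2-v_{2,main})$ and $-(v_3-v_{3,main})$; and the grouped combination
$$G:=-\left(u_{w,2}-u_{w,2,ell,0,0}-u_{w,2,ell,0,1}-u_{w,3,ell,0}\right)-\left(v_{ex}-v_{ex,ell,0}-v_{ex,sub,ell}\right)-v_{3,main}.$$
Once $G$ is handled, the asserted estimate follows by applying the triangle inequality to this sum.

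The first step is to show $G$ is a sum of perturbative remainders. Writing $u_{w,2}=u_{w,2,ell}+u_{w,2,sub}$ and $v_{ex}=v_{ex,ell}+v_{ex,sub}$ (by the definitions of $u_{w,2,sub}$ and $v_{ex,sub}$), and splitting $u_{w,2,ell}-u_{w,2,ell,0,0}-u_{w,2,ell,0,1}=(u_{w,2,ell}-u_{w,2,ell,0})+(u_{w,2,ell,0}-u_{w,2,ell,0,0}-u_{w,2,ell,0,1})$, I would apply Lemma \ref{uw2ellminusell0estlemma} to $u_{w,2,ell}-u_{w,2,ell,0}$ (whose estimate is on this quantity plus $\int_0^\infty xV(t,x)(w_1+v_2-w_{1,lm}-v_{2,lm})\,dx$), Lemma \ref{uw2ell00estlemma1} to $u_{w,2,ell,0}-u_{w,2,ell,0,0}-u_{w,2,ell,0,1}$, Lemma \ref{uw2subminusell30estlemma} to $u_{w,2,sub}-u_{w,3,ell,0}$ (estimate on this quantity minus three explicit $\int_t^\infty$ integrals), Lemma \ref{vexellminusell0estlemma} to $v_{ex,ell}-v_{ex,ell,0}$, and Lemma \ref{vexsubminussub0estlemma} to $v_{ex,sub}-v_{ex,sub,ell}$ (estimate on this quantity plus three more $\int_t^\infty$ integrals). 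The non-perturbative integral terms thereby produced, taken with the signs coming from the two minus signs in $G$, add up to exactly the combination of the six $\int_t^\infty$ integrals and the one $\int_0^\infty$ integral appearing in \eqref{thirdordercancellation}; by that relation — equivalently by the choice \eqref{v30def}, valid since $t\ge T_0\ge 2T_{0,1}$ forces the cutoff in \eqref{v30def} to equal $1$ — these combine with $-v_{3,main}=-t\,v_{3,0}(t)$ to give $0$. Hence $G$ equals minus the sum of the five perturbative remainders from Lemmas \ref{uw2ellminusell0estlemma}, \ref{uw2ell00estlemma1}, \ref{uw2subminusell30estlemma}, \ref{vexellminusell0estlemma}, \ref{vexsubminussub0estlemma}; restricting these to $\lambda(t)\le r\le t/4$ and using \eqref{lambdacomparg} to replace $\sup_{x\in[T_\lambda,t+r]}$ by $\sup_{x\in[T_\lambda,t]}$ yields the $\sup_{x\in[T_\lambda,t]}\sqrt{\lambda(x)}$ and $\log^2(t)$ contributions of the casework bound.

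For the remaining terms I would invoke the cited lemmas directly. Lemma \ref{uellminusuellmainestlemma} bounds $u_{ell}-u_{ell,main}$: converting $R$-derivatives to $r$-derivatives via $\partial_r=\lambda(t)^{-1}\partial_R$ (at fixed $t$, with the standard symbol corrections when $\partial_t$ falls on the argument $r/\lambda(t)$) and using $1+\log(r/\lambda(t))\le C\log t$ on the matching region gives the term $\lambda(t)^{9/2}\log^2(t)\,r^{-(3+k)}t^{-(2+j)}$. Lemma \ref{v2minusmainest} bounds $v_2-v_{2,main}$ by $\lambda(t)^{3/2}r^{6-k}t^{-(8+j)}$; these two form the first line of the claimed estimate. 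Lemma \ref{v3minusv3mainestlemma} bounds $v_3-v_{3,main}$ by $r^{2-k}\lambda(t)^2\big(\sup_{x\in[T_\lambda,t]}\sqrt{\lambda(x)}\big)\log^2(t)\,t^{-(5+j)}$, the $r^{2-k}t^{-(5+j)}$-type terms of the casework. Finally, Lemmas \ref{uell2minusuell2mainestlemma}, \ref{uell3minusuell3mainestlemma}, \ref{em2estlemma}, \ref{uell3mainminusleadingest} control $u_{ell,2}-u_{ell,2,main}$, $u_{ell,3}-u_{ell,3,main}$, $e_{m,2}$, and $u_{ell,3,main}-(w_{1,main}-w_{1,cm}+v_{2,main}-v_{2,qm})$; on $\lambda(t)\le r\le t/4$ the elementary inequalities $r^{-1}\le\lambda(t)^{-1}$ and $r\le t$ absorb each into the casework bound (e.g. Lemma \ref{uell2minusuell2mainestlemma} gives $\lambda(t)^{9/2}\log^2(t)\,r^{-(1+k)}t^{-(4+j)}\le\lambda(t)^{7/2}\log^2(t)\,r^{-k}t^{-(4+j)}$, matching the $\lambda(t)^{7/2}\log^2(t)$ terms, while Lemma \ref{uell3minusuell3mainestlemma} gives $r^{3-k}\lambda(t)^{5/2}\log(t)\,t^{-(6+j)}$, absorbed using $r\le t$).

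I expect the only real obstacle to be bookkeeping: tracking the signs in \eqref{thirdordercancellation} so that the non-perturbative $\int_t^\infty$ and $\int_0^\infty$ integrals inside $G$ annihilate exactly against $-v_{3,main}$ with no residue, and then checking term by term that each quoted lemma's bound — after the $R$-to-$r$ derivative conversion where needed and imposing $\lambda(t)\le r\le t/4$ — is dominated by one of the explicit terms on the right-hand side. No analysis beyond the twelve cited lemmas is required.
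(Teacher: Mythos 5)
Your proposal is correct and follows the same route as the paper: the paper's proof is exactly the one-sentence ``by directly combining Lemmas \ref{uellminusuellmainestlemma}, \ref{uell2minusuell2mainestlemma}, \ref{uell3minusuell3mainestlemma}, \ref{v3minusv3mainestlemma}, \ref{em2estlemma}, \ref{uw2subminusell30estlemma}, \ref{v2minusmainest}, \ref{uw2ell00estlemma1}, \ref{uw2ellminusell0estlemma}, \ref{vexsubminussub0estlemma}, \ref{vexellminusell0estlemma}, \ref{uell3mainminusleadingest}'' applied to the displayed decomposition of $u_e-u_w$, and your account of how the non-perturbative $\int_t^\infty$ and $\int_0^\infty$ integrals inside $G$ annihilate against $-v_{3,main}$ via \eqref{thirdordercancellation} is precisely the implicit cancellation that makes those lemmas applicable. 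The only cosmetic point is that the remainders from Lemmas \ref{uw2ell00estlemma1} and \ref{vexellminusell0estlemma} carry $r^{-(3+k)}$ decay and so land in the first line of the stated bound rather than in the ``casework'' part, but this does not affect the triangle-inequality argument.
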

Lemmas \ref{w1minusmainest} and \ref{ueminusuw0estlemma} directly give
\begin{lemma} \label{ematch0estlemma}
$$||e_{match}(t,|\cdot|)||_{H^{1}(\mathbb{R}^{3})}\leq \frac{C \log^{2}(t)}{t^{4+\delta}}$$
where
$$\delta = \frac{1}{2}\text{min}\{4-7C_{u}-\frac{13}{2}a(1-C_{u}),-2-C_{u}+\frac{7}{2}a(1-C_{u}),1-4C_{u}+\frac{3}{2}a(-1+C_{u}),\frac{1}{2}a(1-C_{u})-3C_{u}\}>0$$
\end{lemma}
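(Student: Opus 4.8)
The key structural fact is that $e_{match}$, being assembled from derivatives of the cutoff $\chi_{\leq1}(r/h(t))$ (see \eqref{ematchdef}), is supported in the annulus $\{h(t)\le r\le 2h(t)\}$, which for $t\ge T_{0}$ large lies inside $\{\lambda(t)\le r\le t/4\}$ since $a<\tfrac23$ forces $\lambda(t)\ll h(t)\ll t$; thus Lemmas \ref{w1minusmainest} and \ref{ueminusuw0estlemma} are available there. On this annulus $r\sim h(t)$ and $\int_{h(t)}^{2h(t)}r^{2}\,dr\sim h(t)^{3}$, so for any radial $g$ supported there one has $\|g\|_{H^{1}(\mathbb{R}^{3})}\lesssim h(t)^{3/2}\big(\sup_{r\sim h(t)}|g|+\sup_{r\sim h(t)}|\partial_{r}g|\big)$, reducing the lemma to pointwise bounds on $e_{match}$ and $\partial_{r}e_{match}$ for $r\sim h(t)$. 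Using the symbol bounds \eqref{lambdasymb} (which give $|h'(t)|/h(t)\lesssim 1/t$) together with $r\sim h(t)$, the cutoff derivatives obey $|\partial_{t}^{j}\partial_{r}^{k}\chi_{\leq1}(r/h(t))|\lesssim h(t)^{-k}t^{-j}$ on the support; feeding this into \eqref{ematchdef} and its $r$-derivative, with $D:=u_{e}-u_{w}$, one finds $\|e_{match}\|_{H^{1}(\mathbb{R}^{3})}$ is bounded by $h(t)^{3/2}$ times a finite sum of terms of the shape $h(t)^{-2-\ell}|D|$, $h(t)^{-1-\ell}|\partial_{r}D|$, $h(t)^{-1-\ell}|\partial_{r}^{2}D|$, $t^{-1}h(t)^{-\ell}|\partial_{t}D|$, $t^{-1}h(t)^{-\ell}|\partial_{t}\partial_{r}D|$ ($\ell\in\{0,1\}$) evaluated at $r=h(t)$. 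Since $h(t)\ge C\sqrt t\to\infty$, the governing contribution is $h(t)^{-1/2}|D|\big|_{r=h(t)}$ (the $L^{2}$ part with the $h(t)^{-2}$ weight from $\partial_{r}^{2}$ and $\tfrac2r\partial_{r}$ of the cutoff hitting $D$), all other terms being no larger.

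It then remains to estimate $|D|\big|_{r=h(t)}$ and a few derivatives. Writing $D=(u_{e}-u_{w}+w_{1}-w_{1,main})-(w_{1}-w_{1,main})$ and inserting $r=h(t)=\lambda(t)^{1-a}t^{a}$ into Lemma \ref{ueminusuw0estlemma} (the $\tfrac{\sqrt\lambda}{r^{3}t^{2}}(\lambda^{4}\log^{2}t+r^{9}\lambda/t^{6})$ part and the case-by-case part, at $j=k=0$) and Lemma \ref{w1minusmainest} gives, schematically, $|D|\big|_{r=h(t)}\lesssim \tfrac{\lambda^{9/2}\log^{2}t}{h^{3}t^{2}}+\tfrac{\lambda^{3/2}h^{6}}{t^{8}}+\tfrac{h^{2}\lambda^{2}(\sup_{x\le t}\sqrt{\lambda(x)})\log^{2}t}{t^{5}}+\tfrac{\lambda^{7/2}\log^{2}t}{t^{4}}+\tfrac{h^{7}\sqrt\lambda}{t^{8}}$ (all with $h=h(t)$, $\lambda=\lambda(t)$). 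Multiplying by $h(t)^{-1/2}$, substituting $h(t)=\lambda(t)^{1-a}t^{a}$, and converting the surviving powers of $\lambda$ by \eqref{lambdacomparg} in the forms $\lambda(t),\sup_{x\le t}\lambda(x)\le Ct^{C_{u}}$ and $h(t)\ge C\sqrt t$, each monomial becomes $Ct^{-(4+\varepsilon)}\log^{2}t$ with $\varepsilon$ affine in $a,C_{u}$. The four most restrictive are exactly the exponents in the statement: the $\tfrac{\lambda^{9/2}\log^{2}t}{h^{3}t^{2}}$ term gives $\varepsilon=\tfrac72 a(1-C_{u})-2-C_{u}$; the two ``cases'' terms give $\varepsilon=1-4C_{u}-\tfrac32 a(1-C_{u})$ and $\varepsilon=\tfrac12 a(1-C_{u})-3C_{u}$; and the $w_{1}-w_{1,main}$ term $\tfrac{h^{7}\sqrt\lambda}{t^{8}}$ gives $\varepsilon=4-7C_{u}-\tfrac{13}2 a(1-C_{u})$ (the remaining $\tfrac{\lambda^{3/2}h^{6}}{t^{8}}$ term produces $\varepsilon=4-7C_{u}-\tfrac{11}2 a(1-C_{u})$, which is larger, hence inactive). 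Taking $\delta$ to be $\tfrac12$ of the minimum of these four gives the bound (the $\tfrac12$ simply leaves room), and $\delta>0$ is precisely what the constraints \eqref{hdef} guarantee: the lower bound on $a$ makes $\tfrac72 a(1-C_{u})-2-C_{u}>0$, the two upper bounds on $a$ make $4-7C_{u}-\tfrac{13}2 a(1-C_{u})>0$ and $1-4C_{u}-\tfrac32 a(1-C_{u})>0$, and $\tfrac12 a(1-C_{u})-3C_{u}>0$ since $a>0$ and $C_{u}$ is small by \eqref{cofconstr0}.

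The derivative contributions $|\partial_{r}D|$, $|\partial_{r}^{2}D|$, $|\partial_{t}D|$, $|\partial_{t}\partial_{r}D|$ at $r=h(t)$ are handled identically, using the $j\le1,\ j+k\le2$ ranges in Lemmas \ref{ueminusuw0estlemma} and \ref{w1minusmainest}: each extra $\partial_{r}$ costs a power of $r\sim h(t)$ but is compensated by exactly one matching power of $h(t)^{-1}$ (or $t^{-1}$ for $\partial_{t}$) from the cutoff and the volume factor, so these terms reproduce the same four exponents and none is worse. The only genuine work — and the main obstacle — is the bookkeeping: one must run through every piece of the multi-part bounds in Lemmas \ref{ueminusuw0estlemma} and \ref{w1minusmainest}, pair each with every admissible cutoff-derivative weight and with $h(t)^{3/2}$, carry out the $\lambda$-to-$t$ conversions in the favorable direction, and confirm that the minimum of the resulting finite list of exponents is attained at the four displayed expressions (and that they are positive for $a$ in the narrow window \eqref{hdef}). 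No single estimate is difficult, but the dozen-odd monomials and their exponents must be tracked with care.
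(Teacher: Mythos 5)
Your proof is correct and takes essentially the same approach as the paper: the paper simply states that Lemmas \ref{w1minusmainest} and \ref{ueminusuw0estlemma} "directly give" Lemma \ref{ematch0estlemma}, and you have correctly filled in the omitted bookkeeping — the support localization to $r\sim h(t)$, the volume-factor reduction to pointwise bounds, the evaluation of the two input lemmas at $r=h(t)=\lambda(t)^{1-a}t^{a}$, the conversion via $\lambda(t)\lesssim t^{C_{u}}$, and the identification of the four binding exponents (with the $r^{9}$ term correctly flagged as slack), matching the stated $\delta$ exactly.
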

\noindent Note that the positivity of $\delta$ is due to \eqref{hdef}. Next, we consider $e_{w,2}$.  By Lemma \ref{uw2estlemma}, we get
\begin{lemma}\label{ew2lemma} $$||e_{w,2}(t,|\cdot|)||_{H^{1}(\mathbb{R}^{3})} \leq \frac{C \lambda(t)^{2} \sup_{x \in [T_{\lambda},t]}(\lambda(x)^{5/2}) \log^{2}(t)}{h(t)^{7/2}t^{2}} + \frac{C \lambda(t)^{2} \log^{2}(t) \sup_{x \in [T_{\lambda},t]}(\lambda(x)^{3/2})}{t^{2}h(t)^{7/2}}$$\end{lemma}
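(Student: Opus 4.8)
Since $e_{w,2}(t,\cdot)$ is radial, the plan is to bound $||e_{w,2}(t,|\cdot|)||_{H^{1}(\mathbb{R}^{3})}^{2}$ by $C\int_{0}^{\infty}\left(|e_{w,2}(t,r)|^{2}+|\partial_{r}e_{w,2}(t,r)|^{2}\right)r^{2}\,dr$, insert the pointwise bounds \eqref{uw2ptwse} and \eqref{druw2ptwse} for $u_{w,2}$ and $\partial_{r}u_{w,2}$ together with the decay of $V$ from \eqref{Vdef}, and evaluate the resulting elementary radial integrals. First I would note that the integrand is supported in $r\geq h(t)$, that there $h(t)\gg\lambda(t)$ (indeed $h(t)\geq C\sqrt{t}$, while \eqref{lambdacomparg} gives $\lambda(t)^{5/2}\leq Ct$, hence $\lambda(t)\leq Ct^{2/5}$), and that $h(t)\leq t$ (since $\lambda(t)\leq t$ and $a<1$); consequently, on the support one has $c\lambda(t)^{2}r^{-4}\leq|V(t,r)|\leq C\lambda(t)^{2}r^{-4}$ and $|\partial_{r}V(t,r)|\leq C\lambda(t)^{2}r^{-5}$ directly from \eqref{Vdef}.

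For the $L^{2}$ part I would split the integral into $h(t)\leq r\leq t$ and $r\geq t$. On the first region $t+r\leq 2t$, so the monotonicity of $x\mapsto\lambda(x)^{5/2}/x$ from \eqref{lambdacomparg} yields $\sup_{x\in[T_{\lambda},t+r]}\lambda(x)^{5/2}\leq C\sup_{x\in[T_{\lambda},t]}\lambda(x)^{5/2}$ and $\log(t+r)\leq C\log(t)$; with \eqref{uw2ptwse} this gives $|V(t,r)u_{w,2}(t,r)|\leq C\lambda(t)^{2}\left(\sup_{x\in[T_{\lambda},t]}\lambda(x)^{5/2}\right)\log^{2}(t)\,r^{-5}t^{-2}$, and integrating against $r^{2}\,dr$ from $h(t)$ produces $C\lambda(t)^{4}\left(\sup_{x\in[T_{\lambda},t]}\lambda(x)^{5/2}\right)^{2}\log^{4}(t)\,t^{-4}h(t)^{-7}$, whose square root is the first term of the lemma. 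On $r\geq t$ I would instead use the upper bound in \eqref{lambdacomparg} to write $\sup_{x\in[T_{\lambda},t+r]}\lambda(x)^{5/2}\leq C(r/t)^{5C_{u}/2}\sup_{x\in[T_{\lambda},t]}\lambda(x)^{5/2}$ and $\log(t+r)\leq C\log(r)$; the integral $\int_{t}^{\infty}r^{5C_{u}-8}\log^{4}(r)\,dr$ converges because $C_{u}$ is small by \eqref{cofconstr0}, and the resulting contribution is subdominant to the previous one since $h(t)\leq t$.

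For $\partial_{r}e_{w,2}$ the product rule produces the three terms $(\partial_{r}V)u_{w,2}\chi_{\geq 1}(r/h(t))$, $V(\partial_{r}u_{w,2})\chi_{\geq 1}(r/h(t))$, and $V u_{w,2}h(t)^{-1}\chi_{\geq 1}'(r/h(t))$. The first is smaller than $V u_{w,2}$ by a factor $O(1/h(t))$ (as $|\partial_{r}V|\leq C\lambda(t)^{2}r^{-5}\leq(C/h(t))|V|$ on $r\geq h(t)$), and the third is supported in $h(t)\leq r\leq 2h(t)$ and carries the explicit factor $1/h(t)$; by the computation above both are bounded in $L^{2}(r^{2}\,dr)$ by $C\lambda(t)^{2}\left(\sup_{x\in[T_{\lambda},t]}\lambda(x)^{5/2}\right)\log^{2}(t)\,t^{-2}h(t)^{-9/2}$, which is absorbed into the first term of the lemma because $h(t)\geq C$. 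For the middle term I would insert \eqref{druw2ptwse}: on $h(t)\leq r\leq t$ one has $\frac{1}{r}+\frac{1}{t}\leq\frac{2}{r}$, so after multiplying by $|V|$ and integrating against $r^{2}\,dr$ the $\lambda(x)^{5/2}$ piece of \eqref{druw2ptwse} contributes a term of order $\lambda(t)^{4}\left(\sup_{x\in[T_{\lambda},t]}\lambda(x)^{5/2}\right)^{2}\log^{4}(t)\,t^{-4}h(t)^{-9}$ (again absorbed), while the $\lambda(x)^{3/2}$ piece contributes a term of order $\lambda(t)^{4}\left(\sup_{x\in[T_{\lambda},t]}\lambda(x)^{3/2}\right)^{2}\log^{4}(t)\,t^{-4}h(t)^{-7}$, whose square root is exactly the second term of the lemma; the region $r\geq t$ is treated as before and is subdominant.

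The argument is essentially bookkeeping. The only point I expect to require care is the tail $r\geq t$ in each integral, where both $\lambda$ and the logarithm appearing in Lemma \ref{uw2estlemma} grow: this is controlled by the smallness of $C_{u}$ built into \eqref{cofconstr0} together with the elementary inequality $h(t)\leq t$, which force those tails to be of strictly lower order than the contribution of the region $r\sim h(t)$.
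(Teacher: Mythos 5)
Your proof is correct and is exactly the "direct computation" the paper performs (the paper just cites Lemma~\ref{uw2estlemma} without spelling it out). The bookkeeping is done carefully: the split into $h(t)\leq r\leq t$ and $r\geq t$ is the right way to convert the $\sup_{x\in[T_\lambda,t+r]}$ appearing in \eqref{uw2ptwse}--\eqref{druw2ptwse} into the $\sup_{x\in[T_\lambda,t]}$ that appears in the lemma, the tail $r\geq t$ is correctly shown subdominant using the symbol bound $\lambda(x)\leq(x/t)^{C_u}\lambda(t)$ from \eqref{lambdacomparg} and $h(t)\leq t$, and you correctly identify that the $\lambda(x)^{3/2}$ piece of \eqref{druw2ptwse} in the $V\,\partial_r u_{w,2}$ term is what produces the second summand of the lemma while the $\partial_r V$ and cutoff-derivative terms carry an extra $1/h(t)$ and are absorbed.
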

Similarly, Lemma \ref{vexestlemma} gives
\begin{lemma}\label{eexlemma}
$$||e_{ex}(t,|\cdot|)||_{H^{1}(\mathbb{R}^{3})} \leq \frac{C \lambda(t)^{9/2} \log(t)}{t^{2}h(t)^{7/2}}\left(1+\frac{1}{h(t)}\right)$$\end{lemma}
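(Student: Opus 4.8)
We argue exactly as in the proofs of Lemmas~\ref{ew2lemma} and~\ref{eu3lemma}: since $e_{ex}(t,\cdot)$ is radial, $||e_{ex}(t,|\cdot|)||_{H^{1}(\mathbb{R}^{3})}^{2}$ is comparable to $\int_{0}^{\infty}\left(|e_{ex}(t,r)|^{2}+|\partial_{r}e_{ex}(t,r)|^{2}\right)r^{2}\,dr$, and the plan is to bound these two one-dimensional integrals directly using Lemma~\ref{vexestlemma}. The key preliminary remark is that the cutoff $\chi_{\geq 1}(\frac{r}{h(t)})$ forces $e_{ex}(t,\cdot)$ to be supported in $\{r\geq h(t)\}$, where $r\geq h(t)\gg\lambda(t)$ by the properties of $h$ recorded after~\eqref{hdef} (in particular $h(t)\geq C\sqrt{t}$ and $h(t)/\lambda(t)=(t/\lambda(t))^{a}\to\infty$; moreover $h(t)/t=(\lambda(t)/t)^{1-a}\to0$, so $h(t)\ll t$). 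Hence on the support of $e_{ex}$ we may use the $r\geq\lambda(t)$ branch of Lemma~\ref{vexestlemma},
\[
|v_{ex}(t,r)|\leq\frac{C\sup_{x\in[t,t+r]}\lambda(x)^{5/2}\,\log(t+r)}{r\,t^{2}},\qquad
|\partial_{r}v_{ex}(t,r)|\leq\frac{C\sup_{x\in[t,t+r]}\lambda(x)^{5/2}\,\log(t+r)}{r^{2}\,t^{2}},
\]
together with the elementary bounds $\frac{45\lambda(t)^{2}}{(3\lambda(t)^{2}+r^{2})^{2}}\leq C\lambda(t)^{2}r^{-4}$ and $\left|\partial_{r}\frac{45\lambda(t)^{2}}{(3\lambda(t)^{2}+r^{2})^{2}}\right|\leq C\lambda(t)^{2}r^{-5}$, valid for $r\geq\lambda(t)$.

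For the $L^{2}(r^{2}\,dr)$ contribution, combining the above gives $|e_{ex}(t,r)|\leq\chi_{\geq 1}(\frac{r}{h(t)})\,C\lambda(t)^{2}r^{-5}t^{-2}\sup_{x\in[t,t+r]}\lambda(x)^{5/2}\log(t+r)$, hence $\int_{0}^{\infty}|e_{ex}|^{2}r^{2}\,dr\leq C\lambda(t)^{4}t^{-4}\int_{h(t)}^{\infty}r^{-8}\left(\sup_{x\in[t,t+r]}\lambda(x)^{5/2}\right)^{2}\log^{2}(t+r)\,dr$. The supremum is handled with~\eqref{lambdacomparg}: on $h(t)\leq r\leq t$ it is $\lesssim\lambda(t)^{5/2}$ and $\log(t+r)\lesssim\log t$, while on $r>t$ it is $\lesssim\lambda(t)^{5/2}(r/t)^{5C_{u}/2}$, which — $C_{u}$ being small — keeps the integral convergent and dominated by the portion near the lower endpoint $r\sim h(t)\ll t$. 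This yields $\int_{h(t)}^{\infty}r^{-8}(\cdots)\,dr\leq C\lambda(t)^{5}\log^{2}(t)\,h(t)^{-7}$, so that $||e_{ex}(t,\cdot)||_{L^{2}(r^{2}dr)}\leq C\lambda(t)^{9/2}\log(t)\,t^{-2}h(t)^{-7/2}$; this accounts for the summand $1$ inside the factor $\left(1+\frac{1}{h(t)}\right)$. Note that, because the support is bounded away from the origin, no delicate argument near $r=0$ (of the kind needed for \eqref{vexenest}) is required here.

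For the gradient part, I would write, with $W(t,r)=\frac{45\lambda(t)^{2}}{(3\lambda(t)^{2}+r^{2})^{2}}$,
\[
\partial_{r}e_{ex}=-\frac{1}{h(t)}\chi_{\geq 1}'\left(\frac{r}{h(t)}\right)W\,v_{ex}-\chi_{\geq 1}\left(\frac{r}{h(t)}\right)(\partial_{r}W)\,v_{ex}-\chi_{\geq 1}\left(\frac{r}{h(t)}\right)W\,\partial_{r}v_{ex},
\]
and note that each of these three terms carries one extra factor of $r^{-1}\leq h(t)^{-1}$ relative to $e_{ex}$ itself — for the first term because $\chi_{\geq 1}'(\frac{r}{h(t)})$ is supported in $h(t)\leq r\leq2h(t)$ and is $O(h(t)^{-1})$ there, while $v_{ex}$ and $W$ are estimated there as above. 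Repeating the $L^{2}(r^{2}dr)$ computation of the previous paragraph with this extra factor of $h(t)^{-1}$ gives $||\partial_{r}e_{ex}(t,\cdot)||_{L^{2}(r^{2}dr)}\leq C\lambda(t)^{9/2}\log(t)\,t^{-2}h(t)^{-9/2}$, which accounts for the summand $\frac{1}{h(t)}$. Adding the two contributions proves the stated bound. The only slightly delicate point — and the main (mild) obstacle — is the control of $\sup_{x\in[t,t+r]}\lambda(x)^{5/2}$ in the regime $r\gtrsim t$, since $v_{ex}$ is not supported in a bounded range of $r$; this is exactly where the smallness of $C_{u}$ for $\lambda\in\Lambda$, via the comparison~\eqref{lambdacomparg}, is used to keep all the radial integrals convergent and concentrated near $r\sim h(t)$.
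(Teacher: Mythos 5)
Your proof is correct and is essentially the same argument the paper intends (the paper gives no explicit proof, stating only that Lemma~\ref{vexestlemma} gives the result, just as Lemma~\ref{uw2estlemma} gives Lemma~\ref{ew2lemma}): restrict to the support $r \geq h(t) \gg \lambda(t)$ imposed by the cutoff, insert the pointwise bounds of Lemma~\ref{vexestlemma} for $k=0,1$ together with the elementary decay of $V$ and $\partial_r V$, control the supremum via~\eqref{lambdacomparg}, and observe that the radial integrals are dominated by $r \sim h(t)$ since the integrands decay faster than $r^{-1}$ as $r\to\infty$ for $C_u$ small. Your careful separate accounting of the $L^2$ and gradient contributions, producing the $1$ and the $1/h(t)$ in the final factor respectively, matches the stated bound exactly.
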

The next linear error term to consider is $e_{ell,3}$. From Lemma \ref{uell3estlemma}, we get
\begin{lemma}\label{eell3lemma}
$$||e_{ell,3}(t,|\cdot|)||_{H^{1}(\mathbb{R}^{3})} \leq \frac{C h(t)^{13/2} \sqrt{\lambda(t)}}{t^{8}}$$\end{lemma}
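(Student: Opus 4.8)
The plan is to reduce the $H^{1}(\mathbb{R}^{3})$ norm of the radial function $x \mapsto e_{ell,3}(t,|x|)$ to weighted one-dimensional $L^{2}$ norms. Since $e_{ell,3}(t,r) = \chi_{\leq 1}(\frac{r}{h(t)})\partial_{t}^{2}u_{ell,3}(t,r)$ is a compactly supported radial function which is smooth at the origin, one has
$$\|e_{ell,3}(t,|\cdot|)\|_{H^{1}(\mathbb{R}^{3})}^{2} \leq C\left(\|e_{ell,3}(t,r)\|_{L^{2}(r^{2} dr)}^{2}+\|\partial_{r}e_{ell,3}(t,r)\|_{L^{2}(r^{2} dr)}^{2}\right).$$
The structural point is that, by the support of $\chi_{\leq 1}$, both integrals are over $r \leq 2h(t)$, and, since $\lambda(t) \ll h(t)$ for large $t$ (a consequence of the lower bound on $a$ in \eqref{hdef} together with \eqref{lambdacomparg}), the region $\lambda(t) \leq r \leq 2h(t)$ is essentially all of this range.

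First I would bound the undifferentiated term. Splitting the radial integral at $r=\lambda(t)$ and applying the last display of Lemma \ref{uell3estlemma} with $k=0$ --- that is, $|\partial_{t}^{2}u_{ell,3}| \leq C r^{4}\lambda(t)^{3/2}t^{-8}$ for $r \leq \lambda(t)$ and $|\partial_{t}^{2}u_{ell,3}| \leq C r^{5}\sqrt{\lambda(t)}\,t^{-8}$ for $r \geq \lambda(t)$ --- gives
$$\|e_{ell,3}(t,r)\|_{L^{2}(r^{2} dr)}^{2} \leq \frac{C\lambda(t)^{3}}{t^{16}}\int_{0}^{\lambda(t)} r^{10}\,dr + \frac{C\lambda(t)}{t^{16}}\int_{\lambda(t)}^{2h(t)} r^{12}\,dr \leq \frac{C\lambda(t)^{14}}{t^{16}}+\frac{C\lambda(t)\,h(t)^{13}}{t^{16}}.$$
Since $\lambda(t) \ll h(t)$ the second term dominates, so $\|e_{ell,3}(t,r)\|_{L^{2}(r^{2} dr)} \leq C\sqrt{\lambda(t)}\,h(t)^{13/2}t^{-8}$, which is already the asserted bound.

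Next I would treat $\partial_{r}e_{ell,3}$ via the product rule $\partial_{r}e_{ell,3} = \frac{1}{h(t)}\chi_{\leq 1}'(\frac{r}{h(t)})\partial_{t}^{2}u_{ell,3} + \chi_{\leq 1}(\frac{r}{h(t)})\partial_{r}\partial_{t}^{2}u_{ell,3}$. The first term is supported in $h(t) \leq r \leq 2h(t)$, where $r \geq \lambda(t)$, hence is bounded there by $C h(t)^{5}\sqrt{\lambda(t)}(h(t) t^{8})^{-1} = C h(t)^{4}\sqrt{\lambda(t)}\,t^{-8}$, with $L^{2}(r^{2} dr)$ norm over that shell at most $C\sqrt{\lambda(t)}\,h(t)^{11/2}t^{-8}$. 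The second term is controlled by the $k=1$ cases of Lemma \ref{uell3estlemma}, namely $|\partial_{r}\partial_{t}^{2}u_{ell,3}| \leq C r^{3}\lambda(t)^{3/2}t^{-8}$ for $r \leq \lambda(t)$ and $\leq C r^{4}\sqrt{\lambda(t)}\,t^{-8}$ for $r \geq \lambda(t)$; the same integration bounds its $L^{2}(r^{2} dr)$ norm over $r \leq 2h(t)$ by $C\sqrt{\lambda(t)}\,h(t)^{11/2}t^{-8}$. Since $h(t) \geq 1$ for the times in question, $h(t)^{11/2} \leq h(t)^{13/2}$, so $\|\partial_{r}e_{ell,3}(t,r)\|_{L^{2}(r^{2} dr)} \leq C\sqrt{\lambda(t)}\,h(t)^{13/2}t^{-8}$ as well, and adding the two bounds proves the lemma.

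There is no genuine conceptual obstacle here; the argument is a direct computation. The one place needing attention is the bookkeeping of the power of $h(t)$: the $h(t)^{13/2}$ arises precisely from integrating the interior bound $r^{5}\sqrt{\lambda(t)}\,t^{-8}$ against $r^{2}\,dr$ out to the edge $r \sim h(t)$ of the cutoff, and one must check that the less regular but spatially localized $\chi_{\leq 1}'$ term contributes the strictly smaller power $h(t)^{11/2}$. It is worth noting that this bound is far from sharp; but in view of the upper bound on $a$ in \eqref{hdef} the quantity $\sqrt{\lambda(t)}\,h(t)^{13/2}t^{-8}$ still decays faster than $t^{-4}$, which is comfortably within what the subsequent fixed-point argument tolerates.
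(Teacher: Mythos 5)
Your proof is correct and is essentially the argument the paper intends (it simply says ``From Lemma \ref{uell3estlemma}, we get\ldots'' without writing out the computation): you split at $r=\lambda(t)$, integrate the pointwise bounds against $r^{2}\,dr$ over the support $r\leq 2h(t)$ of the cutoff, and observe that the $\chi_{\leq 1}'$ term and the interior-derivative term each contribute the smaller power $h(t)^{11/2}$. The only thing worth noting is that your discussion would read slightly cleaner if you explicitly cited $h(t)\geq C\sqrt{t}$ (stated right after \eqref{hdef}) to justify $h(t)\geq 1$ and $\lambda(t)\ll h(t)$, but this is cosmetic.
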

\subsection{Nonlinear error terms, part 1}\label{nonlinear1section}
We study the nonlinear error terms in \eqref{eqnafterua}. In particular, we define
$$u_{a,0}(t,r):= \chi_{\leq 1}(\frac{r}{h(t)}) u_{ell}(t,r)+(1-\chi_{\leq 1}(\frac{r}{h(t)}))(w_{1}+v_{2}+v_{3})$$
$$u_{a,1}(t,r):= \chi_{\leq 1}(\frac{r}{h(t)})(u_{ell,2}(t,r)+u_{ell,3}(t,r)) + (1-\chi_{\leq 1}(\frac{r}{h(t)}))(v_{ex}+u_{w,2}) + u_{3}(t,r)\psi_{2}(\frac{r}{h(t)})$$
so that $u_{a}(t,r)=u_{a,0}(t,r)+u_{a,1}(t,r)$. Next, we split the nonlinear terms into $N_{0}$, which is not quite perturbative, and $N_{1}$ which is.
\begin{equation}\label{n0def}N_{0}(t,r):= -\left(10 Q_{\lambda(t)}^{3}u_{a,0}^{2} + 10Q_{\lambda}^{2} u_{a,0}^{3}+5 Q_{\lambda}u_{a,0}^{4}+u_{a,0}^{5}\right)\end{equation}
$$N_{1}(t,r):= -\left(10 Q_{\lambda(t)}^{3}(u_{a}^{2}-u_{a,0}^{2})+10 Q_{\lambda}^{2}(u_{a}^{3}-u_{a,0}^{3}) + 5 Q_{\lambda}(u_{a}^{4}-u_{a,0}^{4})+u_{a}^{5}-u_{a,0}^{5}\right)$$
We begin by showing that $N_{1}$ is perturbative:
\begin{lemma} $$||N_{1}(t,|\cdot|)||_{H^{1}(\mathbb{R}^{3})} \leq \frac{C \log^{10}(t)}{t^{4+\delta_{1}}}$$
where $\delta_{1}>0$ is given by $$\delta_{1}=\frac{1}{2}\text{min}\{2+\frac{5}{2}a(-1+C_{u})-5C_{u},\frac{3}{2}a(1-C_{u})-3C_{u},\frac{3}{2}a -\frac{3}{2}(1-a)C_{l}-\frac{7}{2}C_{u},\frac{3}{2}-C_{l}-\frac{9}{2}C_{u},2-C_{l}-\frac{25}{2}C_{u}\}$$\end{lemma}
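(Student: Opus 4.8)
The plan is to estimate $N_1$ by a brute-force algebraic expansion followed by region-by-region pointwise bounds. First, since $u_a=u_{a,0}+u_{a,1}$ and the contribution $5Q_{\lambda(t)}^{4}u_{a,1}$ cancels against the $j=1$ term of the binomial expansion of $(Q_{\lambda(t)}+u_a)^5-(Q_{\lambda(t)}+u_{a,0})^5$, one writes
\[
N_1=-\sum_{\substack{p+q+s=5,\ s\ge1,\ (q,s)\neq(0,1)}}c_{p,q,s}\,Q_{\lambda(t)}^{\,p}\,u_{a,0}^{\,q}\,u_{a,1}^{\,s}
\]
for explicit combinatorial constants $c_{p,q,s}$ --- a finite sum. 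Using the radial identity $\|f(|\cdot|)\|_{H^1(\mathbb{R}^3)}^2\simeq\int_0^\infty(|f|^2+|\partial_r f|^2)\,r^2\,dr$ and the Leibniz rule for $\partial_r$, it then suffices to bound, for each monomial $Q_{\lambda(t)}^{p}u_{a,0}^{q}u_{a,1}^{s}$ and for the (at most one) factor onto which the single $\partial_r$ may fall, the integral $\int_0^\infty r^2\big(\prod(\text{pointwise bound on each factor})\big)^2\,dr$. The entire proof thus reduces to: (i) recording pointwise bounds on every factor on every portion of the half-line; (ii) multiplying; (iii) integrating elementary powers of $r$; (iv) checking the resulting exponent of $t$ exceeds $4$.

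For (i), I would split $(0,\infty)$ into the regions $(0,\lambda(t)]$, $[\lambda(t),h(t)/4]$, $[h(t)/4,2h(t)]$ (where the cutoffs $\chi_{\le1}(\cdot/h(t))$ and $\psi_2(\cdot/h(t))$ transition), $[2h(t),t/2]$ and $[t/2,\infty)$. On each region $u_{a,0}$ and $u_{a,1}$ are explicit finite sums of the functions already built, and I would insert the bounds of Lemmas~\ref{uellminusuellmainestlemma}, \ref{uell2minusuell2mainestlemma}, \ref{uell3estlemma} for $u_{ell},u_{ell,2},u_{ell,3}$, of Lemmas~\ref{w1estlemma}, \ref{v2estlemma}, \ref{v3minusv3mainestlemma} for $w_1,v_2,v_3$, of Lemmas~\ref{vexestlemma}, \ref{uw2estlemma}, \ref{u3estlemma} for $v_{ex},u_{w,2},u_3$, together with $Q_{\lambda(t)}(r)\le C\min\{\lambda(t)^{-1/2},\sqrt{\lambda(t)}/r\}$ and the analogous bound for $\partial_r Q_{\lambda(t)}$. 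Each factor is then controlled by a monomial in $r,\lambda(t),h(t),t$ times at most $\log^2(t+r)$, so a quintic product carries at most $\log^{10}(t+r)$; on $r\le t$ this is $\le C\log^{10}(t)$, and on the tail $r\ge t/2$ it is absorbed against the $r^{-1}$-decay of the free-wave factors. Finally, every $\lambda$-factor carrying a nonzero exponent is converted into a power of $t$ via $Ct^{-C_l}\le\lambda(t)\le Ct^{C_u}$ (and likewise for the $\sup_{x\in[T_\lambda,t+r]}$ quantities on the relevant $r$-ranges), and the cutoff scale is eliminated through $h(t)=\lambda(t)(t/\lambda(t))^{a}$.

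Carrying out (ii)--(iii), each monomial contributes on each region a quantity of the form $C\log^{10}(t)\,t^{-4}\,t^{-\mu}$, where $\mu$ is a linear combination of $1$, $a$, $1-a$, $C_u$ and $C_l$ coming from the $r$-integration and the substitution for $h(t)$; the minimum of these $\mu$'s over all monomials, all regions, and all placements of $\partial_r$ is exactly the constant $2\delta_1$ in the statement. The monomials built from the interior corrections $u_{ell,2},u_{ell,3}$ worsen as $h(t)$ grows and produce the terms of the $\min$ that decrease in $a$ (such as $2+\frac{5}{2}a(-1+C_u)-5C_u$), while those built from the exterior corrections $v_{ex},u_{w,2},u_3$ worsen as $h(t)$ shrinks and produce the terms that increase in $a$; the window for $a$ fixed in \eqref{hdef}, together with the smallness conditions \eqref{cofconstr0}--\eqref{cofconstr2}, is precisely what makes all of these positive simultaneously, so $\delta_1>0$.

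There is no single hard estimate here; the main obstacle is the combinatorial volume --- on the order of dozens of (monomial, region, differentiated-factor) cases --- and, above all, verifying that the worst of them still beats $t^{-4}$, i.e.\ that the five-way $\min$ defining $\delta_1$ is positive on the admissible range of $a,C_u,C_l$ (this is where \eqref{hdef} is used in an essential way). A secondary care-point is the gluing region $r\sim h(t)$: there one must use that $u_{a,0},u_{a,1}$ and their first derivatives are simultaneously controlled by the ``$r\ge\lambda(t)$'' bounds for $u_e$ and the ``$r\le t/2$'' bounds for $u_w$ with no loss across the transition, which holds because $\lambda(t)\ll h(t)\ll t$.
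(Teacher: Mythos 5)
Your proposal is essentially the same strategy the paper uses. The paper also factors out one power of $u_{a,1}$, but via the coarser pointwise bound \eqref{n1estintstep}, namely $|N_1|\le C|u_{a,1}|\bigl(|Q_\lambda|^3(|u_a|+|u_{a,0}|)+u_{a,0}^4+u_a^4\bigr)$, rather than a full multinomial expansion (your identity is a correct and equivalent but finer-grained refinement of this). It then records the region-by-region pointwise estimates in the explicit functions $u_{a,0,est}$ and the bound \eqref{drkua1est} on $\partial_r^k u_{a,1}$, multiplies, and integrates exactly as you propose, with the regions implicitly separated at $\lambda(t)$, $h(t)$ and $t/2$.

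One small technical point where your plan differs from what the paper actually does: for the contribution in which $\partial_r$ lands on the $u_3$-piece of $u_{a,1}$, the paper does \emph{not} use the pointwise bound on $\partial_r u_3$; it instead uses the energy estimate \eqref{u3enest} for $\|\partial_r u_3\|_{L^2(r^2\,dr)}$ and bounds the remaining factors in $L^\infty$. The reason is that the pointwise bound on $\partial_r u_3$ in Lemma \ref{u3estlemma} does not decay as $r\to\infty$ (the $r/t^3$ term in the final parenthesis prevents this), so $\partial_r u_3$ is not square-integrable from the pointwise bound alone. Your version — using the pointwise bound and relying on the decay of the remaining factors in the monomial — still closes: those factors supply at least $r^{-4}$ of decay in the worst $(p,q,s)=(3,1,1)$ case, leaving an integrand of order $r^{-6+O(C_u)}$ at infinity, which is integrable and produces a smaller power of $t$ than required. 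But you should make this explicit; a naive statement of "integrate the product of pointwise bounds" would otherwise look like it fails for this one term. With that caveat addressed, the proposal is correct and in the same spirit as the paper's proof.
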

\begin{proof}
From the definition of $N_{1}$, we get
\begin{equation}\label{n1estintstep}|N_{1}(t,r)| \leq C |u_{a,1}| (|Q_{\lambda}|^{3}(|u_{a}|+|u_{a,0}|)+u_{a,0}^{4}+u_{a}^{4})\end{equation}
Let 
\begin{equation}\begin{split} u_{a,0,est}(t,r) &= \left( \mathbbm{1}_{\{r \leq 2 h(t)\}} \frac{\lambda(t)^{3/2}}{t^{2}} \begin{cases} 1, \quad r \leq \lambda(t)\\
\frac{r}{\lambda(t)}, \quad r \geq \lambda(t)\end{cases} \right.\\
&\left.+ \mathbbm{1}_{\{r \geq h(t)\}} \begin{cases} \frac{r \sup_{x \in [T_{\lambda},t]}\sqrt{\lambda(x)}}{t^{2}}, \quad r \leq \frac{t}{2}\\
\frac{\left(\sup_{x \in [T_{\lambda},t+r]}\sqrt{\lambda(x)}\right)\log^{2}(t+r)}{r} \left(1+\frac{\left(\sup_{x \in [T_{\lambda},t+r]}\sqrt{\lambda(x)}\right)^{4}}{\langle t-r \rangle^{2}}\right), \quad r \geq \frac{t}{2}\end{cases}\right)\end{split}\end{equation}
Then, using Lemmas \ref{uellminusuellmainestlemma}, \ref{w1estlemma}, \ref{v2estlemma}, \ref{v3minusv3mainestlemma} and \ref{v30estlemma}, we get,
\begin{equation}\begin{split}|\partial_{r}^{k}u_{a,0}(t,r)| &\leq \begin{cases} \frac{C r}{\sqrt{\lambda(t)} t^{2}}, \quad k=1 \text{ and } r \leq \lambda(t)\\
C \left(\frac{1}{r^{k}}+\frac{1}{\langle t-r \rangle^{k}}+\frac{1}{t^{k}}\right) \cdot u_{a,0,est}(t,r), \quad k=0 \text{ and } r>0 \text{ or } k=1 \text{ and } r \geq \lambda(t)\end{cases}\end{split}\end{equation}
For later use, we also note that, for $0 \leq j \leq 3$,
\begin{equation}\label{dtjua0est}\begin{split}|\partial_{t}^{j} u_{a,0}(t,r)| &\leq C \left(\frac{1}{t^{j}}+\frac{1}{\langle t-r \rangle^{j}}\right) \cdot u_{a,0,est}(t,r)\end{split}\end{equation}
and the following two estimates are true:
\begin{equation}\label{dtrua0est} |\partial_{tr}u_{a,0}(t,r)| \leq C u_{a,0,est}(t,r) \left(\frac{1}{tr} + \frac{1}{\langle t-r \rangle^{2}} + \frac{1}{r \langle t-r \rangle} + \frac{1}{t^{2}}\right)\end{equation}
\begin{equation}\label{dttrua0est} |\partial_{ttr}u_{a,0}(t,r)| \leq C u_{a,0,est}(t,r) \left(\frac{1}{t^{2}r} + \frac{1}{\langle t-r \rangle^{3}} +\frac{1}{r \langle t-r \rangle^{2}} + \frac{1}{t^{3}}\right)\end{equation}
Next, we use Lemmas \ref{uell2minusuell2mainestlemma}, \ref{uell3estlemma}, \ref{vexestlemma}, \ref{uw2estlemma}, \ref{u3estlemma}, to get, for $k=0,1$,
\begin{equation}\label{drkua1est}\begin{split}|\partial_{r}^{k}u_{a,1}(t,r)| &\leq C \left(\frac{1}{r^{k}}+\frac{1}{t^{k}}\right) \cdot \left( \mathbbm{1}_{\{r \leq 2 h(t)\}} \frac{r^{2}(r+\lambda(t))\sqrt{\lambda(t)}}{t^{4}}\right. \\
&\left.+ C \mathbbm{1}_{\{r \geq \frac{h(t)}{4}\}} \left(\frac{\left(\sup_{x \in [T_{\lambda},t+r]}\lambda(x)^{5/2}\right)\log^{2}(t+r)}{r t^{2}} + \frac{\lambda(t)^{2}\left(\sup_{x \in [T_{\lambda},t]}\sqrt{\lambda(x)}\right)^{5} \log^{2}(t)}{t^{3} \text{max}\{r,t\}}\right)\right)\\
&+C\begin{cases}0, \quad k=0\\
|\partial_{r}u_{3}(t,r)| |\psi_{2}(\frac{r}{h(t)})| +  \mathbbm{1}_{\{r \geq h(t)\}} \frac{\left(\sup_{x \in [T_{\lambda},t+r]}\lambda(x)^{3/2}\right) \log^{2}(t+r)}{r t^{2}}, \quad k=1\end{cases}\end{split}\end{equation}
We obtain the lemma statement by inserting the above into \eqref{n1estintstep}, and estimating directly, using Lemma \ref{u3estlemma} to estimate $||\partial_{r}u_{3}(t,r)||_{L^{2}(r^{2} dr)}$. 
\end{proof}
Next, we consider $N_{0}$, which we recall is defined in \eqref{n0def}. We let $u_{N_{0}}$ be the solution to the following equation with $0$ Cauchy data at infinity.
\begin{equation}\label{un0eqn}-\partial_{t}^{2}u_{N_{0}}+\partial_{r}^{2}u_{N_{0}}+\frac{2}{r}\partial_{r}u_{N_{0}}=N_{0}\end{equation}
We have
\begin{equation}\label{un0def}u_{N_{0}}(t,r) = \int_{t}^{\infty} ds \left(\frac{-1}{2r}\int_{|r-(s-t)|}^{r+s-t} y N_{0}(s,y) dy\right)\end{equation}
Note that all of the estimates in the entire proof thus far are valid for all $t \geq T_{0}$ for any $T_{0} \geq 2 T_{0,1}$ (recall that we restricted $T_{0}$ after \eqref{v30def}). In particular, they are valid for all $t \geq 2T_{0,1}$.
We then define
\begin{equation}\label{v40def}v_{4,0}(r) = \frac{1}{r} \int_{r}^{\infty} ds (s-r) N_{0}(s,s-r) \psi(\frac{r T_{\lambda}}{2 T_{0,1}}), \quad r >0\end{equation}
for $\psi$ as in \eqref{psidef}, restrict $T_{0}$ to satisfy $T_{0} \geq 4 T_{0,1}$, but is otherwise arbitrary, and we let $v_{4}$ solve
$$\begin{cases}-\partial_{t}^{2}v_{4}+\partial_{r}^{2}v_{4}+\frac{2}{r}\partial_{r}v_{4}=0\\
v_{4}(0,r)=0, \quad \partial_{t}v_{4}(0,r) = v_{4,0}(r)\end{cases}$$
Then, we have the following estimates
\begin{lemma}\label{un0estlemma}
For $j=0,1,2$,
\begin{equation}\label{un0smrest}\begin{split}&|\partial_{t}^{j}\left(u_{N_{0}}(t,r)+t v_{4,0}(t)\right)|\\
&\leq \frac{C}{t^{j}}\left( \frac{ \text{min}\{1,r\} \left(\sup_{x \in [T_{\lambda},t]} \sqrt{\lambda(x)}\right)^{25} \log^{10}(t)}{t^{4}}+\frac{ r \left(\sup_{x \in [T_{\lambda},t]} \sqrt{\lambda(x)}\right)^{5} (\log^{10}(t)+\log^{10}(r))}{t^{4}}\right.\\
&+\left.\frac{ r^{2}\left(\sup_{x \in [T_{\lambda},t]}\sqrt{\lambda(x)}\right)^{3}}{t^{4}}\right), \quad r \leq \frac{t}{2}\end{split}\end{equation}
\begin{equation}\label{un0lgrest}|u_{N_{0}}(t,r)| \leq \frac{C \left(\sup_{x \in [T_{\lambda},t]}\sqrt{\lambda(x)}\right)^{5} \log^{10}(t)}{r t^{2}} , \quad r \geq \frac{t}{2}\end{equation}
\begin{equation}\label{drun0smrest}\begin{split}|\partial_{r}u_{N_{0}}(t,r)| &\leq \frac{C \left(\sup_{x \in [T_{\lambda},t]}\sqrt{\lambda(x)}\right)^{25} \log^{10}(t) \text{min}\{1,r\}}{r t^{4}}\\
&+\frac{C \left(\sup_{x \in [T_{\lambda},t]}\sqrt{\lambda(x)}\right)^{5} (\log^{10}(t)+\log^{10}(r))}{t^{4}} + \frac{C r \left(\sup_{x \in [T_{\lambda},t]}\sqrt{\lambda(x)}\right)^{3}}{t^{4}}, \quad r \leq \frac{t}{2}\end{split}\end{equation}
\begin{equation}\begin{split} |\partial_{r}u_{N_{0}}(t,r)|&\leq \frac{C \left(\sup_{x \in [T_{\lambda},t+r]}\sqrt{\lambda(x)}\right)^{5} \log^{10}(t+r)}{r t^{3}} \left(1+\left(\sup_{x \in [T_{\lambda},t+r]}\sqrt{\lambda(x)}\right)^{20}\left(\frac{1}{t}+\frac{1}{\langle t-r \rangle^{10}}\right)\right), \quad r \geq \frac{t}{2}\end{split}\end{equation}
For $0 \leq k \leq 2$,
\begin{equation}\label{drkv4smrest}|\partial_{r}^{k}(v_{4}(t,r)-t v_{4,0}(t))| \leq\frac{C r^{2-k} \left(\sup_{x \in [T_{\lambda},t]}\sqrt{\lambda(x)}\right)^{5} \log^{10}(t)}{t^{5}}, \quad r \leq \frac{t}{2}\end{equation}
For $j=1,2$,
\begin{equation}\label{dtjv4smrest}|\partial_{t}^{j}(v_{4}-t v_{4,0}(t))| \leq \frac{C r^{3-j} \left(\sup_{x \in [T_{\lambda},t]}\sqrt{\lambda(x)}\right)^{5} \log^{10}(t)}{t^{6}}, \quad r \leq \frac{t}{2}\end{equation}
For $0 \leq j+k \leq 2$,
\begin{equation}\label{v4lgrest}|\partial_{t}^{j}\partial_{r}^{k}v_{4}(t,r)| \leq \frac{C}{r \langle t-r \rangle^{2}}\left(\sup_{x \in [T_{\lambda},t+r]}\sqrt{\lambda(x)}\right)^{5} \log^{10}(t+r) \left(\frac{1}{\langle t-r \rangle^{j+k}}+\frac{1}{t^{j+k}}\right), \quad r \geq \frac{t}{2}\end{equation}
Finally, \begin{equation}\label{un0enest} ||\partial_{t}u_{N_{0}}(t,r)||_{L^{2}(r^{2}dr)} + ||\partial_{r}u_{N_{0}}||_{L^{2}(r^{2} dr)} \leq \frac{C \left(\sup_{x \in [T_{\lambda},t]}\sqrt{\lambda(x)}\right)^{5} \log^{10}(t)}{t^{5/2}} \end{equation}
\end{lemma}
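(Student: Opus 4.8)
The plan is to obtain every estimate in the lemma by inserting suitable pointwise bounds on $N_{0}$ into the relevant representation formula; the only structural input is the near-origin cancellation built into the definition of $v_{4}$.

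\emph{Bounds on $N_{0}$.} Since $Q_{\lambda(t)}(r)=\sqrt{3}\,\sqrt{\lambda(t)}\,(r^{2}+3\lambda(t)^{2})^{-1/2}$, the symbol estimates \eqref{lambdasymb} give $|\partial_{t}^{j}\partial_{r}^{k}Q_{\lambda(t)}(r)|\leq C\sqrt{\lambda(t)}\,t^{-j}\max\{r,\lambda(t)\}^{-1-k}$; combining this with the pointwise bounds $|\partial_{r}^{k}u_{a,0}|\leq C(r^{-k}+\langle t-r\rangle^{-k}+t^{-k})\,u_{a,0,est}$ together with \eqref{dtjua0est}, \eqref{dtrua0est}, \eqref{dttrua0est} (established in the proof of the previous lemma) and inserting into \eqref{n0def} gives
$$|\partial_{t}^{j}N_{0}(t,r)|\leq C\Big(\tfrac{1}{t^{j}}+\tfrac{1}{\langle t-r\rangle^{j}}\Big)\Big(|Q_{\lambda}|^{3}u_{a,0,est}^{2}+|Q_{\lambda}|^{2}u_{a,0,est}^{3}+|Q_{\lambda}|u_{a,0,est}^{4}+u_{a,0,est}^{5}\Big),$$
and I would record the resulting explicit bound on $N_{0}$ in each of the regimes $r\leq\lambda(t)$, $\lambda(t)\leq r\leq h(t)$, $h(t)\leq r\leq t/2$, and $r\geq t/2$ (in the last one $Q_{\lambda}(r)\lesssim\sqrt{\lambda}/r$ is harmless and the far-field form of $u_{a,0,est}$, inherited from the free waves $v_{2},v_{3}$ inside $u_{a,0}$, produces the high powers of $\sup\sqrt{\lambda}$ and the $\langle t-r\rangle$ and $\log$ weights on the right-hand sides of the lemma). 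The change of variables $s=r+q$ in \eqref{v40def} then yields bounds on $v_{4,0}$ and its derivatives directly from these.

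\emph{Estimating $u_{N_{0}}$ and $v_{4}$.} I would insert the bounds on $N_{0}$ into the Duhamel formula \eqref{un0def}, use $s=t+w$ so that $r$- and $t$-differentiations can be carried under the integral sign, and estimate the characteristic integrals directly; for $r\geq t/2$ this gives \eqref{un0lgrest} and the stated bound on $\partial_{r}u_{N_{0}}$. For the interior estimates, since $s>t$ forces $s-t>0$ one has $\lim_{r\to0^{+}}\tfrac{1}{2r}\int_{|r-(s-t)|}^{r+(s-t)}y N_{0}(s,y)\,dy=(s-t)N_{0}(s,s-t)$, hence $u_{N_{0}}(t,0)=-\int_{t}^{\infty}(s-t)N_{0}(s,s-t)\,ds=-t v_{4,0}(t)$ for $t\geq T_{0}\geq4T_{0,1}$ (where $\psi(tT_{\lambda}/(2T_{0,1}))=1$), so that
$$u_{N_{0}}(t,r)+tv_{4,0}(t)=\int_{t}^{\infty}\Big(\frac{-1}{2r}\int_{|r-(s-t)|}^{r+(s-t)}y N_{0}(s,y)\,dy+(s-t)N_{0}(s,s-t)\Big)\,ds.$$
Taylor expanding the inner integrand about $r=0$, with the regions $s-t\le r$ and $s-t>r$ kept separate, extracts the factors $r$, $r^{2}$ and $\min\{1,r\}$ in \eqref{un0smrest}, the various powers of $\sup\sqrt{\lambda}$ being dictated by which of the four regimes above the radius $y=s-t$ falls into; differentiating this identity in $t$, and \eqref{un0def} in $r$, gives \eqref{drun0smrest}. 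For $v_{4}$ I would use $v_{4}(t,r)=\tfrac{t}{2}\int_{0}^{\pi}\sin\theta\,v_{4,0}(\sqrt{r^{2}+t^{2}+2rt\cos\theta})\,d\theta$ and repeat, verbatim, the arguments of Lemmas \ref{w1estlemma}, \ref{v2estlemma}, \ref{v3minusv3mainestlemma}: for $r\leq t/2$ write $v_{4}-tv_{4,0}(t)$ as a double integral with a Taylor remainder in $r$ (giving \eqref{drkv4smrest}, \eqref{dtjv4smrest}), and for $r\geq t/2$ use \eqref{derivofsqrtcomp}, the dilation field $X=t\partial_{t}+r\partial_{r}$, and $(-\partial_{t}^{2}+\partial_{r}^{2}+\tfrac{2}{r}\partial_{r})v_{4}=0$ to trade $t$-derivatives for $r$-derivatives, giving \eqref{v4lgrest}. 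Finally, \eqref{un0enest} follows exactly as \eqref{uw2enest} and \eqref{u3enest}: write $\partial_{r}u_{N_{0}}+u_{N_{0}}/r$ and $\partial_{t}u_{N_{0}}$ as explicit characteristic integrals of $N_{0}$, apply Minkowski's inequality with the above bounds, and use a crude pointwise bound on $u_{N_{0}}$ (from directly inserting the $N_{0}$ bounds into \eqref{un0def}) to check $r\,u_{N_{0}}(t,r)^{2}\to0$ as $r\to0$, so that $||\partial_{r}u_{N_{0}}+u_{N_{0}}/r||_{L^{2}(r^{2}dr)}^{2}=||\partial_{r}u_{N_{0}}||_{L^{2}(r^{2}dr)}^{2}$ by dominated and monotone convergence.

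\emph{Main difficulty.} The obstacle is bookkeeping: the bounds on $N_{0}$ must be carried through all of the integration regions without degrading the sharp powers of $t$, $\langle t-r\rangle$ and $\sup\sqrt{\lambda}$, and in the exterior region the light-cone singularities of $v_{2},v_{3}$ inside $u_{a,0}$ force the same integration-by-parts manoeuvres used in Lemmas \ref{uw2estlemma} and \ref{u3estlemma}. The most delicate point is checking that the subtraction of $tv_{4,0}(t)$ genuinely produces the claimed near-origin gains simultaneously in the $r$, $r^{2}$ and $\min\{1,r\}$ regimes — i.e., that the Taylor expansion of the Duhamel integrand in $r$ is uniformly valid across the regions $s-t\leq r$, $r\leq s-t\leq\lambda(s)$ and $s-t\geq\lambda(s)$.
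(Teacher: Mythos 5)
Your proposal follows essentially the same route as the paper. In particular, your observation that $\lim_{r\to0^{+}}\tfrac{1}{2r}\int_{|r-(s-t)|}^{r+(s-t)}y\,N_{0}(s,y)\,dy=(s-t)N_{0}(s,s-t)$ gives $u_{N_{0}}(t,0^{+})=-tv_{4,0}(t)$ is precisely the content of the paper's decomposition \eqref{un0estintstep1}; the paper then treats the $s\geq t+r$ piece by the mean value theorem in the $y$-variable and the $t\leq s\leq t+r$ pieces directly, which is exactly what you call Taylor expanding the integrand in $r$ with the regions $s-t\leq r$ and $s-t>r$ kept separate, and your handling of $v_{4}$, the $r$- and $t$-derivatives, and the energy estimate matches the paper's.
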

\begin{proof}
We start with estimates on $N_{0}$. Define
$$N_{0,est}(t,r) = \begin{cases}\frac{\lambda(t)^{3/2}}{t^{4}}, \quad r \leq \lambda(t)\\
\frac{\lambda(t)^{5/2} }{r t^{4}}, \quad \lambda(t) \leq r \leq h(t)\\
\frac{\left(\sup_{x \in [T_{\lambda},t]}\sqrt{\lambda(x)}\right)^{5}}{r t^{4}}, \quad h(t) \leq r \leq \frac{t}{2}\\
\frac{\left(\sup_{x \in [T_{\lambda},t+r]}\sqrt{\lambda(x)}\right)^{5} \log^{10}(t+r)}{r^{5}} \left(1+\frac{\left(\sup_{x \in [T_{\lambda},t+r]}\sqrt{\lambda(x)}\right)^{20}}{\langle t-r \rangle^{10}}\right), \quad r \geq \frac{t}{2}\end{cases}$$ 
Directly estimating the definition of $N_{0}$, we get, for $k=0,1$,
\begin{equation}\label{drkn0est}\begin{split}|\partial_{r}^{k}N_{0}(t,r)| \leq C \left(\frac{1}{r}+\frac{1}{\langle t-r \rangle} + \frac{1}{t}\right)^{k} \cdot N_{0,est}(t,r) \end{split}\end{equation}
We also note that 
$$|\partial_{t}N_{0}(t,r)| \leq C(Q_{\lambda}(r)^{2}u_{a,0}^{2}+u_{a,0}^{4}) |\partial_{t}Q_{\lambda}| + C(|Q_{\lambda}|^{3} |u_{a,0}| + u_{a,0}^{4}) |\partial_{t}u_{a,0}|$$
Similarly directly estimating $\partial_{t}^{2}N_{0}(t,r)$, and using \eqref{dtjua0est}, we get, for $j=1,2,3$,
$$|\partial_{t}^{j}N_{0}(t,r)| \leq C N_{0,est}(t,r) \left(\frac{1}{t^{j}}+\frac{1}{\langle t-r \rangle^{j}}\right)$$
Also with the same procedure, for $j=1,2$,
$$|\partial_{t}^{j}\partial_{r}N_{0}(t,r)| \leq C \left(\frac{1}{t^{j}}+\frac{1}{\langle t-r \rangle^{j}}\right)\left(\frac{1}{r}+\frac{1}{\langle t-r \rangle} + \frac{1}{t}\right) \cdot N_{0,est}(t,r)$$
Inserting \eqref{drkn0est} into \eqref{un0def} gives \eqref{un0lgrest}. To obtain \eqref{un0smrest} for $j=0$, we first decompose \eqref{un0def} as
\begin{equation}\label{un0estintstep1} \begin{split} u_{N_{0}}(t,r) &= \int_{t}^{t+r} ds \left(\frac{-1}{2r}\right) \int_{|r-(s-t)|}^{r+s-t} y N_{0}(s,y) dy - \int_{t}^{\infty} ds (s-t) N_{0}(s,s-t) + \int_{t}^{t+r} ds (s-t) N_{0}(s,s-t)\\
&+\int_{t+r}^{\infty} ds \left(\frac{-1}{2r}\right) \int_{s-t-r}^{s-t+r} (y N_{0}(s,y) - y N_{0}(s,s-t)) dy\end{split}\end{equation}
To estimate the last term in \eqref{un0estintstep1}, we use the mean value theorem for the function $x \mapsto N_{0}(s,x)$, and directly estimate the remaining terms, to get \eqref{un0smrest} for $j=0$. Next, we directly estimate the definition of $v_{4,0}$, \eqref{v40def}, to get, for $0 \leq j \leq 3$,
\begin{equation}\label{v40ests}|v_{4,0}^{(j)}(r)| \leq \frac{C \left(\sup_{x \in [T_{\lambda},r]}\sqrt{\lambda(x)}\right)^{5} \log^{10}(r) \mathbbm{1}_{\{r \geq T_{\lambda}\}}}{r^{4+j}}\end{equation}
For \eqref{drkv4smrest}, we write
\begin{equation}\begin{split}v_{4}(t,r) -t v_{4,0}(t) &= \frac{t}{2}\int_{0}^{\pi} \sin(\theta) \left(v_{4,0}(\sqrt{r^{2}+t^{2}+2 r t \cos(\theta)})-v_{4,0}(t)\right) d\theta\\
&= \frac{t}{2} \int_{0}^{\pi} \sin(\theta) \int_{0}^{r} dy (r-y)\partial_{y}^{2} \left(v_{4,0}(\sqrt{y^{2}+t^{2}+2 y t \cos(\theta)})\right) d\theta\end{split}\end{equation}
Directly estimating this gives \eqref{drkv4smrest}, and \eqref{dtjv4smrest} for $j=1$. Since we only have three derivatives of $v_{4,0}$ estimated in \eqref{v40ests}, we obtain \eqref{dtjv4smrest} for $j=2$ by directly differentiating and estimating
$$v_{4}(t,r)-t v_{4,0}(t) = \frac{t}{2}\int_{0}^{\pi}\sin(\theta) \int_{0}^{r} \partial_{y}(v_{4,0}(\sqrt{y^{2}+t^{2}+2 y t \cos(\theta)}))dy d\theta$$
Next, we obtain \eqref{v4lgrest} with the same procedure as for the analogous estimates in Lemma \ref{v3minusv3mainestlemma}. Next, for \eqref{drun0smrest}, we note that
\begin{equation}\begin{split} \partial_{r}u_{N_{0}}(t,r) &= \partial_{r}\left(u_{N_{0}}(t,r) + \int_{t}^{\infty} ds (s-t) N_{0}(s,s-t)\right)\end{split}\end{equation}
and we write $u_{N_{0}}(t,r) + \int_{t}^{\infty} ds (s-t) N_{0}(s,s-t)$ using the expression \eqref{un0estintstep1}. \eqref{drun0smrest} follows from direct differentiation. If $r \geq \frac{t}{2}$, we simply directly differentiate \eqref{un0def}, to get
$$\partial_{r}u_{N_{0}}(t,r) = \frac{-u_{N_{0}}(t,r)}{r} + \int_{t}^{\infty} ds \left(\frac{-1}{2r}\right) \left((r+s-t) N_{0}(s,r+s-t) + (s-t-r) N_{0}(s,|s-t-r|)\right)$$
To estimate $\partial_{t}^{j}(u_{N_{0}}(t,r) + \int_{t}^{\infty} ds (s-t) N_{0}(s,s-t))$, we use \eqref{un0estintstep1} to get
\begin{equation}\begin{split}&\partial_{t}^{j}\left(u_{N_{0}}(t,r) + \int_{t}^{\infty} ds (s-t) N_{0}(s,s-t)\right)\\
&=\partial_{t}^{j}\left(\int_{0}^{r} dw \left(\frac{-1}{2r}\right) \int_{|r-w|}^{r+w} y N_{0}(w+t,y) dy\right) + \partial_{t}^{j}\left(\int_{0}^{r} dw w N_{0}(t+w,w)\right)\\
&+\partial_{t}^{j}\left(\int_{r}^{\infty} dw \left(\frac{-1}{2r}\right) \int_{w-r}^{w+r} y (N_{0}(t+w,y) - N_{0}(t+w,w)) dy\right)\end{split}\end{equation}
We then directly estimate this expression. 

Finally, a direct computation using \eqref{drkn0est} gives
$$||N_{0}(t,r)||_{L^{2}(r^{2} dr)} \leq \frac{C \left(\sup_{x \in [T_{\lambda},t]}\sqrt{\lambda(x)}\right)^{5} \log^{10}(t)}{t^{7/2}} \left(1+\frac{\left(\sup_{x \in [T_{\lambda},t]}\sqrt{\lambda(x)}\right)^{20}}{\sqrt{t}}\right)$$
The same procedure used to establish \eqref{u3enest} then gives \eqref{un0enest}. 
\end{proof}
We define the linear error terms associated to $u_{N_{0}}$ and $v_{4}$ by
\begin{equation}\label{eun0ev4defs} e_{u_{N_{0}}}(t,r) := \frac{-45 \lambda(t)^{2} u_{N_{0}}(t,r)}{(3\lambda(t)^{2}+r^{2})^{2}}, \quad e_{v_{4}}(t,r):=\frac{-45 \lambda(t)^{2} v_{4}(t,r)}{(3\lambda(t)^{2}+r^{2})^{2}}\end{equation}
Then, let $\psi_{1} \in C^{\infty}([0,\infty))$ satisfy $0 \leq \psi_{1}(x) \leq 1$, 
\begin{equation}\label{psi1def}\psi_{1}(x) = \begin{cases}1, \quad x \leq \frac{1}{2}\\
0, \quad x \geq \frac{3}{4}\end{cases}\end{equation}
First, a direct application of Lemma \ref{un0estlemma} shows that the following piece of the linear error terms of $u_{N_{0}}$ and $v_{4}$ is perturbative. 
\begin{lemma} \begin{equation}\begin{split}||(e_{u_{N_{0}}}+e_{v_{4}}\psi_{1}(\frac{|\cdot|}{t}))\chi_{\geq 1}(\frac{4|\cdot|}{t})||_{H^{1}(\mathbb{R}^{3})} \leq \frac{C \lambda(t)^{2}}{t^{9/2}} & \left(\sup_{x \in [T_{\lambda},t]} \sqrt{\lambda(x)}\right)^{3}\end{split}\end{equation}\end{lemma}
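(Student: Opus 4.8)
The plan is to reduce the radial $H^{1}(\mathbb{R}^{3})$ norm to the weighted norms $\|\cdot\|_{L^{2}(r^{2}dr)}$ of the function and of its radial derivative, and then to exploit the fact that the cutoffs force the support to lie in the region $r\geq t/4$. There, by the definitions \eqref{eun0ev4defs} and the bound $\lambda(t)\ll t\leq 4r$, the weight $\frac{\lambda(t)^{2}}{(3\lambda(t)^{2}+r^{2})^{2}}$ is $\leq \frac{C\lambda(t)^{2}}{r^{4}}$ and its $r$-derivative is $\leq\frac{C\lambda(t)^{2}}{r^{5}}$. Hence it suffices to control $u_{N_{0}}$, $v_{4}$ and their $r$-derivatives against suitable powers of $r$ on $\{r\geq t/4\}$, with an extra $1/t$ gained whenever a derivative falls on one of the cutoffs $\chi_{\geq 1}(4r/t)$ or $\psi_{1}(r/t)$, which are supported on $r\sim t$.

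First I would split the support into the three regimes determined by the cutoffs: $t/4\leq r\leq t/2$ (where $\psi_{1}(r/t)=1$), $t/2\leq r\leq 3t/4$ (where $\psi_{1}$ transitions), and $r\geq 3t/4$ (where $\psi_{1}=0$, so only $e_{u_{N_{0}}}$ survives). In the outer two regimes I would simply insert the large-$r$ bounds \eqref{un0lgrest}, \eqref{v4lgrest}, the large-$r$ estimate on $\partial_{r}u_{N_{0}}$ from Lemma \ref{un0estlemma}, and $\partial_{r}v_{4}$ from \eqref{v4lgrest}; since these carry factors $1/r$ and $1/\langle t-r\rangle^{2}$, multiplying by $\lambda(t)^{2}/r^{4}$ and integrating $r^{2}dr$ over $r\gtrsim t$ produces contributions of size $\lesssim \lambda(t)^{2}\bigl(\sup_{x\in[T_{\lambda},t]}\sqrt{\lambda(x)}\bigr)^{5}\log^{10}(t)\, t^{-11/2}$, which is dominated by the claimed bound because the surplus factor $\bigl(\sup_{x\in[T_{\lambda},t]}\sqrt{\lambda(x)}\bigr)^{2}\log^{10}(t)/t$ is small for $t\geq T_{0}$ by the symbol bounds \eqref{lambdasymb} (as $C_{u}$ is small).

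The decisive regime is $t/4\leq r\leq t/2$, where $e_{u_{N_{0}}}+e_{v_{4}}$ appears and I would use the cancellation built into the construction of $v_{4}$: write $u_{N_{0}}(t,r)+v_{4}(t,r)=\bigl(u_{N_{0}}(t,r)+t\,v_{4,0}(t)\bigr)+\bigl(v_{4}(t,r)-t\,v_{4,0}(t)\bigr)$, bounding the first bracket (and its $r$-derivative) by \eqref{un0smrest} and \eqref{drun0smrest}, and the second by \eqref{drkv4smrest}, \eqref{dtjv4smrest}. Among the three terms in \eqref{un0smrest}, the dominant one in this region is $\frac{r^{2}\left(\sup_{x\in[T_{\lambda},t]}\sqrt{\lambda(x)}\right)^{3}}{t^{4}}$; multiplying by $\frac{C\lambda(t)^{2}}{r^{4}}$ gives $\frac{C\lambda(t)^{2}\left(\sup_{x\in[T_{\lambda},t]}\sqrt{\lambda(x)}\right)^{3}}{t^{4}r^{2}}$, and the $L^{2}(r^{2}dr)$ norm over $r\sim t$ yields exactly $\frac{C\lambda(t)^{2}\left(\sup_{x\in[T_{\lambda},t]}\sqrt{\lambda(x)}\right)^{3}}{t^{9/2}}$. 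The remaining terms of \eqref{un0smrest}, the whole of $v_{4}-t\,v_{4,0}(t)$, the contributions where $\partial_{r}$ lands on the potential or on $\chi_{\geq1}(4r/t)$, and the $\partial_{r}(u_{N_{0}}+v_{4})$ terms of \eqref{drun0smrest} are all smaller by at least one power of $t$ (up to harmless $\log$ and $\left(\sup_{x\in[T_{\lambda},t]}\sqrt{\lambda(x)}\right)$ factors absorbed via \eqref{lambdasymb}).

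The main obstacle is purely organizational: Lemma \ref{un0estlemma} supplies many terms, and one must verify in each of the three support regimes that, after multiplication by the rapidly decaying potential weight and integration against $r^{2}dr$, every term is dominated by $\lambda(t)^{2}t^{-9/2}\left(\sup_{x\in[T_{\lambda},t]}\sqrt{\lambda(x)}\right)^{3}$. There is no analytic difficulty beyond this bookkeeping and the elementary $L^{2}$ integrations over $r\sim t$, which is why the statement is asserted to follow by a direct application of Lemma \ref{un0estlemma}.
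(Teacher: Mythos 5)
Your proposal is correct and takes essentially the same approach as the paper, which states only that the lemma follows by a ``direct application of Lemma \ref{un0estlemma}''. You correctly identify the crucial device (pairing $u_{N_{0}}$ with $+t\,v_{4,0}(t)$ and $v_{4}$ with $-t\,v_{4,0}(t)$ so that the small-$r$ estimates \eqref{un0smrest} and \eqref{drkv4smrest} apply on the inner piece of the support), correctly locate the dominant term $r^{2}(\sup\sqrt{\lambda})^{3}/t^{4}$ in \eqref{un0smrest}, and your $L^{2}(r^{2}dr)$ integration against the weight $\lambda(t)^{2}/r^{4}$ on $r\sim t$ reproduces the asserted $\lambda(t)^{2}(\sup\sqrt{\lambda})^{3}/t^{9/2}$ exactly.
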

Next, we let $u_{4}$ solve the following equation with zero Cauchy data at infinity:
\begin{equation}\label{u4eqn}-\partial_{t}^{2}u_{4}+\partial_{r}^{2}u_{4}+\frac{2}{r}\partial_{r}u_{4} = e_{v_{4}}(t,r) (1-\psi_{1}(\frac{r}{t})) \chi_{\geq 1}(\frac{4 r}{t})\end{equation}
We have
\begin{equation} \label{u4def}u_{4}(t,r) = \int_{t}^{\infty} ds \left(\frac{-1}{2r} \int_{|r-(s-t)|}^{r+s-t} y \left(e_{v_{4}}(s,y) (1-\psi_{1}(\frac{y}{s})) \chi_{\geq 1}(\frac{4 y}{s})\right) dy\right)\end{equation}
Then, we get
\begin{lemma} \label{u4estlemma}
\begin{equation}|u_{4}(t,r)| \leq \begin{cases} \frac{C \lambda(t)^{2} \left(\sup_{x \in [T_{\lambda},t]} \sqrt{\lambda(x)}\right)^{5} \log^{10}(t)}{t^{5}}, \quad r \leq \frac{t}{2}\\
\frac{C \lambda(t)^{2}\left(\sup_{x \in [T_{\lambda},t]} \sqrt{\lambda(x)}\right)^{5} \log^{10}(t)}{r t^{3}}, \quad r \geq \frac{t}{2}\end{cases}\end{equation}
$$|\partial_{r}u_{4}(t,r)| \leq \begin{cases} \frac{C \lambda(t)^{2} \left(\sup_{x \in [T_{\lambda},t]} \sqrt{\lambda(x)}\right)^{5}\log^{10}(t)}{t^{6}}, \quad r \leq \frac{t}{2}\\
\frac{C \lambda(t)^{2} \left(\sup_{x \in [T_{\lambda},r]} \sqrt{\lambda(x)}\right)^{5} \log^{10}(r)}{t^{3} r}, \quad r \geq \frac{t}{2}\end{cases}$$
$$||\partial_{t}u_{4}(t,r)||_{L^{2}(r^{2} dr)} + ||\partial_{r}u_{4}(t,r)||_{L^{2}(r^{2} dr)} \leq \frac{C \lambda(t)^{2} \left(\sup_{x \in [T_{\lambda},t]} \sqrt{\lambda(x)}\right)^{5} \log^{10}(t)}{t^{3}}$$\end{lemma}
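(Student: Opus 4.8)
The plan is to work entirely from the representation formula \eqref{u4def}, so that the proof reduces to a pointwise bound on the source term $e_{v_4}(s,y)\,(1-\psi_1(\tfrac{y}{s}))\,\chi_{\geq 1}(\tfrac{4y}{s})$ followed by a direct estimation of the resulting double integral, in the spirit of the proofs of Lemmas \ref{u3estlemma} and \ref{un0estlemma}. The structural point is that the cutoffs localize the source to $y \geq \tfrac{s}{4}$ (from $\chi_{\geq 1}(\tfrac{4y}{s})$), and in fact to $y \geq \tfrac{s}{2}$ (from $1-\psi_1(\tfrac{y}{s})$); in particular $y \gg \lambda(s)$ on the support, so that $\tfrac{45\lambda(s)^2}{(3\lambda(s)^2+y^2)^2} \leq \tfrac{C\lambda(s)^2}{y^4}$ there by \eqref{eun0ev4defs}, and the large-$r$ bound \eqref{v4lgrest} for $v_4$ is applicable. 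Combining these gives
\[ \left| e_{v_4}(s,y)\,(1-\psi_1(\tfrac{y}{s}))\,\chi_{\geq 1}(\tfrac{4y}{s}) \right| \leq \frac{C\,\mathbbm{1}_{\{y \geq s/2\}}\,\lambda(s)^2 \left(\sup_{x \in [T_\lambda,\,s+y]}\sqrt{\lambda(x)}\right)^5 \log^{10}(s+y)}{y^5\,\langle s-y\rangle^2}. \]

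Inserting this into \eqref{u4def}, and using \eqref{lambdacomparg} to trade $\lambda(s)$ for $\lambda(t)$ up to acceptable powers of $s/t$ together with the monotonicity of $x \mapsto \lambda(x)^{5/2}/x$ to bound the supremum, the pointwise estimates for $u_4$ and $\partial_r u_4$ follow by directly estimating the double integral, split according to whether $r \leq \tfrac t2$ or $r \geq \tfrac t2$ and, within the inner $y$-integral, according to the position of $y$ relative to $\tfrac s2$. For $r \geq \tfrac t2$ the bound on $\partial_r u_4$ is obtained by differentiating \eqref{u4def} directly, producing $-u_4/r$ plus the light-cone boundary terms evaluated at $y = r+s-t$ and $y = |s-t-r|$, each controlled by the source bound above.

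The energy estimate is proved by the argument already used for \eqref{uw2enest} and \eqref{u3enest}: one writes $\partial_r u_4 + u_4/r$ and $\partial_t u_4$ as explicit integrals of the source against light-cone kernels, applies Minkowski's inequality in $L^2(r^2\,dr)$, and changes variables on each $s$-slice to reduce to $\|(\text{source})(s,\cdot)\|_{L^2(r^2\,dr)}$, which by the pointwise bound above is $\lesssim \lambda(s)^2 (\sup_{x\in[T_\lambda,s]}\sqrt{\lambda(x)})^5 \log^{10}(s)\, s^{-4}$ and hence integrable in $s$ with the claimed $t^{-3}$ decay; the rough bound $|u_4(t,r)| \leq C\lambda(t)^2(\sup_{x\in[T_\lambda,t]}\sqrt{\lambda(x)})^5\log^{10}(t)\,t^{-3}$, valid for all $r>0$ from the same computation, shows $r\,u_4(t,r)^2 \to 0$ as $r \to 0$, which lets one pass from $\|\partial_r u_4 + u_4/r\|_{L^2(r^2 dr)}$ to $\|\partial_r u_4\|_{L^2(r^2 dr)}$ exactly as in the proof of \eqref{vexenest}. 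There is no substantial obstacle here; the only point needing care is the bookkeeping of how the support constraint $y \geq \tfrac s2$ interacts with the light-cone interval $[|r-(s-t)|,\,r+s-t]$ — in particular splitting the $s$-integration near $s \sim 2(t-r)$, below which the light cone misses the support, and near $s \sim t+r$ — after which every term reduces to an elementary one-dimensional integral.
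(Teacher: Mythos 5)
Your overall plan matches the paper's proof in structure: bound the source using \eqref{v4lgrest}, \eqref{eun0ev4defs}, and the support restriction $y\geq s/2$ forced by the cutoffs; estimate the resulting integral in \eqref{u4def}; treat the energy estimate by the same Minkowski-plus-$\partial_r u_4 + u_4/r$ device used for \eqref{uw2enest} and \eqref{u3enest}. Those parts are fine.

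The gap is in the estimate of $\partial_r u_4$ for $r \leq t/2$. You only explain the $\partial_r u_4$ computation for $r \geq t/2$, where differentiating \eqref{u4def} directly is adequate. For $r \leq t/2$ the claimed bound is $t^{-6}$, a full power of $t$ better than the $t^{-5}$ bound on $u_4$ itself, uniformly down to $r\to 0$. Direct differentiation of \eqref{u4def} produces the term $-u_4(t,r)/r$, and estimating this in absolute value together with the boundary terms at $y=r+s-t$ and $y=|s-t-r|$ gives only $O(1/(r\,t^5))$, which diverges for small $r$; the $t^{-6}$ bound relies on a cancellation between $-u_4/r$ and the sum of the two light-cone boundary integrals (the combination is, to leading order in $r$, proportional to $r\,\partial_y^2(y\,g(s,y))\bigl|_{y=s-t}$ integrated in $s$, rather than $O(1/r)$). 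The paper makes this cancellation explicit by using what it calls the ``$u_4$-analog of \eqref{un0estintstep1}'': one adds and subtracts $\int_t^\infty (s-t)\,g(s,s-t)\,ds$ and $\int_t^{t+r}(s-t)\,g(s,s-t)\,ds$ so that, after regrouping, each of the four resulting pieces is manifestly $r$-differentiable without a $1/r$ loss (in particular the last piece is an integral of the difference $y\,g(s,y)-y\,g(s,s-t)$ over a shell of width $2r$, which supplies the compensating factor of $r$). You gesture at Lemma \ref{un0estlemma} ``in spirit,'' but your written plan --- ``directly estimating the double integral, split according to whether $r\leq t/2$ or $r\geq t/2$'' --- does not commit to this rearrangement, and without it the $r\leq t/2$ bound on $\partial_r u_4$ does not close. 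You should state explicitly that you rewrite $u_4$ via this decomposition and differentiate that, exactly as in \eqref{un0estintstep1}, before the $r\leq t/2$ estimate goes through.
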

\begin{proof} We directly estimate \eqref{u4def}, using Lemma \ref{un0estlemma}. We then directly estimate the $r$ derivative of the $u_{4}$-analog of \eqref{un0estintstep1}. The energy estimate is obtained with the same procedure as in Lemma \ref{u3estlemma}.\end{proof}
We define the linear error term of $u_{4}$ by 
$$e_{u_{4}}(t,r):= \frac{-45 \lambda(t)^{2} u_{4}(t,r)}{(3 \lambda(t)^{2}+r^{2})^{2}}$$
A direct application of Lemma \ref{u4estlemma} gives
\begin{lemma} $$||e_{u_{4}}(t,|\cdot|)||_{H^{1}(\mathbb{R}^{3})} \leq \frac{C \sqrt{\lambda(t)}(1+\lambda(t)) \left(\sup_{x \in [T_{\lambda},t]}\sqrt{\lambda(x)}\right)^{5} \log^{10}(t)}{t^{5}}$$\end{lemma}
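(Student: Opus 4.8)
The plan is to reduce everything to the pointwise bounds on $u_{4}$ and $\partial_{r}u_{4}$ from Lemma \ref{u4estlemma}, exploiting the strong spatial decay of the potential weight to supply the $r$-integrability. Write $e_{u_{4}}(t,r)=V_{1}(t,r)u_{4}(t,r)$ with $V_{1}(t,r)=\frac{-45\lambda(t)^{2}}{(3\lambda(t)^{2}+r^{2})^{2}}$, so that $\partial_{r}e_{u_{4}}=V_{1}\,\partial_{r}u_{4}+(\partial_{r}V_{1})u_{4}$, and, for radial functions, $||f(|\cdot|)||_{H^{1}(\mathbb{R}^{3})}^{2}\sim\int_{0}^{\infty}r^{2}(|f|^{2}+|\partial_{r}f|^{2})\,dr$. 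First I would record the elementary bounds $|V_{1}(t,r)|\leq\frac{C\lambda(t)^{2}}{(3\lambda(t)^{2}+r^{2})^{2}}$, $|\partial_{r}V_{1}(t,r)|\leq\frac{C\lambda(t)^{2}r}{(3\lambda(t)^{2}+r^{2})^{3}}$, together with the scaling identities (substitute $r=\lambda(t)u$) $\int_{0}^{\infty}r^{2}V_{1}^{2}\,dr\leq C\lambda(t)^{-1}$ and $\int_{0}^{\infty}r^{2}(\partial_{r}V_{1})^{2}\,dr\leq C\lambda(t)^{-3}$.

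Next I would split the integral at $r=\tfrac{t}{2}$. On $r\leq\tfrac{t}{2}$, pull the constants $\sup_{r\leq t/2}|u_{4}|\leq\frac{C\lambda(t)^{2}(\sup_{x\in[T_{\lambda},t]}\sqrt{\lambda(x)})^{5}\log^{10}(t)}{t^{5}}$ and $\sup_{r\leq t/2}|\partial_{r}u_{4}|\leq\frac{C\lambda(t)^{2}(\sup_{x\in[T_{\lambda},t]}\sqrt{\lambda(x)})^{5}\log^{10}(t)}{t^{6}}$ out of the integrals and multiply by the weight integrals above. This gives $\int_{0}^{t/2}r^{2}|e_{u_{4}}|^{2}\,dr\leq\frac{C\lambda(t)^{3}(\sup\sqrt{\lambda})^{10}\log^{20}(t)}{t^{10}}$ and $\int_{0}^{t/2}r^{2}((\partial_{r}V_{1})u_{4})^{2}\,dr\leq\frac{C\lambda(t)(\sup\sqrt{\lambda})^{10}\log^{20}(t)}{t^{10}}$, with an even smaller $t^{-12}$-type bound for the $V_{1}\,\partial_{r}u_{4}$ term; taking square roots and using $\lambda^{3/2}+\lambda^{1/2}\leq C\sqrt{\lambda}(1+\lambda)$ already yields the claimed estimate from this region. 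On $r\geq\tfrac{t}{2}$ I would use $|V_{1}|\leq\frac{C\lambda(t)^{2}}{r^{4}}$, $|\partial_{r}V_{1}|\leq\frac{C\lambda(t)^{2}}{r^{5}}$ and the large-$r$ bounds of Lemma \ref{u4estlemma}, so that $|e_{u_{4}}|\lesssim\frac{\lambda(t)^{4}(\sup\sqrt{\lambda})^{5}\log^{10}(t)}{r^{5}t^{3}}$ and analogously for $\partial_{r}e_{u_{4}}$; the $r^{2}\,dr$ integrals then converge with room to spare, producing a contribution bounded by $\frac{C\lambda(t)^{4}(\sup\sqrt{\lambda})^{5}\log^{10}(t)}{t^{13/2}}$, which, since $x\mapsto\lambda(x)^{5/2}/x$ is decreasing by \eqref{lambdacomparg} (hence $\lambda(t)^{5/2}\leq Ct$), is dominated by $\frac{C\sqrt{\lambda(t)}(1+\lambda(t))(\sup\sqrt{\lambda})^{5}\log^{10}(t)}{t^{5}}$.

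The only mild bookkeeping point, and the closest thing to an obstacle, is that for $r\geq\tfrac{t}{2}$ the large-$r$ estimate for $\partial_{r}u_{4}$ carries $\sup_{x\in[T_{\lambda},r]}\sqrt{\lambda(x)}$ and $\log^{10}(r)$ rather than their values at $t$; here I would invoke \eqref{lambdacomparg} (specifically $\lambda(x)\leq\lambda(t)(x/t)^{C_{u}}$ for $x\geq t$, together with $\log(r)\lesssim_{\epsilon}r^{\epsilon}$) to bound these by $\sup_{x\in[T_{\lambda},t]}\sqrt{\lambda(x)}$ times a harmless power of $r/t$, which is absorbed by the $r^{-8}$-type decay of the integrand. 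Everything else is a direct computation, and summing the two regions gives the stated bound.
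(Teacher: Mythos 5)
Your proposal is correct and takes essentially the same route as the paper, which simply states that the lemma follows "by a direct application of Lemma \ref{u4estlemma}." Your write-up is just the expanded version of that direct computation: split at $r=t/2$, use the weighted integrals of $V_1$ and $\partial_r V_1$ (scaling $r=\lambda(t)u$ gives the $\lambda^{-1}$ and $\lambda^{-3}$ factors), use the pointwise bounds on $u_4,\partial_r u_4$ from Lemma \ref{u4estlemma} in each region, and handle the $\sup_{[T_\lambda,r]}\sqrt{\lambda}$ and $\log(r)$ factors for $r\geq t/2$ via \eqref{lambdacomparg} and the fast $r^{-8}$ decay, with $\lambda(t)^{5/2}\leq Ct$ absorbing the surplus powers.
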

Finally, we eliminate the remaining piece of the $v_{4}$ and $u_{N_{0}}$ linear error terms. In particular, we let $u_{N_{0},ell}$ be the following particular solution to 
\begin{equation}\label{un0elleqn}\partial_{r}^{2}u + \frac{2}{r}\partial_{r}u - V u = F(t,r):= (e_{u_{N_{0}}} + e_{v_{4}}) \chi_{\leq 1}(\frac{4r}{t})\end{equation}
$$u_{N_{0},ell}(t,r) = \phi_{0}(\frac{r}{\lambda(t)}) \int_{0}^{\frac{r}{\lambda(t)}} s^{2}\lambda(t)^{2} F(t,s\lambda(t)) e_{2}(s) ds - e_{2}(\frac{r}{\lambda(t)}) \int_{0}^{\frac{r}{\lambda(t)}} s^{2}\lambda(t)^{2} F(t,s\lambda(t)) \phi_{0}(s) ds$$
Another direct computation using Lemma \ref{un0estlemma} gives
\begin{lemma}\label{un0ellestlemma} For $j=0$ and $0 \leq k \leq 2$ or $j=1,2$ and $0 \leq k \leq 1$,
$$|\partial_{t}^{j}\partial_{r}^{k}u_{N_{0},ell}(t,r)| \leq \frac{C}{t^{j}r^{k}} \begin{cases} \frac{\text{min}\{1,r\} r^{2} \left(\sup_{x \in [T_{\lambda},t]}\sqrt{\lambda(x)}\right)^{25} \log^{10}(t)}{\lambda(t)^{2}t^{4}}\\
+\frac{C r^{4} \left(\sup_{x \in [T_{\lambda},t]}\sqrt{\lambda(x)}\right)^{3}}{\lambda(t)^{2}t^{4}} + \frac{C r^{3} \left(\sup_{x \in [T_{\lambda},t]}\sqrt{\lambda(x)}\right)^{5} (\log^{10}(t)+\log^{10}(r))}{\lambda(t)^{2}t^{4}}, \quad r \leq \lambda(t)\\
\frac{\lambda(t) \left(\sup_{x \in [T_{\lambda},t]}\sqrt{\lambda(x)}\right)^{5} \log^{11}(t)}{t^{4}} + \frac{C \text{min}\{1,r\} \left(\sup_{x \in [T_{\lambda},t]}\sqrt{\lambda(x)}\right)^{25} \log^{10}(t)}{t^{4}}\\
+\frac{C(1+\log(\frac{r}{\lambda(t)})) \lambda(t)^{2} \left(\sup_{x \in [T_{\lambda},t]}\sqrt{\lambda(x)}\right)^{3}}{t^{4}}, \quad \lambda(t) \leq r \leq \frac{t}{2}\\
\frac{C \left(\sup_{x \in [T_{\lambda},t]}\sqrt{\lambda(x)}\right)^{25} \log^{10}(t)}{t^{4}} + \frac{C \lambda(t)^{2} \left(\sup_{x \in [T_{\lambda},t]}\sqrt{\lambda(x)}\right)^{3} \log(t)}{t^{4}} \\
+\frac{C \lambda(t) \left(\sup_{x \in [T_{\lambda},t]}\sqrt{\lambda(x)}\right)^{5} \log^{11}(t)}{t^{4}}, \quad r \geq \frac{t}{2}\end{cases}$$\end{lemma}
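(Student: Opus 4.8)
The plan is to reduce the estimate to a direct estimation of the explicit variation-of-parameters representation of $u_{N_0,ell}$ against pointwise bounds on the source $F$, along the lines of the proofs of Lemmas \ref{uell2minusuell2mainestlemma} and \ref{uell3minusuell3mainestlemma}.

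First I would record pointwise bounds on $F$ and its low-order derivatives. By \eqref{eun0ev4defs} and \eqref{un0elleqn}, $F(t,r) = -45\lambda(t)^2 (u_{N_0}+v_4)(t,r)\,\chi_{\leq 1}(\tfrac{4r}{t})/(3\lambda(t)^2+r^2)^2$, and since $\chi_{\leq 1}(\tfrac{4r}{t})$ is supported in $r\le t/2$, all the estimates of Lemma \ref{un0estlemma} valid in that region apply on the support of $F$. Writing $u_{N_0}+v_4 = (u_{N_0}+t v_{4,0}(t)) + (v_4 - t v_{4,0}(t))$, so as to exploit the cancellation built into the definition \eqref{v40def} of $v_{4,0}$ (the summand $t v_{4,0}(t)$ being independent of $r$), I would invoke \eqref{un0smrest}, \eqref{drun0smrest}, \eqref{drkv4smrest} and \eqref{dtjv4smrest} to bound $|\partial_t^j\partial_r^k(u_{N_0}+v_4)(t,r)|$ for $0\le j\le 2$, $0\le k\le 1$ on $r\le t/2$. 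Multiplying by the weight $(3\lambda^2+r^2)^{-2}$, which contributes $\lambda(t)^{-4}$ for $r\lesssim\lambda(t)$ and $r^{-4}$ for $r\gtrsim\lambda(t)$, and accounting for the derivatives of $\chi_{\leq 1}(\tfrac{4r}{t})$ (which cost powers of $t^{-1}$ and are supported in $t/4\le r\le t/2$, where $\langle t-r\rangle\sim t$, so no delicate factors appear), yields the needed bounds on $\partial_t^j\partial_r^k F$.

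Second I would insert these into the explicit formula $u_{N_0,ell}(t,r)=\phi_0(R)\int_0^R s^2\lambda^2 F(t,s\lambda)e_2(s)\,ds - e_2(R)\int_0^R s^2\lambda^2 F(t,s\lambda)\phi_0(s)\,ds$ with $R=r/\lambda(t)$, using the asymptotics of $\phi_0,e_2$ from \eqref{phi0e2def}: $\phi_0(R)=O(1/R)$, $e_2(R)=O(1)$ as $R\to\infty$, while $s^2\phi_0(s)$ and $s^2 e_2(s)$ vanish linearly at $s=0$ and are integrable there. Splitting into $r\le\lambda(t)$ (so $R\le 1$), $\lambda(t)\le r\le t/2$, and $r\ge t/2$, the two integrals are estimated termwise; for $r\ge t/2$ one has $F\equiv 0$ on the range $s\lambda>t/2$, so $u_{N_0,ell}(t,r)$ reduces to $\phi_0(R)A(t)-e_2(R)B(t)$ with $A,B$ the full $r$-independent integrals, and the $O(1)$ behaviour of $e_2$ produces the constant-in-$r$ bound there. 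For $\partial_r u_{N_0,ell}$ I would differentiate directly; the boundary contributions from the upper limit $R$ cancel by the Wronskian identity for $\phi_0,e_2$, leaving $\partial_r u_{N_0,ell} = \lambda^{-1}\big(\phi_0'(R)\int_0^R\cdots - e_2'(R)\int_0^R\cdots\big)$, handled as above; for $\partial_r^2 u_{N_0,ell}$ I would instead use the equation \eqref{un0elleqn} itself, $\partial_r^2 u_{N_0,ell} = F - \tfrac2r\partial_r u_{N_0,ell} + V u_{N_0,ell}$ (recall \eqref{Vdef}). For the $t$-derivatives, the symbol estimates \eqref{lambdasymb} on $\lambda$ together with the regularity of $F$ inherited from $u_{N_0}$ and $v_4$ justify differentiation under the integral sign; each $\partial_t$ either lands on a factor $\lambda(t)$, costing $\lambda'/\lambda\lesssim t^{-1}$, or on $F(t,s\lambda(t))$, producing $\partial_1 F(t,s\lambda)+s\lambda'(t)\,\partial_2 F(t,s\lambda)$, controlled by the $j\ge 1$ and $k\ge 1$ bounds from the first step.

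The computation is long but elementary and introduces no new ideas beyond those already used for the analogous Lemmas \ref{uell2minusuell2mainestlemma}, \ref{uell3minusuell3mainestlemma} and \ref{vexsublemma}. The only points that genuinely require care are the bookkeeping across the three spatial regions — in particular the fact that $u_{N_0,ell}$ does not vanish for $r\ge t/2$ although $F$ does there — and the tracking of the singular weight $\lambda(t)^{-2}$ for $r\le\lambda(t)$, which arises because the factor $\lambda(t)^2$ in the numerator of $e_{u_{N_0}},e_{v_4}$ cancels only two of the four powers of $\lambda$ in $(3\lambda^2+r^2)^{-2}$ and must be carried through the $s^2\lambda^2$ weight in the Green's function.
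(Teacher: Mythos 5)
Your overall plan — insert the pointwise bounds from Lemma \ref{un0estlemma} into the explicit variation-of-parameters formula for $u_{N_0,ell}$, split according to $r\lesssim\lambda(t)$, $\lambda(t)\lesssim r\le t/2$, $r\ge t/2$, use $\phi_0(R)=O(1/R)$ and $e_2(R)=O(1)$ at infinity and the saturation of the integrals when $r\ge t/2$, handle $\partial_r^2$ via the ODE, and track the $\lambda^{-2}$ weight — is exactly what the paper means by ``another direct computation using Lemma \ref{un0estlemma}''. The bookkeeping you sketch for the three regions and for the log factors reproduces the stated bounds.

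There is, however, a gap in your handling of the $j=2$ time-derivatives. You work in the rescaled variable $s$, with $F(t,s\lambda(t))$ appearing in the integrand, and differentiate under the integral sign in $t$ at fixed $s$. Applying $\partial_t^2$ to $F(t,s\lambda(t))$ produces, via the chain rule, terms containing $\partial_t\partial_r F(t,s\lambda(t))$ and $(\lambda'(t))^2 s^2\,\partial_r^2 F(t,s\lambda(t))$. Bounding these requires $\partial_t\partial_r u_{N_0}$ and $\partial_r^2 u_{N_0}$, neither of which appears in Lemma \ref{un0estlemma}; you claim in your first step to control ``$\partial_t^j\partial_r^k(u_{N_0}+v_4)$ for $0\le j\le 2$, $0\le k\le 1$'' from \eqref{un0smrest}, \eqref{drun0smrest}, \eqref{drkv4smrest}, \eqref{dtjv4smrest}, but those estimates are for pure $t$-derivatives or pure $r$-derivatives, not mixed ones, and no $\partial_r^2 u_{N_0}$ bound is provided. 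The fix is to change variables $w=s\lambda(t)$ first, rewriting $u_{N_0,ell}(t,r)=\lambda(t)^{-1}\left(\phi_0(r/\lambda)\int_0^r w^2 F(t,w)e_2(w/\lambda)\,dw - e_2(r/\lambda)\int_0^r w^2 F(t,w)\phi_0(w/\lambda)\,dw\right)$; then $\partial_t$ at fixed $w$ hits only $F(t,w)$ (giving $\partial_t F, \partial_t^2 F$, which are controlled by \eqref{un0smrest} and \eqref{dtjv4smrest}) and the explicit $\lambda(t)$-dependence of the prefactors and of $e_2(w/\lambda), \phi_0(w/\lambda)$. Equivalently, one can integrate by parts in $s$ to trade each $s\lambda'\,\partial_2 F(t,s\lambda)$ for a boundary term plus derivatives of $s^2 e_2(s)$, $s^2\phi_0(s)$. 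Either device restores your argument without requiring estimates absent from Lemma \ref{un0estlemma}. (Minor side notes: the boundary terms under a single $\partial_r$ cancel for trivial algebraic reasons, not via the Wronskian; and $s^2\phi_0(s)=O(s^2)$, not $O(s)$, at the origin — harmless, but worth stating correctly.)
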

We will insert $\psi_{1}(\frac{r}{t}) u_{N_{0},ell}(t,r)$ into our ansatz, and define its error term as
\begin{equation}\begin{split} e_{u_{N_{0},ell}}(t,r)&:= -\left(\left(-\partial_{t}^{2}+\partial_{r}^{2}+\frac{2}{r}\partial_{r} - V(t,r)\right)\left(\psi_{1}(\frac{r}{t}) u_{N_{0},ell}(t,r)\right) - F(t,r)\right)\end{split}\end{equation}
By the support properties of $\chi_{\leq 1}$, $F(t,r) =0$ for $r \geq \frac{t}{2}$, which means that $\psi_{1}(\frac{r}{t}) F(t,r) = F(t,r)$. Using this observation, we get
\begin{lemma}$$||e_{u_{N_{0},ell}}(t,|\cdot|)||_{H^{1}(\mathbb{R}^{3})} \leq \frac{C \left(\sup_{x \in [T_{\lambda},t]}\sqrt{\lambda(x)}\right)^{25} \log^{11}(t)}{t^{9/2}} \left(1+\frac{1}{\lambda(t)}\right)$$\end{lemma}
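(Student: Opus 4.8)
\emph{Proof proposal.} The plan is to write $e_{u_{N_{0},ell}}$ explicitly, by exploiting the equation \eqref{un0elleqn} satisfied by $u_{N_{0},ell}$ together with the support of $F$, and then to bound every resulting term in $H^{1}(\mathbb{R}^{3})$ region by region using Lemma \ref{un0ellestlemma}. Writing $\chi:=\psi_{1}(r/t)$ and inserting $\partial_{r}^{2}u_{N_{0},ell}+\frac{2}{r}\partial_{r}u_{N_{0},ell}-Vu_{N_{0},ell}=F$ into $\left(-\partial_{t}^{2}+\partial_{r}^{2}+\frac{2}{r}\partial_{r}-V\right)(\chi u_{N_{0},ell})$, the bulk terms collapse (the $Vu_{N_{0},ell}$ contributions cancel) to $\chi(F-\partial_{t}^{2}u_{N_{0},ell})$, and since $F$ is supported in $r\leq\frac{t}{2}$ where $\chi\equiv 1$ (so $\chi F=F$), one gets
\begin{align*}
e_{u_{N_{0},ell}}(t,r) &= \psi_{1}(\tfrac{r}{t})\,\partial_{t}^{2}u_{N_{0},ell}(t,r) + u_{N_{0},ell}\left(\partial_{t}^{2}\chi-\partial_{r}^{2}\chi-\tfrac{2}{r}\partial_{r}\chi\right)\\
&\quad + 2\,\partial_{t}\chi\,\partial_{t}u_{N_{0},ell} - 2\,\partial_{r}\chi\,\partial_{r}u_{N_{0},ell}.
\end{align*}
The first term is the ``main'' contribution; the other three are commutator terms supported in the annulus $\frac{t}{2}\leq r\leq\frac{3t}{4}$, on which $\partial_{r}\chi,\partial_{t}\chi=O(t^{-1})$ and the second derivatives of $\chi$ are $O(t^{-2})$.

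For the main term, I would use that for a radial function the $H^{1}(\mathbb{R}^{3})$ norm is comparable to $\|\cdot\|_{L^{2}(r^{2}dr)}+\|\partial_{r}(\cdot)\|_{L^{2}(r^{2}dr)}$, and that the support forces $r\leq\frac{3t}{4}$, so I would split into the regions $r\leq\lambda(t)$, $\lambda(t)\leq r\leq\frac{t}{2}$, $\frac{t}{2}\leq r\leq\frac{3t}{4}$ and apply Lemma \ref{un0ellestlemma} with $j=2$, $k=0,1$ (for $\psi_{1}\partial_{t}^{2}u_{N_{0},ell}$ and its $r$-derivative $\psi_{1}\partial_{r}\partial_{t}^{2}u_{N_{0},ell}$; the cross term $\partial_{r}\psi_{1}\,\partial_{t}^{2}u_{N_{0},ell}$ is strictly lower order on the annulus). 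The dominant piece is $\|\psi_{1}\partial_{t}^{2}u_{N_{0},ell}\|_{L^{2}(r^{2}dr)}$ in the regime $r\gtrsim\lambda(t)$: the pointwise bound of Lemma \ref{un0ellestlemma} is there of the form $t^{-2}\cdot t^{-4}(\cdots)$, and integrating its square against $r^{2}dr$ out to $r\sim t$ produces a factor $\sim t^{3}$, giving the advertised $t^{-9/2}$ together with the relevant powers of $\sup_{x\in[T_{\lambda},t]}\sqrt{\lambda(x)}$ and $\log t$. The near-origin regime $r\leq\lambda(t)$, where Lemma \ref{un0ellestlemma} carries a $\lambda(t)^{-2}$, is easily absorbed after integration into $t^{-9/2}\bigl(1+\lambda(t)^{-1}\bigr)$ (the vanishing powers of $r$ in that regime win).

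For the commutator terms I would use the $r\geq\frac{t}{2}$ part of Lemma \ref{un0ellestlemma} with $j=0,1,2$ and $k=0,1$ (and one further $r$-derivative for the $H^{1}$ bound, requiring at most $\partial_{r}^{2}u_{N_{0},ell}$, which is the $j=0$, $k=2$ case): each of the three terms is $O(t^{-2})$ times a bracket of size $\sim t^{-4}(\cdots)$ supported on a set of measure $\sim t^{3}$, so again contributes at the level $t^{-9/2}$ times the appropriate powers of $\log t$ and of $\sup_{x\in[T_{\lambda},t]}\sqrt{\lambda(x)}$. Collecting the three regions and the commutators, and weakening the various $\lambda(t)$-dependent prefactors coming out of Lemma \ref{un0ellestlemma} to the uniform factor $1+\lambda(t)^{-1}$, yields the stated estimate.

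The computation is routine once the displayed expansion of $e_{u_{N_{0},ell}}$ is in hand; the one place demanding care — and the most likely place for an error — is carrying the bookkeeping of the three $\lambda(t)$-dependent groups appearing in Lemma \ref{un0ellestlemma} (the $\bigl(\sup\sqrt{\lambda}\bigr)^{25}$, the $\bigl(\sup\sqrt{\lambda}\bigr)^{3}$, and the $\bigl(\sup\sqrt{\lambda}\bigr)^{5}$ pieces, the last with an extra power of $\log t$) faithfully through each $L^{2}(r^{2}dr)$ integration and checking that none of them overtakes the right-hand side. This is precisely why the final bound is stated with the extra $\log t$ and the $\bigl(1+\lambda(t)^{-1}\bigr)$ slack rather than with a sharp constant, which is all that is needed for the subsequent fixed-point argument.
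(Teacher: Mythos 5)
Your proposal is correct and follows the same strategy as the paper: exploit $\psi_{1}(r/t)F = F$ (since $F$ is supported in $r \le t/2$ where $\psi_{1}\equiv 1$) to reduce $e_{u_{N_0,ell}}$ to $\psi_{1}\partial_t^2 u_{N_0,ell}$ plus commutator terms supported on the annulus $t/2 \le r \le 3t/4$, then estimate directly via Lemma \ref{un0ellestlemma}. The paper states exactly this observation and leaves the routine region-by-region integration unwritten; your accounting of which $(j,k)$ cases of the lemma are needed (including $j=0,k=2$ and $j=2,k=1$) is accurate.
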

\subsection{Nonlinear error terms, part 2}
The new function added to our ansatz last section is
$$u_{n}(t,r):= u_{N_{0}}(t,r) + v_{4}(t,r) + u_{4}(t,r) + u_{N_{0},ell}(t,r) \psi_{1}(\frac{r}{t}) := u_{new}(t,r) + v_{4}(t,r)(1-\psi_{1}(\frac{r}{t}))$$
The corresponding new nonlinear interactions which need to be studied are contained in $N_{2}$, defined by
$$N_{2}(t,r):= -\left(5 \left(\left(Q_{\lambda}+u_{a}\right)^{4}-Q_{\lambda}^{4}\right) u_{n} + 10(Q_{\lambda} + u_{a})^{3} u_{n}^{2} + 10(Q_{\lambda}+u_{a})^{2} u_{n}^{3} + 5(Q_{\lambda}+u_{a}) u_{n}^{4} + u_{n}^{5}\right)$$
This is split into a perturbative piece, $N_{2,1}$, and the remainder $N_{2,0}$.
\begin{equation}\label{n20def}N_{2,0} = N_{2} \Bigr|_{u_{n} \rightarrow v_{4}(1-\psi_{1}(\frac{r}{t}))}, \quad N_{2,1} = N_{2}-N_{2,0}\end{equation}
For ease of notation, we let $\widetilde{v_{4}}(t,r):= v_{4}(t,r) (1-\psi_{1}(\frac{r}{t}))$. Then, we have
\begin{lemma} $$||N_{2,1}(t,|\cdot|)||_{H^{1}(\mathbb{R}^{3})} \leq \frac{C \log^{21}(t)}{t^{4+\delta_{2}}}, \quad \delta_{2}= \frac{1}{2}\text{min}\{\frac{1}{2}-\frac{7}{2}C_{u}-C_{l},\frac{3}{2}-\frac{29}{2} C_{u}-C_{l}\}>0$$
\end{lemma}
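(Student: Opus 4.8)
The plan is to reduce $N_{2,1}$ to pointwise products of functions whose size and first derivative are already controlled by the lemmas above, and then integrate region by region in $r$. Since $u_{n}=u_{new}+\widetilde{v_{4}}$ and $u_{n}^{k}-\widetilde{v_{4}}^{\,k}=u_{new}\sum_{j=0}^{k-1}u_{n}^{j}\widetilde{v_{4}}^{\,k-1-j}$, every monomial appearing in $N_{2,1}=N_{2}-N_{2,0}$ carries at least one factor of $u_{new}$; combining this with $|(Q_{\lambda}+u_{a})^{4}-Q_{\lambda}^{4}|\le C|u_{a}|(Q_{\lambda}^{3}+|u_{a}|^{3})$ gives the pointwise bound
\[ |N_{2,1}|\le C|u_{new}|\Big(|u_{a}|(Q_{\lambda}^{3}+|u_{a}|^{3})+\sum_{m=1}^{4}(Q_{\lambda}+|u_{a}|)^{4-m}(|u_{n}|+|\widetilde{v_{4}}|)^{m}\Big). \]
For $\partial_{r}N_{2,1}$ I distribute the derivative; each ingredient $g\in\{u_{new},u_{a},u_{n},\widetilde{v_{4}},Q_{\lambda}\}$ satisfies $|\partial_{r}g|\le C|g|(\tfrac1r+\tfrac1{\langle t-r\rangle}+\tfrac1t)$ away from $r=0$ and $r=t$, with the refined behaviour near the origin and near the light cone already recorded in Lemmas \ref{un0estlemma}, \ref{u4estlemma}, \ref{un0ellestlemma} and in the $\partial_{r}u_{a,0}$, $\partial_{r}u_{a,1}$ displays of the previous subsection, so $|\partial_{r}N_{2,1}|$ obeys the same structural bound with one extra factor of that type.

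\textbf{Ingredient estimates.} Next I assemble pointwise bounds for $u_{new}=u_{N_{0}}+v_{4}\psi_{1}(\tfrac rt)+u_{4}+u_{N_{0},ell}\psi_{1}(\tfrac rt)$. For $r\le\tfrac t2$ one has $\psi_{1}(\tfrac rt)=1$, so $u_{N_{0}}+v_{4}\psi_{1}=(u_{N_{0}}+t v_{4,0}(t))+(v_{4}-t v_{4,0}(t))$, both small by \eqref{un0smrest} and \eqref{drkv4smrest} — this is the cancellation $v_{4}$ was designed to produce — while for $r\ge\tfrac t2$ one uses \eqref{un0lgrest}, \eqref{v4lgrest} and the support of $\psi_{1}$; to this I add $u_{4}$ (Lemma \ref{u4estlemma}) and $u_{N_{0},ell}\psi_{1}$ (Lemma \ref{un0ellestlemma}). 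I also record $\widetilde{v_{4}}=v_{4}(1-\psi_{1}(\tfrac rt))$, supported in $r\ge\tfrac t2$ and estimated by \eqref{v4lgrest}; the bound $Q_{\lambda}(r)=\sqrt{3\lambda(t)}\,(3\lambda(t)^{2}+r^{2})^{-1/2}\le C\sqrt{\lambda(t)}/(r+\lambda(t))$; and the $u_{a},u_{a,0},u_{a,1}$ bounds from the previous subsection. Throughout, $\sup_{x\in[T_{\lambda},t+r]}\sqrt{\lambda(x)}\le C\sqrt{\lambda(t)}\,(1+\tfrac rt)^{C_{u}/2}$ by \eqref{lambdacomparg}.

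\textbf{Region decomposition and exponent bookkeeping.} I then split $(0,\infty)$ into $r\le\lambda(t)$, $\lambda(t)\le r\le h(t)$, $h(t)\le r\le\tfrac t2$, $\tfrac t2\le r\le 2t$, and $r\ge 2t$. In each region the pointwise bound above is dominated by an explicit monomial in $t$, $\lambda(t)$, $h(t)$, powers of $r$ and $\langle t-r\rangle$, and logarithms; integrating $r^{2}\,dr$ over the region — using $h(t)\ge C\sqrt t$ in the two inner regions, $\int\langle t-r\rangle^{-p}\,dr<\infty$ for $p>1$ near $r\sim t$, and the fast decay of the interactions in $r$ for $r\ge 2t$ — and doing the same for $\partial_{r}N_{2,1}$, gives a bound of the shape $C\log^{21}(t)\,t^{-4-\delta_{2}}$ once every occurrence of $h(t)$ is rewritten as $\lambda(t)(t/\lambda(t))^{a}$ and every $\lambda(t)^{\pm}$ is absorbed via \eqref{lambdacomparg}. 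Collecting the net powers of $t$, the binding regions produce exactly the two quantities in the definition of $\delta_{2}$, and the constraint \eqref{hdef} on $a$ together with \eqref{cofconstr0} makes their minimum positive.

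\textbf{Main obstacle.} The delicate part is the regime $r\gtrsim t$: there $v_{4}$, $\widetilde{v_{4}}$ and $u_{N_{0}}$ decay only polynomially in $\langle t-r\rangle$ with high powers of $\log(t+r)$, and the factors $\sup_{[T_{\lambda},t+r]}\sqrt{\lambda}$ can grow like a small power of $t$. The full $v_{4}$ self-interaction $N_{2,0}$ was removed precisely because it is not perturbative there; what rescues $N_{2,1}$ is the guaranteed single factor of $u_{new}$ in every monomial, so the crux is to verify that each cross term $u_{new}\cdot\widetilde{v_{4}}^{\,k}\cdot(\cdots)$ still decays in $t$ and is $\langle t-r\rangle$-integrable after the $\lambda$-growth is accounted for — and it is exactly this verification that forces the smallness of $C_{u},C_{l}$ and pins down $\delta_{2}$.
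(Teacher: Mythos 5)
Your proposal is correct and follows essentially the same route as the paper's proof. You isolate the same structural fact — that $u_{n}^{k}-\widetilde{v_{4}}^{\,k}$ factors through $u_{new}$, so every monomial of $N_{2,1}$ carries a factor of $u_{new}$ — and you assemble the same ingredient bounds: the paper's \eqref{unewest}, derived exactly as you describe by splitting $u_{N_{0}}+v_{4}\psi_{1}(\tfrac{r}{t})$ into $(u_{N_{0}}+t v_{4,0}(t))+(v_{4}-t v_{4,0}(t))$ on $r\le\tfrac{t}{2}$ via \eqref{un0smrest} and \eqref{drkv4smrest}, and then adding $u_{4}$ and $u_{N_{0},ell}\psi_{1}$ via Lemmas \ref{u4estlemma} and \ref{un0ellestlemma}. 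Your pointwise bound on $|N_{2,1}|$ is an equivalent (slightly less compressed) form of the paper's, and the region decomposition with the exponent bookkeeping in $t$, $\lambda(t)$, $h(t)$, $\langle t-r\rangle$ is what the paper compresses into ``a straightforward, but slightly long calculation.'' The only point worth flagging: the paper's $\|N_{2,1}\|_{L^{2}}$ bound yields $\epsilon_{2}=\tfrac12\min\{\tfrac12-\tfrac72 C_{u},\tfrac32-\tfrac{29}{2}C_{u}\}$, and the additional $-C_{l}$ appearing in $\delta_{2}$ arises only from $\|\partial_{r}N_{2,1}\|_{L^{2}}$; your sketch does treat the derivative with the same regional argument, but you might make explicit where that extra loss enters.
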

\begin{proof}
We note that
$$u_{new}(t,r) = u_{N_{0}}(t,r) + \psi_{1}(\frac{r}{t}) v_{4}(t,r) + u_{4}(t,r) + u_{N_{0},ell}(t,r) \psi_{1}(\frac{r}{t})$$
and recall that $\psi_{1}(\frac{r}{t}) =1$ if $\frac{r}{t} \leq \frac{1}{2}$. Then, using Lemmas \ref{un0estlemma}, \ref{u4estlemma}, and \ref{un0ellestlemma}, we get
\begin{equation}\label{unewest}|u_{new}(t,r)| \leq C \begin{cases} \frac{\text{min}\{1,r\} \left(\sup_{x \in [T_{\lambda},t]}\sqrt{\lambda(x)}\right)^{25} \log^{10}(t)}{t^{4}} + \frac{r \left(\sup_{x \in [T_{\lambda},t]}\sqrt{\lambda(x)}\right)^{5} (\log^{10}(t) + \log^{10}(r))}{t^{4}}\\
+\frac{r^{2} \left(\sup_{x \in [T_{\lambda},t]}\sqrt{\lambda(x)}\right)^{3} \log(t)}{t^{4}} + \frac{\lambda(t) \left(\sup_{x \in [T_{\lambda},t]}\sqrt{\lambda(x)}\right)^{5} \log^{11}(t)}{t^{4}}, \quad r \leq \frac{t}{2}\\
\frac{\left(\sup_{x \in [T_{\lambda},t]}\sqrt{\lambda(x)}\right)^{25} \log^{10}(t)}{r t^{3}} + \frac{\left(\sup_{x \in [T_{\lambda},t]}\sqrt{\lambda(x)}\right)^{5} \log^{11}(t)}{r t^{2}}, \quad r \geq \frac{t}{2}\end{cases}\end{equation}
Using the above estimate on $u_{new}$ and Lemma \ref{u4estlemma}, along with 
$$|N_{2,1}| \leq C |u_{new}|\left(|Q_{\lambda}|^{3} (|u_{a}|+|u_{new}| + |\widetilde{v_{4}}|) + u_{new}^{4}+\widetilde{v_{4}}^{4} + u_{a}^{4}\right)$$
a straightforward, but slightly long calculation gives
$$||N_{2,1}||_{L^{2}(r^{2} dr)} \leq \frac{C \log^{21}(t)}{t^{4+\epsilon_{2}}}, \quad \epsilon_{2} = \frac{1}{2} \text{min}\{\frac{1}{2}-\frac{7}{2}C_{u}, \frac{3}{2}-\frac{29}{2} C_{u}\} >0 $$
$\partial_{r}N_{2,1}$ is estimated with the same procedure (which is a direct, but slightly long computation).
\end{proof}
Next, we need to eliminate $N_{2,0}$. We let $u_{N_{2}}$ be the solution to the following equation, with 0 Cauchy data at infinity.
\begin{equation}\label{un2eqn}-\partial_{t}^{2}u+\partial_{r}^{2}u + \frac{2}{r}\partial_{r}u = N_{2,0}\end{equation}
Then, we have
\begin{lemma}\label{un2estlemma}
$$|u_{N_{2}}(t,r)| \leq \begin{cases} \frac{C \left(\sup_{x \in [T_{\lambda},t]}\sqrt{\lambda(x)}\right)^{9} \log^{18}(t)}{t^{5}}, \quad r \leq \frac{t}{2}\\
\frac{C}{r} \frac{\left(\sup_{x \in [T_{\lambda},t]}\sqrt{\lambda(x)}\right)^{25} \log^{50}(t)}{t^{3}}, \quad r \geq \frac{t}{2}\end{cases}$$
$$|\partial_{r}u_{N_{2}}(t,r)| \leq C \begin{cases} \frac{\left(\sup_{x \in [T_{\lambda},t]}\sqrt{\lambda(x)}\right)^{9} \log^{50}(t)}{t^{6}}, \quad r \leq \frac{t}{2}\\
\frac{\left(\sup_{x \in [T_{\lambda},r]}\sqrt{\lambda(x)}\right)^{9} \log^{50}(r)}{r} \left(\frac{\left(\sup_{x \in [T_{\lambda},r]}\sqrt{\lambda(x)}\right)^{4}}{t^{3} \langle t-r \rangle^{4}} + \frac{1}{\langle t-r \rangle^{2} t^{3}}\right.\\
\left. + \frac{\left(\sup_{x \in [T_{\lambda},r]}\sqrt{\lambda(x)}\right)^{16}}{t^{3} \langle t-r \rangle^{10}} + \frac{\left(\sup_{x \in [T_{\lambda},r]}\sqrt{\lambda(x)}\right)^{16}}{t^{4}}\right), \quad r \geq \frac{t}{2}\end{cases}$$
$$||\partial_{t}u_{N_{2}}||_{L^{2}(r^{2}dr)} + ||\partial_{r}u_{N_{2}}||_{L^{2}(r^{2} dr)} \leq \frac{C \left(\sup_{x \in [T_{\lambda},t]}\sqrt{\lambda(x)}\right)^{25} \log^{50}(t)}{t^{3}}$$
\end{lemma}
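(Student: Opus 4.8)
The plan is to construct $u_{N_2}$ exactly as $u_{N_0}$, $u_3$, $u_4$ were constructed earlier: record pointwise (and, where needed, first–derivative) bounds on the source $N_{2,0}$, insert them into the explicit Duhamel representation of the solution with zero Cauchy data at infinity, and estimate the resulting integrals region by region; the two $L^2$ bounds are then obtained by the same argument used to prove \eqref{u3enest} and \eqref{un0enest}. The key preliminary observation is that, since $1-\psi_{1}(\tfrac{r}{t})$ vanishes for $r\le \tfrac{t}{2}$, the function $\widetilde{v_{4}}(t,r)=v_{4}(t,r)(1-\psi_{1}(\tfrac{r}{t}))$, and hence (recalling \eqref{n20def}) $N_{2,0}$, is supported in $\{r\ge \tfrac{t}{2}\}$.

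On that region I would bound the relevant ingredients: $\widetilde{v_{4}}$ and its derivatives by \eqref{v4lgrest}; $Q_{\lambda(t)}(r)\le C\sqrt{\lambda(t)}/r$ (valid since $r\ge \tfrac t2\gg\lambda(t)$); and $u_{a}=u_{a,0}+u_{a,1}$ by the bounds established in the proofs of the two preceding lemmas (the majorant $u_{a,0,est}$ together with the right–hand side of \eqref{drkua1est}, and Lemmas~\ref{uw2estlemma}, \ref{vexestlemma}, \ref{u3estlemma}). Substituting these into
$$N_{2,0}=-\Big(5\big((Q_{\lambda}+u_{a})^{4}-Q_{\lambda}^{4}\big)\widetilde{v_{4}}+10(Q_{\lambda}+u_{a})^{3}\widetilde{v_{4}}^{2}+10(Q_{\lambda}+u_{a})^{2}\widetilde{v_{4}}^{3}+5(Q_{\lambda}+u_{a})\widetilde{v_{4}}^{4}+\widetilde{v_{4}}^{5}\Big),$$
and using $(Q_{\lambda}+u_{a})^{4}-Q_{\lambda}^{4}=4Q_{\lambda}^{3}u_{a}+\cdots$ for the first term, produces a pointwise majorant $N_{2,0,est}(t,r)$ supported in $r\ge\tfrac t2$ that is a finite sum of terms of the form $C_{k}\,r^{-5}\,\langle t-r\rangle^{-c_{k}}(\sup_{x\in[T_{\lambda},t+r]}\sqrt{\lambda(x)})^{a_{k}}\log^{b_{k}}(t+r)$; the largest powers $(\sup\sqrt{\lambda})^{25}$, $\log^{50}$ and the strongest weight $\langle t-r\rangle^{-10}$ come precisely from the $\widetilde{v_{4}}^{5}$ and $(Q_{\lambda}+u_{a})^{2}\widetilde{v_{4}}^{3}$ contributions, which is why these exponents appear in the statement.

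Next I would use
$$u_{N_{2}}(t,r)=\int_{t}^{\infty}ds\,\Big(\frac{-1}{2r}\int_{|r-(s-t)|}^{r+s-t}y\,N_{2,0}(s,y)\,dy\Big),$$
as in \eqref{un0def}. For $r\le \tfrac t2$ the support condition $y\ge \tfrac s2$ together with $y\le r+s-t$ keeps the $y$–integration well away from the endpoint $|r-(s-t)|$, so one may write $s=t+w$ and differentiate under the integral sign directly (exactly as in \eqref{un0estintstep1}), yielding the $r\le\tfrac t2$ bounds for both $u_{N_{2}}$ and $\partial_{r}u_{N_{2}}$. For $r\ge\tfrac t2$ one estimates the integral directly, and for the derivative uses
$$\partial_{r}u_{N_{2}}(t,r)=\frac{-u_{N_{2}}(t,r)}{r}+\int_{t}^{\infty}ds\,\Big(\frac{-1}{2r}\Big)\big((r+s-t)N_{2,0}(s,r+s-t)+(s-t-r)N_{2,0}(s,|s-t-r|)\big);$$
on the first slice $\langle s-y\rangle=\langle t-r\rangle$ is constant in $s$, so integrating $N_{2,0,est}$ in $s$ transfers the $\langle t-r\rangle^{-c_{k}}$ weights of $N_{2,0,est}$ intact to $\partial_{r}u_{N_{2}}$, which (after bounding the slowly varying $\sup\sqrt{\lambda}$ and $\log$ factors against the $(r+u)^{-4}$ decay) gives the claimed $\langle t-r\rangle^{-2},\langle t-r\rangle^{-4},\langle t-r\rangle^{-10}$ terms; on the second slice the support condition again keeps one away from singularities. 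Finally, a direct computation from $N_{2,0,est}$ gives $\|N_{2,0}(t,\cdot)\|_{L^{2}(r^{2}\,dr)}$, and the Minkowski–inequality argument used for \eqref{u3enest}/\eqref{un0enest} — together with the crude uniform bound $|u_{N_{2}}(t,r)|\le C(\sup_{x\in[T_{\lambda},t]}\sqrt{\lambda(x)})^{25}\log^{50}(t)/t^{3}$ obtained by plugging $N_{2,0,est}$ into the representation, which forces $r\,u_{N_{2}}(t,r)^{2}\to 0$ as $r\to 0$ — yields the energy estimate.

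The only real obstacle is bookkeeping: $u_{a}$ is a sum of many pieces with different region–dependent bounds, $N_{2,0}$ is a degree–five polynomial expression in $Q_{\lambda}$, $u_{a}$, $\widetilde{v_{4}}$, and tracking the optimal powers of $\sup\sqrt{\lambda}$, of $\log$, and especially the optimal $\langle t-r\rangle$ decay through the light–cone integral so as to land exactly on the exponents in the statement requires some care. There is, however, no new analytic ingredient beyond what was already used to construct and estimate $u_{N_{0}}$, $u_{3}$, and $u_{4}$.
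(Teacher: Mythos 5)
Your proposal follows the paper's proof in all essentials: you observe the support of $N_{2,0}$ in $\{r\ge t/2\}$, build a pointwise majorant $N_{2,0,est}$ by feeding \eqref{v4lgrest}, the bound on $Q_{\lambda}$, and the $u_{a}$ majorants into the degree-five expression \eqref{n20def}, then insert this into the Duhamel representation \eqref{un2def}, split $r\le t/2$ versus $r\ge t/2$ (using the \eqref{un0estintstep1}-type decomposition on the first region and the direct differentiation formula $\partial_{r}u_{N_{2}}=-u_{N_{2}}/r+\dots$ on the second), and close the energy estimate with the Minkowski/rough-pointwise-bound argument from Lemma \ref{u3estlemma}. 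The only quibble is a minor heuristic slip — the dominant $(\sup\sqrt{\lambda})^{25}\log^{50}\langle t-r\rangle^{-10}$ contribution comes from $\widetilde{v_{4}}^{5}$ alone (the $(Q_{\lambda}+u_{a})^{2}\widetilde{v_{4}}^{3}$ term is smaller in $\sup\sqrt{\lambda}$), but this does not affect the validity of the strategy, and you correctly flag the remaining work as bookkeeping.
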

\begin{proof}
We recall \eqref{n20def} and the fact that $\psi_{1}(\frac{r}{t}) =1$ if $\frac{r}{t} \leq \frac{1}{2}$, and use Lemma \ref{un0estlemma} to get
$$|N_{2,0}(t,r)| \leq C N_{2,0,est}(t,r)$$
for
\begin{equation}\begin{split}&N_{2,0,est}(t,r)\\
&=\frac{ \mathbbm{1}_{\{r \geq \frac{t}{2}\}}\log^{10}(r)}{r \langle t-r \rangle^{2}} \left(\sup_{x \in [T_{\lambda},r]} \sqrt{\lambda(x)}\right)^{5} \\
&\cdot \begin{aligned}[t]&\left(\frac{\left(\sup_{x \in [T_{\lambda},r]} \sqrt{\lambda(x)}\right)^{8} \log^{2}(r)}{r^{4} t^{2}} + \frac{\left(\sup_{x \in [T_{\lambda},r]} \sqrt{\lambda(x)}\right)^{8} \log^{10}(r)}{r^{4} \langle t-r \rangle^{2}}+\frac{\left(\sup_{x \in [T_{\lambda},r]} \sqrt{\lambda(x)}\right)^{4} \log^{8}(r)}{r^{4}}\right.\\
&\left.+\frac{\left(\sup_{x \in [T_{\lambda},r]} \sqrt{\lambda(x)}\right)^{20} \log^{8}(r)}{r^{4}t^{8}}\right.\left. + \frac{\left(\sup_{x \in [T_{\lambda},r]} \sqrt{\lambda(x)}\right)^{20} \log^{40}(r)}{r^{4} \langle t-r \rangle^{8}}\right)\end{aligned}\end{split}\end{equation}
We also get
\begin{equation}\begin{split}|\partial_{r}N_{2,0}(t,r)| &\leq C N_{2,0,est}(t,r) \left(\frac{1}{r}+\frac{1}{\langle t-r \rangle}+\frac{1}{t}\right)\\
&+C \mathbbm{1}_{\{r \geq \frac{t}{2}\}} \left(\frac{ \left(\sup_{x \in [T_{\lambda},r]}\sqrt{\lambda(x)}\right)^{23} \log^{42}(r)}{r^{2}\langle t-r \rangle^{2}t^{5}} + \frac{\left(\sup_{x \in [T_{\lambda},t]}\sqrt{\lambda(x)}\right)^{26}\log^{40}(t) }{t^{4}} |\partial_{r}u_{3}(t,r)|\right)\end{split}\end{equation}
We note that 
\begin{equation}\label{un2def} u_{N_{2}}(t,r) = \int_{t}^{\infty} ds \left(\frac{-1}{2r} \int_{|r-(s-t)|}^{r+s-t} y N_{2,0}(s,y) dy\right)\end{equation}
We then directly estimate \eqref{un2def} for all $r>0$, and the $r$ derivative of the $u_{N_{2}}$-analog of \eqref{un0estintstep1} for $r \leq \frac{t}{2}$. When $r \geq \frac{t}{2}$, we directly estimate the $r$ derivative of \eqref{un2def}. The energy estimate is obtained with the same procedure as in Lemma \ref{u3estlemma}.\end{proof}
Next, we estimate $e_{u_{N_{2}}}$, the linear error term associated to $u_{N_{2}}$, defined by
$$e_{u_{N_{2}}}(t,r):= \frac{-45 \lambda(t)^{2} u_{N_{2}}(t,r)}{(3\lambda(t)^{2}+r^{2})^{2}}$$
A direct application of Lemma \ref{un2estlemma} gives
\begin{lemma}
$$||e_{u_{N_{2}}}(t,|\cdot|)||_{H^{1}(\mathbb{R}^{3})} \leq \frac{C \left(\sup_{x \in [T_{\lambda},t]} \sqrt{\lambda(x)}\right)^{9} \log^{50}(t)}{\sqrt{\lambda(t)} t^{5}} \left(1+\frac{1}{\lambda(t)}\right)$$\end{lemma}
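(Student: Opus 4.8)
The plan is to reduce the claim to weighted $L^{2}$ estimates via the identity $e_{u_{N_{2}}}(t,r)=V(t,r)\,u_{N_{2}}(t,r)$, with $V$ the potential from \eqref{Vdef}, together with the fact that for a radial profile $\|f(|\cdot|)\|_{H^{1}(\mathbb{R}^{3})}^{2}\leq C\left(\|f\|_{L^{2}(r^{2}\,dr)}^{2}+\|\partial_{r}f\|_{L^{2}(r^{2}\,dr)}^{2}\right)$. Thus it suffices to bound $\|V u_{N_{2}}\|_{L^{2}(r^{2}\,dr)}$ and, via $\partial_{r}(V u_{N_{2}})=(\partial_{r}V)u_{N_{2}}+V\partial_{r}u_{N_{2}}$, the quantities $\|(\partial_{r}V)u_{N_{2}}\|_{L^{2}(r^{2}\,dr)}$ and $\|V\partial_{r}u_{N_{2}}\|_{L^{2}(r^{2}\,dr)}$. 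First I would record the elementary pointwise bounds that follow by differentiating \eqref{Vdef}, namely $|V(t,r)|\leq\frac{C\lambda(t)^{2}}{(r^{2}+\lambda(t)^{2})^{2}}$ and $|\partial_{r}V(t,r)|\leq\frac{C\lambda(t)^{2}}{(r^{2}+\lambda(t)^{2})^{5/2}}$; in particular $|V|\leq C\lambda(t)^{-2}$, $|\partial_{r}V|\leq C\lambda(t)^{-3}$ for $r\leq\lambda(t)$, while $|V|\leq C\lambda(t)^{2}r^{-4}$, $|\partial_{r}V|\leq C\lambda(t)^{2}r^{-5}$ for $r\geq\lambda(t)$.

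Next I would split the radial integral into the three ranges $0<r\leq\lambda(t)$, $\lambda(t)\leq r\leq t/2$, and $r\geq t/2$, matching the case distinction in Lemma \ref{un2estlemma}, and on each range substitute the corresponding pointwise bounds for $u_{N_{2}}$ and $\partial_{r}u_{N_{2}}$. On the first two ranges $u_{N_{2}}$ and $\partial_{r}u_{N_{2}}$ are controlled by $C\left(\sup_{x\in[T_{\lambda},t]}\sqrt{\lambda(x)}\right)^{9}\log^{50}(t)\,t^{-5}$ (the derivative gaining a power $t^{-1}$), so that after pulling these factors out one is left with the integrals $\int_{0}^{\lambda(t)}$ and $\int_{\lambda(t)}^{\infty}$ of $|V|^{2}r^{2}$ and $|\partial_{r}V|^{2}r^{2}$; these converge and produce the factors $\lambda(t)^{-1/2}$ (from $V$) and $\lambda(t)^{-3/2}$ (from $\partial_{r}V$), which is exactly the $\frac{1}{\sqrt{\lambda(t)}}\left(1+\frac{1}{\lambda(t)}\right)$ structure of the claimed bound, in fact with an extra power of $t$ to spare.

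The only range requiring genuine care, and hence the main (mild) obstacle, is $r\geq t/2$, where $u_{N_{2}}$ carries only the free-wave decay $r^{-1}$ rather than a large negative power of $r$. Here I would use the full $r\geq t/2$ bounds of Lemma \ref{un2estlemma} together with $|V|\leq C\lambda(t)^{2}r^{-4}$ (legitimate since $\lambda(t)\leq Ct^{C_{u}}\ll t$), estimating the resulting $r$-integrals term by term near $r\sim t$ and at $r=\infty$; the factors $\lambda(t)^{2}$, $\left(\sup_{x\in[T_{\lambda},r]}\sqrt{\lambda(x)}\right)^{N}$ and the $\langle t-r\rangle$-weights are absorbed into the stated bound using the power-law comparison $(x/t)^{-C_{l}}\leq\lambda(x)/\lambda(t)\leq(x/t)^{C_{u}}$ and the monotonicity of $x\mapsto\lambda(x)^{5/2}/x$ from \eqref{lambdacomparg}, where the smallness of $C_{u},C_{l}$ from \eqref{cofconstr0}--\eqref{cofconstr2} is precisely what makes the exponent of $t$ in the resulting ratio negative, so that the $r$-integral converges with the advertised rate. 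Summing the three ranges and taking square roots yields the lemma.
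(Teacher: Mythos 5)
Your proposal is correct and follows exactly the approach the paper intends: the paper's proof is the one-line remark ``A direct application of Lemma \ref{un2estlemma} gives,'' and you have simply spelled out that direct application, writing $e_{u_{N_2}}=V\,u_{N_2}$, splitting into $r\lesssim\lambda(t)$, $\lambda(t)\lesssim r\lesssim t$, and $r\gtrsim t$, and combining the pointwise bounds on $u_{N_2}$, $\partial_r u_{N_2}$ with the $L^2(r^2\,dr)$ integrals of $V$ and $\partial_r V$ (which produce the $\lambda(t)^{-1/2}$ and $\lambda(t)^{-3/2}$ factors) and the smallness of $C_u$ to absorb the extra $\lambda$-powers at large $r$.
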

At this stage, the only error terms that remain are the nonlinear interactions between $u_{N_{2}}$ and the previous corrections, which we collect together in $N_{3}$:
$$-N_{3}= 5\left(\left(Q_{\lambda}+u_{a}+u_{n}\right)^{4}-Q_{\lambda}^{4}\right) u_{N_{2}} + 10(Q_{\lambda}+u_{a}+u_{n})^{3}u_{N_{2}}^{2}+10(Q_{\lambda}+u_{a}+u_{n})^{2}u_{N_{2}}^{3}+5(Q_{\lambda}+u_{a}+u_{n})u_{N_{2}}^{4}+u_{N_{2}}^{5}$$
Then, we have
\begin{lemma}\label{n3estlemma}$$||N_{3}(t,|\cdot|)||_{H^{1}(\mathbb{R}^{3})} \leq \frac{C \log^{90}(t)}{t^{4+\frac{1}{2}(\frac{1}{2}-6C_{u})}}$$\end{lemma}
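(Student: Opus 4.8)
The plan is to exploit the fact that every monomial in the definition of $-N_{3}$ carries at least one factor of $u_{N_{2}}$. Writing $P:=Q_{\lambda(t)}+u_{a}+u_{n}$, one has $P^{4}-Q_{\lambda(t)}^{4}=(u_{a}+u_{n})(P^{3}+P^{2}Q_{\lambda(t)}+PQ_{\lambda(t)}^{2}+Q_{\lambda(t)}^{3})$, so that, pointwise,
\begin{equation}\label{n3ptwse}|N_{3}(t,r)|\leq C\,|u_{N_{2}}(t,r)|\Bigl(|Q_{\lambda(t)}(r)|^{3}\bigl(|u_{a}|+|u_{n}|+|u_{N_{2}}|\bigr)+\bigl(|u_{a}|+|u_{n}|+|u_{N_{2}}|\bigr)^{4}\Bigr)(t,r),\end{equation}
and, applying $\partial_{r}$ and the product rule, $|\partial_{r}N_{3}|$ is bounded by the right-hand side of \eqref{n3ptwse} with one of the factors replaced by its $r$-derivative (so $|\partial_{r}Q_{\lambda(t)}|$, $|\partial_{r}u_{a}|$, $|\partial_{r}u_{n}|$, $|\partial_{r}u_{N_{2}}|$ enter, each multiplied by bounded combinations of the undifferentiated quantities). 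Since for radial functions $\|f(|\cdot|)\|_{H^{1}(\mathbb{R}^{3})}\sim\|f\|_{L^{2}(r^{2}dr)}+\|\partial_{r}f\|_{L^{2}(r^{2}dr)}$, it suffices to bound these two norms of $N_{3}$.

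Next I would insert the pointwise and first-derivative bounds already available for each constituent: for $u_{N_{2}}$ and $\partial_{r}u_{N_{2}}$ from Lemma \ref{un2estlemma}; for $u_{a}=u_{a,0}+u_{a,1}$ from the estimates assembled in the proof of the bound on $N_{1}$ (in particular the pointwise control of $u_{a,0}$, \eqref{dtjua0est}, and of $\partial_{r}^{k}u_{a,1}$, \eqref{drkua1est}); for $u_{n}=u_{new}+\widetilde{v_{4}}$ from \eqref{unewest}, together with the $\langle t-r\rangle$-weighted bounds on $v_{4}$, $\partial_{r}v_{4}$ in Lemma \ref{un0estlemma}, on $u_{4}$, $\partial_{r}u_{4}$ in Lemma \ref{u4estlemma}, and on $u_{N_{0},ell}$ in Lemma \ref{un0ellestlemma}; and the elementary bounds $Q_{\lambda(t)}(r)\leq C\min\{\lambda(t)^{-1/2},\lambda(t)^{1/2}/r\}$ with the corresponding bound on $\partial_{r}Q_{\lambda(t)}$. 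One then decomposes $(0,\infty)$ into the four regions $r\leq\lambda(t)$, $\lambda(t)\leq r\leq h(t)$, $h(t)\leq r\leq t/2$, and $r\geq t/2$, and in each region multiplies out the regional bounds for the dozen-or-so monomials and integrates against $r^{2}dr$. The only region that is not entirely mechanical is $r\geq t/2$: there $u_{N_{2}}$, $v_{4}$, $v_{3}$ and $u_{3}$ decay only in $\langle t-r\rangle$ (they are not symbols near the light cone), so one keeps those weights, bounds the supremum factors by $\sup_{x\in[T_{\lambda},t+r]}\sqrt{\lambda(x)}\leq C\sqrt{\lambda(t)}\,(1+r/t)^{C_{u}/2}$ via \eqref{lambdacomparg}, and checks that the resulting $r$-integrals converge — the high negative powers of $r$ coming from the $1/r$ factors in $u_{N_{2}}$, $Q_{\lambda(t)}$, $v_{4}$, $u_{3}$ beating the $(1+r/t)^{O(C_{u})}$ growth since $C_{u}$ is small.

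Comparing the contributions of all the monomials over all four regions, the slowest-decaying one yields precisely the stated rate $t^{-(4+\frac{1}{2}(\frac{1}{2}-6C_{u}))}$, the exponent being positive because \eqref{cofconstr0} forces $C_{u}$ small enough that $6C_{u}<\tfrac12$; the various logarithmic factors accumulated along the way (dominated by the $\log^{50}(t)$ appearing in the $u_{N_{2}}$ estimates, multiplied by the $\log^{10}$–$\log^{11}$ factors from $u_{n}$ and by lower powers from $u_{a}$) are absorbed into $\log^{90}(t)$. The main obstacle is organizational rather than conceptual: $N_{3}$ is a degree-five polynomial in four functions, each supplied with a several-case pointwise bound, so $\partial_{r}N_{3}$ unfolds into a large number of terms to be tracked over four regions; the single genuine analytic point is ensuring $L^{2}(r^{2}dr)$-integrability with the claimed rate in the $r\geq t/2$ region, where only the $\langle t-r\rangle$-weighted estimates (and not any symbol-type decay) are available.
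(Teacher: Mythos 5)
Your proposal follows the same overall route as the paper: start from the pointwise bound $|N_{3}|\leq C|u_{N_{2}}|\bigl(|Q_{\lambda}|^{3}(|u_{a}|+|u_{n}|+|u_{N_{2}}|)+u_{a}^{4}+u_{n}^{4}+u_{N_{2}}^{4}\bigr)$, assemble the regional pointwise estimates on the constituents (the paper records them in a single display \eqref{uansatzest}), split $(0,\infty)$ into $r\leq\lambda(t)$, $\lambda(t)\leq r\leq t/2$, $r\geq t/2$ (the paper doesn't actually need a separate $r\leq h(t)$ region here), and integrate. That part matches.

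The one substantive detail you gloss over is how to treat $\partial_{r}N_{3}$. You propose feeding in \emph{pointwise} bounds on every differentiated factor, but the paper explicitly does \emph{not} do that for $\partial_{r}u_{N_{2}}$ on $(t/2,\infty)$ and $\partial_{r}u_{3}$ on $(h(t)/4,\infty)$: it instead uses the energy estimates from Lemmas \ref{un2estlemma} and \ref{u3estlemma}, i.e.\ Hölder's inequality with the $L^{2}(r^{2}dr)$ norm on the derivative and $L^{\infty}$ on the cofactor. This is not merely cosmetic. The pointwise bound on $\partial_{r}u_{3}$ for $r\geq h(t)/4$ contains a term of size $\frac{\lambda(t)^{2}(\sup\sqrt{\lambda})^{5}\log^{2}(t+r)}{t^{6}}$ (the $\frac{r}{t^{3}}$ piece of Lemma \ref{u3estlemma}), which by itself is not in $L^{2}(r^{2}dr)$ and, after multiplication by the other factors near and past the light cone, still forces one to track delicate cancellations involving the $\langle t-r\rangle$ weights and the $(1+r/t)^{O(C_{u})}$ growth; similarly, $\partial_{r}u_{N_{2}}$ for $r\geq t/2$ has a $\frac{(\sup\sqrt{\lambda})^{25}\log^{50}(r)}{rt^{4}}$ tail that is not square-integrable in isolation. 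Replacing these by the clean $\|\partial_{r}u_{N_{2}}\|_{L^{2}}\lesssim(\sup\sqrt{\lambda})^{25}\log^{50}(t)/t^{3}$ and $\|\partial_{r}u_{3}\|_{L^{2}}\lesssim\lambda(t)^{2}(\sup\sqrt{\lambda})^{5}\log^{2}(t)/t^{3}$, and pairing each with an $L^{\infty}$ bound on its cofactor, is what makes the $\partial_{r}N_{3}$ computation close cleanly at the rate $t^{-(4+\frac{1}{2}(\frac{1}{2}-6C_{u}))}$. Your write-up, as stated, leaves this as an unaddressed convergence question in exactly the region you identify as the non-mechanical one; you should either invoke the two energy estimates as the paper does, or carry out the extra bookkeeping to show the pointwise route actually converges to the same rate.

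One minor remark: the exponent $\delta_{3,0}=\tfrac12\min\{\tfrac52-\tfrac{29}{2}C_{u},\,3-\tfrac{45}{2}C_{u}\}$ that the $L^{2}$-part of the computation produces is strictly better than the $\tfrac12(\tfrac12-6C_{u})$ in the lemma statement; the weaker advertised rate comes from the $\partial_{r}N_{3}$ bound, which is precisely where the energy-estimate step matters. So your sentence ``the slowest-decaying one yields precisely the stated rate'' is asserting exactly the thing the missing step is needed to establish.
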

\begin{proof}
We start with
$$|N_{3}(t,r)| \leq C |u_{N_{2}}|(|Q_{\lambda}|^{3}(|u_{a}|+|u_{n}|+|u_{N_{2}}|)+u_{a}^{4}+u_{n}^{4}+u_{N_{2}}^{4})$$
Using \eqref{dtjua0est}, \eqref{drkua1est}, \eqref{unewest} and Lemmas \ref{un0estlemma}, \ref{un2estlemma}, we get
\begin{equation}\label{uansatzest}|u_{a}|+|u_{n}|+|u_{N_{2}}| \leq C \begin{cases} \frac{\log(t) \left(\sup_{x \in [T_{\lambda},t]}\sqrt{\lambda(x)}\right)^{3}}{t^{2}}+\frac{\text{min}\{1,r\} \left(\sup_{x \in [T_{\lambda},t]}\sqrt{\lambda(x)}\right)^{25} \log^{10}(t)}{t^{4}}, \quad r \leq \lambda(t)\\
\frac{\left(\sup_{x \in [T_{\lambda},t]}\sqrt{\lambda(x)}\right) r}{t^{2}}, \quad \lambda(t) \leq r \leq \frac{t}{2}\\
\frac{\left(\sup_{x \in [T_{\lambda},r]}\sqrt{\lambda(x)}\right) \log^{2}(r)}{r} + \frac{ \left(\sup_{x \in [T_{\lambda},r]}\sqrt{\lambda(x)}\right)^{5} \log^{11}(r)}{r t^{2}}+\frac{\left(\sup_{x \in [T_{\lambda},r]}\sqrt{\lambda(x)}\right)^{5} \log^{10}(r)}{r \langle t-r \rangle^{2}}, \quad r \geq \frac{t}{2}\end{cases}\end{equation}
Then, a direct computation gives
$$||N_{3}(t,r)||_{L^{2}(r^{2}dr)} \leq \frac{C \log^{90}(t)}{t^{4+\delta_{3,0}}}, \quad \delta_{3,0}=\frac{1}{2}\text{min}\{\frac{5}{2}-\frac{29}{2}C_{u},3-\frac{45}{2}C_{u}\}>0$$
$\partial_{r}N_{3}$ is estimated similarly. The only detail worth noting is that we use the energy estimates from Lemmas \ref{un2estlemma} and \ref{u3estlemma} to estimate $||\partial_{r}u_{N_{2}}||_{L^{2}((\frac{t}{2},\infty),r^{2} dr)}$ and $||\partial_{r}u_{3}||_{L^{2}((\frac{h(t)}{4},\infty),r^{2} dr)}$, and pointwise estimates on derivatives otherwise.
\end{proof}
The estimates from Lemma \ref{n3estlemma} show that $N_{3}$ can be treated perturbatively in the next section of the argument. Therefore, our final ansatz, which we denote by $u_{ansatz}$ is the sum of all terms added thus far:
$$u_{ansatz}(t,r)= u_{a}(t,r)+u_{n}(t,r)+u_{N_{2}}(t,r)$$
Now, we estimate the error term (which we denote by $F_{5}$) of $u_{ansatz}$:
\begin{equation}\label{f5def}\begin{split}F_{5}(t,r)&= e_{match}+e_{ell,2}+e_{ex}+e_{w,2}+e_{u_{3}}+(e_{u_{N_{0}}}+e_{v_{4}}\psi_{1}(\frac{r}{t}))\chi_{\geq 1}(\frac{4r}{t})\\
&+e_{u_{4}}+e_{u_{N_{0},ell}}+e_{u_{N_{2}}}+N_{1}+N_{2,1}+N_{3}\end{split}\end{equation}  
\begin{lemma}\label{f5estlemma} $$||F_{5}(t,|\cdot|)||_{H^{1}(\mathbb{R}^{3})} \leq \frac{C \log^{90}(t)}{t^{4+2 \epsilon}}$$
where $\epsilon >3 C_{l}$ and is given by
$$\epsilon = \frac{1}{2}\text{min}\{\delta,4-\frac{13}{2}a(1-C_{u}) -7C_{u},-2+\frac{7}{2}a (1-C_{u}) -\frac{5}{2}C_{u}-\frac{3}{2}C_{l},\frac{a}{2}(1-C_{u})-4C_{u},\frac{1}{2}-C_{l}-\frac{25}{2}C_{u},\delta_{1},\delta_{2},\frac{1}{2}(\frac{1}{2}-6C_{u})\}$$\end{lemma}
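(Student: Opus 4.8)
Our proof is essentially a bookkeeping exercise on top of the estimates already proved. The identity \eqref{f5def} exhibits $F_5$ as a sum of twelve pieces, each of which has been assigned an $H^1(\mathbb{R}^3)$ bound: the nine linear error terms by Lemmas \ref{ematch0estlemma} (for $e_{match}$), \ref{eell3lemma} (for the term written $e_{ell,2}$ in \eqref{f5def}, namely $e_{ell,3}$), \ref{eexlemma} (for $e_{ex}$), \ref{ew2lemma} (for $e_{w,2}$), \ref{eu3lemma} (for $e_{u_3}$), and the four unlabelled lemmas bounding $(e_{u_{N_0}}+e_{v_4}\psi_1(\tfrac{|\cdot|}{t}))\chi_{\geq 1}(\tfrac{4|\cdot|}{t})$, $e_{u_4}$, $e_{u_{N_0,ell}}$ and $e_{u_{N_2}}$; and the three nonlinear error terms $N_1$, $N_{2,1}$, $N_3$ by their respective lemmas (the last being Lemma \ref{n3estlemma}). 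So the plan is: apply the triangle inequality in $H^1(\mathbb{R}^3)$, and then check that each of the twelve bounds is $\leq C\log^{90}(t)\,t^{-4-2\epsilon}$ for the stated $\epsilon$.

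To do this I would first convert each of the twelve bounds — a product of powers of $\lambda(t)$, of $\sup_{x\in[T_\lambda,t]}\sqrt{\lambda(x)}$, of $h(t)$, of $t$, and of $\log t$ — into a pure power of $t$ up to $\log$'s. This uses $h(t)=\lambda(t)^{1-a}t^{a}$ together with \eqref{lambdacomparg}, which gives $\lambda(t)\leq Ct^{C_u}$, $\lambda(t)^{-1}\leq Ct^{C_l}$ and $\sup_{x\in[T_\lambda,t]}\sqrt{\lambda(x)}\leq C\min\{t^{C_u/2},\sqrt{\lambda(t)}\,t^{C_l/2}\}$, with $C$ depending on $\lambda$ but not on $T_0$. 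After substitution, the $j$-th term becomes $C\log^{m_j}(t)\,t^{-4-2\epsilon_j}$ with $\epsilon_j$ an explicit affine function of $(a,C_u,C_l)$ and $m_j\in\mathbb{N}$, and $\max_j m_j=90$, attained by $N_3$. The eight quantities in the displayed formula for $\epsilon$ are exactly the binding $2\epsilon_j$: $\delta,\delta_1,\delta_2$ from Lemma \ref{ematch0estlemma} and the $N_1$, $N_{2,1}$ lemmas; $4-\tfrac{13}{2}a(1-C_u)-7C_u$ from Lemma \ref{eell3lemma} (via $h^{13/2}$); $-2+\tfrac72 a(1-C_u)-\tfrac52 C_u-\tfrac32 C_l$ from Lemmas \ref{eexlemma} and \ref{ew2lemma} (via $h^{-7/2}$); $\tfrac a2(1-C_u)-4C_u$ from Lemma \ref{eu3lemma} (via $h^{-1/2}$); $\tfrac12-C_l-\tfrac{25}{2}C_u$ from the $e_{u_{N_0,ell}}$ lemma (via $(\sup\sqrt{\lambda})^{25}\lambda(t)^{-1}$); and $\tfrac12(\tfrac12-6C_u)$ from Lemma \ref{n3estlemma}. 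The remaining four pieces — $e_{u_4}$, $e_{u_{N_2}}$, and the $e_{u_{N_0}}+e_{v_4}$ piece — produce strictly larger $2\epsilon_j$ and are therefore absorbed, so $\epsilon=\tfrac12\min_j 2\epsilon_j$ equals the displayed expression; the precise coefficients of $C_u,C_l$ in each entry simply record which of the two available bounds is used for each $\sup_{x\in[T_\lambda,t]}\sqrt{\lambda(x)}$ factor and whether a positive or negative power of $\lambda(t)$ is being estimated.

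Finally one checks $\epsilon>3C_l$ (in particular $\epsilon>0$). Each binding $2\epsilon_j$ is affine in $(a,C_u,C_l)$: $\delta$ is positive by the positivity already recorded after Lemma \ref{ematch0estlemma} (which comes from \eqref{hdef}), and $\delta_1,\delta_2$ by their lemmas; the entry $4-\tfrac{13}{2}a(1-C_u)-7C_u$ exceeds $6C_l$ since the upper bound on $a$ in \eqref{hdef} lies below $\tfrac{2(4-7C_u-6C_l)}{13(1-C_u)}$; the entry $-2+\tfrac72 a(1-C_u)-\tfrac52 C_u-\tfrac32 C_l$ exceeds $6C_l$ since the lower bound on $a$ in \eqref{hdef} exceeds $\tfrac{2}{7(1-C_u)}(2+\tfrac52 C_u+\tfrac{15}{2}C_l)$; the entry $\tfrac a2(1-C_u)-4C_u$ exceeds $6C_l$ using the lower bound on $a$ together with the smallness \eqref{cofconstr0} of $C_u,C_l$; and $\tfrac12-C_l-\tfrac{25}{2}C_u$ and $\tfrac12(\tfrac12-6C_u)$ exceed $6C_l$ directly from \eqref{cofconstr0}. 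The main obstacle is nothing analytic — the real analysis is all inside the component lemmas — but precisely the arithmetical bookkeeping of the second paragraph: substituting $h(t)=\lambda(t)^{1-a}t^a$ and \eqref{lambdacomparg} into a dozen bounds and verifying a dozen elementary linear inequalities in $(a,C_u,C_l)$, the one subtlety being the correct choice of bound for each $\sup_{x\in[T_\lambda,t]}\sqrt{\lambda(x)}$ factor.
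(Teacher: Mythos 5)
Your proposal is correct and is plainly the argument the paper intends (the lemma is stated without proof): apply the triangle inequality to the decomposition \eqref{f5def}, substitute $h(t)=\lambda(t)^{1-a}t^{a}$ and the consequences of \eqref{lambdacomparg} into each of the twelve component bounds, and check that the resulting power of $t$ is at least $4+2\epsilon$; you also correctly spot that $e_{ell,2}$ in \eqref{f5def} is a typo for $e_{ell,3}$, correctly identify $N_{3}$ as the source of the $\log^{90}$, and correctly verify $\epsilon>3C_{l}$ using \eqref{hdef} and \eqref{cofconstr0}. The only blemish is a counting slip at the end (you write ``four'' but list three non-binding pieces), which does not affect the argument.
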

For later use, we decompose $\partial_{r}u_{ansatz}$ as follows.
\begin{equation}\partial_{r}u_{ansatz}(t,r) =u_{an,r,0}(t,r)+u_{an,r,1}(t,r)\end{equation}
where
\begin{equation}\label{uanr0def} u_{an,r,1}(t,r) = \partial_{r}u_{N_{0}}(t,r) + \partial_{r}u_{N_{2}}(t,r)\end{equation}
\begin{lemma} 
\begin{equation}\label{uaninftyest}||u_{ansatz}(t,r)||_{L^{\infty}_{r}} \leq \frac{C \left(\sup_{x \in [T_{\lambda},t]}\sqrt{\lambda(x)}\right)^{5} \log^{10}(t)}{t}\end{equation}
\begin{equation}\label{drqq2uaninftyest} ||\partial_{r}(Q_{\lambda}(r)) Q_{\lambda}(r)^{2} u_{ansatz}||_{L^{\infty}_{r}} + ||Q_{\lambda}(r)^{3}u_{an,r,0}(t,r)||_{L^{\infty}_{r}} \leq \frac{C}{t^{2}\lambda(t)}\end{equation}
\begin{equation}\label{bracket3uanr0inftyest} ||\frac{1}{\langle \frac{r}{\lambda(t)}\rangle^{3}} u_{an,r,0}||_{L^{\infty}_{r}} \leq \frac{C \left(\sup_{x \in [T_{\lambda},t]}\sqrt{\lambda(x)}\right)}{t^{2}}\end{equation}
\begin{equation}\label{uan3uanr0inftyest} ||u_{ansatz}^{3} u_{an,r,0}||_{L^{\infty}_{r}} \leq \frac{C \left(\sup_{x \in [T_{\lambda},t]}\sqrt{\lambda(x)}\right)^{25} \log^{40}(t)}{t^{4}}\end{equation}
\begin{equation}\label{q2uansatz2linftyest} ||Q_{\lambda}(r)^{2}u_{ansatz}(t,r)^{2}||_{L^{\infty}_{r}} \leq C \left(\frac{\log^{2}(t) \left(\sup_{x \in [T_{\lambda},t]}\sqrt{\lambda(x)}\right)^{6}}{\lambda(t)t^{4}} + \frac{\lambda(t) \left(\sup_{x \in [T_{\lambda},t]}\sqrt{\lambda(x)}\right)^{10} \log^{22}(t)}{t^{4}}\right)\end{equation}
\begin{equation}\label{uanr12est}||u_{an,r,1}(t,r)||_{L^{2}(r^{2} dr)} \leq \frac{C \left(\sup_{x \in [T_{\lambda},t]}\sqrt{\lambda(x)}\right)^{5} \log^{10}(t)}{t^{5/2}}\end{equation}
\begin{equation}\label{q3uaninftyest}\begin{split}||Q_{\lambda}(r)^{3} u_{ansatz}(t,r)||_{L^{\infty}_{r}}&\leq \frac{\left(\lambda'(t)\right)^{2}}{\lambda(t)^{2}} \frac{3 (111-45 \log(4))}{16}+\frac{|\lambda''(t)|}{\lambda(t)} \frac{15}{16} \left(-12+12\sqrt{3}+15\pi^{2}-4 \log(4)\right)\\
&+\frac{C \log(t) \left(\sup_{x \in [T_{\lambda},t]}\sqrt{\lambda(x)}\right)^{4}}{h(t)^{2} t^{2}} + \frac{C \left(\sup_{x \in [T_{\lambda},t]}\sqrt{\lambda(x)}\right)^{5} \log^{10}(t)}{t^{3}\lambda(t)^{3/2}}\end{split}\end{equation}
\end{lemma}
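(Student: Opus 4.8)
The plan is to assemble the seven estimates from the pointwise and energy bounds on the individual constituents of $u_{ansatz}=u_{a}+u_{n}+u_{N_{2}}$ that have already been proven, where $u_{a}=u_{c}+u_{3}\psi_{2}(\frac{r}{h(t)})$ with $u_{c}=\chi_{\leq 1}(\frac{r}{h(t)})(u_{ell}+u_{ell,2}+u_{ell,3})+(1-\chi_{\leq 1}(\frac{r}{h(t)}))(w_{1}+v_{ex}+v_{2}+u_{w,2}+v_{3})$, and $u_{n}=u_{N_{0}}+v_{4}+u_{4}+u_{N_{0},ell}\psi_{1}(\frac{r}{t})$. For \eqref{uaninftyest}, \eqref{uan3uanr0inftyest} and \eqref{q2uansatz2linftyest} I would split $(0,\infty)$ into the four regions $r\leq\lambda(t)$, $\lambda(t)\leq r\leq h(t)$, $h(t)\leq r\leq\frac{t}{2}$ and $r\geq\frac{t}{2}$ dictated by the case structure of Lemmas \ref{uellminusuellmainestlemma}, \ref{w1estlemma}, \ref{v2estlemma}, \ref{v3minusv3mainestlemma}, \ref{u3estlemma}, \ref{un0estlemma}, \ref{u4estlemma}, \ref{un0ellestlemma}, \ref{un2estlemma} (equivalently of \eqref{dtjua0est}, \eqref{drkua1est}, \eqref{unewest}, \eqref{uansatzest}), take the supremum of the resulting explicit expression on each region, and compare the powers of $t$, $\lambda(t)$ and $h(t)$ using $h(t)\geq C\sqrt{t}$, the comparison and monotonicity statement \eqref{lambdacomparg}, and the lower bound on $a$ in \eqref{hdef}. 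In this way \eqref{uaninftyest} is just the pointwise envelope of \eqref{uansatzest} together with the aforementioned lemmas, and \eqref{uan3uanr0inftyest}, \eqref{q2uansatz2linftyest} follow by multiplying the bounds and simplifying. The $L^{2}$ estimate \eqref{uanr12est} is immediate: by \eqref{uanr0def} we have $u_{an,r,1}=\partial_{r}u_{N_{0}}+\partial_{r}u_{N_{2}}$, so it follows from the energy estimate \eqref{un0enest} and the $\partial_{r}u_{N_{2}}$ energy estimate in Lemma \ref{un2estlemma}.

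For the weighted quantities \eqref{drqq2uaninftyest}, \eqref{bracket3uanr0inftyest}, \eqref{q3uaninftyest} the weights $Q_{\lambda}(r)^{j}$, respectively $\langle r/\lambda(t)\rangle^{-3}$, decay like $(\lambda(t)/r)^{j}$, so each supremum is attained in the region $r\lesssim\lambda(t)$ or near the crossover $r\sim\lambda(t)$; there the dominant contribution to $u_{ansatz}$, respectively to $u_{an,r,0}=\partial_{r}u_{ansatz}-\partial_{r}u_{N_{0}}-\partial_{r}u_{N_{2}}$, comes from $u_{ell}$, for which $\partial_{r}^{k}u_{ell}(t,R\lambda(t))$ is $O(\lambda(t)^{3/2}t^{-2}R^{2-k})$ for $R\leq 1$ by Lemma \ref{uellminusuellmainestlemma}, while all remaining pieces are smaller near $r\sim\lambda(t)$ by at least one power of $t$ or of $h(t)/\lambda(t)=(t/\lambda(t))^{a}$. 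For \eqref{drqq2uaninftyest} one also uses that $\partial_{r}Q_{\lambda}(r)$ and $Q_{\lambda}(r)^{2}$ are concentrated at $r\sim\lambda(t)$, where $u_{ell}$ (and its $r$-derivative) carries the $\frac{1}{t^{2}\lambda(t)}$ size; for \eqref{bracket3uanr0inftyest} the weight $\langle r/\lambda(t)\rangle^{-3}$ cancels the linear growth of $\partial_{r}u_{ell}(t,R\lambda(t))$ in $R$.

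The one estimate requiring care is \eqref{q3uaninftyest}, whose first two terms must carry the \emph{exact} constants $\frac{3(111-45\log 4)}{16}$ and $\frac{15}{16}(-12+12\sqrt{3}+15\pi^{2}-4\log 4)$, since these reappear in the smallness condition \eqref{cofconstr1} and cannot be absorbed into error terms. Since $Q_{\lambda}(r)^{3}=\lambda(t)^{-3/2}Q_{1}(r/\lambda(t))^{3}$ is concentrated at $r\sim\lambda(t)$, I would write $u_{ansatz}=\chi_{\leq 1}(\frac{r}{h(t)})u_{ell}+(u_{ansatz}-\chi_{\leq 1}(\frac{r}{h(t)})u_{ell})$ and use $0\leq\chi_{\leq 1}\leq 1$ to reduce the main term to $\sup_{r}|Q_{\lambda}(r)^{3}u_{ell}(t,r)|$. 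Substituting the exact formula \eqref{uellexp} for $u_{ell}$ and the value of $c_{1}(t)$ fixed in \eqref{matching1}, one finds $Q_{\lambda}(r)^{3}u_{ell}(t,r)|_{r=R\lambda(t)}=\frac{\lambda'(t)^{2}}{\lambda(t)^{2}}\,Q_{1}(R)^{3}(f_{1}(R)+\frac{3(74-30\log 4)}{16}\phi_{0}(R))+\frac{\lambda''(t)}{\lambda(t)}\,Q_{1}(R)^{3}(f_{2}(R)+\frac{3(75\pi^{2}-4(53+5\log 4))}{16}\phi_{0}(R))$, and the two claimed constants are exactly the suprema over $R>0$ of the absolute values of these two $R$-profiles, a concrete single-variable optimization; the triangle inequality then yields the first two terms of \eqref{q3uaninftyest}. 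The remainder $Q_{\lambda}(r)^{3}(u_{ansatz}-\chi_{\leq 1}(\frac{r}{h(t)})u_{ell})$ is handled by the four-region analysis: the $w_{1}+v_{2}+v_{3}$ piece, supported where $r\gtrsim h(t)$ so that $Q_{\lambda}(r)^{3}\lesssim\lambda(t)^{3/2}h(t)^{-3}$, produces the term $\frac{C\log(t)(\sup_{x\in[T_{\lambda},t]}\sqrt{\lambda(x)})^{4}}{h(t)^{2}t^{2}}$, while $u_{ell,2},u_{ell,3},u_{3},u_{n}$ and $u_{N_{2}}$ each produce contributions dominated by $\frac{C(\sup_{x\in[T_{\lambda},t]}\sqrt{\lambda(x)})^{5}\log^{10}(t)}{t^{3}\lambda(t)^{3/2}}$, using $\lambda(t)^{5/2}\leq Ct$ (from \eqref{lambdacomparg}) to discard lower order terms. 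The main obstacle is therefore the bookkeeping that isolates precisely these two profiles inside the $Q_{\lambda}^{3}u_{ell}$ term and verifies that every other piece of $u_{ansatz}$ is a genuine remainder of the stated size; the region analysis behind the other six estimates, while lengthy, is routine given the lemmas already established.
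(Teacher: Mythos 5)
Your proposal takes the same route as the paper's very terse proof (``direct calculation'' from the already-established pointwise and energy estimates, with the exact formula \eqref{uellexp} supplying the constants in \eqref{q3uaninftyest}), and the region decomposition $r\leq\lambda(t)$, $\lambda(t)\leq r\leq h(t)$, $h(t)\leq r\leq t/2$, $r\geq t/2$ you describe is the right organizing device. The treatment of \eqref{uanr12est} from \eqref{un0enest} and Lemma \ref{un2estlemma}, and of the weighted $L^\infty$ quantities from the concentration of $Q_\lambda$ near $r\sim\lambda(t)$, is exactly right.

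One specific assertion, however, appears to be false and you should not rely on it. You claim that $\frac{3(111-45\log 4)}{16}$ is \emph{exactly} the supremum over $R>0$ of $|Q_1(R)^3(f_1(R)+c\,\phi_0(R))|$, where $c=\frac{3(74-30\log 4)}{16}$. Writing $g(R)=Q_1(R)^3(f_1(R)+c\,\phi_0(R))$, a direct computation (using $Q_1(R)^3=\tfrac{3^{3/2}}{(R^2+3)^{3/2}}$) gives
\begin{equation}
g(R)=-\frac{9\,(R^2-3)(R^2+4c)}{8\,(R^2+3)^3}.
\end{equation}
Since $f_1(0)=0$, $\phi_0(0)=\tfrac{1}{2}$, $Q_1(0)=1$, one has $g(0)=\tfrac{c}{2}$, and checking the derivative in $x=R^2$ shows the global maximum of $|g|$ is attained at $R=0$: the only other critical value (near $R^2\approx 6.6$) is an order of magnitude smaller. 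But $\tfrac{c}{2}=\frac{3(74-30\log 4)}{32}=\frac{111-45\log 4}{16}$, using $74-30\log 4=\tfrac{2}{3}(111-45\log 4)$; this is a factor of $3$ \emph{smaller} than the constant $\frac{3(111-45\log 4)}{16}$ written in the lemma. A similar discrepancy (roughly a factor of $3$) appears for the $\lambda''$-profile and the constant $\tfrac{15}{16}(-12+12\sqrt{3}+15\pi^2-4\log 4)$, since $f_2(0)=0$ (a short Taylor expansion of the numerator of $f_2$ near $R=0$ shows the $O(R)$ terms cancel) so that profile's value at $R=0$ is again $c'/2$. So the stated constants are not the suprema of the $R$-profiles; the lemma's inequality is of course still implied (a smaller supremum only strengthens it), but your ``concrete single-variable optimization'' will not reproduce the paper's numbers, and you should not present the equality as a fact to be verified. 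Whether the paper's coefficients carry an intentional factor-of-$3$ slack (they only enter as smallness hypotheses via \eqref{cofconstr1}) or whether there is a further contribution near $r\sim\lambda(t)$ you and I are both missing is worth sorting out before this step is written up.
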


\begin{proof}
The proof is a direct calculation. In particular, we use \eqref{uansatzest} for \eqref{uaninftyest} and \eqref{q2uansatz2linftyest}. We use \eqref{uellexp} (which gives the precise constants in \eqref{q3uaninftyest}) along with Lemmas \ref{uell2minusuell2mainestlemma}, \ref{uell3estlemma}, \ref{w1estlemma}, \ref{vexestlemma}, \ref{v2estlemma}, \ref{uw2estlemma}, \ref{v30estlemma}, \ref{v3minusv3mainestlemma}, \ref{un0estlemma}, \ref{u4estlemma}, \ref{un0ellestlemma}, \ref{un2estlemma} for \eqref{q3uaninftyest}, \eqref{drqq2uaninftyest}, \eqref{uan3uanr0inftyest}. Lemmas \ref{un0estlemma} and \ref{un2estlemma} give \eqref{uanr12est}.
\end{proof}
\section{Constructing the Exact Solution}
If we substitute $u(t,r) = Q_{\lambda(t)}(r)+u_{ansatz}(t,r)+v(t,r)$ into \eqref{slw}, we get
\begin{equation}\label{veqn}-\partial_{t}^{2}v+\partial_{r}^{2}v+\frac{2}{r}\partial_{r}v+\frac{45 \lambda(t)^2}{\left(3 \lambda(t)^2+r^2\right)^2}v = F_{5}(t,r)+F_{3}(t,r)\end{equation}
where
\begin{equation}\label{f3def}F_{3}= L_{1}(v) + N(v), \quad L_{1}(v) = (-5(Q_{\lambda}+u_{ansatz})^{4}+5 Q_{\lambda}^{4})v\end{equation}
$$N(v) = -10(Q_{\lambda}+u_{ansatz})^{3}v^{2}-10(Q_{\lambda}+u_{ansatz})^{2}v^{3}-5(Q_{\lambda}+u_{ansatz})v^{4}-v^{5}$$
We proceed as follows. First, we formally derive the equation solved by $y$ defined by
\begin{equation}\label{yintermsofv}y(t,\xi) = (y_{0}(t),y_{1}(t,\xi \lambda(t)^{-2})) = \mathcal{F}\left(\left(\cdot\right) v(t,\cdot\lambda(t))\right)(\xi)\end{equation}
where $\mathcal{F}$ is the distorted Fourier transform from \cite{kstslw}, which we regard as a two-component vector, as in Proposition 4.3 of \cite{kstslw}. Then, we prove that this formally derived equation has a solution, say $(y_{6,0},y_{6,1})$, which is regular enough to allow us to justify the statement that the function $v$ defined by the following expression, with $y_{0}=y_{6,0}, y_{1}=y_{6,1}$
\begin{equation}\label{vintermsofy}v(t,r) = \frac{\lambda(t)}{r}\left(y_{0}(t) \phi_{d}(\frac{r}{\lambda(t)})+\int_{0}^{\infty} \phi(\frac{r}{\lambda(t)},\xi) y_{1}(t,\frac{\xi}{\lambda(t)^{2}}) \rho(\xi) d\xi\right)\end{equation}
is a solution to \eqref{veqn}. Letting $v$ be as in \eqref{vintermsofy}, we formally get that \eqref{veqn} is equivalent to the system consisting of the following two equations. We remind the reader that the first component of the distorted Fourier transform of a function is a real number, while the second component is a function of frequency.
\begin{equation}\label{yeqn}\begin{split}&-y_{0}''(t)-\frac{\lambda'(t)}{\lambda(t)}y_{0}'(t)-\frac{\xi_{d}}{\lambda(t)^{2}}y_{0} = F_{2,0}(t)-\frac{\lambda'(t)}{\lambda(t)} y_{0}'(t) +\mathcal{F}\left(\left(\cdot\right)(F_{5}+F_{3})(t,\cdot\lambda(t))\right)_{0}\\
&-\partial_{t}^{2}y_{1}-\omega y_{1} = F_{2,1}(t,\omega \lambda(t)^{2}) + \mathcal{F}\left(\left(\cdot\right)(F_{5}+F_{3})(t,\cdot\lambda(t))\right)_{1}(\omega\lambda(t)^{2}) \end{split}\end{equation}
where
\begin{equation}\label{F2def}\begin{split}F_{2}(t,\eta) &= \begin{bmatrix} y_{0}(t)\\
y_{1}(t,\frac{\eta}{\lambda(t)^{2}})\end{bmatrix} \frac{\lambda''(t)}{\lambda(t)} +\frac{2\lambda'(t)}{\lambda(t)} \begin{bmatrix}y_{0}'(t)\\
\partial_{1}y_{1}(t,\frac{\eta}{\lambda(t)^{2}})\end{bmatrix}- 2 \left(\frac{\lambda'(t)}{\lambda(t)}\right) \mathcal{K}\left(\begin{bmatrix} y_{0}'(t)\\
\partial_{1}y_{1}(t,\frac{\cdot}{\lambda(t)^{2}})\end{bmatrix}\right)\\
&-\left(\frac{\lambda''(t)}{\lambda(t)}+\left(\frac{\lambda'(t)}{\lambda(t)}\right)^{2}\right) \mathcal{K}\left(\begin{bmatrix} y_{0}(t)\\
y_{1}(t,\frac{\cdot}{\lambda(t)^{2}})\end{bmatrix}\right)+2 \left(\frac{\lambda'(t)}{\lambda(t)}\right)^{2} [\mathcal{K},\xi \partial_{\xi}]\left(\begin{bmatrix} y_{0}(t)\\
y_{1}(t,\frac{\cdot}{\lambda(t)^{2}})\end{bmatrix}\right)\\
&+ \left(\frac{\lambda'(t)}{\lambda(t)}\right)^{2} \mathcal{K}\left(\mathcal{K}\left(\begin{bmatrix} y_{0}(t)\\
y_{1}(t,\frac{\cdot}{\lambda(t)^{2}})\end{bmatrix}\right)\right)\\
&:= \begin{bmatrix} F_{2,0}(t)\\
F_{2,1}(t,\eta)\end{bmatrix}\end{split}\end{equation}
and we use the notation $x_{i}$ to denote the $i+1$ entry of the vector $x=\begin{bmatrix}x_{0}\\
x_{1}\end{bmatrix}$ .
\subsection{The iteration space}\label{iterationspacesection}
The space in which we will solve \eqref{yeqn} is defined as follows. We define $Z$ to be the set of elements $\begin{bmatrix} y_{0}(t)\\
y_{1}(t,\omega)\end{bmatrix}$ such that $y_{0}:[T_{0},\infty) \rightarrow \mathbb{R}$, and $y_{1}$ is an (equivalence class) of measurable functions, $y_{1}:[T_{0},\infty) \times (0,\infty) \rightarrow \mathbb{R}$ satisfying
$$y_{0}(t) \in C^{1}_{t}([T_{0},\infty))$$
$$y_{1}(t,\omega) \frac{t^{2+\epsilon-\frac{3}{2}C_{l}}}{\log^{90}(t)}\lambda(t) \langle \omega \lambda(t)^{2}\rangle \sqrt{\rho(\omega \lambda(t)^{2})} \in C^{0}_{t}([T_{0},\infty), L^{2}(d\omega))$$
$$\partial_{t}y_{1}(t,\omega) \frac{t^{3+\epsilon-\frac{3}{2}C_{l}}}{\log^{90}(t)} \lambda(t) \langle \sqrt{\omega \lambda(t)^{2}}\rangle \sqrt{\rho(\omega \lambda(t)^{2})} \in C^{0}_{t}([T_{0},\infty), L^{2}(d\omega))$$
and $||\begin{bmatrix} y_{0}\\
y_{1}\end{bmatrix}||_{Z} < \infty$
where
\begin{equation}\label{znormdef}\begin{split}||\begin{bmatrix} y_{0}\\
y_{1}\end{bmatrix}||_{Z} &= \sup_{t \geq T_{0}}\left(\frac{t^{2+\epsilon-\frac{3}{2}C_{l}}}{\log^{90}(t)} \left(|y_{0}(t)| +\lambda(t) || \langle \omega \lambda(t)^{2}\rangle y_{1}(t,\omega)||_{L^{2}(\rho(\omega \lambda(t)^{2}) d\omega)} \right) \right.\\
&\left. + \frac{t^{3+\epsilon-\frac{3}{2}C_{l}}}{\log^{90}(t)}\left(|y_{0}'(t)|+ \lambda(t) ||\partial_{t}y_{1}(t,\omega) ||_{L^{2}(\rho(\omega \lambda(t)^{2}) d\omega)}+C_{Z}^{-1}\lambda(t) ||\sqrt{\omega \lambda(t)^{2}} \partial_{t}y_{1}(t,\omega)||_{L^{2}(\rho(\omega\lambda(t)^{2})d\omega)}\right)\right)\end{split}\end{equation} 
and $C_{Z}>0$ is otherwise arbitrary, and will be further constrained later on. Then, $(Z,||\cdot||_{Z})$ is a normed vector space. We remark that this space is similar to the iteration space used in a Yang-Mills paper of the author \cite{ym}. By directly estimating \eqref{F2def}, and using the boundedness of $\mathcal{K}$ and $[\mathcal{K},\xi\partial_{\xi}]$ from Proposition 5.2 of \cite{kstslw}, we obtain the following lemma.

\begin{lemma} \label{f2estlemma}
There exists $C>0$, independent of $T_{0}$ and $C_{Z}$, such that, for all $y \in Z$,
\begin{equation}\label{f21est}\begin{split}&|F_{2,0}(t)|+ \lambda(t) ||F_{2,1}(t,\omega\lambda(t)^{2})||_{L^{2}(\rho(\omega \lambda(t)^{2}) d\omega)}\\
&\leq2 \cdot \frac{\log^{90}(t) ||y||_{Z}}{t^{4+\epsilon-\frac{3}{2}C_{l}}} \begin{aligned}[t]&\left(C_{2}(1+||\mathcal{K}||_{\mathcal{L}(L^{2,0}_{\rho})})+M^{2}\left(||\mathcal{K}||_{\mathcal{L}(L^{2,0}_{\rho})}+2||[\mathcal{K},\xi\partial_{\xi}]||_{\mathcal{L}(L^{2,0}_{\rho})} + ||\mathcal{K}||_{\mathcal{L}(L^{2,0}_{\rho})}^{2}\right)\right.\\
&\left.+2 M \left(1+||\mathcal{K}||_{\mathcal{L}(L^{2,0}_{\rho})}\right)\right)\end{aligned}\end{split}\end{equation}
\begin{equation}\begin{split}\label{f21weightest}\lambda(t)||\sqrt{\omega \lambda(t)^{2}} F_{2,1}(t,\omega\lambda(t)^{2})||_{L^{2}(\rho(\omega\lambda(t)^{2})d\omega)} &\leq \frac{C \log^{90}(t) ||y||_{Z}}{t^{4+\epsilon-\frac{3}{2}C_{l}}}\\
&+\frac{2 M ||y||_{Z} \log^{90}(t)}{t^{4+\epsilon-\frac{3}{2}C_{l}}} \left(C_{Z}(1+||\mathcal{K}||_{\mathcal{L}(L^{2,\frac{1}{2}}_{\rho})})+||\mathcal{K}||_{\mathcal{L}(L^{2,\frac{1}{2}}_{\rho})}\right) \end{split}\end{equation}

\end{lemma}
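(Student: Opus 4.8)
The plan is to reduce everything to the boundedness of $\mathcal{K}$ and $[\mathcal{K},\xi\partial_{\xi}]$ on the spaces $L^{2,\alpha}_{\rho}$ (Proposition 5.2 of \cite{kstslw}), combined with the symbol bounds \eqref{lambdasymb} on $\lambda$. Inspecting \eqref{F2def}, $F_{2}$ is a sum of six contributions, each of the form $c(t)\,\mathcal{O}(Q(t))$ (the two components of $F_{2}$ being the discrete and continuous parts of this sum), where: $c(t)$ is one of $\frac{\lambda''(t)}{\lambda(t)}$, $\frac{2\lambda'(t)}{\lambda(t)}$, $\left(\frac{\lambda'(t)}{\lambda(t)}\right)^{2}$, $\frac{\lambda''(t)}{\lambda(t)}+\left(\frac{\lambda'(t)}{\lambda(t)}\right)^{2}$; $\mathcal{O}$ is the identity, $\mathcal{K}$, $[\mathcal{K},\xi\partial_{\xi}]$, or $\mathcal{K}\circ\mathcal{K}$; and $Q(t)$ is either the profile $P(t):=\left(y_{0}(t),\,\xi\mapsto y_{1}(t,\xi\lambda(t)^{-2})\right)$ (which appears whenever $c(t)=O(t^{-2})$) or $P'(t):=\left(y_{0}'(t),\,\xi\mapsto\partial_{1}y_{1}(t,\xi\lambda(t)^{-2})\right)$ (which appears with $c(t)=\frac{2\lambda'}{\lambda}$). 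From \eqref{lambdasymb} and the definition $M=\max\{C_{u},C_{l}\}$ one has $\left|\frac{\lambda'(t)}{\lambda(t)}\right|\le\frac{M}{t}$, $\left|\frac{\lambda''(t)}{\lambda(t)}\right|\le\frac{C_{2}}{t^{2}}$, hence $\left(\frac{\lambda'(t)}{\lambda(t)}\right)^{2}\le\frac{M^{2}}{t^{2}}$ and $\left|\frac{\lambda''(t)}{\lambda(t)}\right|+\left(\frac{\lambda'(t)}{\lambda(t)}\right)^{2}\le\frac{C_{2}+M^{2}}{t^{2}}$; all of these are independent of $T_{0}$ and $C_{Z}$.

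The next step is the dictionary between the weighted norms in \eqref{znormdef} and the $L^{2,\alpha}_{\rho}$-norms. For a two-component object $g=(g_{0},\,\xi\mapsto g_{1}(\xi))$, the substitution $\xi=\omega\lambda(t)^{2}$ transforms $\rho(\xi)\,d\xi$ correctly and, up to the harmless equivalence (factor $\sqrt2$) between the $\ell^{2}$-combination used in $\|\cdot\|_{L^{2,\alpha}_{\rho}}$ and the $\ell^{1}$-combination used in \eqref{znormdef}, gives $\|g\|_{L^{2,\alpha}_{\rho}}\simeq|g_{0}|+\lambda(t)\big\|\langle\omega\lambda(t)^{2}\rangle^{\alpha}g_{1}(\omega\lambda(t)^{2})\big\|_{L^{2}(\rho(\omega\lambda(t)^{2})\,d\omega)}$. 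Two consequences, used repeatedly:
\[\|P(t)\|_{L^{2,0}_{\rho}}\le\|P(t)\|_{L^{2,1}_{\rho}}\lesssim\frac{\log^{90}(t)}{t^{2+\epsilon-\frac{3}{2}C_{l}}}\|y\|_{Z},\qquad\|P'(t)\|_{L^{2,0}_{\rho}}\lesssim\frac{\log^{90}(t)}{t^{3+\epsilon-\frac{3}{2}C_{l}}}\|y\|_{Z},\]
and, using $\langle\xi\rangle^{1/2}\le1+\xi^{1/2}$ together with the $C_{Z}^{-1}$-weighted term in \eqref{znormdef},
\[\|P'(t)\|_{L^{2,1/2}_{\rho}}\le|y_{0}'(t)|+\lambda(t)\|\partial_{1}y_{1}(t,\omega)\|_{L^{2}(\rho\,d\omega)}+\lambda(t)\|\sqrt{\omega\lambda(t)^{2}}\,\partial_{1}y_{1}(t,\omega)\|_{L^{2}(\rho\,d\omega)}\lesssim(1+C_{Z})\frac{\log^{90}(t)}{t^{3+\epsilon-\frac{3}{2}C_{l}}}\|y\|_{Z}.\]

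For \eqref{f21est}, the left side equals (after $\xi=\omega\lambda(t)^{2}$) the $\ell^{1}$-analogue of $\|(F_{2,0}(t),\,F_{2,1}(t,\cdot))\|_{L^{2,0}_{\rho}}$, so applying the triangle inequality over the six terms, the bounds $\|\mathcal{O}\|_{\mathcal{L}(L^{2,0}_{\rho})}\in\{1,\,\|\mathcal{K}\|_{\mathcal{L}(L^{2,0}_{\rho})},\,\|[\mathcal{K},\xi\partial_{\xi}]\|_{\mathcal{L}(L^{2,0}_{\rho})},\,\|\mathcal{K}\|_{\mathcal{L}(L^{2,0}_{\rho})}^{2}\}$, the coefficient bounds above, and the profile bounds (each $O(t^{-2})$-coefficient paired with $P$, each $\frac{2\lambda'}{\lambda}$-coefficient paired with $P'$), every term is bounded by $\frac{\log^{90}(t)}{t^{4+\epsilon-\frac{3}{2}C_{l}}}\|y\|_{Z}$ times, respectively, $C_{2}$, $2M$, $2M\|\mathcal{K}\|_{\mathcal{L}(L^{2,0}_{\rho})}$, $(C_{2}+M^{2})\|\mathcal{K}\|_{\mathcal{L}(L^{2,0}_{\rho})}$, $2M^{2}\|[\mathcal{K},\xi\partial_{\xi}]\|_{\mathcal{L}(L^{2,0}_{\rho})}$, $M^{2}\|\mathcal{K}\|_{\mathcal{L}(L^{2,0}_{\rho})}^{2}$; regrouping the sum as $C_{2}(1+\|\mathcal{K}\|)+2M(1+\|\mathcal{K}\|)+M^{2}(\|\mathcal{K}\|+2\|[\mathcal{K},\xi\partial_{\xi}]\|+\|\mathcal{K}\|^{2})$ and absorbing the $\sqrt2$-loss into the prefactor $2$ reproduces \eqref{f21est}. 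For the weighted bound \eqref{f21weightest} the left side is $\le\|(F_{2,0}(t),F_{2,1}(t,\cdot))\|_{L^{2,1/2}_{\rho}}$; the four terms whose coefficient is $O(t^{-2})$ act on $P$ and, since $\|P\|_{L^{2,1/2}_{\rho}}\le\|P\|_{L^{2,1}_{\rho}}\lesssim\frac{\log^{90}(t)}{t^{2+\epsilon-\frac{3}{2}C_{l}}}\|y\|_{Z}$, are each $\le C\frac{\log^{90}(t)}{t^{4+\epsilon-\frac{3}{2}C_{l}}}\|y\|_{Z}$ with $C$ depending only on $C_{2},M$ and the fixed norms of $\mathcal{K},[\mathcal{K},\xi\partial_{\xi}]$ on $L^{2,1/2}_{\rho}$; the remaining two terms have coefficient $\frac{2\lambda'}{\lambda}$ and act on $P'$, so using $\|P'\|_{L^{2,1/2}_{\rho}}$ above and $\|\mathcal{K}\|_{\mathcal{L}(L^{2,1/2}_{\rho})}$ they contribute at most $2M(1+C_{Z})(1+\|\mathcal{K}\|_{\mathcal{L}(L^{2,1/2}_{\rho})})\frac{\log^{90}(t)}{t^{4+\epsilon-\frac{3}{2}C_{l}}}\|y\|_{Z}$, whose $C_{Z}$-part is exactly the displayed term and whose $C_{Z}$-free part (a fixed multiple of $M$) is absorbed into $C$.

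\emph{Main obstacle.} There is no analytic subtlety here — the lemma is a bookkeeping step whose purpose is to feed the contraction argument — but two points must be handled with care. First, the change of variables $\xi=\omega\lambda(t)^{2}$ must be carried out so that the spectral density $\rho$ and the weights $\langle\cdot\rangle^{\alpha}$ are transported exactly, since the only available mapping properties of $\mathcal{K}$ and $[\mathcal{K},\xi\partial_{\xi}]$ are on $L^{2,\alpha}_{\rho}$; getting the powers of $\lambda(t)$ right is what makes the $Z$-norm collapse onto $\|P\|_{L^{2,\alpha}_{\rho}}$ and $\|P'\|_{L^{2,\alpha}_{\rho}}$. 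Second, and more importantly for the overall argument, one must verify that the parameter $C_{Z}$ (which is chosen large only at the very end, when closing the fixed point) enters these estimates only through the $\sqrt{\omega\lambda(t)^{2}}\,\partial_{1}y_{1}$ slot — hence only in \eqref{f21weightest}, and always multiplied by $M$. This is precisely what allows the smallness hypothesis \eqref{cofconstr2} on $M(1+\|\mathcal{K}\|_{\mathcal{L}(L^{2,1/2}_{\rho})})$ to eventually beat the large choice of $C_{Z}$, so the split $\langle\xi\rangle^{1/2}\le1+\xi^{1/2}$ and the decision of which pieces to absorb into $C$ must be done in a way that preserves this structure.
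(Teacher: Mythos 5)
Your proof is correct and takes essentially the same route the paper does. The paper's own proof of this lemma is a single sentence (``By directly estimating \eqref{F2def}, and using the boundedness of $\mathcal{K}$ and $[\mathcal{K},\xi\partial_{\xi}]$ from Proposition 5.2 of \cite{kstslw}''), and what you have written is a careful elaboration of that bookkeeping: the term-by-term decomposition of \eqref{F2def} into six contributions $c(t)\,\mathcal{O}(Q(t))$ with $Q\in\{P,P'\}$, the coefficient bounds $|\lambda'/\lambda|\le M/t$, $|\lambda''/\lambda|\le C_{2}/t^{2}$ from \eqref{lambdasymb}, the change of variables $\xi=\omega\lambda(t)^{2}$ turning the weighted $\lambda(t)\|\cdot\|_{L^{2}(\rho(\omega\lambda^{2})d\omega)}$ quantities into genuine $L^{2,\alpha}_{\rho}$-norms on which $\mathcal{K}$ and $[\mathcal{K},\xi\partial_{\xi}]$ act, and the identification of exactly which slot of the $Z$-norm (and hence the factor $C_{Z}$) each piece draws on. Your regrouping of the six coefficients reproduces the bracket in \eqref{f21est} exactly, and the factor $2$ dominates the $\sqrt{2}$ loss from passing between the $\ell^{1}$-combination used in the $Z$-norm and the $\ell^{2}$-combination in $\|\cdot\|_{L^{2,0}_{\rho}}$, so \eqref{f21est} is recovered with the stated constant. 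You also correctly flag the point that matters for the downstream contraction: $C_{Z}$ enters only via $\|P'\|_{L^{2,1/2}_{\rho}}$, always alongside a factor of $M$, which is what \eqref{cofconstr2} is designed to control.

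One minor remark on \eqref{f21weightest}: for the identity term $\frac{2\lambda'}{\lambda}P'$ you route everything through $\|P'\|_{L^{2,1/2}_{\rho}}\lesssim(1+C_{Z})\,(\cdots)$, so your two $P'$-terms together give $2M(1+C_{Z})(1+\|\mathcal{K}\|_{\mathcal{L}(L^{2,1/2}_{\rho})})$. The paper's slightly sharper form $2M\bigl(C_{Z}(1+\|\mathcal{K}\|)+\|\mathcal{K}\|\bigr)$ arises by noting that for the \emph{identity} term one can feed the $\sqrt{\omega\lambda(t)^{2}}\,\partial_{t}y_{1}$ slot of the $Z$-norm directly (giving only $2MC_{Z}$, with no $1+C_{Z}$), and only the $\mathcal{K}$-term requires passing to $L^{2,1/2}_{\rho}$ via $\langle\xi\rangle^{1/2}\le 1+\sqrt{\xi}$. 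Your version differs from the paper's displayed bound by an additive $2M$, which is $C_{Z}$-independent and is absorbed into $C$, so both forms are valid and the $C_{Z}$-dependent part, which is the one that must beat the smallness constraint \eqref{cofconstr2}, is $2MC_{Z}(1+\|\mathcal{K}\|_{\mathcal{L}(L^{2,1/2}_{\rho})})$ in both.
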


Next, we understand the relation between $v$ and $y$ in \eqref{vintermsofy} by proving the following.
\begin{lemma}\label{vytranslemma} For all $y_{d} \in \mathbb{R}$, $y: (0,\infty)\rightarrow \mathbb{R}$ satisfying $y(\xi) \langle \xi \rangle \in L^{2}(\rho(\xi) d\xi)$, if
\begin{equation}\label{vfromy} w(R) =\frac{1}{R}\left( y_{d} \phi_{d}(R) + \int_{0}^{\infty} \phi(R,\xi)y(\xi) \rho(\xi) d\xi\right), \quad R>0\end{equation}
then, for $R>0$,
\begin{equation}\label{voverrest}|w(R)| \leq \frac{C}{\langle R \rangle}\left(|y_{d}| +||\langle \xi \rangle y(\xi)||_{L^{2}(\rho(\xi) d\xi)}\right)\end{equation}
\begin{equation}\label{l2isom} || w(R)||_{L^{2}(R^{2} dR)} = \left(|y_{d}|^{2}+|| y(\xi)||_{L^{2}( \rho(\xi) d\xi)}^{2}\right)^{1/2}\end{equation}
\begin{equation}\label{sobolevtrans}C^{-1} ||w(|\cdot|)||_{H^{1}(\mathbb{R}^{3})}  \leq  \left(|y_{d}|^{2} + ||y(\xi) \sqrt{\langle \xi \rangle}||_{L^{2}(\rho(\xi) d\xi)}^{2}\right)^{1/2} \leq C ||w(|\cdot|)||_{H^{1}(\mathbb{R}^{3})} \end{equation}
\end{lemma}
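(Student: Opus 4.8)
All three assertions concern the map $(y_d,y)\mapsto w$ defined in \eqref{vfromy}, which, after multiplying by $R$, is exactly the inverse of the distorted Fourier transform of Proposition 4.3 of \cite{kstslw}: setting $g(R)=R\,w(R)$, one has $g=\mathcal{F}^{-1}(y_d,y)$. Accordingly, the plan is to obtain \eqref{l2isom} from the Plancherel identity for $\mathcal{F}$, to obtain \eqref{sobolevtrans} from the fact that $\mathcal{F}$ diagonalizes $L$ together with the comparison between the $L^{2,\alpha}_\rho$ spaces and Sobolev spaces from Lemma 2.7 of \cite{kstslw} (cited in Remark 4), and to obtain the pointwise bound \eqref{voverrest} from the WKB-type pointwise estimates on the generalized eigenfunctions $\phi(R,\xi)$ and on the eigenfunction $\phi_d(R)$ established in Section 4 of \cite{kstslw}.

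For \eqref{l2isom}: by hypothesis $y\in L^2((0,\infty),\rho\,d\xi)$ (since $\langle\xi\rangle\ge 1$), so the right-hand side of \eqref{vfromy}, applied to $g=R\,w$, converges, and Proposition 4.3 of \cite{kstslw} — which asserts that $\mathcal{F}$ is unitary from $L^2(dR)$ onto $\mathbb{R}\oplus L^2(\rho\,d\xi)$ with inverse given by that formula — yields $\|g\|_{L^2(dR)}^2=|y_d|^2+\|y\|_{L^2(\rho\,d\xi)}^2$. Since $\|g\|_{L^2(dR)}^2=\int_0^\infty R^2|w(R)|^2\,dR=\|w\|_{L^2(R^2\,dR)}^2$, this is precisely \eqref{l2isom}.

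For \eqref{sobolevtrans}: first, for the radial function $w(|\cdot|)$ one has $\|w(|\cdot|)\|_{H^1(\mathbb{R}^3)}^2\sim\|w\|_{L^2(R^2\,dR)}^2+\|\partial_R w\|_{L^2(R^2\,dR)}^2$; writing $g=R\,w$ and integrating by parts (the boundary terms at $0$ and $\infty$ being controlled by a standard approximation argument, using $g(0)=0$), one gets $\|\partial_R w\|_{L^2(R^2\,dR)}^2=\|g'\|_{L^2(dR)}^2$, so that $\|w(|\cdot|)\|_{H^1(\mathbb{R}^3)}^2\sim\|g\|_{H^1((0,\infty))}^2$. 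On the other hand, $g=\mathcal{F}^{-1}(y_d,y)$, and since $\mathcal{F}$ diagonalizes $L$, which differs from $-\partial_R^2$ only by a bounded, decaying potential, the quantity $|y_d|^2+\|y(\xi)\sqrt{\langle\xi\rangle}\|_{L^2(\rho\,d\xi)}^2$ is, by Lemma 2.7 of \cite{kstslw}, comparable to $\|g\|_{H^1((0,\infty))}^2$. Chaining these comparisons gives \eqref{sobolevtrans}.

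Finally, \eqref{voverrest} is the step I expect to require the most care. The plan is to split the spectral integral $\int_0^\infty\phi(R,\xi)y(\xi)\rho(\xi)\,d\xi$ at $\xi=R^{-2}$ and apply Cauchy--Schwarz in the measure $\rho\,d\xi$ against the weight $\langle\xi\rangle$, reducing the claim to the pointwise bounds $\int_0^\infty\frac{|\phi(R,\xi)|^2}{\langle\xi\rangle^2}\rho(\xi)\,d\xi\le C\,\frac{R^2}{\langle R\rangle^2}$ and $|\phi_d(R)|\le C\,\frac{R}{\langle R\rangle}$. The latter follows from the regularity of $\phi_d$ at the origin (so $\phi_d(R)\sim cR$ near $R=0$) together with its decay at infinity. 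For the former, one uses the structure of $\phi(R,\xi)$ from \cite{kstslw}: in the region $R\sqrt\xi\lesssim 1$ one has $|\phi(R,\xi)|\lesssim\frac{R}{\langle R\rangle}$ (matching the behavior of $2R\phi_0(R)$), while in the oscillatory region $R\sqrt\xi\gtrsim 1$ the spectral normalization gives $\rho(\xi)|\phi(R,\xi)|^2\lesssim 1$; combining these with $\rho(\xi)\lesssim\sqrt\xi$ and $\int_{R^{-2}}^\infty\langle\xi\rangle^{-2}\,d\xi\lesssim\min\{R^2,1\}$, a short case check separating $R\lesssim 1$ from $R\gtrsim 1$ yields the stated bound, hence \eqref{voverrest}. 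The only genuinely delicate point is the bookkeeping of the powers of $R$, $\xi$, and of the density $\rho$ across the elliptic and oscillatory regimes; everything else is a direct invocation of the spectral theory of \cite{kstslw}.
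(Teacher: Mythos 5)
Your proposal is correct and follows essentially the same route as the paper: \eqref{l2isom} is read off from the unitarity of $\mathcal{F}$ in Proposition 4.3 of \cite{kstslw}, \eqref{sobolevtrans} comes from Lemma 2.7 of \cite{kstslw}, and \eqref{voverrest} is obtained by Cauchy--Schwarz together with the pointwise bounds on $\phi(R,\xi)$ from Propositions 4.4--4.5 of \cite{kstslw} (elliptic region $R^2\xi\lesssim 1$ gives $|\phi(R,\xi)|\lesssim R/\langle R\rangle$; oscillatory region gives $|\phi(R,\xi)|\lesssim |a(\xi)|$, equivalently $\rho|\phi|^2\lesssim 1$) and the regularity/decay of $\phi_d$. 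Your intermediate identity $\|\partial_R w\|_{L^2(R^2\,dR)}=\|g'\|_{L^2(dR)}$ for $g=Rw$ is a slightly more explicit way of reducing the $H^1(\mathbb{R}^3)$ comparison to a one-dimensional statement, but it leads to the same invocation of Lemma 2.7, so this is not a genuinely different argument.
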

\begin{proof}
From Proposition 4.5 of \cite{kstslw}, there exists some constant $C_{1}>1$ such that, for all $R^{2}\xi \geq C_{1}$,
$$\phi(R,\xi) = 2 \text{Re}\left(a(\xi) f_{+}(R,\xi)\right)$$
where
$$f_{+}(R,\xi) = e^{i R \sqrt{\xi}} \sigma(R \sqrt{\xi},R)$$
Using Proposition 4.5 of \cite{kstslw}, we have
$$|\phi(R,\xi)| \leq C|a(\xi)|, \quad R^{2}\xi \geq C_{1}$$
When $R^{2}\xi \leq C_{1}$, we use Proposition 4.4 of \cite{kstslw} to write
$$\phi(R,\xi) = \widetilde{\phi_{0}}(R) + \frac{1}{R}\sum_{j=1}^{\infty} (R^{2}z)^{j} \phi_{j}(R^{2})$$
where 
$$|\phi_{j}(R)| \leq \frac{C^{j} R}{(j-1)! \langle R \rangle^{1/2}}$$
This gives, for $R^{2}\xi \leq C_{1}$,
$$|\phi(R,\xi)| \leq \frac{C R}{\langle R \rangle}$$
Next, we use the discussion preceding Lemma 4.2 of \cite{kstslw}, which notes that $\phi_{d} \in C^{\infty}([0,\infty))$, $\phi_{d}(0)=0$, and $\phi_{d}$ is exponentially decaying.
Directly inserting these estimates into \eqref{vfromy}, gives \eqref{voverrest}.  Finally, \eqref{l2isom} is the $L^{2}$ isometry property of $\mathcal{F}$, and \eqref{sobolevtrans} follows from Lemma 2.7 of \cite{kstslw}.
\end{proof}
Now, we can estimate $F_{3}$ (defined in \eqref{f3def}).
\begin{lemma} There exists $C>0$ so that for all $y_{1},y_{2} \in \overline{B_{1}(0)} \subset Z$ of the form 
$$y_{i} := \begin{bmatrix} y_{0,i}(t)\\
y_{1,i}(t,\omega)\end{bmatrix} , \quad i=1,2$$
if 
$$v_{i}(t,r) = \frac{\lambda(t)}{r}\left(y_{0,i}(t) \phi_{d}(\frac{r}{\lambda(t)})+\int_{0}^{\infty} \phi(\frac{r}{\lambda(t)},\xi) y_{1,i}(t,\frac{\xi}{\lambda(t)^{2}}) \rho(\xi) d\xi\right), \quad r>0$$
then,
\begin{equation}\label{f3estlemma}\begin{split}||F_{3}(v_{2})-F_{3}(v_{1})||_{L^{2}(r^{2} dr)} & \leq \frac{2 \cdot \log^{90}(t)}{t^{2+\epsilon-\frac{3}{2}C_{l}}} \lambda(t)^{3/2} ||y_{1}-y_{2}||_{Z}\\
&\cdot\begin{aligned}[t]&\left(20||Q_{\lambda}(r)^{3}u_{ansatz}||_{L^{\infty}}\right.\\
&\left.+C\left(||Q_{\lambda}(r)^{2}u_{ansatz}^{2}||_{L^{\infty}}+||u_{ansatz}||_{L^{\infty}}^{4} + \frac{\log^{360}(t)}{t^{8+4\epsilon-6C_{l}}} + \frac{\log^{90}(t)}{\lambda(t)^{3/2} t^{2+\epsilon-\frac{3}{2}C_{l}}}\right)\right)\end{aligned}\end{split}\end{equation}
\begin{equation}\begin{split}&||\partial_{r}(F_{3}(v_{2})-F_{3}(v_{1}))||_{L^{2}(r^{2}dr)}\\
&\leq \frac{C \log^{90}(t) ||y_{1}-y_{2}||_{Z}}{t^{2+\epsilon-\frac{3}{2}C_{l}}} \begin{aligned}[t]&(\lambda(t)^{3/2}\begin{aligned}[t]&\left(||\partial_{r}Q_{\lambda}Q_{\lambda}^{2}u_{ansatz}||_{L^{\infty}_{r}} + \frac{||Q_{\lambda}^{3}u_{ansatz}||_{L^{\infty}_{r}}}{\lambda(t)} + ||\partial_{r}Q_{\lambda}u_{ansatz}^{3}||_{L^{\infty}_{r}}\right.\\
&\left.+||Q_{\lambda}^{3} u_{an,r,0}||_{L^{\infty}_{r}} + ||u_{ansatz}^{3}u_{an,r,0}||_{L^{\infty}_{r}} + \sqrt{\lambda(t)} ||u_{ansatz}||_{L^{\infty}_{r}}^{4}\right.\\
&\left.+\frac{\left(||Q_{\lambda}^{3} u_{an,r,1}||_{L^{2}(r^{2} dr)} + ||u_{ansatz}||_{L^{\infty}_{r}}^{3} ||u_{an,r,1}||_{L^{2}(r^{2} dr)}\right)}{\lambda(t)^{3/2}}\right)\end{aligned}\\
&\left.+\frac{\log^{270}(t)}{t^{6+3\epsilon-\frac{9}{2}C_{l}}} \left(\lambda(t)^{3/2} ||\frac{u_{an,r,0}}{\langle \frac{r}{\lambda(t)}\rangle^{3}}||_{L^{\infty}_{r}} + ||u_{an,r,1}||_{L^{2}(r^{2} dr)}\right)\right.\\
&+\frac{\log^{360}(t) \sqrt{\lambda(t)}}{t^{8+4\epsilon-6C_{l}}} + \frac{\log^{90}(t)}{t^{2+\epsilon-\frac{3}{2}C_{l}}}\left(\frac{1}{\lambda(t)} + \sqrt{\lambda(t)} ||u_{ansatz}||_{L^{\infty}_{r}}^{3}\right))\end{aligned}\end{split}\end{equation}

\end{lemma}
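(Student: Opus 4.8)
The plan is to exploit the fact that $F_{3}(v)=L_{1}(v)+N(v)$, defined in \eqref{f3def}, is a polynomial in $v$ of degree $5$ with vanishing constant term whose coefficients are built from $Q_{\lambda(t)}$ and $u_{ansatz}$, together with the transference estimates of Lemma \ref{vytranslemma}. First I would record, for $i=1,2$, the relevant consequences of Lemma \ref{vytranslemma}. Writing $v_{i}(t,r)=w_{i}(r/\lambda(t))$ with $w_{i}$ as in \eqref{vfromy} (taking $y_{d}=y_{0,i}(t)$ and $y(\xi)=y_{1,i}(t,\xi\lambda(t)^{-2})$), the pointwise bound \eqref{voverrest} together with the change of variables $\xi=\omega\lambda(t)^{2}$ gives $|v_{i}(t,r)|\leq \frac{C}{\langle r/\lambda(t)\rangle}(|y_{0,i}(t)|+\lambda(t)||\langle\omega\lambda(t)^{2}\rangle y_{1,i}(t,\omega)||_{L^{2}(\rho(\omega\lambda(t)^{2})d\omega)})\leq \frac{C\log^{90}(t)}{t^{2+\epsilon-\frac{3}{2}C_{l}}\langle r/\lambda(t)\rangle}$, since $||y_{i}||_{Z}\leq1$; the $L^{2}$ isometry \eqref{l2isom} gives $||v_{2}-v_{1}||_{L^{2}(r^{2}dr)}\leq C\lambda(t)^{3/2}\frac{\log^{90}(t)}{t^{2+\epsilon-\frac{3}{2}C_{l}}}||y_{1}-y_{2}||_{Z}$; and the Sobolev transference \eqref{sobolevtrans} gives $||\partial_{r}(v_{2}-v_{1})||_{L^{2}(r^{2}dr)}\leq C\sqrt{\lambda(t)}\frac{\log^{90}(t)}{t^{2+\epsilon-\frac{3}{2}C_{l}}}||y_{1}-y_{2}||_{Z}$, with the analogous bounds for $v_{i}$ and $\partial_{r}v_{i}$ separately (with $||y_{i}||_{Z}\leq1$ in place of $||y_{1}-y_{2}||_{Z}$). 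I would also use $|Q_{\lambda(t)}(r)^{j}|\leq C\lambda(t)^{-j/2}\langle r/\lambda(t)\rangle^{-j}$, which is immediate from \eqref{qlambdadef}.

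Next comes the algebra. Using $L_{1}(v_{2})-L_{1}(v_{1})=(-5(Q_{\lambda}+u_{ansatz})^{4}+5Q_{\lambda}^{4})(v_{2}-v_{1})$ with $-5(Q_{\lambda}+u_{ansatz})^{4}+5Q_{\lambda}^{4}=-20Q_{\lambda}^{3}u_{ansatz}-30Q_{\lambda}^{2}u_{ansatz}^{2}-20Q_{\lambda}u_{ansatz}^{3}-5u_{ansatz}^{4}$, and, for each $2\leq k\leq5$, the identity $v_{2}^{k}-v_{1}^{k}=(v_{2}-v_{1})\sum_{j=0}^{k-1}v_{2}^{j}v_{1}^{k-1-j}$ inside $N$, one writes $F_{3}(v_{2})-F_{3}(v_{1})=(v_{2}-v_{1})\,G(t,r)$, where $G$ is an explicit finite sum of monomials of the form (a possibly empty product of $Q_{\lambda}$ and $u_{ansatz}$ factors) $\times v_{1}^{\alpha}v_{2}^{\beta}$, the total $v$-degree being at most $4$. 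For the $L^{2}(r^{2}dr)$ bound I then place $v_{2}-v_{1}$ in $L^{2}(r^{2}dr)$ and all of $G$ in $L^{\infty}$, estimating the coefficient–monomial products by \eqref{uaninftyest}, \eqref{q3uaninftyest}, \eqref{q2uansatz2linftyest}, the pointwise bound on $v_{i}$ above, the bound on $Q_{\lambda}^{j}$, and elementary AM--GM inequalities to reduce mixed terms such as $Q_{\lambda}u_{ansatz}^{3}$ or $Q_{\lambda}^{3}v_{i}$ to the quantities already listed (for instance $|Q_{\lambda}^{3}v_{i}|\leq C\lambda(t)^{-3/2}\langle r/\lambda(t)\rangle^{-4}\frac{\log^{90}(t)}{t^{2+\epsilon-\frac{3}{2}C_{l}}}$). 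The isolated $20||Q_{\lambda}^{3}u_{ansatz}||_{L^{\infty}}$ in \eqref{f3estlemma} is precisely the leading $-20Q_{\lambda}^{3}u_{ansatz}(v_{2}-v_{1})$ contribution of $L_{1}(v_{2})-L_{1}(v_{1})$, and everything else is absorbed into the $C(\cdots)$ term.

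For the $\partial_{r}$ estimate I would differentiate $F_{3}(v_{2})-F_{3}(v_{1})=(v_{2}-v_{1})G$ by the product rule. The derivative lands either on (i) the coefficient of a monomial, producing $\partial_{r}Q_{\lambda}$ (bounded by $C\lambda(t)^{-3/2}\langle r/\lambda(t)\rangle^{-2}$) or $\partial_{r}u_{ansatz}=u_{an,r,0}+u_{an,r,1}$ as in \eqref{uanr0def}; on (ii) the factor $v_{2}-v_{1}$, giving $\partial_{r}(v_{2}-v_{1})$; or on (iii) a power $v_{i}^{\alpha}$, giving $\partial_{r}v_{i}$ times a lower-degree $v$-monomial. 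In cases (ii) and (iii) I assign the unique $L^{2}(r^{2}dr)$ norm to $\partial_{r}(v_{2}-v_{1})$, respectively $\partial_{r}v_{i}$, via \eqref{sobolevtrans}, and place everything else in $L^{\infty}$. In case (i), if the derivative produces $u_{an,r,0}$ I keep $v_{2}-v_{1}$ in $L^{2}(r^{2}dr)$ and place $u_{an,r,0}$ (paired as needed with $Q_{\lambda}$ or $u_{ansatz}$ factors) in $L^{\infty}$ via \eqref{drqq2uaninftyest}, \eqref{bracket3uanr0inftyest}, \eqref{uan3uanr0inftyest}; if it produces $u_{an,r,1}$, which is controlled only in $L^{2}(r^{2}dr)$ by \eqref{uanr12est}, I instead place $u_{an,r,1}$ (or $Q_{\lambda}^{3}u_{an,r,1}$, or $u_{ansatz}^{3}u_{an,r,1}$) in $L^{2}(r^{2}dr)$ and $v_{2}-v_{1}$ in $L^{\infty}$. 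Matching each of the resulting finitely many terms against the right-hand side of the claimed inequality is then routine. The main obstacle is exactly this bookkeeping: once $F_{3}$ is expanded and differentiated there are many monomials, and for each one must choose the Hölder pairing so that the available $L^{\infty}$ bounds on the $Q_{\lambda}$-/$u_{ansatz}$-coefficients (which carry different negative powers of $\lambda(t)$; compare $Q_{\lambda}^{3}u_{ansatz}$, $Q_{\lambda}^{2}u_{ansatz}^{2}$, and $u_{ansatz}^{4}$) combine with the $\lambda(t)^{3/2}$, $\sqrt{\lambda(t)}$, or $\lambda(t)^{-3/2}$ weights coming from the three transference bounds of Lemma \ref{vytranslemma} to reproduce the $\lambda(t)$- and $t$-powers recorded in the statement; the $u_{an,r,1}$ contributions, which must be kept in $L^{2}(r^{2}dr)$ rather than estimated pointwise, require the most care in their placement.
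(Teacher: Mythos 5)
The paper presents this lemma without a proof, but every ingredient you invoke — \eqref{voverrest}, \eqref{l2isom}, \eqref{sobolevtrans}, \eqref{uaninftyest}, \eqref{q3uaninftyest}, \eqref{q2uansatz2linftyest}, \eqref{drqq2uaninftyest}, \eqref{bracket3uanr0inftyest}, \eqref{uan3uanr0inftyest}, \eqref{uanr12est}, and the splitting $\partial_{r}u_{ansatz}=u_{an,r,0}+u_{an,r,1}$ from \eqref{uanr0def} — is furnished by the preceding lemmas precisely so that they slot into the Hölder pairings you describe, and the terms in your expansion correspond one-to-one to the terms recorded in \eqref{f3estlemma} and the $\partial_{r}$ estimate. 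Your approach is correct and is evidently the intended one.

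Two small remarks. First, when matching constants for the leading $L^{2}$ bound, note that \eqref{l2isom} already gives $\|v_{2}-v_{1}\|_{L^{2}(r^{2}dr)}\le \lambda(t)^{3/2}\frac{\log^{90}(t)}{t^{2+\epsilon-\frac{3}{2}C_{l}}}\|y_{1}-y_{2}\|_{Z}$ with constant $1$ (since $\sqrt{a^{2}+b^{2}}\le a+b$), so the isolated constant $2\cdot 20$ in the statement — which feeds into the factor $40$ in \eqref{cofconstr1} — is comfortably achieved, with the factor $2$ serving as slack for the remaining $C(\cdots)$ terms. Second, your organization of the $\partial_{r}$ estimate (cases (i)--(iii), with the $u_{an,r,1}$ contributions necessarily kept in $L^{2}(r^{2}dr)$ and compensated by putting $v_{2}-v_{1}$ in $L^{\infty}$) is exactly the bookkeeping that produces the $\lambda(t)^{-3/2}$ weights on the $u_{an,r,1}$ terms in the stated inequality; it would be worth making explicit, as you do only implicitly, that the decay $\langle r/\lambda(t)\rangle^{-1}$ from \eqref{voverrest} is what permits pairing $u_{an,r,0}$ against $\|\frac{u_{an,r,0}}{\langle r/\lambda(t)\rangle^{3}}\|_{L^{\infty}}$ when three $v$-factors are present.
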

We will also need a simple estimate regarding the density of the spectral measure, $\rho(\xi)$, which is proven by a direct application of Lemma 4.6 of \cite{kstslw}.
\begin{lemma}\label{rhoscalinglemma} There exists $C_{\rho}>0$ (independent of $T_{0}$) so that
$$\frac{\rho(\omega\lambda(x)^{2})}{\rho(\omega\lambda(t)^{2})} \leq C_{\rho} \left(\frac{\lambda(x)}{\lambda(t)}+\frac{\lambda(t)}{\lambda(x)}\right), \quad x,t \geq T_{0}$$
\end{lemma}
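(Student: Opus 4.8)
The final statement is Lemma \ref{rhoscalinglemma}, which claims a scaling bound for the density of the continuous spectral measure.

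\medskip

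The plan is to deduce this directly from Lemma 4.6 of \cite{kstslw}, which is recalled earlier in the excerpt: there exists $C_{\rho}>0$ such that
$$\frac{\rho(y)}{\rho(z)} \leq C_{\rho}\left(\sqrt{\frac{y}{z}}+\sqrt{\frac{z}{y}}\right), \quad y,z>0.$$
First I would fix $\omega>0$ and $x,t \geq T_{0}$, and apply this inequality with the particular choice $y = \omega\lambda(x)^{2}$ and $z = \omega\lambda(t)^{2}$. Since $\omega>0$, $\lambda(x)>0$, and $\lambda(t)>0$ (as $\lambda \in \Lambda$ is a positive function and $T_{0}>50$, so $\lambda$ is defined and positive on $[T_{0},\infty)$), both $y$ and $z$ are legitimate strictly positive arguments. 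The ratio $y/z$ simplifies as
$$\frac{y}{z} = \frac{\omega\lambda(x)^{2}}{\omega\lambda(t)^{2}} = \frac{\lambda(x)^{2}}{\lambda(t)^{2}},$$
so $\sqrt{y/z} = \lambda(x)/\lambda(t)$ and $\sqrt{z/y} = \lambda(t)/\lambda(x)$ (all quantities positive, so no absolute values are needed). Substituting gives exactly
$$\frac{\rho(\omega\lambda(x)^{2})}{\rho(\omega\lambda(t)^{2})} \leq C_{\rho}\left(\frac{\lambda(x)}{\lambda(t)}+\frac{\lambda(t)}{\lambda(x)}\right),$$
which is the claimed bound.

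\medskip

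The only point worth remarking on is the claim that $C_{\rho}$ is independent of $T_{0}$: this is immediate, since $C_{\rho}$ is the constant furnished by Lemma 4.6 of \cite{kstslw}, which depends only on the operator $L$ (equivalently, only on $Q_{1}$), and has nothing to do with the choice of $T_{0}$ made in the present construction. Thus $C_{\rho}$ can be taken to be precisely the constant appearing in the displayed inequality from \cite{kstslw} quoted in the introduction. There is no genuine obstacle here — the lemma is a one-line specialization of an already-cited estimate, recorded separately only because the specific form with arguments $\omega\lambda(x)^{2}$ and $\omega\lambda(t)^{2}$ is what gets used repeatedly in the fixed-point argument (e.g. when comparing weighted $L^{2}(\rho(\omega\lambda(t)^{2})d\omega)$ norms at different times, as in the definition of the space $Z$). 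If anything, the mild bookkeeping step is just confirming that $\omega\lambda(x)^{2}, \omega\lambda(t)^{2} \in (0,\infty)$ so that Lemma 4.6 of \cite{kstslw} applies, which is automatic.
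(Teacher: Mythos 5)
Your proof is correct and matches the paper's, which simply invokes Lemma 4.6 of \cite{kstslag} directly; substituting $y=\omega\lambda(x)^{2}$ and $z=\omega\lambda(t)^{2}$ so that the $\omega$ cancels in the ratio is exactly the intended one-line argument.
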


Define 
\begin{equation}\label{tdef}T\left(\begin{bmatrix}y_{0}\\
y_{1}\end{bmatrix}\right)(t,\omega) = \begin{bmatrix} e_{+}(t) \int_{t}^{\infty} \frac{F_{0}(w) \lambda(w)}{2 \sqrt{-\xi_{d}}} e_{-}(w) dw + e_{-}(t) \int_{T_{0}}^{t} \frac{F_{0}(w)\lambda(w)}{2 \sqrt{-\xi_{d}}} e_{+}(w) dw\\
\int_t^{\infty } \frac{\sin \left(\sqrt{\omega} (t-s)\right)}{\sqrt{\omega}} F_{1}(s,\omega)  ds \end{bmatrix}, \quad \begin{bmatrix}y_{0}\\
y_{1}\end{bmatrix} \in \overline{B_{1}(0)} \subset Z\end{equation}
where
$$F_{i}(t,\omega) =\begin{cases}\mathcal{F}\left(\left(\cdot\right)(F_{5}+F_{3})(t,\cdot\lambda(t))\right)_{0}+  F_{2,0}(t)-\frac{\lambda'(t)}{\lambda(t)} y_{0}'(t),\text{  } i=0\\
\mathcal{F}\left(\left(\cdot\right)(F_{5}+F_{3})(t,\cdot\lambda(t))\right)_{1}(\omega\lambda(t)^{2})+ F_{2,1}(t,\omega\lambda(t)^{2}),\text{  } i=1\end{cases}$$
$$e_{\pm}(t) = \exp \left(\pm \sqrt{-\xi_{d}} \int_{T_{\lambda}}^t \frac{1}{\lambda(s)} \, ds\right)$$
and $F_{3}$ is given by \eqref{f3def}, with $v$ given by \eqref{vintermsofy}. Lemmas \ref{f2estlemma}, \ref{f3estlemma}, \ref{f5estlemma}, and \eqref{cofconstr1} directly give
\begin{lemma}\label{rhslemma} There exists $C>0$ (\emph{independent} of $T_{0}$) so that, for all $y \in \overline{B}_{1}(0)$, and $s , t \geq T_{0}$,
$$|F_{0}(t)| \leq \frac{C \log^{90}(t)}{t^{4+\epsilon-\frac{3}{2}C_{l}}}$$
\begin{equation}\begin{split}&\lambda(s) ||F_{1}(s,\omega)||_{L^{2}(\rho(\omega \lambda(s)^{2}) d\omega)}< \frac{\log^{90}(s)}{s^{4+\epsilon-\frac{3}{2}C_{l}}}\begin{aligned}[t]&\left(\frac{C \log^{10}(s)}{s^{1-\frac{5}{2}C_{u}-\frac{3}{2}C_{l}}}+\frac{C \log^{90}(s)}{s^{\epsilon-3C_{l}}}\right.\left.+\frac{1}{12 \sqrt{C_{\rho}}}\right)\end{aligned}\end{split}\end{equation}
There exists $C_{1}>0$ (\emph{independent} of $T_{0}$ and $C_{Z}$) so that, for all $y \in \overline{B}_{1}(0)$, and $s  \geq T_{0}$,
\begin{equation} \lambda(s)||\sqrt{\omega\lambda(s)^{2}} F_{1}(s,\omega)||_{L^{2}(\rho(\omega\lambda(s)^{2})d\omega)} \leq \frac{C_{1} \log^{90}(s)}{s^{4+\epsilon-\frac{3}{2}C_{l}}}+ \frac{C_{Z} \log^{90}(s)}{24 \sqrt{C_{\rho}}s^{4+\epsilon-\frac{3}{2}C_{l}}}\end{equation}
\end{lemma}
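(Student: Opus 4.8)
The plan is to obtain all three bounds by directly combining Lemma~\ref{f2estlemma}, the $F_{3}$-estimates \eqref{f3estlemma} and their $\partial_{r}$-companion, Lemma~\ref{f5estlemma}, and the mapping properties of the distorted Fourier transform recorded in Lemma~\ref{vytranslemma}. The first step is a rescaling that converts the $L^{2}_{\rho}$-norms in the statement into norms of $(F_{5}+F_{3})(t,\cdot)$ on physical space. Applying $\mathcal{F}$ to the function $R\mapsto R(F_{5}+F_{3})(t,R\lambda(t))$ and using the isometry \eqref{l2isom}, one gets $|\mathcal{F}(\cdots)_{0}|^{2}+\|\mathcal{F}(\cdots)_{1}\|_{L^{2}(\rho\,d\xi)}^{2}=\lambda(t)^{-3}\|(F_{5}+F_{3})(t)\|_{L^{2}(r^{2}dr)}^{2}$ after the change of variables $r=R\lambda(t)$, while \eqref{sobolevtrans} (equivalently Lemma~2.7 of~\cite{kstslw}) controls $\|\sqrt{\langle\xi\rangle}\,\mathcal{F}(\cdots)_{1}\|_{L^{2}(\rho\,d\xi)}$ by the $H^{1}(\mathbb{R}^{3})$-norm of $R\mapsto(F_{5}+F_{3})(t,R\lambda(t))$, hence by $C\lambda(t)^{-3/2}(1+\lambda(t))\big(\|(F_{5}+F_{3})(t)\|_{L^{2}(r^{2}dr)}+\|\partial_{r}(F_{5}+F_{3})(t)\|_{L^{2}(r^{2}dr)}\big)$. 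The corresponding change of variables $\xi=\omega\lambda(s)^{2}$ in the $\omega$-integrals turns $\lambda(s)\|\cdot\|_{L^{2}(\rho(\omega\lambda(s)^{2})d\omega)}$ into $\|\cdot\|_{L^{2}(\rho\,d\xi)}$ with no further loss of $\lambda$-factors, and since $\sqrt{\omega\lambda(s)^{2}}\le\sqrt{\langle\omega\lambda(s)^{2}\rangle}$ pointwise, the $\sqrt{\omega\lambda(s)^{2}}$-weighted estimate is dominated by the $\sqrt{\langle\cdot\rangle}$-weighted one. All powers of $\lambda$ produced this way are harmless: by \eqref{lambdacomparg}, $ct^{-C_{l}}\le\lambda(t)\le Ct^{C_{u}}$ and $\sup_{x\in[T_{\lambda},t]}\lambda(x)\le Ct^{C_{l}}\lambda(t)$, and by \eqref{lambdasymb}, $|\lambda'(t)/\lambda(t)|\le M/t$, $|\lambda''(t)/\lambda(t)|\le C_{2}/t^{2}$.

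I would then insert the source-term bounds. Lemma~\ref{f5estlemma} gives $\|F_{5}(t,|\cdot|)\|_{H^{1}}\le C\log^{90}(t)/t^{4+2\epsilon}$; since $\epsilon\ge 0$ and $\epsilon>3C_{l}$, after multiplication by the admissible $\lambda$-powers this is $\le C\log^{90}(t)/t^{4+\epsilon-\frac{3}{2}C_{l}}$ times a strictly negative power of $t$, so it is subleading in all three estimates. For $F_{3}$, applying \eqref{f3estlemma} and its $\partial_{r}$-companion with $y_{1}=y$, $y_{2}=0$ --- legitimate since $F_{3}(0)=L_{1}(0)+N(0)=0$ by \eqref{f3def} --- bounds $\|F_{3}(v)\|_{L^{2}(r^{2}dr)}$ and $\|\partial_{r}F_{3}(v)\|_{L^{2}(r^{2}dr)}$ by $C\log^{90}(t)\,t^{-(2+\epsilon-\frac{3}{2}C_{l})}\lambda(t)^{3/2}\|y\|_{Z}$ times a finite sum of quantities that, by \eqref{q3uaninftyest}, \eqref{q2uansatz2linftyest}, \eqref{uaninftyest}, \eqref{drqq2uaninftyest}, \eqref{bracket3uanr0inftyest}, \eqref{uan3uanr0inftyest} and \eqref{uanr12est}, are each $O(t^{-2})$ up to logarithms and the small powers $t^{C_{u}},t^{C_{l}}$. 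The only term that must be tracked sharply is $20\|Q_{\lambda}(r)^{3}u_{ansatz}\|_{L^{\infty}_{r}}$: by \eqref{q3uaninftyest} together with $|\lambda'/\lambda|\le M/t$ and $|\lambda''/\lambda|\le C_{2}/t^{2}$, its leading part equals $t^{-2}\big(\frac{3M^{2}}{16}(111-45\log 4)+\frac{15C_{2}}{16}(-12+12\sqrt{3}+15\pi^{2}-4\log 4)\big)$ plus faster-decaying remainders, and the explicit factor $2$ in \eqref{f3estlemma} turns this into exactly the coefficient $40(\cdots)$ appearing in \eqref{cofconstr1}. Dividing by $\lambda(t)^{3/2}$ (and, for the weighted estimate, using the $H^{1}$ bound above), the whole $F_{3}$-contribution to $\lambda(s)\|F_{1}(s,\cdot)\|_{L^{2}(\rho(\omega\lambda(s)^{2})d\omega)}$ is at most $\frac{\log^{90}(s)}{s^{4+\epsilon-\frac{3}{2}C_{l}}}\big(40(\cdots)+\text{faster terms}\big)$, the faster terms being absorbed into the advertised $\frac{C\log^{10}(s)}{s^{1-\frac{5}{2}C_{u}-\frac{3}{2}C_{l}}}$ and $\frac{C\log^{90}(s)}{s^{\epsilon-3C_{l}}}$ factors; positivity of the exponents $1-\frac{5}{2}C_{u}-\frac{3}{2}C_{l}$ and $\epsilon-3C_{l}$ is exactly what \eqref{cofconstr0} and the definition of $\epsilon$ supply.

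Finally, one adds the $F_{2}$-contributions. Estimate \eqref{f21est} gives $|F_{2,0}(t)|+\lambda(t)\|F_{2,1}(t,\omega\lambda(t)^{2})\|_{L^{2}(\rho(\omega\lambda(t)^{2})d\omega)}\le\frac{\log^{90}(t)}{t^{4+\epsilon-\frac{3}{2}C_{l}}}\|y\|_{Z}$ times exactly $2\big(C_{2}(1+\|\mathcal{K}\|_{\mathcal{L}(L^{2,0}_{\rho})})+M^{2}(\|\mathcal{K}\|_{\mathcal{L}(L^{2,0}_{\rho})}+2\|[\mathcal{K},\xi\partial_{\xi}]\|_{\mathcal{L}(L^{2,0}_{\rho})}+\|\mathcal{K}\|_{\mathcal{L}(L^{2,0}_{\rho})}^{2})+2M(1+\|\mathcal{K}\|_{\mathcal{L}(L^{2,0}_{\rho})})\big)$, the second bracket of \eqref{cofconstr1}. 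Combining the $F_{3}$- and $F_{2}$-parts, using $\|y\|_{Z}\le 1$, and invoking \eqref{cofconstr1} (after discarding its positive first bracket) bounds the leading constant in $\lambda(s)\|F_{1}(s,\cdot)\|_{L^{2}(\rho(\omega\lambda(s)^{2})d\omega)}$ by $\frac{1}{12\sqrt{C_{\rho}}}$, which is the second displayed estimate; the bound on $|F_{0}(t)|$ follows from the same inputs applied to the discrete component (controlled by the full $L^{2}$-norm via \eqref{l2isom}) together with $|\lambda'/\lambda|\,|y_{0}'(t)|\le t^{-1}\frac{\log^{90}(t)}{t^{3+\epsilon-\frac{3}{2}C_{l}}}\|y\|_{Z}$. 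For the weighted estimate, the $F_{5}$- and $F_{3}$-parts contribute at most $C_{1}\log^{90}(s)/s^{4+\epsilon-\frac{3}{2}C_{l}}$ as above, while \eqref{f21weightest} contributes $\frac{\log^{90}(s)}{s^{4+\epsilon-\frac{3}{2}C_{l}}}\|y\|_{Z}\big(C+2MC_{Z}(1+\|\mathcal{K}\|_{\mathcal{L}(L^{2,\frac{1}{2}}_{\rho})})+2M\|\mathcal{K}\|_{\mathcal{L}(L^{2,\frac{1}{2}}_{\rho})}\big)$, and \eqref{cofconstr2} gives $2M(1+\|\mathcal{K}\|_{\mathcal{L}(L^{2,\frac{1}{2}}_{\rho})})\le\frac{1}{24\sqrt{C_{\rho}}}$, producing exactly the stated $\frac{C_{Z}}{24\sqrt{C_{\rho}}}$ coefficient, with $C_{1}$ absorbing everything else. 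I expect no conceptual difficulty --- the substance is in Lemmas~\ref{f2estlemma}--\ref{f5estlemma} --- and the only real obstacle is the constant bookkeeping: one must use the sharp leading coefficients from \eqref{q3uaninftyest} and \eqref{f21est}, verify that the factor $2$ in \eqref{f3estlemma} times $20\|Q_{\lambda}^{3}u_{ansatz}\|_{L^{\infty}}$ plus the $2(\cdots)$ term from \eqref{f21est} assemble into precisely the left-hand side of \eqref{cofconstr1} (and likewise \eqref{f21weightest} into \eqref{cofconstr2}), and then check that every remaining term decays strictly faster than $\log^{90}(s)\,s^{-(4+\epsilon-\frac{3}{2}C_{l})}$ --- which the constraints \eqref{cofconstr0}--\eqref{cofconstr2}, the definition of $\epsilon$ in Lemma~\ref{f5estlemma}, and the range \eqref{hdef} of $a$ guarantee.
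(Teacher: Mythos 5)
Your proof is correct and follows the paper's route exactly: the paper gives no argument beyond the sentence ``Lemmas~\ref{f2estlemma}, \ref{f3estlemma}, \ref{f5estlemma}, and \eqref{cofconstr1} directly give'' the result, and your write-up supplies precisely the scaling step via Lemma~\ref{vytranslemma}, the identification of $2\cdot 20\,\|Q_{\lambda}^{3}u_{ansatz}\|_{L^{\infty}}$ with the $40(\cdots)$ bracket, the use of \eqref{f21est} for the $2(\cdots)$ bracket, and \eqref{cofconstr2}/\eqref{f21weightest} for the $C_{Z}/(24\sqrt{C_{\rho}})$ coefficient, with the remaining terms absorbed into the subleading factors as you verify.
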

We proceed to show that $T$ maps $\overline{B_{1}(0)} \subset Z$ into itself. If $\begin{bmatrix}y_{0}\\
y_{1}\end{bmatrix} \in \overline{B_{1}(0)} \subset Z$, we have
\begin{equation}|T(\begin{bmatrix}y_{0}\\
y_{1}\end{bmatrix})_{0}| \leq e_{+}(t)\int_{t}^{\infty}\frac{|F_{0}(w)| \lambda(w)}{2 \sqrt{-\xi_{d}}} e_{-}(w) dw + e_{-}(t) \int_{T_{0}}^{t} \frac{|F_{0}(w)| \lambda(w)}{2 \sqrt{-\xi_{d}}} e_{+}(w) dw\end{equation}
From Lemma \ref{rhslemma}, we thus get
\begin{equation}\begin{split}&|T(\begin{bmatrix}y_{0}\\
y_{1}\end{bmatrix})_{0}| \leq C\left(e_{+}(t)\int_{t}^{\infty}\frac{\log^{90}(w) \lambda(w)}{w^{4+\epsilon-\frac{3}{2}C_{l}}} e_{-}(w) dw + e_{-}(t) \int_{T_{0}}^{t} \frac{\log^{90}(w) \lambda(w)}{w^{4+\epsilon-\frac{3}{2}C_{l}} } e_{+}(w) dw\right):=C I(t)\end{split}\end{equation}
Integrating by parts (where we integrate $\frac{\pm \sqrt{-\xi_{d}}}{\lambda(w)} e_{\pm}(w)$ and differentiate the rest of the integrand), we get
\begin{equation}\begin{split} I(t) &\leq e_{+}(t)\left(\frac{\log^{90}(t) \lambda(t)^{2} e_{-}(t)}{\sqrt{-\xi_{d}} t^{4+\epsilon-\frac{3}{2}C_{l}}} + \int_{t}^{\infty} dw e_{-}(w)\frac{C}{w^{1-C_{u}}} \frac{\log^{90}(w) \lambda(w)}{w^{4+\epsilon-\frac{3}{2}C_{l}}}\right)\\
&+e_{-}(t) \left(\frac{\log^{90}(t) \lambda(t)^{2} e_{+}(t)}{t^{4+\epsilon-\frac{3}{2}C_{l}}\sqrt{-\xi_{d}}}+ \int_{T_{0}}^{t} e_{+}(w) \frac{C}{w^{1-C_{u}}} \frac{\log^{90}(w) \lambda(w)}{w^{4+\epsilon-\frac{3}{2}C_{l}}}dw\right)\end{split}\end{equation}
where $C$ is \emph{independent} of $T_{0}$. To get this, we also used
\begin{equation} \frac{\log^{90}(w)\lambda(w)^{2}}{w^{4+\epsilon-\frac{3}{2}C_{l}}\sqrt{-\xi_{d}}} e_{+}(w)\Bigr|_{w=T_{0}}^{w=t} \leq \frac{\log^{90}(t)\lambda(t)^{2}}{t^{4+\epsilon-\frac{3}{2}C_{l}}\sqrt{-\xi_{d}}} e_{+}(t)\end{equation}
In other words, the boundary term at $T_{0}$ is negative. We therefore get, for $C$ independent of $T_{0}$,
\begin{equation}I(t) \leq \frac{C \log^{90}(t) \lambda(t)^{2}}{t^{4+\epsilon-\frac{3}{2}C_{l}}} + \frac{C I(t)}{T_{0}^{1-C_{u}}}, \quad t \geq T_{0}\end{equation}
Since $1-C_{u}>0$, there exists $T_{3}>0$ so that, for all $T_{0} \geq T_{3}$, 
$$I(t) \leq \frac{C_{2} \log^{90}(t) \lambda(t)^{2}}{t^{4+\epsilon-\frac{3}{2}C_{l}}}, \quad t \geq T_{0}$$
(where $C_{2}$ is independent of $T_{0}$). From here on, we further restrict $T_{0}$ to satisfy $T_{0} \geq T_{3}$. Since 
$$T(\begin{bmatrix}y_{0}\\
y_{1}\end{bmatrix})_{0}'(t) = \frac{\sqrt{-\xi_{d}}}{\lambda(t)}\left(e_{+}(t) \int_{t}^{\infty} \frac{F_{0}(w) \lambda(w) e_{-}(w) dw}{2 \sqrt{-\xi_{d}}}-e_{-}(t) \int_{T_{0}}^{t} \frac{F_{0}(w) \lambda(w) e_{+}(w) dw}{2 \sqrt{-\xi_{d}}}\right)$$
we get
$$|T(\begin{bmatrix}y_{0}\\
y_{1}\end{bmatrix})_{0}| + \lambda(t) |T(\begin{bmatrix}y_{0}\\
y_{1}\end{bmatrix})_{0}'(t)| \leq \frac{C \log^{90}(t) \lambda(t)^{2}}{t^{4+\epsilon-\frac{3}{2}C_{l}}}$$
Using Lemma \ref{rhoscalinglemma} and \eqref{lambdacomparg}, we get
\begin{equation}\begin{split}\lambda(t) ||T(\begin{bmatrix}y_{0}\\
y_{1}\end{bmatrix})_{1}(t,\omega)||_{L^{2}(\rho(\omega\lambda(t)^{2}) d\omega)} &\leq \sqrt{C_{\rho}} \int_{t}^{\infty} ds (s-t) ||\lambda(s) F_{1}(s,\omega)||_{L^{2}(\rho(\omega\lambda(s)^{2})d\omega)} \left(\frac{\sqrt{\lambda(t)}}{\sqrt{\lambda(s)}} + \frac{\lambda(t)^{3/2}}{\lambda(s)^{3/2}}\right)\\
&\leq 2 \sqrt{C_{\rho}} \int_{t}^{\infty} ds (s-t) ||\lambda(s) F_{1}(s,\omega)||_{L^{2}(\rho(\omega\lambda(s)^{2})d\omega)} \left(\frac{s}{t}\right)^{\frac{3 C_{l}}{2}}\end{split}\end{equation}
Using Lemma \ref{rhslemma}, we get
\begin{equation}\begin{split} &\lambda(t) ||T(\begin{bmatrix}y_{0}\\
y_{1}\end{bmatrix})_{1}(t,\omega)||_{L^{2}(\rho(\omega\lambda(t)^{2}) d\omega)} < \frac{2 \sqrt{C_{\rho}}}{t^{\epsilon-\frac{3}{2}C_{l}}} \left(\frac{C \log^{100}(t)}{t^{3-\frac{5}{2}C_{u}-\frac{3}{2}C_{l}}}+\frac{C \log^{180}(t)}{t^{2+\epsilon-3 C_{l}}}+\frac{\log^{90}(t)}{12 t^{2} \sqrt{C_{\rho}}}\right)\end{split}\end{equation}
We remark that the constant 2 multiplying $\sqrt{C_{\rho}}$ is not sharp. For instance, the integral was, in part, estimated by using the fact that $s \mapsto \frac{\log^{180}(s)}{s}$ is decreasing on $[T_{0},\infty)$. We get
\begin{equation}\label{Tl2est}\frac{t^{2+\epsilon-\frac{3}{2}C_{l}} \lambda(t)}{\log^{90}(t)} ||T(\begin{bmatrix}y_{0}\\
y_{1}\end{bmatrix})_{1}(t,\omega)||_{L^{2}(\rho(\omega\lambda(t)^{2}) d\omega)} < \frac{1}{6}+C\left(\frac{\log^{10}(t)}{t^{1-\frac{5}{2}C_{u}-\frac{3}{2}C_{l}}}+\frac{\log^{90}(t)}{t^{\epsilon-3C_{l}}}\right)\end{equation}
Also,
\begin{equation}\lambda(t) ||\partial_{t}T(\begin{bmatrix}y_{0}\\
y_{1}\end{bmatrix})_{1}(t,\omega)||_{L^{2}(\rho(\omega\lambda(t)^{2}) d\omega)} \leq 2 \sqrt{C_{\rho}} \int_{t}^{\infty} \lambda(s) ||F_{1}(s,\omega)||_{L^{2}(\rho(\omega\lambda(s)^{2})d\omega)} \left(\frac{s}{t}\right)^{\frac{3}{2}C_{l}} ds\end{equation}
Therefore, the identical argument used to obtain \eqref{Tl2est} gives
$$\frac{t^{3+\epsilon-\frac{3}{2}C_{l}} \lambda(t)}{\log^{90}(t)} ||\partial_{t}T(\begin{bmatrix}y_{0}\\
y_{1}\end{bmatrix})_{1}(t,\omega)||_{L^{2}(\rho(\omega\lambda(t)^{2}) d\omega)} < \frac{1}{6} + C \left(\frac{\log^{10}(t)}{t^{1-\frac{5}{2}C_{u}-\frac{3}{2}C_{l}}} + \frac{\log^{90}(t)}{t^{\epsilon-3 C_{l}}}\right)$$
Next, using Lemmas \ref{rhoscalinglemma} and \ref{rhslemma}, we get
\begin{equation}\begin{split}&C_{Z}^{-1} \lambda(t) ||\sqrt{\omega\lambda(t)^{2}} \partial_{t}T(\begin{bmatrix}y_{0}\\
y_{1}\end{bmatrix})_{1}(t,\omega)||_{L^{2}(\rho(\omega\lambda(t)^{2})d\omega)}\\
&\leq \sqrt{C_{\rho}} C_{Z}^{-1} \int_{t}^{\infty} \lambda(s) ||\sqrt{\omega\lambda(s)^{2}} F_{1}(s,\omega)||_{L^{2}(\rho(\omega\lambda(s)^{2})d\omega)} \left(\left(\frac{\lambda(t)}{\lambda(s)}\right)^{5/2} + \left(\frac{\lambda(t)}{\lambda(s)}\right)^{3/2}\right) ds\\
&\leq \frac{2 \sqrt{C_{\rho}} \log^{90}(t)}{t^{3+\epsilon-\frac{3}{2}C_{l}}} \left(\frac{C_{1}}{C_{Z}} + 2 M (1+||\mathcal{K}||_{\mathcal{L}(L^{2,\frac{1}{2}}_{\rho})})\right)\\
&< \frac{2 \sqrt{C_{\rho}} \log^{90}(t)}{t^{3+\epsilon-\frac{3}{2}C_{l}}} \left(\frac{C_{1}}{C_{Z}}+\frac{1}{24 \sqrt{C_{\rho}}}\right)\end{split}\end{equation}
where we remind the reader that the constant $C_{1}$ from Lemma \ref{rhslemma} is \emph{independent} of $C_{Z}, T_{0}$. Finally, with the same procedure,
\begin{equation} \begin{split} &\lambda(t) ||\omega\lambda(t)^{2} T(\begin{bmatrix}y_{0}\\
y_{1}\end{bmatrix})_{1}(t,\omega)||_{L^{2}(\rho(\omega\lambda(t)^{2}) d\omega)}\leq 2 \sqrt{C_{\rho}} \lambda(t) \int_{t}^{\infty} \lambda(s) ||\sqrt{\omega\lambda(s)^{2}} F_{1}(s,\omega)||_{L^{2}(\rho(\omega\lambda(s)^{2}) d\omega)} \left(\frac{s}{t}\right)^{\frac{5}{2}C_{l}} ds\\
&\leq \frac{2 \lambda(t) \sqrt{C_{\rho}} \log^{90}(t)}{t^{3+\epsilon-\frac{3}{2}C_{l}}} \left(C_{1}+2 M C_{Z}(1+||\mathcal{K}||_{\mathcal{L}(L^{2,\frac{1}{2}}_{\rho})})\right)\\
&< \frac{2 \lambda(t) \sqrt{C_{\rho}} \log^{90}(t)}{t^{3+\epsilon-\frac{3}{2}C_{l}}}\left(C_{1}+\frac{C_{Z}}{24\sqrt{C_{\rho}}}\right)\end{split}\end{equation}
We therefore get that there exists $C>0, C_{1}>0$, independent of $C_{Z}, T_{0}$, such that, for all $y \in \overline{B_{1}(0)} \subset Z$, 
\begin{equation}\begin{split}||T(y)||_{Z} &\leq \frac{5}{12} + \frac{2 \sqrt{C_{\rho}} C_{1}}{C_{Z}}\\
&+\sup_{t \geq T_{0}} \left(C(\frac{\log^{10}(t)}{t^{1-\frac{5}{2}C_{u}-\frac{3}{2}C_{l}}}+\frac{\log^{90}(t)}{t^{\epsilon-3C_{l}}})+\frac{2 \sqrt{C_{\rho}}}{t} \lambda(t)(C_{1}+\frac{C_{Z}}{24 \sqrt{C_{\rho}}})+\frac{C \lambda(t)}{t}\right)\end{split}\end{equation}
Using the fact that $F_{2}$ is linear in $\begin{bmatrix} y_{0}\\
y_{1}\end{bmatrix}$ and Lemmas \ref{f2estlemma}, \ref{f3estlemma}, we get that  there exists $C>0, C_{3}>0$, independent of $C_{Z}, T_{0}$, such that, for all $y,z \in \overline{B_{1}(0)} \subset Z$, 
\begin{equation}\begin{split}||T(y)-T(z)||_{Z} &\leq C ||y-z||_{Z} \begin{aligned}[t]&\left(\sup_{t \geq T_{0}}\left(\frac{1}{t^{1-C_{u}}}\right) + \sup_{t \geq T_{0}}\left(\frac{\log^{90}(t)}{t^{\epsilon-3C_{l}}}\right)+\sup_{t \geq T_{0}}\left(\frac{\log^{10}(t)}{t^{1-\frac{5}{2}C_{u}-\frac{3}{2}C_{l}}}\right)\right)\end{aligned}\\
&+||y-z||_{Z}\left(\frac{5}{12}+\frac{2 \sqrt{C_{\rho}}C_{3}}{C_{Z}}\right) + \frac{||y-z||_{Z} C_{Z}}{12} \sup_{t \geq T_{0}}\left(\frac{1}{t^{1-C_{u}}}\right)\end{split}\end{equation}
Now, we fix $C_{Z} = 24 \sqrt{C_{\rho}} \text{max}\{C_{1},C_{3}\}$. Since $\epsilon>3C_{l}$, there exists $T_{4}>0$ so that, for all $T_{0}\geq T_{4}$, $T$ is a strict contraction on $\overline{B_{1}(0)} \subset Z$. By Banach's fixed point theorem, there thus exists a fixed point of $T$, say $\overline{y}  \in \overline{B_{1}(0)} \subset Z$. By the definition of $T$, $\overline{y}$ is a solution to \eqref{yeqn}.
\subsection{Decomposition of the solution as in Theorem \ref{mainthm}}
We write $\overline{y} =\begin{bmatrix} \overline{y_{0}}\\
\overline{y_{1}}\end{bmatrix}$ and define $v_{f}$ by \begin{equation}v_{f}(t,r) = \frac{\lambda(t)}{r}\left(\overline{y_{0}}(t) \phi_{d}(\frac{r}{\lambda(t)})+\int_{0}^{\infty} \phi(\frac{r}{\lambda(t)},\xi) \overline{y_{1}}(t,\frac{\xi}{\lambda(t)^{2}}) \rho(\xi) d\xi\right), \quad r >0\end{equation}
By our derivation of \eqref{yeqn} from \eqref{veqn}, $v_{f}$ is a solution to \eqref{veqn}. Let
$$v_{rad}(t,r):= v_{2}(t,r) + v_{3}(t,r) + v_{4}(t,r) + \widetilde{v_{2}}(t,r)$$
so that
$$(-\partial_{t}^{2}+\partial_{r}^{2}+\frac{2}{r}\partial_{r})v_{rad}(t,r) =0$$
and let
$$v_{e}(t,r):= u_{ansatz}(t,r)-v_{rad}(t,r)$$
Defining $u_{e}(t,r) = v_{e}(t,r) + v_{f}(t,r)$, we get that $u$ given by
$$u(t,r):= Q_{\lambda(t)}(r) + v_{rad}(t,r) + u_{e}(t,r)$$
is a solution to \eqref{slw}. We conclude the proof of Theorem \ref{mainthm} by showing 
\begin{equation}\label{enconds}E_{SLW}(u,\partial_{t}u)< \infty, \quad E(u_{e},\partial_{t}(Q_{\lambda(t)}+u_{e})) \leq \frac{C \left(\sup_{x \in [T_{\lambda},t]} \sqrt{\lambda(x)}\right)^{2}}{t}\end{equation}
We use Lemmas \ref{uellminusuellmainestlemma} (for $u_{ell}$), \ref{v2estlemma} (for $v_{2}$), \ref{v30estlemma}, \ref{v3minusv3mainestlemma} (for $v_{3}$), \ref{uell2minusuell2mainestlemma} (for $u_{ell,2}$), \ref{uell3estlemma} (for $u_{ell,3}$), \ref{w1estlemma} (for $\widetilde{v_{2}}, w_{1}$), \ref{vexestlemma} (for $v_{ex}$), \ref{uw2estlemma}(for $u_{w,2}$), \ref{u3estlemma} (for $u_{3}$), \ref{un0estlemma} (for $u_{N_{0}},v_{4}$), \ref{u4estlemma} (for $u_{4}$), \ref{un0ellestlemma} (for $u_{N_{0},ell}$), and \ref{un2estlemma} (for $u_{N_{2}}$) to get
$$||\partial_{r}v_{e}(t,r)||_{L^{2}(r^{2} dr)} + ||v_{e}(t,r)||_{L^{6}(r^{2} dr)} \leq \frac{C \sup_{x \in [T_{\lambda},t]}(\sqrt{\lambda(x)})}{\sqrt{t}},\quad ||v_{rad}(t,r)||_{L^{6}(r^{2} dr)} \leq \frac{C \sup_{x \in [T_{\lambda},t]}(\sqrt{\lambda(x)})\log(t)}{\sqrt{t}}$$
We also need to capture an exact cancellation between $\partial_{t}w_{1,0}(t,r)$ and $\partial_{t}Q_{\lambda(t)}(r)$ for large $r$, since each of these functions are not separately in $L^{2}(r^{2} dr)$ (unless $\lambda'(t) =0$). For this, we recall the definitions \eqref{qlambdadef} and \eqref{v20hattildedef}, and note that
\begin{equation}\label{dtw10qlambdacancel}\partial_{t}(w_{1,0}(t,r)+Q_{\lambda(t)}(r)) = \frac{\sqrt{3}}{2}\left(\frac{\lambda'(t)}{\sqrt{\lambda(t)} r} \left(\frac{1}{(1+\frac{3 \lambda(t)^{2}}{r^{2}})^{3/2}}-1\right) -\frac{3 \lambda(t)^{3/2}\lambda'(t)}{(r^{2}+3\lambda(t)^{2})^{3/2}} + \frac{\lambda'(t+r)}{r \sqrt{\lambda(t+r)}}\right)\end{equation}
When $\lambda'(t) \neq 0$, $\partial_{t}Q_{\lambda(t)}(r)$ and $\partial_{t}w_{1,0}(t,r)$ each decay like $\frac{1}{r}$ for large $r$, but, $\partial_{t}(Q_{\lambda(t)}(r) + w_{1,0}(t,r))$ decays faster for large $r$, as the above expression shows. Then, using the above lemmas, we get
$$||\partial_{t}(Q_{\lambda(t)}(r)+v_{e}(t,r))||_{L^{2}(r^{2} dr)} \leq \frac{C \sqrt{h(t)} \sup_{x \in [T_{\lambda},t]}\left(\sqrt{\lambda(x)}\right)}{t}$$
Next, we have
$$v_{f}(t,r) = \frac{\lambda(t)}{r} \mathcal{F}^{-1}(\begin{bmatrix} \overline{y_{0}}(t)\\
\overline{y_{1}}(t,\frac{\cdot}{\lambda(t)^{2}})\end{bmatrix})(\frac{r}{\lambda(t)}), \quad r >0$$
Then, by the transference identity of \cite{kstslw}, namely,
$$\mathcal{F}(r \partial_{r}u)(\xi) = \begin{bmatrix}0\\
-2 \xi \mathcal{F}(u)_{1}'(\xi)\end{bmatrix} + \mathcal{K}(\mathcal{F}(u))(\xi), \quad \mathcal{F}(u)(\xi) = \begin{bmatrix}\mathcal{F}(u)_{0}\\
 \mathcal{F}(u)_{1}(\xi)\end{bmatrix}$$
we get
\begin{equation}\begin{split}||\partial_{t}v_{f}||_{L^{2}(r^{2}dr)} &\leq C \frac{|\lambda'(t)|}{\lambda(t)} ||v_{f}||_{L^{2}(r^{2}dr)} + ||\frac{\lambda(t)}{r} \mathcal{F}^{-1}(\begin{bmatrix} \overline{y_{0}}'(t)\\
\partial_{1}\overline{y_{1}}(t,\frac{\cdot}{\lambda(t)^{2}})\end{bmatrix})(\frac{r}{\lambda(t)})||_{L^{2}(r^{2}dr)} \\
&+ ||\frac{\lambda(t)}{r} \left(\frac{-\lambda'(t)}{\lambda(t)}\right) \mathcal{F}^{-1}\left(\mathcal{K}\left(\begin{bmatrix} \overline{y_{0}}(t)\\
\overline{y_{1}}(t,\frac{\cdot}{\lambda(t)^{2}})\end{bmatrix}\right)\right)\left(\frac{r}{\lambda(t)}\right)||_{L^{2}(r^{2}dr)}\\
&\leq \frac{C \lambda(t)^{3/2} \log^{90}(t)}{t^{3+\epsilon-\frac{3}{2}C_{l}}}\end{split}\end{equation}
By Lemma \ref{vytranslemma}, 
$$||v_{f}||_{L^{2}(r^{2} dr)}+\lambda(t) ||\partial_{r}v_{f}||_{L^{2}(r^{2} dr)}\leq \frac{C \lambda(t)^{3/2} \log^{90}(t)}{t^{2+\epsilon-\frac{3}{2}C_{l}}}$$
In particular, using the Sobolev embedding $H^{1}(\mathbb{R}^{3}) \subset L^{6}(\mathbb{R}^{3})$, we get
$$||v_{f}(t,r)||_{L^{6}(r^{2} dr)} \leq \frac{C \sqrt{\lambda(t)} \log^{90}(t)}{t^{2+\epsilon-\frac{3}{2}C_{l}-C_{u}}}$$
From \eqref{v20tildeests}, \eqref{v40ests}, Lemma \ref{v30estlemma}, and \eqref{v20ests}, we get that $E(v_{rad},\partial_{t}v_{rad})<\infty$, which finishes the verification of \eqref{enconds}.
\printbibliography
\end{document}